\newlength{\doublefracgap}
\DeclareRobustCommand{\doublefrac}[2]{%
  \mathinner{\mathpalette\doublefrac@{{#1}{#2}}}%
}
\newcommand{\doublefrac@}[2]{\doublefrac@@#1#2}
\newcommand{\doublefrac@@}[3]{%
  \ooalign{%
    \raisebox{\doublefracgap}{$\m@th#1\frac{#2}{\phantom{#3}}$}\cr
    \raisebox{-\doublefracgap}{$\m@th#1\frac{\phantom{#2}}{#3}$}\cr
  }%
}
\DeclareMathSymbol{\invques}{\mathord}{operators}{`>}
\DeclareRobustCommand{\tmquestiondown}{%
  \ifmmode\invques\else\textquestiondown\fi
}
\numberwithin{equation}{section}
\newcommand{\mylabel}[2]{#2\def\@currentlabel{#2}\label{#1}}
\newtheorem{theorem}{Theorem}[section]
\newtheorem{lemma}[theorem]{Lemma}
\newtheorem{proposition}[theorem]{Proposition}
\newtheorem{conjecture}[theorem]{Conjecture}
\newtheorem{corollary}[theorem]{Corollary}
\newtheorem{defn}[theorem]{Definition}
\newtheorem{remark}[theorem]{Remark}
\newtheorem{assumption}[theorem]{Assumption}
\newtheorem*{conjecture*}{Conjecture}
\newtheorem{lthm}{Theorem}
\newcommand{\fb}{\mathfrak{b}}
\newcommand{\fc}{\mathfrak{c}}
\newcommand{\fd}{\mathfrak{d}}
\newcommand{\bb}{\mathbb}
\newcommand{\frk}{\mathfrak}
\newcommand{\pr}{\mathrm{pr}}
\newcommand{\Tr}{\operatorname{Tr}}
\newcommand{\Gal}{\operatorname{Gal}}
\newcommand{\Fil}{\operatorname{Fil}}
\newcommand{\DD}{\mathbb{D}}
\newcommand{\CC}{\mathbb{C}}
\newcommand{\NN}{\mathbb{N}}
\newcommand{\QQ}{\mathbb{Q}}
\newcommand{\Qp}{{\mathbb{Q}_p}}
\newcommand{\Zp}{\mathbb{Z}_p}
\newcommand{\ZZ}{\mathbb{Z}}
\renewcommand{\AA}{\mathbb{A}}
\newcommand{\FF}{\mathbb{F}}
\newcommand{\ord}{\mathrm{ord}}
\newcommand{\fp}{\mathfrak{p}}
\newcommand{\fq}{\mathfrak{q}}
\newcommand{\vp}{\varphi}
\newcommand{\cL}{\mathcal{L}}
\newcommand{\cH}{\mathcal{H}}
\newcommand{\cO}{\mathcal{O}}
\newcommand{\Iw}{\mathrm{Iw}}
\newcommand{\GL}{\mathrm{GL}}
\newcommand{\col}{\mathrm{Col}}
\newcommand{\image}{\mathrm{Im}}
\newcommand{\cyc}{\textup{cyc}}
\newcommand{\ff}{\mathfrak{f}}
\newcommand{\Hom}{\mathrm{Hom}}
\newcommand{\Sel}{\mathrm{Sel}}
\newcommand{\Char}{\mathrm{char}}
\newcommand{\Ind}{\mathrm{Ind}}
\newcommand{\ac}{\textup{ac}}
\newcommand{\LL}{\Lambda}
\newcommand{\TT}{\mathbb{T}}
\newcommand{\h}{\textup{\bf h}}
\newcommand{\lra}{\longrightarrow}
\newcommand{\res}{\textup{res}}
\newcommand{\Bf}{\mathbf{f}}
\newcommand{\BF}{\mathbf{F}}
\newcommand{\Bg}{\mathbf{g}}
\newcommand{\ur}{\textup{ur}}
\newcommand{\cP}{\mathcal{P}}
\newcommand{\Dcris}{\mathbb{D}_{\rm cris}}
\newcommand{\adj}{\mathrm{adj}}
\newcommand{\cor}{\mathrm{cor}}
\newcommand{\id}{\mathrm{id}}
\newcommand{\etale}{\textup{\'et}}
\newcommand{\bk}{(\bm{\kappa}_{\ac}^{-1})}
\newcommand{\bS}{\mathbf{S}}
\newcommand{\Bh}{\mathbf{h}}
\newcommand{\BI}{\mathbf{I}}
\newcommand{\cC}{\mathcal{C}}
\newcommand{\DdR}{\DD_{\mathrm{dR}}}
\newcommand{\green}[1]{\textcolor{ForestGreen}{#1}}
\newcommand{\m}{\mathfrak{m}}
\newcommand{\cD}{\mathcal{D}}
\newcommand{\cG}{\mathcal{G}}
\newcommand{\sF}{\mathscr{F}}
\newcommand{\sL}{\mathscr{L}}
\newcommand{\cR}{\mathcal{R}}
\newcommand{\fe}{\mathfrak{e}}
\newcommand{\sW}{\mathscr{W}}
\newcommand{\fa}{\mathfrak{a}}
\newcommand{\wt}{{\rm wt}}
\newcommand{\Fr}{{\rm Fr}}
\newcommand{\CH}{{\rm CH}}
\newcommand{\End}{{\rm End}}
\newcommand{\hh}{\h}
\newcommand{\Q}{\QQ}
\newcommand{\Z}{\ZZ}
\definecolor{pinegreen}{rgb}{0.0, 0.47, 0.44}
 \definecolor{pAlgae}{RGB}{87,115,135}
\definecolor{airforceblue}{rgb}{0.36, 0.54, 0.66}
	\definecolor{bondiblue}{rgb}{0.0, 0.58, 0.71}
\definecolor{britishracinggreen}{rgb}{0.0, 0.26, 0.15}
\definecolor{camouflagegreen}{rgb}{0.47, 0.53, 0.42}
\definecolor{darkcyan}{rgb}{0.0, 0.55, 0.55}
\subjclass[2020]{Primary 11F66, 11F67, 11F33; Secondary 11F85, 11G18, 14F30}
\begin{document}

\author{Ra\'ul Alonso}
\address{Ra\'ul Alonso\newline UCD School of Mathematics and Statistics\\ University College Dublin\\ Belfield\\Dublin 4\\Ireland}
\email{raul.alonsorodriguez@ucd.ie}

\author{K\^az\i m B\"uy\"ukboduk}
\address{K\^az\i m B\"uy\"ukboduk\newline UCD School of Mathematics and Statistics\\ University College Dublin\\ Belfield\\Dublin 4\\Ireland}
\email{kazim.buyukboduk@ucd.ie}

\author{Antonio Cauchi}
\address{Antonio Cauchi\newline UCD School of Mathematics and Statistics\\ University College Dublin\\ Belfield\\Dublin 4\\Ireland}
\email{antonio.cauchi@ucd.ie}

\author{Antonio Lei}
\address{Antonio Lei\newline Department of Mathematics and Statistics\\University of Ottawa\\
150 Louis-Pasteur Pvt\\
Ottawa, ON\\
Canada K1N 6N5}
\email{antonio.lei@uottawa.ca}

\title{{T}\lowercase{rito}-\lowercase{non-ordinary} I\lowercase{wasawa theory of diagonal cycles}}

\subjclass[2020]{11R23 (primary)}
\keywords{Automorphic forms, diagonal cycles, Gan--Gross--Prasad conjectures, anticyclotomic Iwasawa theory}
\begin{abstract}
Our goal in this paper is to introduce and study the Euler system of signed diagonal cycles associated with a \emph{trito-non-ordinary} triple product of the form $f^B \times g^B \times \h^B$, where $f^B$ (resp. $g^B$) is a $p$-ordinary (resp. non-ordinary) eigenform on an indefinite quaternion algebra $B_{/\QQ}$ of weight $2$, and $\h^B$ is a primitive Hida ($p$-ordinary) family. When $B=\mathrm{M}_2(\bb{Q})$ is split and $\h=\h^B$ has CM by an imaginary quadratic field, this allows us to develop the signed anticyclotomic Iwasawa theory for the base change ${\rm BC}_{K/\QQ}(\pi_f)\times {\rm BC}_{K/\QQ}(\pi_g)\times \psi$, where $\psi$ is a Hecke character of $K$. We formulate a signed Perrin-Riou-style Iwasawa main conjecture in this setting, and obtain a result on one inclusion in this conjecture. Our methods also allow us to extend Hsieh's construction of the balanced triple-product $p$-adic $L$-function to the trito-non-ordinary scenario, and to define its signed counterparts.
\end{abstract}

\maketitle

\tableofcontents

\section{Introduction}
Throughout this article, $p$ is a fixed odd prime number. Once and for all, we fix embeddings $\iota:\overline{\bb{Q}}\hookrightarrow \bb{C}$ and $\iota_p:\overline{\bb{Q}}\hookrightarrow \overline{\bb{Q}}_p$.

The construction of Gross--Kudla--Schoen diagonal cycles in the product of three Kuga--Sato varieties was initially studied in \cite{GrossSchoen-cycle,grosskudla}. Darmon and Rotger later incorporated these cycles into the Euler system framework and demonstrated that they satisfy a $p$-adic Gross--Zagier formula via the $p$-adic Abel--Jacobi map and established a connection of these cycles with the Garrett--Rankin $p$-adic $L$-function \cite{darmonrotger14,darmonrotger17}. This development has ignited extensive research into diagonal cycles and their applications in Iwasawa theory. Important progress was made in \cite{asterisque}, where the authors established the variational properties of these cycles in families of modular forms, and proved explicit reciprocity laws linking diagonal cycles to three-variable $p$-adic $L$-functions.

In a different direction, the construction of families of diagonal cycles has been adapted to define anticyclotomic Euler systems for the product of two elliptic modular forms in \cite{ACR25,ACR}, building on the framework developed in \cite{JNS}. These works focus exclusively on the case where all three factors involved in the (families of) triple products are $p$-ordinary.

In the present article, we construct unbounded (resp. bounded signed) anticyclotomic Euler systems of diagonal cycles for the twisted base change ${\rm BC}_{K/\QQ}(\pi_f)\times {\rm BC}_{K/\QQ}(\pi_g)\times \psi$, where $f$ (resp. $g$) is a $p$-ordinary (resp. $p$-non-ordinary) eigenform, $K$ is an imaginary quadratic field, and $\psi$ is a Hecke character of $K$ (see Theorem~\ref{thmB} and Theorem~\ref{thmC} below).  Along the way, we also construct unbounded (resp. bounded signed) balanced triple-product $p$-adic $L$-functions (in a single variable; see Theorem~\ref{thmE} below), extending the work of Hsieh~\cite{hsiehpadicbalanced} to the trito-non-ordinary\footnote{From the Greek prefix \textit{trito-}, meaning ``one-third''.} setting, which we believe is of independent interest.

The main arithmetic applications of these constructions are towards (signed) Iwasawa main conjectures for ${\rm BC}_{K/\QQ}(\pi_f)\times {\rm BC}_{K/\QQ}(\pi_g)\times \psi$, which we formulate in \S\ref{subsubsection_125_2025_11_02}. Employing the Euler system machinery of Jetchev--Nekov\'a\v{r}--Skinner~\cite{JNS}, we prove a containment in this conjecture (cf. Theorem~\ref{thmD}). We remark that, assuming that the Abel--Jacobi map ${\rm cl}_1$ is injective, the non-triviality of this containment follows from \cite{YZZ} (proof of the Gross--Kudla conjecture), provided that we have $L'({\rm BC}_{K/\QQ}(\pi_f)\times {\rm BC}_{K/\QQ}(\pi_g)\times \psi,2)\neq 0$ for some Hecke character $\psi$ of $K$ whose theta-series is a weight-2 specialisation of the CM Hida family $\h$ relevant to our discussion (see \S\ref{subsec_CM_Hida_fam}). 

To explain our main results, we first introduce some notation. 

\subsection{Diagonal cycles for towers of threefolds}

Let $N=N^+N^-$ be a positive integer coprime to $p$, where $N^+$ and $N^-$ are coprime and $N^-$ is square-free. Let $B$ be the indefinite quaternion algebra over $\QQ$ of discriminant $N^-$. We also put $E = \QQ^{\oplus 3}$ and $B_E = B^{\oplus 3}$.  Once and for all fix an isomorphism 
 \[
     \Psi=\prod_{q\nmid N^-}\Psi_q\,:\quad B(\widehat\QQ^{(N^-)})\simeq M_2(\widehat\QQ^{(N^-)}).
 \] 
We fix a maximal order $\mathcal{O}_{B}$ such that $\Psi_q(\mathcal{O}_{B} \otimes \ZZ_q) = M_2(\ZZ_q)$ for all $q \nmid \infty N^{-}$. We then let $U_0(N)$ be the open compact subgroup of $\widehat{B}^{\times}$ defined by the Eichler order $R_N$ of level $N^+$ in $\mathcal{O}_{B}$  and let 
$$U_1( N) = \{ g \in U_0(N)\,:\, \Psi_q(g_q) \equiv \left( \begin{smallmatrix}
    \star & \star \\ 0 & 1
\end{smallmatrix} \right)\, (\text{mod } N \ZZ_q ) \text{ for } q| N^+ \}. $$

For a natural number $n$, we put
 \begin{align*}
     U_n &:= \{ g \in U_1(N) \,:\, \Psi_p(g_p) \equiv \left( \begin{smallmatrix} a & \star  \\  & a \end{smallmatrix}\right)\, (\text{mod } p^n)\, ,\,a \in\Zp^\times\}\,, \\
     U_{1,n}& : = \{ g \in  U_1(N)  \,:\, \Psi_p(g_p) \equiv \left( \begin{smallmatrix} \star &  \star \\  & 1 \end{smallmatrix}\right)\, (\text{mod } p^n)\}\,.     
  \end{align*}
  We write $S_n$ for the Shimura curve given by $U_n$.
 Let us denote by $\iota: B \hookrightarrow B_E$ the diagonal embedding, and let $\mathcal{U}_1(N) : = U_1(N)^{\times 3} \subseteq \widehat{B}_E^{\times}$. For each $\mathbf{n}=(n_1,n_2,n_3) \in \mathbb{Z}_{\geq 0}^3$, we define
\begin{align*}
   \mathcal{U}_{\mathbf{n}} &:=\left \{ g \in  \mathcal{U}_{1}(N)\,:\, g \equiv \left(\left( \begin{smallmatrix}  a & \star \\ 0 & a \end{smallmatrix}\right) \mod p^{n_1},\left( \begin{smallmatrix}  a & \star \\ 0 & a \end{smallmatrix}\right) \mod p^{n_2},\left( \begin{smallmatrix} a & \star \\ 0 & a \end{smallmatrix}\right) \mod p^{n_3}\right)\, ,\,a \in\Zp^\times\right\}.
\end{align*}
The corresponding Shimura threefold is denoted by $\mathbf{X}_{\mathcal{U}_{\mathbf{n}}}$. Via a twisted diagonal embedding of the Shimura curve $S_n$ where $n=\max\{n_1,n_2,n_3\}$, we define the diagonal cycle $\Delta_{\mathcal{U}_{\mathbf{n}} } \in {\rm CH}^2(\mathbf{X}_{\mathcal{U}_{\mathbf{n}}})$ (see Definition~\ref{def_2025_07_03_1206}). The general machinery of Loeffler given in \cite{LoefflerSpherical} implies that these classes satisfy a norm relation given by the Hecke operator:
 \[ {\rm pr}^{\mathbf{n}+\mathbf{1}}_{\mathbf{n}}(\Delta_{\mathcal{U}_{\mathbf{n+1}}}) = \mathbb{U}_p' \, \Delta_{\mathcal{U}_{\mathbf{n}}}.\]

\subsection{Main results}
Let $f$ and $g$ be weight-two forms on $B$ of level $U_1(N)$. We assume that $f$ is $p$-ordinary and $g$ is $p$-non-ordinary. 
\subsubsection{} 
\label{subsubsec_2025_11_02_0946}
Our primary goal in the present paper is to build an anticyclotomic Euler system for the Rankin--Selberg product of $f$ and $g$ using the classes $\Delta_{\mathcal{U}_{\mathbf{n}} }$. In particular, we show that there exist signed integral classes, in the spirit of the constructions in \cite{BFSuper,BBL1, BL-MSMF,BBL2,sprung16,CCSS}. To do so, we first prove that the classes naturally defined by $\Delta_{\mathcal{U}_{\mathbf{n}} }$ satisfy a three-term norm relation. More precisely, starting off with $\Delta_{\mathcal{U}_{\mathbf{n}} }$,  we define a class in Galois cohomology 
$ \Delta_{\mathbf{n}}^{\etale} \in H^1(\Q,  H^3_{\etale} (\mathbf{X}_{\mathbf{n}}\times_{\QQ} \overline{\QQ}, \ZZ_p(2)))$, where $\mathbf{X}_{\mathbf{n}}$ denotes the Shimura threefold of level $U_{n_1} \times U_{n_2} \times U_{n_3}$, which in turn gives rise to
\[
\Delta_{n}^{\etale}(f,g),\ \widetilde{\Delta}_{n}^{\etale}(f,g)\in  H^1(\QQ, T_f^*\otimes T_g^* \otimes e_{\mathrm{U}_{p}'} H^1_{\etale} (X(U_{n})\times_{\QQ}
\overline{\QQ}, \ZZ_p))\,,
\]
where $T_f^*$ and $T_g^*$ are $G_\QQ$-representations attached to $f$ and $g$, realized as quotients of $H^1_{\etale} (X(U_{1})\times_{\QQ}
\overline{\QQ}, \ZZ_p)$ and $H^1_{\etale} (X(U_{0})\times_{\QQ}
\overline{\QQ}, \ZZ_p)$, respectively.
Our first main result is the following norm relations of these classes.

\begin{lthm}[Corollary~\ref{cor_2025_03_22_0955}]\label{thmA}
        For any positive integer $n$, we have
    \begin{align*}
        ({\rm id}, {\rm id},\pi_1)_*\,\Delta_{n+1}^{\etale}(f, g)&=a_p(g) \Delta_{n}^{\etale}(f,g)-\chi_g(p)\widetilde{\Delta}_{n}^{\etale}(f,g)\,,\\
        ({\rm id}, {\rm id},\pi_1)_*\,\widetilde\Delta_{n+1}^{\etale}(f, g)&=p\cdot \Delta_{n} ^{\etale}(f,g)\,,
    \end{align*}
       where $\pi_1$ denotes the natural projection map $X(U_{n+1})\to X(U_n)$.
\end{lthm}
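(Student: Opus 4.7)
The starting point is a partial version of the norm relation recalled in the introduction, in which only the third level varies:
\[
(\id, \id, \pi_1)_*\, \Delta_{\mathcal{U}_{\mathbf{n}+(0,0,1)}} \;=\; (\mathbb{U}_p')^{(3)}\cdot \Delta_{\mathcal{U}_{\mathbf{n}}},
\]
where $(\mathbb{U}_p')^{(3)}$ denotes the local Hecke operator acting only on the third $B^\times$-factor. I would extract this identity from the same local double-coset computation that underlies Loeffler's relation $\pr^{\mathbf{n}+\mathbf{1}}_{\mathbf{n}}(\Delta_{\mathcal{U}_{\mathbf{n}+\mathbf{1}}})=\mathbb{U}_p'\,\Delta_{\mathcal{U}_{\mathbf{n}}}$: since the Shimura datum is a product over three $B^\times$-components and the map $(\id,\id,\pi_1)_*$ touches only the third factor, the local analysis decouples and the Hecke operator that appears is just $(\mathbb{U}_p')^{(3)}$. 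The passage from geometric cycles to Galois cohomology classes via the K\"unneth decomposition is functorial, so the identity persists after projecting onto $T_f^*\otimes T_g^*$ in the first two factors.

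With this in hand, the theorem reduces to analysing the action of $(\mathbb{U}_p')^{(3)}$ on the classes $\Delta_n^{\etale}(f,g)$ and $\widetilde{\Delta}_n^{\etale}(f,g)$ regarded as elements of $H^1(\QQ,\,T_f^*\otimes T_g^*\otimes e_{\mathbb{U}_p'}H^1_{\etale}(X(U_n),\ZZ_p))$. The key structural observation is that $\Delta_n^{\etale}(f,g)$ and $\widetilde{\Delta}_n^{\etale}(f,g)$ are, by construction, the two natural images of the diagonal cycle under two different degeneracy maps on the factor carrying $g$. Because $g$ is a weight-two eigenform of nebentype $\chi_g$, its Hecke polynomial $X^2-a_p(g)X+\chi_g(p)p$ gives a Cayley--Hamilton identity for $(\mathbb{U}_p')^{(3)}$ on the two-dimensional $g$-isotypic subspace, and $(\mathbb{U}_p')^{(3)}$ acts on the ordered pair $\bigl(\Delta_n^{\etale}(f,g),\,\widetilde{\Delta}_n^{\etale}(f,g)\bigr)$ through the companion matrix
\[
\begin{pmatrix} a_p(g) & p \\ -\chi_g(p) & 0 \end{pmatrix}.
\]
Reading off each column of this matrix action in the partial norm relation above yields at once the two identities asserted in the theorem.

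The main obstacle I anticipate is pinning down $\widetilde{\Delta}_n^{\etale}(f,g)$ with exactly the right ``companion'' degeneracy map, so that the matrix of $(\mathbb{U}_p')^{(3)}$ in the basis $\bigl(\Delta_n^{\etale}(f,g),\,\widetilde{\Delta}_n^{\etale}(f,g)\bigr)$ has precisely the entries above, including the sign and the twist by $\chi_g(p)$. This requires an explicit local calculation at $p$ with the degeneracy maps defining the two classes, together with careful tracking of the normalisation of $e_{\mathbb{U}_p'}$. Once those conventions are fixed, the three-term norm relations follow as formal consequences of the Cayley--Hamilton identity for the Hecke polynomial of $g$ acting on the two-dimensional $g$-isotypic summand.
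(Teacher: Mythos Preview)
Your proposal has a genuine gap at its foundation. The partial norm relation
\[
(\id, \id, \pi_1)_*\, \Delta_{\mathcal{U}_{\mathbf{n}+(0,0,1)}} \;=\; (\mathbb{U}_p')^{(3)}\cdot \Delta_{\mathcal{U}_{\mathbf{n}}}
\]
is false. Since $\Delta_{\mathcal{U}_{\mathbf{n}}}:={\rm pr}^{(n)}_{\mathbf{n}}\Delta_{\mathcal{U}_{(n)}}$ with $n=\max\{n_i\}$, applying $(\id,\id,\pi_1)_*$ to $\Delta_{\mathcal{U}_{\mathbf{n}+(0,0,1)}}$ forces you through the step ${\rm pr}^{(n+1)}_{(n)}\Delta_{\mathcal{U}_{(n+1)}}=\mathbb{U}_p'\,\Delta_{\mathcal{U}_{(n)}}$, and this $\mathbb{U}_p'$ is the \emph{diagonal} operator $(\mathrm{U}_p',\mathrm{U}_p',\mathrm{U}_p')$, not the third-factor one. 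The cycle is the pushforward of a diagonally embedded curve, so the three factors are linked and the local analysis does not decouple. A second confusion compounds this: even if a third-factor operator appeared, it would act on $e_{\mathrm{U}_p'}H^1_{\etale}(X(U_n))$ (the Hida variable), not on the $g$-factor, which sits in the \emph{second} slot at level~$0$; the Hecke polynomial of $g$ has nothing to say about $\mathrm{U}_p'$ on the ordinary third factor.

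What the paper actually does supplies the missing mechanism. The normalization $\Delta^\ddagger_{\mathbf{n}}:=C_q^{-1}((\mathrm{U}_p')^{-n_1},\id,(\mathrm{U}_p')^{-n_3})\Delta^{\etale}_{\mathbf{n}}$ is designed precisely so that the $\mathrm{U}_p'$-action on factors $1$ and $3$ coming from the diagonal norm relation is absorbed, leaving $({\rm id},\mathrm{U}_p',{\rm id})$ on the middle factor. One then pushes the middle factor down from level $1$ to level $0$ using the identities
\[
\pi_{1,*}\,\mathrm{U}_p'=\mathrm{T}_p'\,\pi_{1,*}-\mathrm{S}_p^{-1}\,\pi_{2,*},\qquad
\pi_{2,*}\,\mathrm{U}_p'=p\,\pi_{1,*}
\]
(Proposition~\ref{Prop_of_KLZ17}); projecting to the $g$-eigenspace turns $\mathrm{T}_p'$ into $a_p(g)$ and $\mathrm{S}_p^{-1}$ into $\chi_g(p)$, which gives exactly the two asserted relations. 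So your companion-matrix intuition is correct in spirit---these are the Cayley--Hamilton relations for $g$---but they act on the residual $\mathrm{U}_p'$ on the \emph{second} factor after factors $1$ and $3$ have been cancelled by the normalization, not via a decoupled partial norm relation on the third.
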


\subsubsection{} We assume until \S\ref{subsubsec_2025_10_31_1451} that $B=\mathrm{M}_2(\bb{Q})$. We fix $K$ to be an imaginary quadratic field where $p$ splits. We write $(p)=\frk{p}\overline{\frk{p}}$, where $\frk{p}$ is the prime of $K$ determined by our fixed embedding $\iota_p:\overline{\bb{Q}}\hookrightarrow \overline{\bb{Q}}_p$. We assume that $p$ does not divide the class number of $K$. Let $\Lambda=\mathcal{O}\llbracket1+p\bb{Z}_p\rrbracket$, where $\cO$ is the ring of integers of a finite extension of $\Qp$ that contains all the coefficients of $f$ and $g$. Let $\Gamma_{\ac}$ be the Galois group of the anticyclotomic $\bb{Z}_p$-extension of $K$. We can identify $\Gamma_{\ac}$  with the anti-diagonal in $(1+p\bb{Z}_p)\times (1+p\bb{Z}_p)\cong \mathcal{O}_{K,\frk{p}}^{(1)}\times \mathcal{O}_{K,\overline{\frk{p}}}^{(1)}$ via the Artin map. Let 
$\Gamma_{\ac}\xrightarrow{\,\kappa_{\ac}\,} \bb{Z}_p^\times$ be the character defined by mapping the element $((1+p)^{-1},(1+p))$ to $1+p$, and $\Gamma_{\ac}\xrightarrow{\,\bm{\kappa}_{\ac}\,} \Lambda^\times$ the character defined by mapping the element $((1+p)^{-1}, (1+p))$ to the group-like element $[1+p]$. For an integer $n\ge1$, let $\Gamma_n=(1+p\bb{Z}_p)/(1+p\bb{Z}_p)^{p^{n-1}}$ and let $\Lambda_n=\mathcal{O}[\Gamma_n]$.

Let $\varphi_{0,\frk{P}}$ be the $p$-adic avatar of a Hecke character $\varphi_0$ of $K$ of conductor dividing $\frk{cp}$ and infinity type $(-1,0)$, where $\frk{c}$ is coprime to $p$ and $\frk{P}$ is the maximal ideal of a sufficiently large extension of $\QQ_p$ (cf. \S\ref{subsubsec_2025_11_12_1049}). Let $\chi_{\varphi_0}$ denote the central character of $\varphi_0$. We impose the following self-duality condition.
\begin{itemize}
    \item[(\mylabel{item_self_duality}{\textbf{SD}})] $\chi_f\chi_g\chi_{\varphi_0}\varepsilon_K=\omega^{2a}$ for some $a\in \bb{Z}$.
\end{itemize}

Let $m$ be a square-free product of primes coprime to $Np$.  Let $K[m]$ denote the maximal $p$-subextension of the ring class field of $K$ of conductor $m$.
Through suitable modifications of the classes ${\Delta}_{n}^{\etale}(f,g)$ and $\widetilde{\Delta}_{n}^{\etale}(f,g)$, we build on the ideas developed in \cite{LLZ2} to define classes
\[
z_{n,m},\,\tilde{z}_{n,m}\in H^1(\bb{Q},T_f^*\otimes T_g^*\otimes \Ind_{K[m]}^{\bb{Q}}\Lambda_n(\varphi_{0,\frk{P}}^c\bm{\kappa}_{\ac}^{-1})(2)),
\]
where $c$ denotes the complex conjugation. By Shapiro's isomorphism, these classes can also be regarded as classes in
\[
H^1(K[m],T_f^\ast\otimes T_g^\ast\otimes \Lambda_n(\varphi_{0,\frk{P}}^c\bm{\kappa}_{\ac}^{-1})(2))
\simeq H^1(K[mp^n],T_f^\ast\otimes T_g^\ast(\varphi_{0,\frk{P}}^c)(2))\,.
\]

\subsubsection{} Let us put $M=T_f^\ast\otimes T_g^\ast(\varphi_{0,\frk{P}}^c)(2)$ to ease our notation, and consider the following condition. 
\begin{itemize}
    \item[(\mylabel{item_H0}{\textbf{H}$^0$})] The residual $G_{K}$-representation $ \overline{M}$ satisfies $H^0(K,\overline{M})=0$\,.
\end{itemize}
Building on Theorem~\ref{thmA}, we prove the following norm relations for the classes $z_{n,m}$.
\begin{lthm}[Propositions~\ref{prop:horizontal-norm} and~\ref{prop_thm_2025_07_03_1627}]\label{thmB}
Suppose that \eqref{item_H0}  holds. Let $n$ be a positive integer, $m$ a square-free product of primes coprime to $Np$ that are split in $K$, and $q$ a prime coprime to $Nmp$ that is split in $K$. Then
    \begin{align*}
        \mathrm{cor}_{K[mqp^n]/K[mp^n]}(z_{n,mq}) &= \cP_q\cdot z_{n,m},
    \end{align*}
    where $\cP_q$ is an explicit Euler factor.
Furthermore, if $n\geq 2$, then
    \[
    \mathrm{cor}_{K[mp^{n+1}]/K[mp^n]}(z_{n+1,m})=a_p(g)z_{n,m}-\chi_g(p)\mathrm{res}_{K[mp^{n-1}]/K[mp^n]}(z_{n-1,m}),
    \]
    where $\chi_g$ is the nebentype of $g$.
\end{lthm}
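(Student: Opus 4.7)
My plan is to reduce both identities to the finite-level norm relations in Theorem \ref{thmA}, combined with standard Euler-system norm computations at split primes. The main translation happens via Shapiro's isomorphism, which identifies the Iwasawa-theoretic cohomology $H^1(\QQ, T_f^*\otimes T_g^*\otimes\mathrm{Ind}_{K[m]}^\QQ \Lambda_n(\varphi_{0,\fP}^c\bm{\kappa}_{\ac}^{-1})(2))$ with $H^1(K[mp^n], T_f^*\otimes T_g^*(\varphi_{0,\fP}^c)(2))$. Under this translation, the étale pushforward $(\mathrm{id},\mathrm{id},\pi_1)_*$ on the third factor corresponds to a corestriction $\cor_{K[mp^{n+1}]/K[mp^n]}$ in Galois cohomology, while the corresponding pullback $\pi_1^*$ corresponds to restriction.

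For the vertical three-term relation, I would first observe that $\widetilde{\Delta}_n^\etale(f,g)$ is by construction the pullback of $\Delta_{n-1}^\etale(f,g)$ along the natural projection $X(U_{n})\to X(U_{n-1})$; the second identity in Theorem \ref{thmA} is then simply a reflection of the degree relation $\pi_{1*}\pi_1^*=p$. Translating through Shapiro's isomorphism, this forces the identification $\widetilde{z}_{n,m} = \mathrm{res}_{K[mp^{n-1}]/K[mp^n]}(z_{n-1,m})$. Substituting this into the translation of the first identity of Theorem \ref{thmA} yields directly
\[
\cor_{K[mp^{n+1}]/K[mp^n]}(z_{n+1,m}) = a_p(g)\, z_{n,m} - \chi_g(p)\, \mathrm{res}_{K[mp^{n-1}]/K[mp^n]}(z_{n-1,m}),
\]
and the constraint $n\geq 2$ is exactly what is required for $z_{n-1,m}$ to be defined.

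For the horizontal relation at a split prime $q\nmid Nmp$, I would follow the methodology of \cite{LLZ2}: the classes $z_{n,m}$ and $z_{n,mq}$ differ only by a modification of the CM datum at the prime of $K$ above $q$ when passing from the ring class field $K[m]$ to $K[mq]$. The corestriction $\cor_{K[mqp^n]/K[mp^n]}$ acts on this datum by summing over $\Gal(K[mq]/K[m])$; the classical distribution relation for CM points on the relevant Shimura varieties, combined with the Hecke/Galois compatibility for $\Frob_q$ acting on $T_f^*\otimes T_g^*(\varphi_{0,\fP}^c)(2)$, produces exactly the local Euler factor $\cP_q$.

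The hardest part will be the identification $\widetilde{z}_{n,m} = \mathrm{res}_{K[mp^{n-1}]/K[mp^n]}(z_{n-1,m})$: the geometric pullback identity must be tracked compatibly with the twisted diagonal embedding, with the CM cycle encoding $\varphi_{0,\fP}^c$, and with the anticyclotomic Iwasawa twist, at the level of integral coefficients. The hypothesis \eqref{item_H0} is what ensures that the finite-level classes assemble to the claimed identity without torsion ambiguity (it rules out nontrivial kernels/cokernels in the Hochschild--Serre and Shapiro identifications at integral level), and \eqref{item_self_duality} is used implicitly to make sense of the self-dual representation $M$ over $\Lambda$.
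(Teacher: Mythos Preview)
Your outline for the horizontal relation at split primes $q$ is broadly in line with the paper's approach: the paper proves geometric tame norm relations for the diagonal cycles $\Delta_{\mathbf{n}}(m)$ under the degeneracy maps $\varpi_{ij,q}$ (Lemma~\ref{lemma:norm-relations}), and then pushes these through the patching maps $\nu_n^m$ of Proposition~\ref{lemma:LLZ}, under which corestriction matches the norm $\mathcal{N}_{m,n}^{mq,n}$. Your phrasing in terms of ``distribution relations for CM points'' is a little imprecise---the CM structure enters only through the patching, while the Euler factor itself comes from Hecke relations on the Shimura threefold---but the skeleton is right.

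The vertical relation has a genuine gap. You assert that ``$\widetilde{\Delta}_n^{\etale}(f,g)$ is by construction the pullback of $\Delta_{n-1}^{\etale}(f,g)$ along the natural projection $X(U_n)\to X(U_{n-1})$'', and that the second identity in Theorem~\ref{thmA} is then just $\pi_{1,*}\pi_1^*=p$. This is not the definition: $\widetilde{\Delta}_n^{\etale}(f,g)$ is built from the class $\Delta_{(n,1,n)}^{\ddagger}$ via $({\rm id},\pi_{2,*},{\rm id})$ on the \emph{middle} factor (see \S\ref{subsubsec_2025_10_20_1513}), not as a pullback on the third factor. Knowing only $\pi_{1,*}\widetilde{\Delta}_n=p\,\Delta_{n-1}$ does not give $\widetilde{\Delta}_n=\pi_1^*\Delta_{n-1}$; you would need to know that $\widetilde{\Delta}_n$ lies in the image of $\pi_1^*$, and that is precisely the missing step.

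The paper supplies this step by a different route. Proposition~\ref{prop_2025_03_08_1713} proves, by a direct cycle-level computation with the twisted diagonal embedding, that $\langle 1,1,d\rangle\,\widetilde{\Delta}_n^{\etale}(f,g)=\widetilde{\Delta}_n^{\etale}(f,g)$ for all $d\equiv 1\pmod{p^{n-1}}$. After translating through Shapiro, this says $\tilde{z}_{n+1,m}$ is $G_{K[mp^n]}$-invariant; hypothesis \eqref{item_H0} then guarantees (via inflation--restriction) that it is genuinely $\res_{K[mp^n]/K[mp^{n+1}]}(c)$ for some $c$. Only at this point does the degree relation $\cor\circ\res=p$ together with torsion-freeness (again from \eqref{item_H0}) force $c=z_{n,m}$. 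So your identification $\tilde{z}_{n,m}=\res(z_{n-1,m})$ is the correct conclusion, but it is a theorem (Proposition~\ref{prop:2025_08_11_1554}), not a definition, and the diamond-operator invariance is the ingredient you are missing.
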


\subsubsection{} The second norm relation in Theorem~\ref{thmB} is similar to the one satisfied by Heegner points on an elliptic curve. It is precisely this relation that allows us to apply the machinery of Sprung to define signed integral classes.  For $n\ge1$, let $C_{g,n}$ denote the $2\times2$ matrix over $\Lambda\simeq\cO\llbracket X\rrbracket$ given by
$\begin{bmatrix}
    a_p(g)&1\\ -\chi_g(p)\Phi_n&0
    \end{bmatrix}$, where $\Phi_n=\sum_{j=0}^{p-1}(1+X)^{jp^{n-1}}$ denotes the $p^n$-th cyclotomic polynomial in $1+X$. 
\begin{lthm}[Theorems~\ref{thm:generic-decomposition} and \ref{thm:decompo}]\label{thmC}
     Suppose that \eqref{item_H0}  holds. Let $m$ be a square-free product of primes coprime to $Np$ that are split in $K$. There exist unique elements $z_m^\sharp,z_m^\flat\in H^1(K[m],T_f^\ast\otimes T_g^\ast\otimes \Lambda(\varphi_{0,\frk{P}}^c\bm{\kappa}_{\ac}^{-1})(2))$ such that
     \[
    \begin{bmatrix}
        z_{n,m}\\-\chi_g(p)\res_{K[mp^{n-1}]/K[mp^n]}(z_{n-1,m})
    \end{bmatrix}
    =C_{g,n-1}\cdots C_{g,1}\begin{bmatrix}
        \pr^\infty_n z_m^\sharp\\\pr^\infty_n z_m^\flat
    \end{bmatrix},
    \]
    where  
    $$H^1(K[m],T_f^\ast\otimes T_g^\ast\otimes \Lambda(\varphi_{0,\frk{P}}^c\bm{\kappa}_{\ac}^{-1})(2))\xrightarrow{\,\pr^\infty_n\,} H^1(K[m],T_f^\ast\otimes T_g^\ast\otimes \Lambda_n(\varphi_{0,\frk{P}}^c\bm{\kappa}_{\ac}^{-1})(2))$$ 
    denotes the natural projection map.
    Furthermore,  let $q$ be a prime coprime to $Nmp$ and split in $K$. Then
    \begin{align*}
        \mathrm{cor}_{K[mq]/K[m]}\left(z_{m q}^\bullet\right) &= \cP_q\cdot z_{m}^\bullet\,,\qquad \bullet\in\{\sharp,\flat\}\,.
    \end{align*}
\end{lthm}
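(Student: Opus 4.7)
The plan is to apply the Kobayashi--Sprung matrix decomposition technique to the three-term norm relation of Theorem~\ref{thmB}, following the template established in \cite{sprung16,BL-MSMF,BBL2}. Abbreviate $T := T_f^\ast \otimes T_g^\ast(\varphi_{0,\frk{P}}^c)(2)$. For each $n\geq 1$, I package the two terms appearing in the second norm relation of Theorem~\ref{thmB} into the column vector
\[
v_n := \begin{bmatrix} z_{n,m} \\ -\chi_g(p)\,\res_{K[mp^{n-1}]/K[mp^n]}(z_{n-1,m}) \end{bmatrix} \in H^1\bigl(K[m], T\otimes\Lambda_n(\bm{\kappa}_{\ac}^{-1})\bigr)^{\oplus 2}.
\]
Using that $\pr^{n+1}_n \circ \res_{K[mp^n]/K[mp^{n+1}]}$ acts as multiplication by $p$ on $\Lambda_n$-coefficient cohomology (matching the reduction $\Phi_n \equiv p \pmod{\omega_{n-1}}$ in $\Lambda_n$), Theorem~\ref{thmB} translates into the matrix recursion
\[
\pr^{n+1}_n(v_{n+1}) \,=\, C_{g,n}\cdot v_n, \qquad n\geq 1.
\]

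Set $A_n := C_{g,n-1}\cdots C_{g,1}\in M_2(\Lambda)$ with $A_1$ the identity. A direct computation yields $\det A_n = (-\chi_g(p))^{n-1}\prod_{j=1}^{n-1}\Phi_j$, a non-zero-divisor in $\Lambda$, so $A_n$ becomes invertible over $\calHac$. Setting $(w_n^\sharp, w_n^\flat)^t := A_n^{-1}\,v_n$, the recursion $A_{n+1} = C_{g,n}A_n$ combined with the matrix identity above yields $\pr^{n+1}_n w_{n+1}^\bullet = w_n^\bullet$ for $\bullet\in\{\sharp,\flat\}$, so the pairs assemble into elements $z_m^\bullet$ in $H^1(K[m], T\otimes\Lambda(\bm\kappa_{\ac}^{-1})) \otimes_\Lambda \calHac$. \emph{The main obstacle is integrality}: showing that the $z_m^\bullet$ in fact lie in the un-tensored Iwasawa cohomology. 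Writing $A_n^{-1} = (\det A_n)^{-1}\adj(A_n)$, integrality reduces to showing that $\adj(A_n)\cdot v_n$ is divisible by $\det A_n$ in the $\Lambda_n$-coefficient cohomology. The Kobayashi--Sprung strategy matches the forced vanishing of $\adj(A_n)\cdot v_n$ at the wild characters of $\Gamma_\ac$ (the roots of each $\Phi_j$, $1\leq j\leq n-1$) with the corresponding poles of $A_n^{-1}$, via an inductive argument on $n$ using the very matrix recursion above. Hypothesis~\eqref{item_H0} enters here by guaranteeing torsion-freeness of the relevant cohomology modules, so that integrality is detectable character-by-character.

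Uniqueness then follows formally: any alternative decomposition would yield a difference annihilated by $A_n$ for all $n$, hence vanishing after tensoring with $\calHac$, and then zero in $H^1(K[m], T\otimes\Lambda(\bm\kappa_{\ac}^{-1}))$ by torsion-freeness. Finally, the horizontal (tame) norm relation is deduced from its analogue for $z_{n,m}$ (Proposition~\ref{prop:horizontal-norm}): since $\cor_{K[mq]/K[m]}$ is $\Lambda$-linear, it commutes with the $\Lambda$-linear operator $A_n^{-1}$, so applying $A_n^{-1}$ to both sides of the Euler-factor relation for $v_n$ at level $mq$ yields the identity $\cor_{K[mq]/K[m]} w_n^\bullet(mq) = \cP_q\cdot w_n^\bullet(m)$ at each finite level, which passes to the inverse limit to give the claimed relation for $z_m^\bullet$.
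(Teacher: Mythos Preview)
Your overall strategy matches the paper's: both run the Sprung-type matrix decomposition on the three-term recursion of Theorem~\ref{thmB}, and both deduce the tame norm relation from the finite-level one (the paper does it via uniqueness rather than by commuting $\cor$ with $A_n^{-1}$, but these are equivalent once existence and uniqueness are in hand).

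The gap is precisely in the step you flag as ``the main obstacle'', and it is not just incompleteness but a framing problem. First, the definition $w_n := A_n^{-1}v_n$ is not well-posed: $\det A_n = (-\chi_g(p))^{n-1}\prod_{j=1}^{n-1}\Phi_j$ is a \emph{zero-divisor} in $\Lambda_n$ (each $\Phi_j$ with $j<n$ vanishes on characters of order $p^j$), so there is no ambient $\Lambda_n$-module in which $A_n$ is invertible and $v_n$ lives simultaneously. You cannot place $w_n$ in $H^1\otimes_\Lambda\calHac$ before knowing the compatible system of $v_n$'s lifts to $\Lambda$-coefficients, which is exactly the integrality you are trying to prove. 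Second, your proposed route to integrality, ``detectable character-by-character'' via torsion-freeness, does not work here: vanishing of a class in $H^1(K[m],T\otimes\Lambda_n)$ at all characters of order $p^j$ does not by itself imply divisibility by $\Phi_j$ in that module without further structure.

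The paper's argument (Theorem~\ref{thm:generic-decomposition}) avoids both issues. It never inverts $A_n$; instead it proves inductively (via Lemma~\ref{lem:image-trace}, which identifies the image of $\Tr_1^n$ with the image of multiplication by $\omega_{n-1}/\omega_0$) that $\adj(A_n)\cdot v_n\in\operatorname{im}\Tr_1^n$. The induction step is driven by the identity $C_{g,n}'v_{n+1}=\chi_g(p)\Tr_n^{n+1}v_n$, which uses both $\Phi_n\cdot z_{n+1,m}=\Tr_n^{n+1}\pr^{n+1}_n z_{n+1,m}$ and the norm relation. It then passes to \emph{cocycle} representatives $c_n$ chosen recursively so that the recursion holds on the nose, extracts finite-level elements $(c_{\sharp,n},c_{\flat,n})$ in $\Lambda^2/\ker\Theta_{g,n}$, and invokes Lemma~\ref{lem:inverse-limit} ($\varprojlim_n\Lambda^2/\ker\Theta_{g,n}=\Lambda^2$) to assemble the limit. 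This last lemma is the structural input replacing your appeal to $\calHac$, and you do not have an analogue of it in your sketch.
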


\subsubsection{} 
\label{subsubsection_125_2025_11_02}
 We assume until \S\ref{subsubsec_2025_10_31_1451} that the residual representation $\overline{M}$ is absolutely irreducible (see also our ``big image'' hypothesis \eqref{item_BI}, which is required in Theorem~\ref{thmD} below).

Similar to previous works on non-ordinary Iwasawa theory for $\GL_2$, the matrices $C_{g,n}$ can be used to define local signed Coleman maps. Their kernels then give rise to signed Selmer groups that we denote by $H^1_{\bullet,\bullet}(K[p^\infty],M)$ for $\bullet\in \lbrace \sharp,\flat\rbrace$. Also, the dual local conditions for the $p$-divisible module $A:=(M\otimes_{\bb{Z}_p}\bb{Q}_p)/M$ give rise to Selmer groups $\Sel_{\bullet,\bullet}(K[p^\infty],A)$, and we denote by $X_{\bullet,\bullet}(K[p^\infty],A)$ their Pontryagin duals. We refer the reader to \S\ref{subsec:ES} for precise definitions of these objects.

Under the assumption that $a_p(g)=0$ and that hypothesis \eqref{item_H0minus} (formulated in \S\ref{subsec_5_1_2025_11_02_1128}) holds, we prove in \S\ref{subsec_5_2_2025_11_02_1128} that $z_1^\bullet\in H^1_{\bullet,\bullet}(K[p^\infty],M)$. Based on these constructions, we may formulate the following signed Iwasawa main conjectures, similar to the Heegner-point Iwasawa main conjectures of Perrin-Riou studied in \cite{PR-Heegner-IMC} (see also the supersingular analogue discussed in \cite{castellawan}).

\begin{conjecture*}
    Let $\bullet\in \lbrace \sharp,\flat\rbrace$. The class $z_1^\bullet\in H^1(K[p^\infty],M)$ lies in $H^1_{\bullet,\bullet}(K[p^\infty],M)$. Moreover, $z_1^\bullet$ is not $\Lambda$-torsion, both $H^1_{\bullet,\bullet}(K[p^\infty],M)$ and $X_{\bullet,\bullet}(K[p^\infty],A)$ are rank-one $\Lambda$-modules and
    \[
    \Char_{\Lambda}\left(X_{\bullet,\bullet}(K[p^\infty],A)_{\mathrm{tors}}\right)=\Char_\Lambda\left(\frac{H^1_{\bullet,\bullet}(K[p^\infty],M)}{\Lambda \cdot z_1^\bullet}\right)^2.
    \]
\end{conjecture*}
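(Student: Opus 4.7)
The plan is to establish equality of characteristic ideals by proving both divisibilities, together with the rank-one and non-torsion statements; the Euler-system half is essentially handed to us by the prior theorems of the paper, while the reverse divisibility is the genuine difficulty.

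For the Euler-system divisibility $\Char_\Lambda(X_{\bullet,\bullet}(K[p^\infty],A)_{\mathrm{tors}}) \mid \Char_\Lambda(H^1_{\bullet,\bullet}(K[p^\infty],M)/\Lambda \cdot z_1^\bullet)^2$, together with the containment $z_1^\bullet \in H^1_{\bullet,\bullet}(K[p^\infty],M)$, I would appeal to Theorem~\ref{thmD} and the Jetchev--Nekov\'a\v{r}--Skinner machinery: the signed classes $\{z_m^\bullet\}$ of Theorem~\ref{thmC}, whose local conditions at $\mathfrak{p}, \overline{\mathfrak{p}}$ are cut out by the signed Coleman maps arising from the matrices $C_{g,n}$, form an anticyclotomic Euler system after fixing a suitable collection of split Kolyvagin primes. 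To complete this half, I would show that $z_1^\bullet$ is not $\Lambda$-torsion by specializing the CM Hida family $\h$ to a classical Hecke character $\psi$ of suitable infinity type, identifying (via the signed Perrin-Riou regulator and the interpolation of the Abel--Jacobi image) the specialization of $z_1^\bullet$ with the Bloch--Kato logarithm of a classical diagonal cycle, and invoking the Yuan--Zhang--Zhang proof of the Gross--Kudla conjecture: if $L'(\mathrm{BC}_{K/\QQ}(\pi_f)\otimes \mathrm{BC}_{K/\QQ}(\pi_g)\otimes \psi,2)\neq 0$ for at least one such $\psi$, the specialization is nonzero. Combined with global duality, non-triviality of $z_1^\bullet$ then upgrades the Euler-system bound to force both $H^1_{\bullet,\bullet}$ and $X_{\bullet,\bullet}$ to have $\Lambda$-rank exactly one.

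For the reverse divisibility I would proceed in two stages. The first is an \emph{explicit reciprocity law}: using the signed balanced triple-product $p$-adic $L$-function $L_p^\bullet$ of Theorem~\ref{thmE} and the three-term norm relation of Theorem~\ref{thmA}, match the signed Coleman-map image of $\mathrm{loc}_\mathfrak{p}(z_1^\bullet)$ with $L_p^\bullet$ restricted to the anticyclotomic line, up to an explicit nonzero constant. This is the trito-non-ordinary analogue of the reciprocity laws of Darmon--Rotger and of~\cite{asterisque}; after passing through the signed decomposition of Theorem~\ref{thmC}, it should reduce, component by component, to the ordinary reciprocity law already understood. The second stage is to combine this reciprocity with the Euler-system bound to reformulate the desired divisibility as $(L_p^\bullet)^2 \mid \Char_\Lambda(X_{\bullet,\bullet}(K[p^\infty],A)_{\mathrm{tors}})$, and then prove this by congruences with a Klingen-type Eisenstein family on a relevant unitary group, in the spirit of Skinner--Urban and of the signed refinement developed by Castella--Wan.

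The main obstacle is the Eisenstein-congruence step of the second stage. It requires promoting $L_p^\bullet$ to a two-variable object in which $\h$ is also allowed to vary, constructing a signed Klingen-type Eisenstein family whose constant term reproduces $L_p^\bullet$, and running a Fourier--Jacobi expansion argument in the non-ordinary regime — all of which demand substantial new input beyond the methods deployed in the paper. The explicit reciprocity law of the first stage, while technically delicate, should be within reach of the machinery (signed Coleman maps, three-term norm relations, and the construction of $L_p^\bullet$) already assembled here; it is the Eisenstein side that I expect to absorb essentially all the difficulty of the main conjecture.
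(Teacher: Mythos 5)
The statement you are addressing is not a theorem of the paper at all: it is the signed Iwasawa main \emph{conjecture} that the authors formulate. The paper itself only establishes two fragments of it, both conditionally: the membership $z_1^\bullet\in H^1_{\bullet,\bullet}(K[p^\infty],M)$ under the hypotheses $a_p(g)=0$ and \eqref{item_H0minus} (Proposition~\ref{prop:local-ap=0}), and the single divisibility $\Char_{\Lambda}(X_{\bullet,\bullet})_{\mathrm{tors}}\supseteq\Char_{\Lambda}(H^1_{\bullet,\bullet}/\Lambda z_1^\bullet)^2$ under \eqref{item_BI} and the \emph{additional assumption} that $z_1^\bullet$ is not $\Lambda$-torsion (Theorem~\ref{thm:mainconjectures}). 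Your proposal does not close the gap between these partial results and the full conjecture; it reorganises the paper's own results and then outlines a programme whose decisive steps are not carried out.

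Concretely, three points in your sketch are genuine gaps rather than proofs. First, your argument for non-torsionness of $z_1^\bullet$ via specialisation of $\h$ and Yuan--Zhang--Zhang is exactly the conditional remark the authors make in the introduction: it requires injectivity of the Abel--Jacobi map ${\rm cl}_1$ and non-vanishing of $L'({\rm BC}_{K/\QQ}(\pi_f)\times{\rm BC}_{K/\QQ}(\pi_g)\times\psi,2)$ for a suitable $\psi$, neither of which is known; moreover, non-vanishing of a single specialisation of $z_{m,\infty,\xi}$ does not immediately yield non-torsionness of each signed component, since the matrix $Q_g^{-1}M_{\log,g}$ mixes $z_1^\sharp$ and $z_1^\flat$ and one of $\log^{\pm}$ vanishes at any given finite-order character. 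Second, the explicit reciprocity law relating $\mathrm{loc}_{\mathfrak{p}}(z_1^\bullet)$ to a signed balanced $p$-adic $L$-function is not proved in the paper (the authors explicitly defer the comparison with the Darmon--Rotger/Bertolini--Seveso--Venerucci cycles and their reciprocity laws, cf.\ Remark~\ref{rem_2025_10_13_0908}), and it does not follow formally from Theorems~\ref{thmA}, \ref{thmC} and \ref{thmE}: the $p$-adic $L$-function of Theorem~\ref{thmE} is built on the definite quaternion algebra, while the classes live on the indefinite side, so a Jacquet--Langlands-type comparison or an independent reciprocity computation is required. Third, the Eisenstein-congruence step for the reverse divisibility, which you yourself flag as absorbing all the difficulty, is entirely absent from the paper and from the current literature in this trito-non-ordinary setting. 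Finally, note that the conjecture as stated makes no mention of $a_p(g)=0$, \eqref{item_H0minus} or \eqref{item_BI}, whereas every partial result you invoke requires some of these hypotheses; a proof of the conjecture along your lines would at best be conditional on them.
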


The Euler system machinery developed in \cite{JNS} implies, under appropriate hypotheses, one inclusion in these main conjectures:

\begin{lthm}[Theorem~\ref{thm:mainconjectures}]\label{thmD}
Let $\bullet\in\lbrace \sharp,\flat\rbrace$. Assume that $a_p(g)=0$, that hypothesis \eqref{item_H0minus} holds and that the ``big image'' hypothesis \eqref{item_BI} holds. If $z_1^\bullet\in H^1_{\bullet,\bullet}(K[p^\infty],M)$ is not $\Lambda$-torsion, then both $H^1_{\bullet,\bullet}(K[p^\infty],M)$ and $X_{\bullet,\bullet}(K[p^\infty],A)$ are rank-one $\Lambda$-modules and
\[
\Char_{\Lambda}\left(X_{\bullet,\bullet}(K[p^\infty],A)_{\mathrm{tors}}\right)\supseteq\Char_{\Lambda} \left(\frac{H^1_{\bullet,\bullet}(K[p^\infty],M)}{\Lambda \cdot z_1^\bullet} \right)^2.
\]
\end{lthm}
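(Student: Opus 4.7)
My plan is to apply the anticyclotomic Euler system machinery of Jetchev--Nekov\'a\v{r}--Skinner \cite{JNS} to the collection $\{z_m^\bullet\}_m$ of signed classes produced in Theorem~\ref{thmC}. First I would verify the required axiomatic input of the Euler system. The horizontal norm relation $\mathrm{cor}_{K[mq]/K[m]}(z_{mq}^\bullet) = \cP_q \cdot z_m^\bullet$, for $q$ split in $K$ and coprime to $Nmp$, is the second half of Theorem~\ref{thmC}, so the Euler-factor axiom of \cite{JNS} holds with the required local factor. The vanishing \eqref{item_H0} supplies the compatibilities needed to identify the various Iwasawa cohomology groups via Shapiro's lemma and to ensure exactness of the relevant corestriction sequences used to propagate classes.

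Next I would confirm that the bottom class $z_1^\bullet$ lies in the signed Selmer group $H^1_{\bullet,\bullet}(K[p^\infty],M)$ carved out by the kernels of the signed Coleman maps built from the matrices $C_{g,n}$. This is precisely the content of \S\ref{subsec_5_2_2025_11_02_1128}, valid once the hypotheses $a_p(g)=0$ and \eqref{item_H0minus} are in force. The self-duality condition \eqref{item_self_duality} places the Selmer structure in the core-rank-one regime of \cite{JNS}, which is the regime producing divisibilities with the characteristic ideal of the Euler-system quotient appearing squared, in parallel with the Heegner-point main conjecture of Perrin-Riou \cite{PR-Heegner-IMC} and its supersingular refinement \cite{castellawan}.

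With these ingredients in place, and under the big image hypothesis \eqref{item_BI} together with the standing assumption of \S\ref{subsubsection_125_2025_11_02} that $\overline{M}$ is absolutely irreducible, the hypothesis that $z_1^\bullet$ is not $\Lambda$-torsion triggers the main theorem of \cite{JNS}. The conclusion is the simultaneous rank-one assertion for $H^1_{\bullet,\bullet}(K[p^\infty],M)$ and $X_{\bullet,\bullet}(K[p^\infty],A)$, together with the divisibility of characteristic ideals claimed in the theorem, the square on the right-hand side being a formal feature of the core-rank-one framework once the bottom class generates a rank-one submodule.

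The main obstacle is not the Euler-system bound itself, but the verification of the JNS axioms in our geometric setting. The subtle points I would have to check are: matching the local conditions at primes dividing $N$ with those required by \cite{JNS}; verifying that the signed local conditions at $p$, which are not of Greenberg type, fit into the framework as strictly self-dual Selmer structures and satisfy the requisite local duality at the $p$-adic places; and ensuring compatibility of the Kolyvagin-type derivative classes produced by \cite{JNS} with the anticyclotomic twists by $\bm{\kappa}_{\ac}$, so that they continue to land in $H^1_{\bullet,\bullet}$ at each finite layer. The most novel technical input is the verification of strict self-duality of the signed conditions, since the original \cite{JNS} template was set up with ordinary local conditions in mind; the supersingular predecessor \cite{castellawan} provides a useful guide here.
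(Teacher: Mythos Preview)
Your proposal is correct and follows essentially the same approach as the paper: verify that $\{z_m^\bullet\}_{m\in\mathcal{S}}$ is a split anticyclotomic Euler system in the sense of \cite{JNS} (via the tame norm relation of Theorem~\ref{thmC}, recast in the paper as Theorem~\ref{thm:tamenormrelations}) and that each $z_m^\bullet$ lands in $H^1_{\bullet,\bullet}(K[mp^\infty],M)$ (Proposition~\ref{prop:local-ap=0}), then invoke the JNS machinery. The ``obstacles'' you flag---in particular the self-duality of the signed local conditions at $p$---are not left as gaps but are handled in the paper's preliminary work (the proposition immediately preceding the statement of the main conjecture shows that $H^1_{\ord,\bullet}(K_{\overline{\frk{p}}},M)$ and $H^1_{\ord,\bullet}(K_{\frk{p}},M)$ are exact orthogonal complements), so your plan matches the paper's structure closely.
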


We refer the reader to Remark~\ref{rem_2025_10_13_0908} for a discussion of our assumption $a_p(g)=0$. 

\subsubsection{} 
\label{subsubsec_2025_10_31_1451}
In Appendix~\ref{sec:Regularized_definite_diagonal_cycles}, we discuss the definite counterpart of our diagonal cycles. Namely, taking $B$ to be definite and letting $\mathbf{S}_{\mathcal{U}_{\mathbf{n}}}$ be the (zero-dimensional) Shimura set associated with $B_E$ of level $\mathcal{U}_{\mathbf{n}}$, we define an element $\Theta_{\mathcal{U}_{\mathbf{n}}}\in  \Zp[\mathbf{S}_{\mathcal{U}_{\mathbf{n}}}]$ by imitating the construction of $\Delta_{\mathcal{U}_{\mathbf{n}}}$. Thanks to~\cite{LoefflerSpherical}, these theta elements satisfy a norm relation identical to that of the diagonal cycles. Moreover, they give rise to an alternative construction of Hsieh's diagonal theta elements, which were used by Hsieh~\cite{hsiehpadicbalanced} to construct balanced triple-product $p$-adic $L$-functions, with the advantage that our approach a priori allows more flexibility in the $p$-adic variation. We explain in \S\ref{subsec:Loefflerfamilyofthetaelements} how Hsieh’s construction of diagonal theta elements can be reinterpreted within Loeffler’s framework (see Proposition \ref{prop_rewriting_Hsieh} and Corollary \ref{cor_rewriting_Hsieh}).

In \S\ref{subsec_signedpadic}, using the theta elements $\Theta_{\mathcal{U}_{\mathbf{n}}}\in  \Zp[\mathbf{S}_{\mathcal{U}_{\mathbf{n}}}]$, we
construct unbounded (resp. bounded signed) balanced $p$-adic $L$-functions in a single variable. Let us give an overview of our construction. Let $f$ and $g$ be weight-two cusp forms on $\GL_2$ as above and let $\h$ be a primitive Hida family on $\GL_2$  with coefficients in a normal domain $\BI$, finite flat over $\Lambda = \cO\llbracket1+p\Zp\rrbracket$. Assume that we have a (non-zero) primitive Jacquet--Langlands lift $(f^B,g^B,\h^B)$ to $B$ (see \eqref{item_JL} for a list of conditions that guarantee this). Then, mimicking the definition of ${\Delta}_{n}^{\etale}(f,g)$ (cf. \S\ref{subsubsec_2025_11_02_0946}), we construct elements $\Theta_{n}(f,g) \in e_{\mathrm{U}_{p}'}\mathcal{O}[S_{U_{n}}]$ that satisfy the relation
    \begin{align}\label{eq_31_10_2025_2253}
        {\rm pr}_{n}^{n+1}\,\Theta_{n+1}(f,g)=a_p(g)\cdot\Theta_{n}(f,g)-\chi_g(p)\cdot \res_{n-1}^{n}\Theta_{n-1}(f,g)\,
    \end{align}
 for any integer $n \geq 2$ (cf. Proposition~\ref{prop_2025_10_15_1640}).   
Let $\alpha$ and $\beta$ denote the roots of the Hecke polynomial of $g$ at $p$. Then,  for $\xi \in \{ \alpha, \beta\}$ and $n \geq 2$, \eqref{eq_31_10_2025_2253} let us show that the $p$-stabilized theta element $$\Theta_{n,\xi}(f,g):=\frac{1}{\xi^{n}}\left(\Theta_{n}(f,g)-\frac{\chi_g(p)}{\xi}\res_{n-1}^{n} \Theta_{n-1}(f,g)\right) \,\in\, e_{\mathrm{U}_{p}'}{\rm Frac}(\mathcal{O})[S_{U_n}]$$
is norm-compatible, in the sense that ${\rm pr}_{n}^{n+1} \Theta_{n+1,\xi}(f,g) = \Theta_{n,\xi}(f,g)$. 

\begin{lthm}[Theorems~\ref{thm_2025_10_15_2117} and \ref{thm_2025_10_16_0900}]\label{thmE}
Let $\sL$ and $\mathcal{D}_{?}$ be as in Definition~\ref{defn_PR_rings}.
\item[i)] Let $\xi\in\{\alpha,\beta\}$ be such that $\ord_p(\xi)<1$.  There exists a unique element 
$$\Theta_{\infty,\xi}(f,g)\in \mathscr{L} \otimes_\LL\varprojlim_n e_{\mathrm{U}_{p}'}\cO[S_{U_n}]$$ such that its image under the natural morphism 
$ \mathscr{L} \otimes_{\LL}\varprojlim_n e_{\mathrm{U}_{p}'}\cO[S_{U_n}] \lra e_{\mathrm{U}_{p}'}{\rm Frac}(\mathcal{O})[S_{U_n}]$ coincides with $\Theta_{n,\xi}(f,g)$. Let us put $\Theta_{\infty,\xi}(f,g,\h):=\h^{B, \dagger}\left(\Theta_{\infty,\xi}(f,g)\right)\in \mathscr{L} \otimes_\LL \mathbf{I}$, where $\h^{B, \dagger}$ is the self-dual twist of $\h$ $($cf. Definition~\ref{def_2025_11_02_0949}\,$(\textup{iii}))$. Then, $\Theta_{\infty,\xi}(f,g,\h) \in \mathcal{D}_{\ord_p(\xi)}$\,.
\item[ii)] There exist unique $\Theta_{\infty}^\sharp(f,g,\h)$, $\Theta_{\infty}^\flat(f,g,\h) \in \mathbf{I}$ such that
    \[
    \begin{bmatrix}
      \Theta_{\infty,\alpha}(f,g,\h)\\  
      \Theta_{\infty,\beta}(f,g,\h)\end{bmatrix}=Q_g^{-1}M_{\log,g}
\begin{bmatrix}
      \Theta_{\infty}^\sharp(f,g,\h)\\  
      \Theta_{\infty}^\flat(f,g,\h)
\end{bmatrix}    ,
    \]
    where $Q_g$ and $M_{\log,g}$ are the matrices introduced in Definition \ref{defn_01_11_25}.
\end{lthm}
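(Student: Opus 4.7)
The plan is to prove (i) by an Amice--Višik-style interpolation from a norm-compatible system of $\xi$-stabilisations, and then (ii) by adapting the Sprung-style logarithmic matrix decomposition encoded by $(Q_g, M_{\log,g})$.

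For part (i), I would first verify that the $\xi$-stabilised elements are norm-compatible, i.e., that $\mathrm{pr}_n^{n+1}\Theta_{n+1,\xi}(f,g) = \Theta_{n,\xi}(f,g)$. Expanding the left-hand side via the three-term relation \eqref{eq_31_10_2025_2253} together with the identity $\mathrm{pr}_n^{n+1}\circ\mathrm{res}_n^{n+1} = p\cdot\mathrm{id}$ (the degree of the cover), the desired equality reduces to the quadratic relation $\xi^2 - a_p(g)\xi + p\,\chi_g(p) = 0$, which holds for $\xi\in\{\alpha,\beta\}$ by definition. The resulting projective system has denominators bounded by $\xi^n$, so the Amice--Višik criterion (in the guise of Perrin-Riou's admissibility formalism) produces a unique lift $\Theta_{\infty,\xi}(f,g)\in \mathscr{L}\otimes_\Lambda\varprojlim_n e_{\mathrm{U}_p'}\cO[S_{U_n}]$ interpolating the $\Theta_{n,\xi}(f,g)$. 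Applying the $\Lambda$-linear (and $\mathscr{L}$-continuously extended) self-dual specialisation $\h^{B,\dagger}$ yields an element of $\mathscr{L}\otimes_\Lambda\mathbf{I}$, and tracking the $\xi^n$-denominators through the specialisation places it in $\mathcal{D}_{\ord_p(\xi)}$.

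For part (ii), the three-term recursion \eqref{eq_31_10_2025_2253} is governed by precisely the matrix $C_{g,n}$ introduced in Theorem~\ref{thmC}, so the same Sprung-type decomposition applies. More precisely, the asymptotic factorisation of the infinite product $C_{g,1}C_{g,2}\cdots$ in the distribution algebra $\mathscr{L}$ identifies the matrix $Q_g^{-1}M_{\log,g}$ of Definition~\ref{defn_01_11_25}; rewriting the vector $(\Theta_{\infty,\alpha}(f,g,\h),\Theta_{\infty,\beta}(f,g,\h))^T$ through this factorisation produces the bounded signed components $\Theta_\infty^\sharp(f,g,\h), \Theta_\infty^\flat(f,g,\h) \in \mathbf{I}$. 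Uniqueness of the signed elements is immediate from the invertibility of $Q_g^{-1}M_{\log,g}$ over $\mathscr{L}$.

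The main technical obstacle will be establishing the precise growth bound $\Theta_{\infty,\xi}(f,g,\h) \in \mathcal{D}_{\ord_p(\xi)}$ and, correlatively, the genuine $\mathbf{I}$-integrality of the signed components $\Theta_\infty^\sharp, \Theta_\infty^\flat$ (rather than mere membership in some localisation of $\mathbf{I}$). Both amount to showing that the $\xi^n$-denominators introduced by the $p$-stabilisation procedure are cancelled exactly by the $\log_p$-type entries of $M_{\log,g}$, a phenomenon familiar from the $\Lambda_{\cO_L}$-theory of \cite{BBL1,sprung16,CCSS} but requiring a careful extension to $\mathbf{I}$-coefficients varying in the Hida family $\h$.
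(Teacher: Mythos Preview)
Your proposal is correct and matches the paper's approach: part (i) follows from the norm-compatibility of the $\xi$-stabilisations (established via the three-term relation of Proposition~\ref{prop_2025_10_15_1640}, exactly as you outline) together with the Amice--Vi\v{s}ik criterion, and part (ii) from the Sprung-type decomposition of \cite[Theorem~3.4]{BBL1}. The ``main technical obstacle'' you flag is not actually one: since $\mathbf{I}$ is finite flat over $\Lambda$, the argument of Theorem~\ref{thm:generic-decomposition} (equivalently \cite[Theorem~3.4]{BBL1}) applies verbatim to the integral system $\{\h^{B,\dagger}(\Theta_n(f,g))\}_n$ satisfying the three-term relation, producing $\Theta_\infty^\sharp, \Theta_\infty^\flat \in \mathbf{I}$ directly without ever introducing $\xi$-denominators.
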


\noindent We refer to the elements $\Theta_{\infty,\xi}(f,g,\h)$, resp.  $\Theta_{\infty}^\bullet(f,g,\h)$ for $\bullet \in \{\sharp, \flat \}$, as the unbounded (resp. bounded signed) balanced triple-product $p$-adic $L$-functions. These are trito-non-ordinary versions of Hsieh's balanced $p$-adic $L$-function\footnote{Greenberg and Seveso in \cite{GreenbergSeveso} have also constructed balanced triple-product $p$-adic $L$-functions in the non-ordinary scenario. Since they work over wide open discs properly contained in the full weight space, the (essential part of the) denominators can be absorbed in the coefficient ring. Our work emphasises Iwasawa theory and integrality, which requires that we work over the full weight space. } and, thanks to the calculation on trilinear periods in \cite[\S4]{hsiehpadicbalanced}, they enjoy identical interpolative properties (which characterise them thanks to their indicated growth properties).

\subsubsection{} We conclude by noting that the striking parallelism between Hsieh's construction of theta elements in terms of the special locus on Shimura sets in the definite scenario, and the construction of diagonal cycles in the indefinite setting suggests the existence of a bipartite Euler system (in the sense of Bertolini--Darmon and Howard~\cite{BertoliniDarmon2005, howard06}), but we do not explore this point further in the present article.

\subsection*{Acknowledgments}
We thank Haining Wang for helpful discussions during the preparation of the article. We are grateful to Henri Darmon, Ming-Lun Hsieh, and Yifeng Liu for their encouragement and valuable comments. RA's, KB’s, and AC's research in this publication were conducted with the financial support of Taighde \'{E}ireann -- Research Ireland under Grant number IRCLA/2023/849 (HighCritical). AL's research is supported by the NSERC Discovery Grants Program RGPIN-2020-04259.

\section{\texorpdfstring{$p$-adic\,}\ families of diagonal cycles}
\label{sec:families_of_indefinite_cycles}

The goal of this section is to provide a construction of $p$-adic families of diagonal cycles in the triple product of Shimura curves inspired by the construction of the balanced $p$-adic $L$-function of Hsieh in \cite{hsiehpadicbalanced} (see Appendix  \ref{sec:Regularized_definite_diagonal_cycles} where, in order to highlight the parallelism, we review the construction of theta elements and $p$-adic $L$-functions therein) and by the general technique proposed by Loeffler in \cite{LoefflerSpherical}.

\vspace{-0.1cm}
\subsection{Shimura curves and threefolds}\label{subsec:ShimuraCurves}$\,$

Let $N=N^+N^-$ be a positive integer coprime to $p$, where $N^+$ and $N^-$ are coprime and $N^-$ is square-free. Let $B$ be the indefinite quaternion algebra over $\QQ$ of discriminant $N^-$. We also put $E = \QQ^{\oplus 3}$ and $B_E = B^{\oplus 3}$.  Once and for all fix an isomorphism 
 \begin{equation}
     \Psi=\prod_{q\nmid N^-}\Psi_q\,:\quad B(\widehat\QQ^{(N^-)})\simeq M_2(\widehat\QQ^{(N^-)}).
 \end{equation} 
We fix a maximal order $\mathcal{O}_{B}$ such that $\Psi_q(\mathcal{O}_{B} \otimes \ZZ_q) = M_2(\ZZ_q)$ for all $q \nmid \infty N^{-}$. We then let $U_0(N)$ be the open compact subgroup of $\widehat{B}^{\times}$ defined by the Eichler order $R_N$ of level $N^+$ in $\mathcal{O}_{B}$  and let 

$$U_1( N) = \{ g \in U_0(N)\,:\, \Psi_q(g_q) \equiv \left( \begin{smallmatrix}
    \star & \star \\ 0 & 1
\end{smallmatrix} \right)\, (\text{mod } N \ZZ_q ) \text{ for } q| N^+ \}. $$

\vspace{-0.1cm}

\subsubsection{} For a natural number $n$, we put
 \begin{align}
   \label{eqn_new_level}
     U_n &:= \{ g \in U_1(N) \,:\, \Psi_p(g_p) \equiv \left( \begin{smallmatrix} a & \star  \\  & a \end{smallmatrix}\right)\, (\text{mod } p^n)\, ,\,a \in\Zp^\times\}\,, \\
 \label{eqn_2024_07_31_1640}
     U_{Z,n}  &:= \{ g \in U_1(N) \,:\, \Psi_p(g_p) \equiv \left( \begin{smallmatrix} a &  \\  & a \end{smallmatrix}\right)\, (\text{mod } p^n)\, ,\,a \in\Zp^\times\}\,, 
\\
 \label{eqn_2024_07_31_1649}
     U_{0,n} &: = \{ g \in U_1(N)  \,:\, \Psi_p(g_p) \equiv \left( \begin{smallmatrix} \star & \star \\  & \star \end{smallmatrix}\right)\, (\text{mod } p^n)\}\,,
\\
 \label{eqn_2024_07_31_1650}
     U_{1,n} &: = \{ g \in  U_1(N)  \,:\, \Psi_p(g_p) \equiv \left( \begin{smallmatrix} \star &  \star \\  & 1 \end{smallmatrix}\right)\, (\text{mod } p^n)\}\,.
 \end{align}
\subsubsection{}\label{subsec_2025_10_14_1351} Let us denote by $\iota: B \hookrightarrow B_E$ the diagonal embedding, and let $\mathcal{U}_i(N) : = U_i(N)^{\times 3} \subseteq \widehat{B}_E^{\times}$ for $i = 0,1$. For each $\mathbf{n}=(n_1,n_2,n_3) \in \mathbb{Z}_{\geq 0}^3$, we define 
\begin{align*}
    \mathcal{U}_{1,\mathbf{n}} &:=    U_{1,n_1} \times U_{1,n_2} \times U_{1,n_3}, 
    \\ 
    \mathcal{U}_{Z,\mathbf{n}} &:=\left \{ g \in  \mathcal{U}_{1}(N)\,:\, g \equiv \left(\left( \begin{smallmatrix}  a_1 & \star \\ 0 & a_0 \end{smallmatrix}\right) \mod p^{n_1},\left( \begin{smallmatrix}  a_2 & \star \\ 0 & a_0 \end{smallmatrix}\right) \mod p^{n_2},\left( \begin{smallmatrix} a_3 & \star \\ 0 & a_0 \end{smallmatrix}\right) \mod p^{n_3}\right)\, ,\,a_i \in\Zp^\times\right\},  \\ 
    \mathcal{U}_{\mathbf{n}} &:=\left \{ g \in  \mathcal{U}_{1}(N)\,:\, g \equiv \left(\left( \begin{smallmatrix}  a & \star \\ 0 & a \end{smallmatrix}\right) \mod p^{n_1},\left( \begin{smallmatrix}  a & \star \\ 0 & a \end{smallmatrix}\right) \mod p^{n_2},\left( \begin{smallmatrix} a & \star \\ 0 & a \end{smallmatrix}\right) \mod p^{n_3}\right)\, ,\,a \in\Zp^\times\right\},\\
    \mathcal{U}^{\mathbf{n}} &:=\left \{ g \in  \mathcal{U}_{1}(N)\,:\, g \equiv \left(\left( \begin{smallmatrix}  a & 0\\ \star  & a \end{smallmatrix}\right) \mod p^{n_1},\left( \begin{smallmatrix}  a &0 \\ \star  & a \end{smallmatrix}\right) \mod p^{n_2},\left( \begin{smallmatrix} a & 0 \\ \star  & a \end{smallmatrix}\right) \mod p^{n_3}\right)\, ,\,a \in\Zp^\times\right\}, \\ 
     \mathcal{U}_{{\rm spl},\mathbf{n}} &:=U_{n_1} \times U_{n_2} \times U_{n_3}\,.
\end{align*}
When $\mathbf{n}=(n,n,n)$, we write $\mathcal{U}_{\bullet,(n)}$ in place of $ \mathcal{U}_{\bullet,\mathbf{n}}$ for any choice of $\bullet \in \{ 1, Z , \emptyset, {\rm spl} \}$. Note that $\mathcal{U}_{1,(n)}$ and $\mathcal{U}_{(n)}$ are normal subgroups of $\mathcal{U}_{Z,(n)}$, and the quotient $\mathcal{U}_{Z,(n)}/\mathcal{U}_{1,(n)}$ is isomorphic to $(\ZZ / p^n \ZZ )^\times$.   

\subsubsection{Notation} If $\mathbf{m}=(m_1,m_2,m_3), \mathbf{n}=(n_1,n_2,n_3)\in \ZZ^3$, we put $\mathbf{m}+\mathbf{n}:=(m_1+n_1,m_2+n_2,m_3+n_3)$.

\subsubsection{} We fix an isomorphism $\epsilon_\infty : B \otimes \mathbb{R} \simeq M_2(\mathbb{R})$, and we let $X(U)$ be the Shimura curve over $\QQ$ of level $U$ (where $U$ denotes any one of the level subgroups above), whose complex points are given by the double cosets in \[ B^{\times} \backslash (\mathcal{H}^{\pm} \times \cO_{B,f}^\times / U), \] 
where $B^{\times}$ acts on $\mathcal{H}^{\pm}:=\mathbb{C}\setminus \mathbb{R}$ via the isomorphism $\epsilon_\infty: (B \otimes \mathbb{R})^\times \simeq \GL_2(\mathbb{R})$\footnote{We let $\GL_2(\mathbb{R})$ act on $\mathcal{H}^{\pm}$ via the natural left action $\left ( \begin{smallmatrix}
    a & b \\ c & d 
\end{smallmatrix} \right ) \cdot z = \frac{a z +b }{c z + d}$, for $\left ( \begin{smallmatrix}
    a & b \\ c & d 
\end{smallmatrix} \right ) \in \GL_2(\mathbb{R})$ and $z \in \mathcal{H}^{\pm}$.}. The Shimura curve $X(U)$ is proper unless $N^-=1$ and, if $N^+ $ is big enough, is a fine moduli space of elliptic curves, resp. false elliptic curves\footnote{These are abelian surfaces with an action by $\mathcal{O}_{B}$ and level-$U$ structures.}, if $N^-=1$, resp. $N^-\not =1$. This admits an integral refinement, in the sense that $X(U)$ has an integral model $\mathscr{X}(U)$ over $\ZZ[1/N p]$, which represents the following moduli problem: to any $\ZZ[1/Np]$-scheme $S$, we assign the set of isomorphisms classes of $(A/S, \alpha, \eta_{U})$, where $A/S$ is an abelian scheme over $S$ of relative dimension $2$, $\alpha: \mathcal{O}_{B} \hookrightarrow {\rm End}_S(A)$, and $\eta_{U}$ is a level-$U$ structure on $A/S$.

\subsubsection{} 

Given a sufficiently small open compact subgroup $\mathcal{U} \subseteq  \widehat{B}_E^{\times}$, we denote by $\mathbf{X}_{\mathcal{U}}$ the Shimura threefold over $\QQ$ of level $\mathcal{U}$. When $\mathcal{U} = U_1 \times U_2 \times U_3$,  the threefold $\mathbf{X}_{\mathcal{U}}$ is the product of Shimura curves $X(U_1) \times X(U_2) \times X(U_3)$. The diagonal embedding $\iota: B \hookrightarrow B_E$ induces a morphism from the Shimura curve to the Shimura threefold as follows. If $U \subseteq \cO_{B,f}^\times$ is any sufficiently small open compact subgroup which maps into $\iota(\cO_{B,f}^\times) \cap  \mathcal{U}$, we have a morphism of varieties over $\QQ$ 
\[ \iota_{U,\mathcal{U}}:X(U) \hookrightarrow \mathbf{X}_{\mathcal{U}}\,,\]
which can be described as $[z,g] \mapsto [((z,z,z),(g,g,g))]$ for complex points. When $U=\iota^{-1}(\mathcal{U})$, the morphism $\iota_{U,\mathcal{U}}$ is a closed immersion of codimension 2.  Finally, recall that right-multiplication by $g  \in \cO_{B,f}^\times$, resp. $g  \in \cO_{B_E,f}^\times$, induces a morphism of Shimura curves $\mathcal{T}_g: X(U) \to X(g^{-1} U g)$, resp. of Shimura threefolds $\mathcal{T}_g: \mathbf{X}_{\mathcal{U}} \to \mathbf{X}_{g^{-1}\mathcal{U}g}$, defined over the reflex field $\QQ$.

\subsection{Hecke correspondences}

\subsubsection{} 
\label{subsubsec_A21_2025_06_30_1533}
For a level subgroup $U \subseteq \widehat{B}^\times$ and any ring $R$, we denote by $\mathcal{H}_U( \widehat{B}^\times,R)$ the space of bi-$U$ invariant compactly supported $R$-valued functions on $\widehat{B}^\times$, endowed with the convolution product $\ast$. When $R=\ZZ$, we simply denote it by $\mathcal{H}_U(\widehat{B}^\times)$. We define $\mathcal{H}_{\mathcal{U}}( \widehat{B}_E^\times,R)$ and $\mathcal{H}_{\mathcal{U}}( \widehat{B}_E^\times)$ analogously. 

For $U \subseteq \mathcal{O}_{B,f}^\times$ as above and $g \in \mathcal{O}_{B,f}^\times$, the characteristic function $[UgU] \in \mathcal{H}_U( \widehat{B}^\times)$ defines a Hecke correspondence 
\[\xymatrix{ 
&X(g^{-1}U g \cap U) \ar[r]^-{\mathcal{T}_{g^{-1}}} \ar[ld]_{\rm pr} &X(U \cap g U g^{-1}) \ar[rd]^{\rm pr'} & \\ X(U)  & & &  X(U)  } \]
which acts on the cohomology of $X(U)$ via $({\rm pr}')_* \circ (\mathcal{T}_{g^{-1}})_* \circ ({\rm pr})^*$. 

Let $\mathcal{H}_{B,\bullet}^{\rm sph}(Np^n)$ denote the subalgebra of $\mathcal{H}_{U_{\bullet,n}}( \widehat{B}^\times)$, where $\bullet \in \{\emptyset,  1\}$ in \S\ref{subsubsec_A21_2025_06_30_1533} and \S\ref{subsubsec_A22_2025_06_30_1533}, generated by the characteristic functions $[U_{\bullet,n}\, g\, U_{\bullet,n}]$, with $g = ( g_q)_q \in \widehat{B}^\times$ such that $g_q = 1$ for every $q \mid Np^n$. To be more precise, for a prime $\ell$ let us consider 
 $\eta_\ell = \left (\begin{smallmatrix}
     \ell & \\ & 1
 \end{smallmatrix} \right) \in \GL_2(\QQ_\ell)$ and $s_\ell =  \left (\begin{smallmatrix}
     \ell & \\ & \ell
 \end{smallmatrix} \right) \in \GL_2(\QQ_\ell)\,,$ 
 and define \[\mathrm{T}_\ell':= [U_{\bullet,n} \Psi^{-1}(\eta_\ell^{-1}) U_{\bullet,n} ],\,\quad \mathrm{S}_\ell:= [U_{\bullet,n}\Psi^{-1}(s_\ell) U_{\bullet,n} ]\,,\]
and put $\mathcal{H}_{B,\bullet}^{\rm sph}(Np^n) := \ZZ \left[ \mathrm{T}_\ell', \mathrm{S}_\ell, \mathrm{S}_\ell^{-1},\, \ell \nmid Np^n \right]$.

When $n >0$, we further consider the operators ${\rm U}_p'$ and $\langle x \rangle$, for $x \in \ZZ_p^\times$, defined by 
\[{\rm U}_p':= [U_{\bullet,n} \Psi^{-1}(\eta_p^{-1}) U_{\bullet,n} ],\,\quad \langle x \rangle:= [U_{\bullet,n} \Psi^{-1}\left( \begin{smallmatrix}
    1 & \\ & x
\end{smallmatrix}\right) U_{\bullet,n} ], \]
and set $\mathcal{H}_{B,\bullet}(Np^n) : = \mathcal{H}_{B,\bullet}^{\rm sph}(Np^n)[{\rm U}_p',  \langle x \rangle, x \in \ZZ_p^\times  ] = \ZZ \left[\{ \mathrm{T}_\ell', \mathrm{S}_\ell, \mathrm{S}_\ell^{-1},\, \ell \nmid Np^n \}, \{ {\rm U}_p',  \langle x \rangle, x \in \ZZ_p^\times\} \right]$.

\noindent The algebra of Hecke correspondences generated by $\mathcal{H}_{B,\bullet}(Np^n)$ will be denoted by $\mathrm{T}_{B,\bullet}(Np^n)$.

\subsubsection{}
\label{subsubsec_A22_2025_06_30_1533}
By a slight abuse of notation, we let $\eta_p$ also denote its image in $(B \otimes \ZZ_p)^\times$ via $\Psi^{-1}$. We let $X(U_{\bullet,n+1}) \xrightarrow[]{\pi_i} X(U_{\bullet,n})$, $i=1,2$, be the two degeneracy maps given as follows: the map $\pi_1$ is the natural degeneracy map, whereas $\pi_2$ is given by 
\[X(U_{\bullet,n+1}) \xrightarrow[]{\mathcal{T}_{\eta_p^{-1}}} X(\eta_p U_{\bullet,n+1} \eta_p^{-1}  ) \xrightarrow[]{{\rm pr}'} X( U_{\bullet,n} ), \]
where ${\rm pr}'$ denotes the natural degeneracy map associated with the inclusion $\eta_p U_{\bullet,n+1} \eta_p^{-1}  \subseteq U_{\bullet,n}$. These projections factor through the Shimura curve of level $U_{\bullet,n} \cap U_{0,n+1}$ as follows. We have a commutative diagram (for $i=1,2$)
\[ \xymatrix{ &X(U_{\bullet,n+1}) \ar[ld]_-{\mu} \ar[dr]^-{\pi_i} & \\ X(U_{\bullet,n} \cap U_{0,n+1}) \ar[rr]_-{\varpi_i} && X(U_{\bullet,n})} \]
where $X(U_{\bullet,n+1}) \xrightarrow[]{\mu} X(U_{\bullet,n} \cap U_{0,n+1})$ denotes the natural projection, and $\varpi_2$ is defined by composing the natural degeneracy map with  $\mathcal{T}_{\eta_p^{-1}}$. When we want to emphasize the levels involved, we denote $\pi_1$ by ${\rm pr}^{n+1}_{n}$.

We record the following statement from \cite{KLZ2}, which will be used in the proof of the $Q$-system relations for the diagonal cycles.

\begin{proposition}[{\cite[Proposition 2.4.5]{KLZ2}}] \label{Prop_of_KLZ17}As morphisms $$H^1_{\etale}(X(U_{\bullet,1}),\ZZ_p(j)) \to H^1_{\etale}(X(U_1(N)),\ZZ_p(j))$$ for any $j \in \ZZ$ we have 
\begin{align*}
    \pi_{2,*} \circ {\rm U}_p' &= p \cdot   \pi_{1,*} \\ 
    \pi_{1,*} \circ {\rm U}_p' &= \mathrm{T}_p' \circ \pi_{1,*} -  \mathrm{S}_p^{-1} \circ  \pi_{2,*}. 
\end{align*}
    
\end{proposition}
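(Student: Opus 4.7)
The argument is a local-at-$p$ computation in the Hecke algebra, since $\mathrm{U}_p'$, $\mathrm{T}_p'$, $\mathrm{S}_p$ and the degeneracy maps $\pi_1, \pi_2$ are all determined by their components at $p$ (the tame level plays no role). I would therefore replace $U_1(N)$ by $K := \GL_2(\ZZ_p)$ and the local component of $U_{\bullet,1}$ by the corresponding Iwahori-type subgroup $I \subseteq K$, and work entirely in $\GL_2(\QQ_p)$.

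The proof then rests on the explicit left-coset decompositions
\[
I\, \eta_p^{-1}\, I \;=\; \bigsqcup_{a \in \FF_p} I\,\beta_a\,,\qquad K\,\eta_p^{-1}\,K \;=\; \bigsqcup_{a \in \FF_p} K\,\beta_a\;\sqcup\;K\,\beta_\infty\,,
\]
where $\beta_a := \eta_p^{-1}\left(\begin{smallmatrix} 1 & a \\ 0 & 1 \end{smallmatrix}\right)$ and $\beta_\infty := \left(\begin{smallmatrix} 1 & 0 \\ 0 & 1/p \end{smallmatrix}\right) = s_p^{-1}\eta_p$. Crucially, each $K$-coset $K\beta_a$ in the second decomposition is the disjoint union of the $I$-cosets inside it from the first.

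For the first identity $\pi_{2,*}\mathrm{U}_p' = p\, \pi_{1,*}$, I would use the factorization $\pi_2 = (\text{natural projection}) \circ \mathcal{T}_{\eta_p^{-1}}$ recorded in the excerpt. Applying $\pi_{2,*}$ to the contribution of the $a$-th coset $I\beta_a$, the $\eta_p^{-1}$ factor in $\beta_a$ is absorbed by the $\eta_p$-twist implicit in $\pi_2$; what remains is the contribution of the unipotent element $n_a \in U_1(N)_p$, which acts as the identity on $X(U_1(N))$. Each of the $p$ cosets therefore contributes $\pi_{1,*}$, giving the factor $p$. For the second identity, the decomposition of $K\eta_p^{-1}K$ shows that $\mathrm{T}_p'\circ\pi_{1,*}$ splits as a sum over $p+1$ cosets; the first $p$ cosets $K\beta_a$ reassemble the $I$-cosets of $\mathrm{U}_p'$ and contribute $\pi_{1,*}\,\mathrm{U}_p'$, while the extra coset $K\beta_\infty = K s_p^{-1}\eta_p$ yields $\mathrm{S}_p^{-1}\,\pi_{2,*}$: the $\eta_p$ factor recovers the twisted map $\pi_2$, and the central element $s_p^{-1}$ acts on cohomology as $\mathrm{S}_p^{-1}$. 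Rearranging produces the desired equality.

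The main obstacle is the careful bookkeeping around the extra coset $K\beta_\infty$: one must verify that right multiplication by the central element $s_p^{-1}$ corresponds to the operator $\mathrm{S}_p^{-1}$ (and not its inverse) on $H^1(X(U_1(N)))$, and that the residual $\eta_p$ factor produces exactly $\pi_2$ (rather than $\pi_1$ or a normalisation thereof). Once these sign and normalisation conventions are pinned down, both identities follow at once from the two coset decompositions above.
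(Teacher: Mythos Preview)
The paper does not give its own proof of this proposition; it merely records the statement and cites \cite[Proposition~2.4.5]{KLZ2}. Your coset-decomposition argument is the standard route (and essentially the one taken in the cited reference): decompose $I\eta_p^{-1}I$ and $K\eta_p^{-1}K$ into left cosets, match the $p$ common cosets to produce $\pi_{1,*}\circ\mathrm{U}_p'$, and identify the extra coset $K\beta_\infty=Ks_p^{-1}\eta_p$ with $\mathrm{S}_p^{-1}\circ\pi_{2,*}$. The outline is correct, and the only real work---as you note---is pinning down the conventions so that the central element $s_p^{-1}$ acts as $\mathrm{S}_p^{-1}$ (not $\mathrm{S}_p$) and that the residual $\eta_p$ matches the paper's definition of $\pi_2$ via $\mathcal{T}_{\eta_p^{-1}}$.
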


\subsubsection{} 
\label{subsubsec_2027_07_02_1557}

For $\bullet \in \{\emptyset,  1, {\rm spl}\}$ and a triple $\mathbf{n}=(n_1,n_2,n_3)$ of integers as above, we let $\mathcal{H}_{B_E,\bullet}^{\rm sph}(Np^{\mathbf{n}})$ denote the subalgebra of $\mathcal{H}_{\mathcal{U}_{\bullet,\mathbf{n}}}( \widehat{B}_E^\times)$ generated by the characteristic functions $[\mathcal{U}_{\bullet,\mathbf{n}}\, \underline{g}\, \mathcal{U}_{\bullet,\mathbf{n}}]$, with $\underline{g}\in \widehat{B}_E^\times$ such that $\underline{g_q} = 1$ for every $q \mid Np$. We define
\begin{align*}
 \mathbb{U}_p' := [\mathcal{U}_{\bullet,\mathbf{n}} \iota( \eta_p^{-1}) \mathcal{U}_{\bullet,\mathbf{n}}]\,.
\end{align*}

\noindent For $i,j \in \{1,2,3\}$ and $g \in (B \otimes \ZZ_p)^\times$, we let $^{i}g, {}^{ij}g \in (B_E \otimes \ZZ_p)^\times$ denote the elements that equal to $g$ in position $i$, resp. in positions $i$ and $j$, and to the identity element otherwise. Then we let $^{i}\mathbf{U}_p'$ and $^{ij}\mathbf{U}_p'$ be the operators associated to $^{i}\eta_p^{-1}$ and $^{ij}\eta_p^{-1}$, respectively. Finally, if $x,y,z \in \ZZ_p^\times$, we let $\langle x, y, z \rangle$ be the operator attached to the element $\left(\left( \begin{smallmatrix}
    1 & \\ & x
\end{smallmatrix}\right),\left( \begin{smallmatrix}
    1 & \\ & y
\end{smallmatrix}\right),\left( \begin{smallmatrix}
    1 & \\ & z
\end{smallmatrix}\right)\right)$, and set $\langle \mathbb{x} \rangle := \langle x, x,x \rangle$.

Let us define
$$\mathcal{H}_{B_E,\bullet}(Np^{\mathbf{n}}) : = \mathcal{H}_{B_E,\bullet}^{\rm sph}(Np^{\mathbf{n}}) [\{  \mathbb{U}_p',{}^{i}\mathbf{U}_p',{}^{ij}\mathbf{U}_p', \quad  i,j \in \{ 1,2,3 \}\}, \{ \langle x,y,z \rangle, \quad \, x,y,z \in \ZZ_p^\times \}\rangle]$$
 and let $\mathrm{T}_{B_E,\bullet}(Np^{\mathbf{n}})$ be the algebra of Hecke correspondences generated by $\mathcal{H}_{B_E,\bullet}(Np^{\mathbf{n}})$.

 \subsubsection{Compatibility of Hecke actions} 
 \label{subsubsec_2025_10_15_1335}
 For $\mathbf{n}=(n_1,n_2,n_3)$, let us denote by 
 $$\mathbf{X}_{\mathcal{U}_{\mathbf{n}}}  \xrightarrow{\,{\rm b}_{\mathbf{n}}\,} \mathbf{X}_{n_1,n_2,n_3} = \mathbf{X}_{\mathbf{n}} :=X({U_{n_1}}) \times X({U_{n_2}}) \times X({U_{n_3}})$$ 
 the natural projection map.

\begin{proposition}
\label{prop_2025_05_15_0828}
    Let $T$, resp.  $\widetilde{T}$, be the Hecke correspondences in $\mathrm{T}_{B_E}(Np^{\mathbf{n}})$, resp. $\mathrm{T}_{B_E,{\rm spl}}(Np^{\mathbf{n}})$, associated with $g \in \mathcal{O}_{B_E,f}^\times$. Then
     \[ {\rm b}_{\mathbf{n}}^* \circ T = \widetilde{T}  \circ {\rm b}_{\mathbf{n}}^*, \text{ and } {\rm b}_{\mathbf{n},\ast} \circ  \widetilde{T} = T \circ {\rm b}_{\mathbf{n},\ast}.\]
\end{proposition}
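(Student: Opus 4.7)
The plan is to deduce both displayed identities from a single cartesian diagram of Shimura threefolds. First I would identify $\mathbf{b}_\mathbf{n}$ as the quotient map by the finite abelian group $\Gamma_\mathbf{n} := \mathcal{U}_{{\rm spl},\mathbf{n}}/\mathcal{U}_\mathbf{n}$, which acts freely on $\mathbf{X}_{\mathcal{U}_\mathbf{n}}$ by deck transformations (for $N^+$ large enough to ensure freeness). Concretely, $\Gamma_\mathbf{n}$ embeds into $\prod_{i=1}^3 (\ZZ_p^\times/(1+p^{n_i}\ZZ_p))$ modulo the diagonal, parametrising the choice of scalars $a_i$ at the three components subject to the constraint that they coincide. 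This presents $\mathbf{b}_\mathbf{n}$ as a finite \'etale cover of degree $|\Gamma_\mathbf{n}|$, and the two identities in the proposition are equivalent by the adjointness relation $\mathbf{b}_{\mathbf{n},*} \circ \mathbf{b}_\mathbf{n}^* = |\Gamma_\mathbf{n}| \cdot \mathrm{id}$, so I would focus on establishing, say, the first.

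Next I would realise each Hecke correspondence via its canonical double-coset diagram: $T=[\mathcal{U}_\mathbf{n}\, g\, \mathcal{U}_\mathbf{n}]$ and $\widetilde T=[\mathcal{U}_{{\rm spl},\mathbf{n}}\, g\, \mathcal{U}_{{\rm spl},\mathbf{n}}]$ are realised respectively using the auxiliary threefolds $\mathbf{X}_{\mathcal{U}_\mathbf{n}\cap g\mathcal{U}_\mathbf{n} g^{-1}}$ and $\mathbf{X}_{\mathcal{U}_{{\rm spl},\mathbf{n}}\cap g\mathcal{U}_{{\rm spl},\mathbf{n}} g^{-1}}$. The desired intertwining reduces to the assertion that the natural square with these auxiliary threefolds as its top edge and with $\mathbf{b}_\mathbf{n}$ closing the two vertical sides is cartesian. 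By a standard flat/proper base change argument, this cartesianness is equivalent to the index equality $[\mathcal{U}_{{\rm spl},\mathbf{n}} \cap g\mathcal{U}_{{\rm spl},\mathbf{n}} g^{-1} : \mathcal{U}_\mathbf{n} \cap g\mathcal{U}_\mathbf{n} g^{-1}] = |\Gamma_\mathbf{n}|$; equivalently, conjugation by $g$ must preserve the scalar-diagonal constraint at $p$ that cuts out $\mathcal{U}_\mathbf{n}$ inside $\mathcal{U}_{{\rm spl},\mathbf{n}}$, and the index of the conjugated inclusion must agree with $|\Gamma_\mathbf{n}|$.

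I would then verify these conditions generator by generator on $\mathcal{H}_{B_E}(Np^\mathbf{n})$. For spherical generators in $\mathcal{H}_{B_E}^{\rm sph}(Np^\mathbf{n})$, one has $g_q=1$ for every $q\mid Np$, so conjugation leaves the $p$-part of the level structure untouched and the assertion is immediate. For the operators $\mathbb{U}_p'$, ${}^i\mathbf{U}_p'$, and ${}^{ij}\mathbf{U}_p'$, the $p$-component of $g$ is a product of copies of $\eta_p^{-1}=\begin{pmatrix} p^{-1} & \\ & 1\end{pmatrix}$ and of the identity; since $\eta_p^{-1}$ is diagonal, conjugation by it sends $\begin{pmatrix} a & b\\ & a\end{pmatrix}$ to $\begin{pmatrix} a & pb\\ & a\end{pmatrix}$, preserving the scalar-diagonal constraint and acting identically on the diagonal entries that control $\Gamma_\mathbf{n}$. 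For the diamond operators $\langle x,y,z\rangle$, $g$ itself lies in the diagonal torus and commutes with scalars, giving the claim at once.

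The hard part will be the index equality for the $\mathbb{U}_p'$-type operators: although conjugation preserves the scalar-diagonal constraint, the unipotent part of the level gets deepened by a factor of $p$, so the subgroups $\mathcal{U}_\bullet\cap g\mathcal{U}_\bullet g^{-1}$ are genuinely smaller than $\mathcal{U}_\bullet$ at $p$. The needed index identity follows because this deepening is independent of the scalar choice and occurs identically at both the $\mathcal{U}_\mathbf{n}$ and $\mathcal{U}_{{\rm spl},\mathbf{n}}$ levels, so the ratio is preserved on passing to $\Gamma_\mathbf{n}$-quotients. This step is bookkeeping-heavy but direct once one writes out the level subgroups explicitly at $p$.
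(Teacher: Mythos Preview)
Your strategy is essentially the paper's: reduce to cartesianness of the natural diagram relating the correspondence varieties at the two levels, then verify this generator by generator (automatic for spherical and diamond operators, a direct computation at $p$ for the $\mathrm{U}_p'$-type operators). One small correction: your claim that the two displayed identities are equivalent via $\mathbf{b}_{\mathbf{n},*}\circ\mathbf{b}_\mathbf{n}^*=|\Gamma_\mathbf{n}|\cdot\mathrm{id}$ does not work as stated, since $\mathbf{b}_\mathbf{n}^*$ is not surjective (its image is the $\Gamma_\mathbf{n}$-invariants) and so cannot be cancelled from the right. The paper avoids this by deducing both identities directly from the \emph{same} two-square diagram --- cartesianness of the right square gives $\mathbf{b}_\mathbf{n}^*\circ T=\widetilde T\circ\mathbf{b}_\mathbf{n}^*$, and cartesianness of the left square gives $\mathbf{b}_{\mathbf{n},*}\circ\widetilde T = T\circ\mathbf{b}_{\mathbf{n},*}$ --- which your setup already provides once you draw out both projections of the correspondence.
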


 \begin{proof}
 The Hecke correspondences $T$ and $\widetilde{T}$ are given by \[ \mathbf{X}_{\mathcal{U}_{\mathbf{n}}} \xleftarrow{{\rm pr}}  \mathbf{X}_{g^{-1}\,\mathcal{U}_{\mathbf{n}}\, g \,\,\cap \,\mathcal{U}_{\mathbf{n}}}    \xrightarrow{{\rm pr}' \circ \mathcal{T}_{g^{-1}}}   \mathbf{X}_{\mathcal{U}_{\mathbf{n}}},\]
 \[\mathbf{X}_{\mathbf{n}} \xleftarrow{\widetilde{\rm pr}}  \mathbf{X}_{g^{-1}\mathcal{U}_{{\rm spl},\mathbf{n}} \,g\,\, \cap\, \mathcal{U}_{{\rm spl},\mathbf{n}}}    \xrightarrow{\widetilde{\rm pr}' \circ \mathcal{T}_{g^{-1}}}   \mathbf{X}_{\mathbf{n}}.\]
    We claim that the result follows from the existence of a finite \'etale morphism $\psi:  \mathbf{X}_{g^{-1}\mathcal{U}_{\mathbf{n}} g \cap \mathcal{U}_{\mathbf{n}}}  \to  \mathbf{X}_{g^{-1}\mathcal{U}_{{\rm spl},\mathbf{n}} g \cap \mathcal{U}_{{\rm spl},\mathbf{n}}}  $ such that 
    $$ \xymatrix{  \mathbf{X}_{\mathcal{U}_{\mathbf{n}}} \ar[d]_-{{\rm b}_{\mathbf{n}}} & & \mathbf{X}_{g^{-1}\mathcal{U}_{\mathbf{n}} g \cap \mathcal{U}_{\mathbf{n}}} \ar[d]^-{\psi}  \ar[ll]_-{{\rm pr}}  \ar[rr]^-{{\rm pr}' \circ \mathcal{T}_{g^{-1}}}   &  &\mathbf{X}_{\mathcal{U}_{\mathbf{n}}}\ar[d]^{{\rm b}_{\mathbf{n}}} \\ \mathbf{X}_{\mathbf{n}} & &  \mathbf{X}_{g^{-1}\mathcal{U}_{{\rm spl},\mathbf{n}} g \cap \mathcal{U}_{{\rm spl},\mathbf{n}}}  \ar[ll]_-{\widetilde{\rm pr}}  \ar[rr]^-{\widetilde{\rm pr}' \circ \mathcal{T}_{g^{-1}}}   & &\mathbf{X}_{\mathbf{n}}   }$$
    is commutative and has Cartesian squares. 
 Indeed, if that is the case, we have 
    \begin{align*}
        {\rm b}_{\mathbf{n}}^* \circ \widetilde{T} & =   {\rm b}_{\mathbf{n}}^* \circ (\widetilde{\rm pr}' \circ \mathcal{T}_{g^{-1}})_* \circ \widetilde{\rm pr}^* \\ &=   ({\rm pr}' \circ \mathcal{T}_{g^{-1}})_* \circ \psi^* \circ \widetilde{\rm pr}^* \\
        &=   ({\rm pr}' \circ \mathcal{T}_{g^{-1}})_* \circ {\rm pr}^* \circ {\rm b}_{\mathbf{n}}^* \\ 
        & = T \circ {\rm b}_{\mathbf{n}}^* ,
    \end{align*}
    where the second equality follows from the Cartesianness of the right square, while the third follows from the commutativity of the left square. Similarly, 
  \begin{align*}
        {\rm b}_{\mathbf{n},\ast} \circ T & =    {\rm b}_{\mathbf{n},\ast} \circ ({\rm pr}' \circ \mathcal{T}_{g^{-1}})_* \circ {\rm pr}^* \\ &=   (\widetilde{\rm pr}' \circ \mathcal{T}_{g^{-1}})_* \circ \psi_* \circ {\rm pr}^* \\
        &=  (\widetilde{\rm pr}' \circ \mathcal{T}_{g^{-1}})_* \circ \widetilde{\rm pr}^* \circ {\rm b}_{\mathbf{n},\ast} \\ 
        & = \widetilde{T} \circ {\rm b}_{\mathbf{n},\ast} ,
    \end{align*}
where the second equality follows from the commutativity of the right square, while the third follows from Cartesianness of the left square.
We are left to show that such a $\psi$ exists. As the level subgroups $\mathcal{U}_{\mathbf{n}}$ and $\mathcal{U}_{{\rm spl},\mathbf{n}}$ differ only at $p$, the existence of $\psi$ is easily seen for any $g$ with trivial $p$-component. If $T$ and $\tilde{T}$ are given by the diamond operator $ \langle x, y, z \rangle$, with $x,y,z \in \ZZ_p^\times$, $\psi$ can be taken to be ${\rm b}_{\mathbf{n}}$ (in this case all the horizontal arrows of the diagram above are isomorphisms). We now verify the existence of $\psi$ for $g = \iota(\eta_p^{-1})$; the other cases follow similarly. When $g = \iota(\eta_p^{-1})$, the $p$-component of $V: = g^{-1}\mathcal{U}_{\mathbf{n}} g \cap \mathcal{U}_{\mathbf{n}}$, resp. $U: = g^{-1}\mathcal{U}_{{\rm spl},\mathbf{n}} g \cap \mathcal{U}_{{\rm spl},\mathbf{n}}$, is \[ V_p = \left \{ (g_1,g_2,g_3) \in \mathcal{U}_{\mathbf{n},p} \,:\, \Psi_p(g_i) \equiv \left( \begin{smallmatrix} \star &  0 \\ \star & \star \end{smallmatrix}\right)\, (\text{mod } p) \, \,\, \forall i \right \}, \]
resp. \[ U_p = \left \{ (g_1,g_2,g_3) \in \mathcal{U}_{{\rm spl},\mathbf{n},p} \,:\, \Psi_p(g_i) \equiv \left( \begin{smallmatrix} \star &  0 \\ \star & \star \end{smallmatrix}\right)\, (\text{mod } p) \, \,\, \forall i\right \}. \]
Let $\psi : \mathbf{X}_V \to \mathbf{X}_U$ denote the natural projection induced by the inclusion $V \subseteq U$. It is a Galois cover with Galois group isomorphic to $\mathcal{U}_{{\rm spl},\mathbf{n}} / \mathcal{U}_{\mathbf{n}}$. The commutativity of the diagram above is immediate. The Cartesianness of the two squares follows from the fact $ \mathcal{U}_{\mathbf{n}} \backslash \mathcal{U}_{{\rm spl},\mathbf{n}} / U  $ is a singleton and $U \cap \mathcal{U}_{\mathbf{n}} = V$. 
 \end{proof}

\subsection{Modified diagonal cycles}
\label{subsec_4_3_2024_07_31_1634}
To define the regularized diagonal cycles, we define the element
\[ u := \left(  \left( \begin{smallmatrix} 1 &  \\  & 1 \end{smallmatrix} \right), \left( \begin{smallmatrix} 1 &  1 \\  & 1 \end{smallmatrix} \right),\left( \begin{smallmatrix}  &  1 \\ -1 &  \end{smallmatrix} \right)\right) \in \GL_2^3(\Zp)\,.\]
%{Shall we remove this sentence? following Hsieh's definition in the definite scenario, cf. \cite{hsiehpadicbalanced}. }
\subsubsection{}
    This choice of $u$ is motivated by the following observation. Let $Q$, resp. $\overline{Q}$, 
denote the upper-triangular, resp. lower-triangular, Borel parabolic of $\GL_2^3/\Zp$ and let $\GL_2/\Zp$ act on the left on the flag variety $\GL_2^3/\overline{Q}$. It is well-known that it acts with an open orbit, whose stabilizer is the center $Z$ of $\GL_2$. A representative of this orbit can then be chosen to be $u$\footnote{Note that this flag variety can be identified with $3$-copies of $\mathbb P^1(\ZZ_p)$, allowing each copy of ${\GL}_2(\ZZ_p)$ to act on the \emph{right} as M\"obius transformations, so that the lower triangular Borel subgroup is the stabilizer of $[1:0]$. The point in  the flag variety $\mathbb P^1(\ZZ_p)\times \mathbb P^1(\ZZ_p)\times \mathbb P^1(\ZZ_p)$ that corresponds to $u$ is then 
$$[1:0]u:=\left( [1: 0] \left(\begin{smallmatrix} 1 &  1 \\  & 1 \end{smallmatrix} \right), [1: 0]\left( \begin{smallmatrix} 1 &  \\  & 1 \end{smallmatrix} \right),[1: 0]\left( \begin{smallmatrix}  &  1 \\ -1 &  \end{smallmatrix} \right)\right)=\left([1:1], [1:0], [0:1]\right)\,.$$
The left action of $\GL_2(\ZZ_p)$ on the flag variety (once it is identified by $\mathbb P^1(\ZZ_p)\times \mathbb P^1(\ZZ_p)\times \mathbb P^1(\ZZ_p)$ as above) is given by coordinate-wise M\"obius transformations. In particular, for $g=\left(\begin{smallmatrix} a &  b \\ c & d \end{smallmatrix} \right)\in \GL_2(\ZZ_p)$, we have
$$g\cdot u=\left([a+b:c+d],[a:c], [b:d] \right)\in \mathbb P^1(\ZZ_p)\times \mathbb P^1(\ZZ_p)\times \mathbb P^1(\ZZ_p)\,.$$
It is then clear that the stabilizer of $u$ under this action is the center of $\GL_2(\ZZ_p)$, and that the orbit of $u$ is indeed open.}. This observation will allow us to employ the machinery developed in \cite{LoefflerSpherical} to show that the images of the Shimura curves inside their triple product twisted by right multiplication by $u$ are norm compatible.

\begin{defn}
\label{defn_2024_03_12_1105}
Let us consider the following Shimura curves and threefolds:
\begin{itemize}
    \item $X_{Z,n} := X(U_{Z,n})$, where we recall that 
$U_{Z,n} = \{ g \in U_{0,n} \,:\, g \equiv \left ( \begin{smallmatrix} a &  \\  & a \end{smallmatrix} \right)\, \mod p^{n}  \}$,
\item $\mathbf{X}_{\mathcal{U}_\mathbf{n}}$ and $\mathbf{X}_{\mathcal{U}^{\mathbf{n}}}$ are the Shimura threefolds associated with the level subgroups $\mathcal{U}_{\mathbf{n}}$ and  \begin{align*}
  \mathcal{U}^{\mathbf{n}} &=\left \{ g \in  \mathcal{U}_{1}(N)\,:\, g \equiv \left(\left( \begin{smallmatrix}  a & 0\\ \star  & a \end{smallmatrix}\right) \mod p^{n_1},\left( \begin{smallmatrix}  a &0 \\ \star  & a \end{smallmatrix}\right) \mod p^{n_2},\left( \begin{smallmatrix} a & 0 \\ \star  & a \end{smallmatrix}\right) \mod p^{n_3}\right)\, ,\,a \in\Zp^\times\right\}.
     \end{align*}
\end{itemize}
\end{defn}
As before, we write $(n)$ in place of the triple $(n,n,n)$, and $\mathbf{X}_{\mathcal{U}_{(n)}}$ in place of $\mathbf{X}_{\mathcal{U}_{(n,n,n)}}$ (similarly for $\mathbf{X}_{\mathcal{U}^{(n)}}$).  Note that right multiplication by $\iota(\eta_p^{\mathbf{n}})$ (where $\eta_p^{\mathbf{n}}:=(\eta_p^{n_1},\eta_p^{n_2},\eta_p^{n_3})$) induces an isomorphism $\mathcal{T}_{\iota(\eta_p^{\mathbf{n}})}: \mathbf{X}_{\mathcal{U}^{\mathbf{n}}}  \xrightarrow{\sim} \mathbf{X}_{\mathcal{U}_\mathbf{n}}$. We start with the following auxiliary lemma.

\begin{lemma}\label{lemma_pullback_opencompacts}
For every $n \in \NN$, we have $u\, \mathcal{U}^{(n)} u^{-1} \cap \iota(\cO_{B,f}^\times) = U_{Z,{n}}$. 
\end{lemma}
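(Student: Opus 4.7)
The plan is to split the statement at the prime $p$ from the primes away from $p$. Since $u$ has trivial components at every prime $q\ne p$, conjugation by $u$ is trivial away from $p$, and the conditions cut out by $\mathcal{U}^{(n)}\cap \iota(\widehat B^\times)$ at such $q$ coincide with those cut out by $U_{Z,n}$ (both reduce to the $U_1(N)$--condition $\left(\begin{smallmatrix}\star&\star\\0&1\end{smallmatrix}\right)\bmod N$ at primes dividing $N^+$, and to the maximality condition at the remaining primes). So one only has to verify the equality of $p$-components.

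Next I would write a general element of $\GL_2(\Zp)$ as $h_p=\left(\begin{smallmatrix}a&b\\c&d\end{smallmatrix}\right)$ and compute the three entries of $u_p\,\iota(h_p)\,u_p^{-1}$ directly. The first component is $h_p$ itself; the second is
\[
\begin{pmatrix}1&1\\0&1\end{pmatrix}h_p\begin{pmatrix}1&-1\\0&1\end{pmatrix}=\begin{pmatrix}a+c&\,b+d-a-c\\ c&\,d-c\end{pmatrix};
\]
the third is
\[
\begin{pmatrix}0&1\\-1&0\end{pmatrix}h_p\begin{pmatrix}0&-1\\1&0\end{pmatrix}=\begin{pmatrix}d&-c\\-b&\,a\end{pmatrix}.
\]
The condition that $u_p\iota(h_p)u_p^{-1}\in \mathcal{U}^{(n)}_p$ is that each of these three matrices is congruent to $\left(\begin{smallmatrix}\alpha&0\\ \star&\alpha\end{smallmatrix}\right)\bmod p^n$ for a \emph{common} $\alpha\in\Zp^\times$.

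I would then read off the implications factor by factor. The first factor forces $b\equiv 0\pmod{p^n}$ and $a\equiv d\pmod{p^n}$ with $\alpha_1\equiv a$; the third factor forces $c\equiv 0\pmod{p^n}$ and again $a\equiv d\pmod{p^n}$ with $\alpha_3\equiv d\equiv a$. Once $b\equiv c\equiv 0$ and $a\equiv d\pmod{p^n}$, the middle factor reduces automatically to $\left(\begin{smallmatrix}a&0\\0&a\end{smallmatrix}\right)\bmod p^n$, so $\alpha_2\equiv a$ as well. Thus the three scalars agree, and the conditions on $h_p$ become exactly $h_p\equiv \left(\begin{smallmatrix}a&0\\0&a\end{smallmatrix}\right)\pmod{p^n}$ with $a\in\Zp^\times$, which is the defining condition of $U_{Z,n}$ at $p$. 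For the reverse inclusion, substituting any such scalar $h_p$ into the three formulae immediately yields $\left(\begin{smallmatrix}a&0\\0&a\end{smallmatrix}\right)\pmod{p^n}$ in every factor, so $u_p\iota(h_p)u_p^{-1}\in\mathcal{U}^{(n)}_p$.

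There is no real obstacle here, as the argument is a direct matrix manipulation; the only point that requires attention is consistency of the three diagonal scalars $\alpha_1,\alpha_2,\alpha_3$ imposed by the definition of $\mathcal{U}^{(n)}$, and this is precisely what forces $a\equiv d\pmod{p^n}$ (already visible from either the first or third component individually). No use of $p$ being odd is needed, since the constraints from the middle factor are automatically satisfied once those from the outer factors are in place.
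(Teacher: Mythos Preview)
Your approach is correct and is essentially the same direct matrix computation as the paper's, only organised from the dual side: you start with $h_p\in\GL_2(\Zp)$ and test membership, whereas the paper parametrises a general element of $u\,\mathcal{U}^{(n)}u^{-1}$ (i.e.\ conjugates a generic lower-triangular element of $\mathcal{U}^{(n)}$ by $u$) and then asks when the three components coincide.

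One small slip to fix: the condition $\iota(h_p)\in u\,\mathcal{U}^{(n)}u^{-1}$ is equivalent to $u^{-1}\iota(h_p)u\in\mathcal{U}^{(n)}$, not to $u\,\iota(h_p)\,u^{-1}\in\mathcal{U}^{(n)}$ as you wrote. In your argument this is harmless, because the first component of $u$ is the identity, the third satisfies $u_3^{-1}=-u_3$ (so conjugation by $u_3$ and by $u_3^{-1}$ agree), and you only use the second component as a redundant check after the constraints from components one and three already force $h_p\equiv aI\pmod{p^n}$. But you should correct the displayed formula for the second conjugate to $u_2^{-1}h_pu_2=\left(\begin{smallmatrix}a-c & a+b-c-d\\ c & c+d\end{smallmatrix}\right)$ so that the logic matches the statement being proved.
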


\begin{proof}
The $p$-component of an element $g = \left(\left( \begin{smallmatrix}
    a_1 & b_1 \\ 
    c_1 & d_1
\end{smallmatrix} \right),\left( \begin{smallmatrix}
    a_2 & b_2 \\ 
    c_2 & d_2
\end{smallmatrix} \right),\left( \begin{smallmatrix}
    a_3 & b_3 \\ 
    c_3 & d_3
\end{smallmatrix} \right) \right)\in u\, \mathcal{U}^{(n)} u^{-1} $ reduces modulo $p^{n}$ to 
\[ \left(\left(\begin{smallmatrix} a &  \\ c_1 & a
\end{smallmatrix}\right), \left(\begin{smallmatrix} a +c_2 & -c_2 \\ c_2 & a-c_2
\end{smallmatrix}\right), \left(\begin{smallmatrix} a  & -c_3 \\  & a
\end{smallmatrix}\right)\right). \] 
Then $g$ belongs to $\iota(\cO_{B,f}^\times)$ if and only if $c_i \equiv 0$ modulo $p^{n}$. This shows the desired equality. 
\end{proof}

\subsubsection{} By Lemma \ref{lemma_pullback_opencompacts}, we obtain closed immersions 
\begin{align*}
    \iota_{n}^u \,:\quad  X_{n} &\lra \mathbf{X}_{\mathcal{U}^{(n)}}\,,
\end{align*} given by the composition of the diagonal embedding $\iota_{U_{Z,n}\,,\, u \mathcal{U}^{(n)} u^{-1}}$ with $\mathcal{T}_u$.

\begin{lemma}\label{lemma_cartesian_diagram}
For any positive integer $n$, we have the commutative diagram
\[ \xymatrix{  
  & & \mathbf{X}_{\mathcal{U}^{(n+1)}} \ar[d]^-{\mu_n^{E} } \\ X_{Z,n+1} \ar[d]_-{\varpi_{Z,n}} \ar[urr]^-{\iota_{n+1}^u} \ar[rr] & & \mathbf{X}_{\mathcal{U}^{(n)}_\circ } \ar[d]^-{\varpi_n^{E}} \\ 
X_{Z,n}\ar[rr]_-{\iota_{n}^u} & & \mathbf{X}_{\mathcal{U}^{(n)}} }\, \]
where $\mathcal{U}^{(n)}_{\circ} = \mathcal{U}^{(n)} \,\cap\,  \iota(\eta_p)\, \mathcal{U}^{(n)} \iota(\eta_p)^{-1} $, and all the vertical maps are the natural projections, with Cartesian bottom square.
\end{lemma}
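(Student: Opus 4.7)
My plan is to reduce the entire lemma to a single group-theoretic identity, analogous to Lemma~\ref{lemma_pullback_opencompacts}, namely
\begin{equation}\label{eq:main_identity_cartesian}
u\, \mathcal{U}^{(n)}_\circ\, u^{-1}\,\cap\, \iota(\cO_{B,f}^\times)\,=\,\iota(U_{Z,n+1})\,.
\end{equation}
To prove \eqref{eq:main_identity_cartesian}, I would first unwind the defining conditions at $p$ of $\mathcal{U}^{(n)}_\circ = \mathcal{U}^{(n)} \cap \iota(\eta_p)\mathcal{U}^{(n)}\iota(\eta_p)^{-1}$, observing that conjugation by $\iota(\eta_p)$ strengthens the upper-right entry of each of the three components to vanish modulo $p^{n+1}$ (rather than only $p^n$), while leaving the $p^n$-level diagonal condition $a_i \equiv d_i$ (with a common unit $a$ across components) unchanged. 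Then for $g_0 = \left(\begin{smallmatrix}a&b\\c&d\end{smallmatrix}\right) \in \GL_2(\Zp)$, I would compute the three components of $u^{-1}\iota(g_0)u$ at $p$ to be
\[ g_0, \qquad \left(\begin{smallmatrix}a-c&a-c+b-d\\c&c+d\end{smallmatrix}\right), \qquad \left(\begin{smallmatrix}d&-c\\-b&a\end{smallmatrix}\right). \]
The vanishing of their three upper-right entries modulo $p^{n+1}$ forces $b,c \equiv 0 \pmod{p^{n+1}}$, which in turn yields $a \equiv d \pmod{p^{n+1}}$ via the second component. These are precisely the defining congruences of $U_{Z,n+1}$ at $p$.

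Granting \eqref{eq:main_identity_cartesian}, the unnamed horizontal arrow $X_{Z,n+1} \to \mathbf{X}_{\mathcal{U}^{(n)}_\circ}$ can be defined as the composition of the diagonal embedding $\iota_{U_{Z,n+1},\, u\mathcal{U}^{(n)}_\circ u^{-1}}$ with $\mathcal{T}_u$, and is then a closed immersion. Commutativity of both the upper triangle and the lower square will follow from the inclusions $\mathcal{U}^{(n+1)} \subseteq \mathcal{U}^{(n)}_\circ \subseteq \mathcal{U}^{(n)}$ and $U_{Z,n+1} \subseteq U_{Z,n}$: on complex points every composition in the diagram is described by the same rule $[z,h] \mapsto [(z,z,z),\,\iota(h)\cdot u]$, projected to the appropriate level quotient.

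For the Cartesianness of the lower square, I would invoke the general principle that, for Shimura varieties attached to an embedding of reductive groups $H \hookrightarrow G$, the pullback of a projection $\mathbf{X}_G(V) \to \mathbf{X}_G(U)$ (with $V \subseteq U$) along a morphism $X_H(W) \to \mathbf{X}_G(U)$ induced by an inclusion $W \subseteq \iota^{-1}(U)$ is the Shimura variety of level $W \cap \iota^{-1}(V)$. After using $\mathcal{T}_u$ to replace $\iota_n^u$ by the straight diagonal embedding $X_{Z,n} \to \mathbf{X}_{u\mathcal{U}^{(n)} u^{-1}}$, identity \eqref{eq:main_identity_cartesian} together with $U_{Z,n+1} \subseteq U_{Z,n}$ yields $U_{Z,n} \cap \iota^{-1}(u\mathcal{U}^{(n)}_\circ u^{-1}) = U_{Z,n+1}$, so the pullback is exactly $X_{Z,n+1}$, as desired.

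The main subtlety I anticipate is a clean invocation of the Cartesianness of such level-pullback squares for Shimura varieties over $\QQ$ rather than only on $\C$-points. However, since every morphism in the lower square is finite \'etale between smooth $\QQ$-schemes, it suffices to verify Cartesianness after base change to $\C$, where it reduces to the double-coset identity sketched above.
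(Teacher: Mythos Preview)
Your approach is correct in spirit and rests on the same group-theoretic identity as the paper's proof: the paper states it as $u\,\mathcal{U}^{(n+1)} u^{-1} \cap \iota(\cO_{B,f}^\times) = u\,\mathcal{U}^{(n)}_\circ u^{-1} \cap \iota(\cO_{B,f}^\times)$, which combined with Lemma~\ref{lemma_pullback_opencompacts} is exactly your~\eqref{eq:main_identity_cartesian}, and your explicit matrix computation verifying it is correct.

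There is, however, a genuine gap in your Cartesianness argument. The ``general principle'' you invoke --- that the pullback of $\mathbf{X}_G(V) \to \mathbf{X}_G(U)$ along $X_H(W) \to \mathbf{X}_G(U)$ is $X_H(W \cap \iota^{-1}(V))$ --- is \emph{false} in general, even on $\C$-points and even when the bottom horizontal map is a closed immersion. What is always true is that $X_H(W \cap \iota^{-1}(V))$ maps to the fiber product as a closed immersion, but the fiber product has degree $[U:V]$ over $X_H(W)$, while $X_H(W \cap \iota^{-1}(V)) \to X_H(W)$ has degree $[W : W \cap \iota^{-1}(V)]$, and these need not agree. (For a simple counterexample in the same spherical-pair setup, take $H=\GL_2$ diagonally in $G=\GL_2^3$, $U=\GL_2(\Zp)^3$, and $V\subset U$ the subgroup where the first two factors agree modulo $p$; then $\iota^{-1}(V)=\iota^{-1}(U)$ but $[U:V]>1$.) Your identity~\eqref{eq:main_identity_cartesian} only identifies the level of the natural closed subscheme of the fiber product; to conclude that it exhausts the fiber product you must also check that $[U_{Z,n}:U_{Z,n+1}] = [\mathcal{U}^{(n)}:\mathcal{U}^{(n)}_\circ]$. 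Both indices equal $p^3$, and this degree comparison is precisely how the paper argues: since both horizontal maps are closed immersions and both vertical maps are finite \'etale of the same degree $p^3$, the induced closed immersion from $X_{Z,n+1}$ into the fiber product is a map of finite \'etale $X_{Z,n}$-schemes of equal degree, hence an isomorphism. Once you insert this degree check your argument is complete, and the descent-from-$\C$ step becomes superfluous.
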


\begin{proof}
The commutativity of the triangle in the diagram is obvious. The horizontal arrows are closed immersions because of Lemma \ref{lemma_pullback_opencompacts} and the equality \[ u\, \mathcal{U}^{(n+1)} u^{-1} \cap \iota(\cO_{B,f}^\times) = u \, \mathcal{U}^{(n)}_\circ u^{-1} \cap \iota(\cO_{B,f}^\times).\] The claim that the bottom square is Cartesian thus follows from the fact that the degrees of the vertical maps $\varpi_{Z,n} $ and $\varpi_n^{E}$ are both $p^3$ (in particular, they agree). Note that both equalities, which can be checked directly, follow from \cite[Lemma 4.4.1]{LoefflerSpherical} as $u \in \GL_2^3(\Zp)$ satisfies the hypotheses (A), (B) of \cite[\S 4.3]{LoefflerSpherical}, with $\mathcal{F}=\GL_2^3/\overline{Q}$,  $Q^0_H = \GL_2$, and $\overline{Q}^0_G=Z \cdot \overline{N}$, where $Z$ and $\overline{N}$ denote the center of $\GL_2$ and the unipotent radical of $\overline{Q}$ respectively. 
\end{proof}

\subsubsection{} Let 
$${\rm pr}^{\mathbf{n}+\mathbf{r}}_{\mathbf{n}}:{\rm CH}^2(\mathbf{X}_{\mathcal{U}_{\mathbf{n+r}}}) \to {\rm CH}^2(\mathbf{X}_{\mathcal{U}_{\mathbf{n}}})\,,\qquad \mathbf{n}=(n_1,n_2,n_3), \, \mathbf{r}=(r_1,r_2,r_3)\in \ZZ_{\geq 0}^3$$
denote the pushforward maps induced from the natural degeneracy map $\mathbf{X}_{\mathcal{U}_{\mathbf{n+r}}}\xrightarrow{} \mathbf{X}_{\mathcal{U}_{\mathbf{n}}}$. Let us also denote by 
$$\mathbf{1}_{n}= \sum_{\mathcal{C} \in \pi_0(X_{Z,n})} [\mathcal{C}] \in {\rm CH}^0(X_{Z,n})$$  
the fundamental cycle of $X_{Z,n}$, where $\pi_0(X_{Z,n})$ denotes the set of connected components of $X_{Z,n}$. Moreover, we let $\iota(\eta_p^n)_\star$ be the pushforward of the isomorphism $\mathcal{T}_{\iota(\eta_p^n)}: \mathbf{X}_{\mathcal{U}^{(n)}} \to \mathbf{X}_{\mathcal{U}_{(n)}}$.

\begin{defn}\label{def_2025_07_03_1206}
    \item[i)] For $n \in \mathbb N$, we define $\Delta_{\mathcal{U}_{(n)}} := \iota(\eta_p^n)_\ast \circ \iota_{n,\ast}^u(\mathbf{1}_{n}) \in {\rm CH}^2(\mathbf{X}_{\mathcal{U}_{(n)}})$.        
    \item[ii)]    For every ${\mathbf{n} } = (n_1,n_2,n_3) \in \mathbb N^3$, we define $\Delta_{\mathcal{U}_{\mathbf{n}}} : = {\rm pr}^{(n)}_{\mathbf{n}} \Delta_{\mathcal{U}_{(n)}} $, where $n = \max \{ n_1,n_2,n_3 \}$.
\end{defn}

\begin{remark}
\label{remark_2025_03_2025_1427}
By definition, the class $\Delta_{\mathcal{U}_{(n)}} $  is represented by the image of the codimension-2 cycle $X_{Z,n}$ inside $\mathbf{X}_{\mathcal{U}_{(n)}}$, via the embedding  given by $\mathcal{T}_{\iota(\eta_p^n)} \,\circ\, \mathcal{T}_{u} \,\circ \,\iota_{U_{Z,n}, u \mathcal{U}^{(n)} u^{-1}}$. Note that this embedding sends $ [(\tau,g)] \in X_{Z,n}$, with $\tau \in \CC \smallsetminus \mathbb{R}$ and $g \in \widehat{B}^{ \times}$, to the element
$$
[(\tau, \tau,\tau), \left( g \left( \begin{smallmatrix} p^n & 1 \\ & 1 \end{smallmatrix} \right),  g \left( \begin{smallmatrix}
    p^n &  \\  & 1 
\end{smallmatrix} \right), g  \left( \begin{smallmatrix}
  & 1 \\ -p^n  &   
\end{smallmatrix} \right) \right)  ] \in \mathbf{X}_{\mathcal{U}_{(n)}}.
$$
Our choice of the twisting of the diagonal embedding is therefore concurrent with Hsieh's construction of diagonal theta elements in the definite case (\textit{cf}. Proposition \ref{prop_rewriting_Hsieh}).
\end{remark}

\begin{proposition}\label{prop:norm_comp_indefinite}
 For every triple ${\mathbf{n} } = (n_1, n_2, n_3)$ of positive integers, let us put 
 $$\mathbf{n}+\mathbf{1} =  (n_1 +1 , n_2 +1, n_3+1)\,.$$
 We then have 
 \[ {\rm pr}^{\mathbf{n}+\mathbf{1}}_{\mathbf{n}}(\Delta_{\mathcal{U}_{\mathbf{n+1}}}) = \mathbb{U}_p' \, \Delta_{\mathcal{U}_{\mathbf{n}}}.\] 
\end{proposition}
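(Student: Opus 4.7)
The plan is to reduce first to the balanced case $\mathbf{n}=(n,n,n)$ with $n:=\max\{n_1,n_2,n_3\}$, and then to reinterpret the identity on the side of $\mathbf{X}_{\mathcal{U}^{(n)}}$ via the isomorphism $\mathcal{T}_{\iota(\eta_p^n)}:\mathbf{X}_{\mathcal{U}^{(n)}}\xrightarrow{\sim} \mathbf{X}_{\mathcal{U}_{(n)}}$, so that the Cartesian diagram of Lemma~\ref{lemma_cartesian_diagram} is directly applicable.

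For the reduction, I would combine Definition~\ref{def_2025_07_03_1206}(ii) with the compatibility of composed degeneracy maps $\pr^{\mathbf{n}+\mathbf{1}}_\mathbf{n}\circ \pr^{(n+1)}_{\mathbf{n}+\mathbf{1}} = \pr^{(n)}_\mathbf{n} \circ \pr^{(n+1)}_{(n)}$ and a mild extension of Proposition~\ref{prop_2025_05_15_0828} providing the compatibility of $\mathbb{U}_p'$ (at levels $\mathcal{U}_{(n)}$ and $\mathcal{U}_\mathbf{n}$) with the pushforward along $\pr^{(n)}_\mathbf{n}$. These reduce the claim to the balanced identity $\pr^{(n+1)}_{(n)}\Delta_{\mathcal{U}_{(n+1)}} = \mathbb{U}_p'\cdot\Delta_{\mathcal{U}_{(n)}}$ in $\CH^2(\mathbf{X}_{\mathcal{U}_{(n)}})$. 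A direct moduli-theoretic computation then shows that the composition $\mathcal{T}_{\iota(\eta_p^{-n})}\circ \pr^{(n+1)}_{(n)}\circ \mathcal{T}_{\iota(\eta_p^{n+1})}$ is right-multiplication by $\iota(\eta_p)$---the evident analogue of the map $\pi_2$ from \S\ref{subsubsec_A22_2025_06_30_1533}, which I denote by $\pi_2^E$. Similarly, $\mathbb{U}_p'$ transports to the Hecke correspondence $[\mathcal{U}^{(n)}\,\iota(\eta_p^{-1})\,\mathcal{U}^{(n)}]$ on $\mathbf{X}_{\mathcal{U}^{(n)}}$, whose intermediate level is precisely $\mathcal{U}^{(n)}_{\circ}$ from Lemma~\ref{lemma_cartesian_diagram} with structure maps $\varpi_n^E$ and $\widetilde{\varpi}_n^E := p'\circ \mathcal{T}_{\iota(\eta_p)}$. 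The balanced identity thereby becomes
$$(\pi_2^E)_*\iota^u_{n+1,*}(\mathbf{1}_{n+1}) \;=\; (\widetilde{\varpi}_n^E)_*(\varpi_n^E)^*\iota^u_{n,*}(\mathbf{1}_n)$$
in $\CH^2(\mathbf{X}_{\mathcal{U}^{(n)}})$.

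To finish, I would apply flat base change to the Cartesian bottom square of Lemma~\ref{lemma_cartesian_diagram}: since $\iota^u_n$ is a closed immersion and $\varpi_n^E$ is finite flat, we obtain $(\varpi_n^E)^*\iota^u_{n,*}\mathbf{1}_n = \alpha_*(\varpi_{Z,n})^*\mathbf{1}_n = \alpha_*\mathbf{1}_{n+1}$, where $\alpha:X_{Z,n+1}\to\mathbf{X}_{\mathcal{U}^{(n)}_{\circ}}$ is the horizontal arrow and $(\varpi_{Z,n})^*$ preserves fundamental classes by the étaleness of $\varpi_{Z,n}$. The triangle in the diagram yields $\alpha=\mu_n^E\circ \iota^u_{n+1}$, so the right-hand side collapses to $(\widetilde{\varpi}_n^E\circ \mu_n^E)_*\iota^u_{n+1,*}(\mathbf{1}_{n+1})$, and the proof concludes upon checking the equality of morphisms $\widetilde{\varpi}_n^E\circ \mu_n^E=\pi_2^E$, which on $\mathbb{C}$-points amounts to the identity $[(\tau,g)]\mapsto [(\tau,g\iota(\eta_p))]$. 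The main obstacle is justifying the base change identity on Chow groups; the essential input is the Cartesianness of the bottom square of Lemma~\ref{lemma_cartesian_diagram}, itself a consequence of the open orbit hypothesis on $u$ from \cite[Lemma~4.4.1]{LoefflerSpherical}.
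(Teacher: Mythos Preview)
Your proof is correct and follows essentially the same approach as the paper's: reduce to the balanced case, then use the Cartesian square of Lemma~\ref{lemma_cartesian_diagram} together with compatibility of fundamental classes under pullback, and finish by identifying $\varpi_{2,n}^{E}\circ\mu_n^E$ (your $\widetilde{\varpi}_n^E\circ\mu_n^E=\pi_2^E$) with the transport of $\pr^{(n+1)}_{(n)}$ via $\mathcal{T}_{\iota(\eta_p^\bullet)}$. The only inaccuracy is bibliographic: the commutation of $\mathbb{U}_p'$ with $\pr^{(n)}_{\mathbf{n}}$ is not an extension of Proposition~\ref{prop_2025_05_15_0828} (which concerns ${\rm b}_{\mathbf{n}}$, not $\pr^{(n)}_{\mathbf{n}}$) but rather \cite[Proposition~4.5.3]{LoefflerSpherical}, as the paper cites.
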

\begin{proof}
   This follows from \cite[Proposition 4.5.2]{LoefflerSpherical}, and we give a sketch of the proof for the convenience of the reader. We first claim that it is enough to verify the statement when ${\mathbf{n} } = (n)$. Indeed, if we suppose that and let $n = \max\{ n_1,n_2,n_3 \}$, we have $$ {\rm pr}^{\mathbf{n}+\mathbf{1}}_{\mathbf{n}} \circ {\rm pr}^{(n + 1)}_{\mathbf{n}+\mathbf{1}} =  {\rm pr}^{(n)}_{\mathbf{n}} \circ {\rm pr}^{(n + 1)}_{(n)}.$$  This implies that
 \begin{align*}
 {\rm pr}^{\mathbf{n}+\mathbf{1}}_{\mathbf{n}}(\Delta_{\mathcal{U}_{\mathbf{n+1}}}) &=  {\rm pr}^{\mathbf{n}+\mathbf{1}}_{\mathbf{n}} \circ {\rm pr}^{(n + 1)}_{\mathbf{n}+\mathbf{1}}(\Delta_{\mathcal{U}_{(n+1)}})   \\ &=  {\rm pr}^{(n)}_{\mathbf{n}} \circ {\rm pr}^{(n + 1)}_{(n)}(\Delta_{\mathcal{U}_{(n+1)}}) \\ &=  {\rm pr}^{(n)}_{\mathbf{n}} \circ \mathbb{U}_p' ( \Delta_{\mathcal{U}_{(n)}} ) \\ 
 &= \mathbb{U}_p' \circ {\rm pr}^{(n)}_{\mathbf{n}}  ( \Delta_{\mathcal{U}_{(n)}} )\\
  &= \mathbb{U}_p' \,  \Delta_{\mathcal{U}_{\mathbf{n}}}, 
 \end{align*} 
  where the fourth equality follows from the fact that  $\mathbb{U}_p' $ commutes with the pushforward maps ${\rm pr}^{(n)}_{\mathbf{n}}$ (cf. \cite{LoefflerSpherical}, Proposition 4.5.3).

  We now prove the formula in the case where ${\mathbf{n} } = (n)$. By Lemma \ref{lemma_cartesian_diagram}, 
   \[ (\varpi_n^{E})^\ast \circ \iota_{n,\ast}^u = (\mu_n^{E})_\ast\, \circ \,\iota_{n+1,\ast}^u\, \circ\,  \varpi_{Z,n}^{\ast}.\]
As the formation of the fundamental class $\mathbf{1}_n$ is compatible with base-change, we have
\[ (\varpi_n^{E})^* \circ \iota_{n,*}^u(\mathbf{1}_n) = (\mu_n^{E})_* \,\circ\, \iota_{n+1,*}^u (\mathbf{1}_{n+1}).\]
Applying the pushforward of \[\varpi_{2,n}^{E}:\mathbf{X}_{\mathcal{U}_{\circ}^{(n)}} \xrightarrow{\,\mathcal{T}_{\iota(\eta_p)}\,} \mathbf{X}_{\iota(\eta_p^{-1})\mathcal{U}_{\circ}^{(n)} \iota(\eta_p)} \xrightarrow{\,{\rm pr}\,} \mathbf{X}_{\mathcal{U}^{(n)}}\,, \] 
where ${\rm pr}$ is the natural degeneracy map, we deduce that 
\[ \varpi_{2,n,\ast}^{E} \circ (\varpi_n^{E})^* \,\circ\, \iota_{n,*}^u(\mathbf{1}_n) = \varpi_{2,n,\ast}^{E} \,\circ \,(\mu_n^{E})_* \,\circ\,\iota_{n+1,*}^u (\mathbf{1}_{n+1}).\]
Since $\mathbb{U}_p' = \varpi_{2,n,\ast}^{E}\, \circ \,(\varpi_n^{E})^*$, the final equality reads  
\[ \mathbb{U}_p' \, \iota_{n,*}^u(\mathbf{1}_n) = \varpi_{2,n, \ast}^{E} \circ (\mu_n^{E})_* \circ \iota_{n+1,*}^u (\mathbf{1}_{n+1}).\]
We now apply $\iota(\eta_p^n)_\ast$ to both sides on the left. While this commutes with $\mathbb{U}_p' $, an easy matrix computation shows that \[ \iota(\eta_p^n)_\ast \,\circ\,   \varpi_{2,n, \ast}^{E}\, \circ \,(\mu_n^{E})_* =   {\rm pr}^{(n+1)}_{(n)} \,\circ \,\iota(\eta_p^{n+1})_\ast \,. \]   
This shows that 
\[ \mathbb{U}_p' \circ \iota(\eta_p^n)_\ast \,\circ \,\iota_{n,*}^u(\mathbf{1}_n) = {\rm pr}^{(n+1)}_{(n)}\, \circ \,\iota(\eta_p^{n+1})_\ast \, \circ\, \iota_{n+1,*}^u (\mathbf{1}_{n+1})\,, \]
as desired.
\end{proof}

For every ${\mathbf{n} } = (n_1,n_2,n_3) \in \mathbb N^3$, recall that we have set 
$$\mathbf{X}_{\mathcal{U}_{\mathbf{n}}}  \xrightarrow{\,{\rm b}_{\mathbf{n}}\,} \mathbf{X}_{\mathbf{n}}=\mathbf{X}_{n_1,n_2,n_3} := X({U_{n_1}}) \times X({U_{n_2}}) \times X({U_{n_3}})$$ to denote the natural projection map. 
\begin{defn}
    We put 
 $$\Delta_{\mathbf{n}} : = {\rm b}_{\mathbf{n},\ast} \Delta_{\mathcal{U}_{\mathbf{n}}},$$
 and we let (with a slight abuse of notation)
$${\rm pr}^{\mathbf{n}+\mathbf{r}}_{\mathbf{n}}\,:\, {\rm CH}^2(\mathbf{X}_{\mathbf{n+r}}) \lra {\rm CH}^2(\mathbf{X}_{\mathbf{n}})$$ 
denote the pushforward of the natural degeneracy map.
\end{defn}
\begin{corollary}
\label{cor_2025_07_02_1244}
    For every triple ${\mathbf{n} } = (n_1, n_2, n_3)$ of positive integers, we have 
 \[ {\rm pr}^{\mathbf{n}+\mathbf{1}}_{\mathbf{n}}(\Delta_{{\mathbf{n + 1} }}) = \mathbb{U}_p' \, \Delta_{\mathbf{n} }.\] 
\end{corollary}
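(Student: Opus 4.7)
The plan is to deduce this corollary directly from Proposition \ref{prop:norm_comp_indefinite} by pushing the identity it provides forward along the natural projection ${\rm b}_{\mathbf{n}}\colon \mathbf{X}_{\mathcal{U}_{\mathbf{n}}} \to \mathbf{X}_{\mathbf{n}}$. Concretely, I would start from the norm relation
$${\rm pr}^{\mathbf{n}+\mathbf{1}}_{\mathbf{n}}(\Delta_{\mathcal{U}_{\mathbf{n+1}}}) = \mathbb{U}_p' \, \Delta_{\mathcal{U}_{\mathbf{n}}}$$
in ${\rm CH}^2(\mathbf{X}_{\mathcal{U}_{\mathbf{n}}})$ and apply ${\rm b}_{\mathbf{n},*}$ to both sides; it will then suffice to identify each side with the corresponding expression in the statement.

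For the left-hand side, I would use the obvious commutative square formed by ${\rm b}_{\mathbf{n}+\mathbf{1}}$ and ${\rm b}_{\mathbf{n}}$ together with the two copies of ${\rm pr}^{\mathbf{n}+\mathbf{1}}_{\mathbf{n}}$ --- all four being finite degeneracy maps induced by inclusions of level subgroups --- to derive the functorial identity ${\rm b}_{\mathbf{n},*}\circ{\rm pr}^{\mathbf{n}+\mathbf{1}}_{\mathbf{n}}={\rm pr}^{\mathbf{n}+\mathbf{1}}_{\mathbf{n}}\circ {\rm b}_{\mathbf{n+1},*}$. Combined with the definition $\Delta_{\mathbf{n+1}} := {\rm b}_{\mathbf{n+1},*}\Delta_{\mathcal{U}_{\mathbf{n+1}}}$, this turns the left-hand side into ${\rm pr}^{\mathbf{n}+\mathbf{1}}_{\mathbf{n}}(\Delta_{\mathbf{n+1}})$, exactly as required. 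For the right-hand side, I would invoke Proposition \ref{prop_2025_05_15_0828} with $T = \widetilde{T} = \mathbb{U}_p'$, which yields the commutation ${\rm b}_{\mathbf{n},*}\circ \mathbb{U}_p' = \mathbb{U}_p' \circ {\rm b}_{\mathbf{n},*}$ between the Hecke action on $\mathbf{X}_{\mathcal{U}_{\mathbf{n}}}$ and its counterpart on $\mathbf{X}_{\mathbf{n}}$; applying this to $\Delta_{\mathcal{U}_{\mathbf{n}}}$ and using $\Delta_{\mathbf{n}} := {\rm b}_{\mathbf{n},*}\Delta_{\mathcal{U}_{\mathbf{n}}}$ identifies the right-hand side with $\mathbb{U}_p' \cdot \Delta_{\mathbf{n}}$, and equating the two yields the claim.

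There is no substantive obstacle here: the genuine work has been carried out in Propositions \ref{prop:norm_comp_indefinite} and \ref{prop_2025_05_15_0828}, and the content of the corollary amounts to a formal pushforward of an identity on the non-split Shimura threefolds to the split ones, whose validity reduces to the evident compatibility of pushforward with a commutative square of degeneracy maps.
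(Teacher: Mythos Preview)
Your proposal is correct and matches the paper's own proof essentially line for line: the paper also applies ${\rm b}_{\mathbf{n},*}$, uses the commutative square ${\rm pr}^{\mathbf{n}+\mathbf{1}}_{\mathbf{n}} \circ {\rm b}_{\mathbf{n}+\mathbf{1},*} = {\rm b}_{\mathbf{n},*} \circ {\rm pr}^{\mathbf{n}+\mathbf{1}}_{\mathbf{n}}$ for the left-hand side, and invokes Proposition~\ref{prop_2025_05_15_0828} for the commutation ${\rm b}_{\mathbf{n},*} \circ \mathbb{U}_p' = \mathbb{U}_p' \circ {\rm b}_{\mathbf{n},*}$ on the right.
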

\begin{proof}
    Since ${\rm pr}^{\mathbf{n}+\mathbf{1}}_{\mathbf{n}} \circ  {\rm b}_{\mathbf{n}+\mathbf{1},\ast} ={\rm b}_{\mathbf{n},\ast} \circ {\rm pr}^{\mathbf{n}+\mathbf{1}}_{\mathbf{n}}$, Proposition \ref{prop:norm_comp_indefinite} yields
    \begin{align*}
 {\rm pr}^{\mathbf{n}+\mathbf{1}}_{\mathbf{n}}(\Delta_{{\mathbf{n + 1} }}) =  {\rm pr}^{\mathbf{n}+\mathbf{1}}_{\mathbf{n}} \circ  {\rm b}_{\mathbf{n}+\mathbf{1},\ast}(\Delta_{\mathcal{U}_{\mathbf{n+1}}}) = {\rm b}_{\mathbf{n},\ast} \circ {\rm pr}^{\mathbf{n}+\mathbf{1}}_{\mathbf{n}}(\Delta_{\mathcal{U}_{\mathbf{n+1}}}) = {\rm b}_{\mathbf{n},\ast} \circ \mathbb{U}_p' ( \Delta_{\mathcal{U}_{\mathbf{n}}} ) &=\mathbb{U}_p' \circ {\rm b}_{\mathbf{n},\ast}  ( \Delta_{\mathcal{U}_{\mathbf{n}}} )\\
  &= \mathbb{U}_p' \,  \Delta_{\mathbf{n}}, 
 \end{align*} 
 where the penultimate equality follows from Proposition \ref{prop_2025_05_15_0828}.
\end{proof}

\subsection{\texorpdfstring{$p$-adic\,}\ cycle class maps} 
Given an algebraic variety $Y$ defined over $\QQ$, let us put $\overline{Y}:={Y}\times_\QQ \overline{\QQ}$. Suppose that $X$ is an algebraic threefold defined over $\QQ$. We denote by
$${\rm CH}^2_0(X; K) := \ker \left( {\rm CH}^2(X;K) \xrightarrow{{\rm cl}_0} H^4_{\etale} \left(\overline{X}, \ZZ_p(2)\right)^{G_K}\right)$$ 
the group of null-homologous algebraic cycles defined over an algebraic number field $K$ (which is the kernel of the cycle class map ${\rm cl}_0$). We let ${\rm CH}^2_0(X;K)_{\ZZ_p}$ denote its $p$-adic completion. 

Since the target of ${\rm cl}_0$ is $p$-adically complete, ${\rm cl}_0$ factors through the $p$-adic completion ${\rm CH}^2(X;K)_{\ZZ_p}$ of ${\rm CH}^2(X;K)$, and we denote the induced map on  ${\rm CH}^2(X;K)_{\ZZ_p}$ still by   ${\rm cl}_0$. Moreover, since  the $\ZZ$-module $H^4_{\etale} \left(\overline{X}, \ZZ_p(2)\right)$ is torsion-free\footnote{Note that $\ZZ$ acts on $H^4_{\etale} (\overline{X}, \ZZ_p(2))$ via its image under the canonical injection $\ZZ\hookrightarrow \ZZ_p$.}, it follows that ${\rm im}({\rm cl}_0)$ is also torsion-free, hence it is flat. It then follows from \cite[\href{https://stacks.math.columbia.edu/tag/0315}{Lemma 0315}]{stacks-project} that 
$${\rm CH}^2_0(X;K)_{\ZZ_p}=\ker \left( {\rm CH}^2(X;K)_{\ZZ_p} \xrightarrow{{\rm cl}_0} H^4_{\etale} \left(\overline{X}, \ZZ_p(2)\right)\right)\,.$$

\subsubsection{} We return to the setting of our paper. Let $\TT^{B}_{n}$ denote the $p$-adic completion of the Hecke algebra $\mathrm{T}_{B}(Np^n)$, acting on $H^\ast_{\textup{\'{e}t}}({X}(U_n)\times_{\QQ}\overline{\QQ},\ZZ_p)$. Then $\TT^{B}_{n}$ is a finite $\ZZ_p$-algebra and it follows from \cite[\S7.2, Lemma 1]{Hida_Elementary} that $e_{{\rm U}_p'}:=\lim ({\rm U}_p')^{n!}$ belongs to $\TT^{B}_{n}$. Similarly, we denote by $\TT^{B_E}_{\mathbf{n}}$ the $p$-adic completion of $\mathrm{T}_{B_E}(Np^{\mathbf{n}})$, which is isomorphic to the completed tensor product of the $\TT^{B}_{n_i}$'s, where $\mathbf{n}=(n_1,n_2,n_3)$.

Note that $\TT^{B_E}_{\mathbf{n}}$ acts on ${\rm CH}^2(\mathbf{X}_{\mathbf{n}};K)_{\ZZ_p}$, and as a result, so does $(e_{{\rm U}_p'},{\rm id},e_{{\rm U}_p'})$. This allows us to define
\begin{equation}
    \label{eqn_2023_11_08_1351_appendix}
    \Delta_{\mathbf{n}} ^{\circ}:=(e_{{\rm U}_p'},{\rm T}_{q}'-(q+1),e_{{\rm U}_p'}) \Delta_{\mathbf{n}}\in {\rm CH}^2(\mathbf{X}_{\mathbf{n}};\QQ)_{\ZZ_p}\,,
\end{equation}
where $q\nmid Np$ is an auxiliary prime (whose choice will not concern us for the moment).

\begin{lemma}
    \label{DR_Lemma_2_5_appendix}
    We have $\Delta_{\mathbf{n}} ^{\circ}\in {\rm CH}^2_0(\mathbf{X}_{\mathbf{n}};\QQ)_{\ZZ_p}$. 
\end{lemma}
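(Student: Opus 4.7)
The plan is to exploit the Hecke-equivariance of the cycle class map ${\rm cl}_0$ together with a K\"unneth decomposition of $H^4$ to annihilate every piece of the class systematically. First I would use the compatibility of ${\rm cl}_0$ with proper pushforward and flat pullback along finite \'etale maps (hence with Hecke correspondences) to write
\[
{\rm cl}_0(\Delta_{\mathbf{n}}^{\circ}) = (e_{{\rm U}_p'},\,{\rm T}_{q}'-(q+1),\,e_{{\rm U}_p'}) \cdot {\rm cl}_0(\Delta_{\mathbf{n}}),
\]
so it suffices to show that the operator $(e_{{\rm U}_p'},\,{\rm T}_{q}'-(q+1),\,e_{{\rm U}_p'})$ annihilates every K\"unneth component of $H^4_{\textup{\'et}}(\overline{\mathbf{X}}_{\mathbf{n}}, \ZZ_p(2))$.

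Next, I would invoke the K\"unneth formula --- valid integrally because each $H^*_{\textup{\'et}}(\overline{X}(U_{n_j}), \ZZ_p)$ is torsion-free --- to decompose $H^4_{\textup{\'et}}(\overline{\mathbf{X}}_{\mathbf{n}}, \ZZ_p(2))$ as a direct sum of tensor products
\[
H^{i_1}_{\textup{\'et}}(\overline{X}(U_{n_1}), \ZZ_p) \otimes H^{i_2}_{\textup{\'et}}(\overline{X}(U_{n_2}), \ZZ_p) \otimes H^{i_3}_{\textup{\'et}}(\overline{X}(U_{n_3}), \ZZ_p)(2)
\]
taken over triples $(i_1,i_2,i_3) \in \{0,1,2\}^3$ with $i_1+i_2+i_3=4$. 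The six such triples are $(0,2,2)$, $(2,0,2)$, $(2,2,0)$, $(1,1,2)$, $(1,2,1)$, and $(2,1,1)$, and in each one at least one index lies in $\{0, 2\}$.

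The heart of the argument then reduces to two eigenvalue computations on a single Shimura curve $X(U_n)$. I would verify that ${\rm T}_\ell'$ acts on both $H^0_{\textup{\'et}}(\overline{X}(U_n), \ZZ_p)$ and $H^2_{\textup{\'et}}(\overline{X}(U_n), \ZZ_p)$ by multiplication by $\ell+1$: on $H^0$ because both projections of the Hecke correspondence $[U_n \Psi^{-1}(\eta_\ell^{-1}) U_n]$ have degree $\ell+1$, and on $H^2$ by the same computation applied to the fundamental class of each connected component. Similarly, ${\rm U}_p'$ acts on $H^0$ and on $H^2$ by multiplication by $p$ (the degree of each projection of the $p$-adic correspondence at level $U_n$ with $n\geq 1$), so the ordinary projector $e_{{\rm U}_p'} = \lim ({\rm U}_p')^{n!}$ annihilates both of these groups since $p$ is not a unit in $\ZZ_p$. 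The case analysis then finishes the argument: whenever $i_2 \in \{0,2\}$, the middle factor ${\rm T}_q' - (q+1)$ is zero, and whenever $i_2 = 1$, the constraint $i_1 + i_3 = 3$ with each $i_j \leq 2$ forces one of $i_1, i_3$ to equal $2$, on which the corresponding $e_{{\rm U}_p'}$ vanishes. I do not anticipate a serious obstacle beyond bookkeeping with the transpose-Hecke normalization used for ${\rm T}_\ell'$ and ${\rm U}_p'$ (and confirming that the action on $H^2(1)$ matches that on $H^0$ in these conventions).
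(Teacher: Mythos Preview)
Your proof is correct and follows essentially the same approach as the paper: use Hecke-equivariance of ${\rm cl}_0$, the K\"unneth decomposition of $H^4$, and the eigenvalue computation that ${\rm U}_p'$ (resp.\ ${\rm T}_q'$) acts by $p$ (resp.\ $q+1$) on the degree-$2$ cohomology of each curve factor. The paper's version is terser---it only records the action on $H^2$ (sufficient since every K\"unneth triple $(i_1,i_2,i_3)$ with $i_1+i_2+i_3=4$ and $i_j\le 2$ necessarily contains a $2$), whereas you carry out the full case analysis and also note the $H^0$ action---but the argument is the same.
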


\begin{proof}
The argument we present closely follows the proof of \cite[Lemma 2.5]{DarmonRotger}. The correspondence $\mathrm{U}_{p}'$ (resp. $\mathrm{T}'_q$) acts as multiplication by $p$ (resp. $q+1$) on 
$H^2_{\etale}({X}(U_{n_i})\times_{\QQ}\overline{\QQ},\ZZ_p(2))$. 
%(the curve ${X}(U_{n_i})$ is not geometrically connected} ).
Therefore Hida's idempotent $e_{\mathrm{U}_{p}'}$ and $\mathrm{T}'_q-(q+1)$ annihilate this module. As a result, $(e_{\mathrm{U}_{p}'}, \mathrm{T}_{q}'-(q+1), e_{\mathrm{U}_{p}'})$  annihilates all the direct summands that appear in the K\"unneth decomposition of $H^4_{\etale}(\mathbf{X}_{\mathbf{n}}\times_{\QQ}\overline{\QQ},\ZZ_p(2))$, therefore also $H^4_{\etale}(\mathbf{X}_{\mathbf{n}}\times_{\QQ}\overline{\QQ},\ZZ_p(2))$ itself, which is the target of the Hecke equivariant map ${\rm cl}_0$ (defined as in \cite[\S1.2]{nekovarbanff} and denoted by $cl_{0,X}$ in op. cit.). This concludes the proof of our lemma. 
\end{proof}

\subsubsection{} Thanks to Lemma~\ref{DR_Lemma_2_5_appendix}, we may put 
\begin{equation}
\label{eqn_2023_11_08_1512_bis}
   \Delta_{\mathbf{n}}^{\etale} := {\rm cl}_1(\Delta_{{\mathbf{n}}}^{\circ})\in H^1(\Q,  H^3_{\etale} (\mathbf{X}_{\mathbf{n}}\times_{\QQ} \overline{\QQ}, \ZZ_p(2)))\,.
\end{equation} 
By a slight abuse of notation, we will also denote by $\Delta_{\mathbf{n}}^{\etale}$ the image of this cohomology class under the compositum
\small 
\begin{align*}
&(e_{\mathrm{U}_{p}'}, \mathrm{T}_{q}'-(q+1), e_{\mathrm{U}_{p}'})\cdot {\rm CH}^2(\mathbf{X}_{\mathbf{n}};\QQ)_{\ZZ_p}\,\subset {\rm CH}^2_0(\mathbf{X}_{\mathbf{n}};\QQ)_{\ZZ_p}\xrightarrow{{\rm cl}_1}  H^1(\QQ,  (e_{\mathrm{U}_{p}'}, {\rm id}, e_{\mathrm{U}_{p}'})\cdot  H^3_{\etale} (\mathbf{X}_{\mathbf{n}}\times_{\QQ}
\overline{\QQ}, \ZZ_p(2)))\\
&\hspace{1.8cm}\xrightarrow{\,\,\texttt{K\"u}\,\,} H^1(\QQ,  e_{\mathrm{U}_{p}'} H^1_{\etale} (X(U_{n_1})\times_{\QQ}
\overline{\QQ}, \ZZ_p)\otimes H^1_{\etale} (X(U_{n_2})\times_{\QQ}
\overline{\QQ}, \ZZ_p)\otimes e_{\mathrm{U}_{p}'} H^1_{\etale} (X(U_{n_3})\times_{\QQ}
\overline{\QQ}, \ZZ_p)(2))
\end{align*}
\normalsize
where the final projection $\texttt{K\"u}$ is induced from the K\"unneth decomposition.

\subsubsection{} 
\label{subsubsec_2025_07_02_1527}
Let $g \in S_2^B(U_1(N))$ be an eigenform that is residually irreducible at $p$.  %where $p\nmid N_g\mid N$. 
Let us choose forever a prime $q$ so that $C_q:=a_q(g)-(q+1)$ is a $p$-adic unit (which is possible thanks to the residual irreducibility hypothesis). There exists a Galois equivariant surjective morphism 
$${\rm pr}_g\,:\,H^1_{\etale} (X(U_1(N))\times_{\QQ}
\overline{\QQ}, \QQ_p) \twoheadrightarrow V_g^*,$$
where $V_g^*$ is the $2$-dimensional $p$-adic Galois representation associated to $g$ (i.e. an irreducible $G_\QQ$-constituent of the maximal $g$-Hecke-isotypic quotient of $H^1_{\etale} (X(U_1(N))\times_{\QQ}
\overline{\QQ}, \QQ_p)$), and let $T_g^*$ denote the Galois-stable $\ZZ_p$-lattice obtained as the image of $H^1_{\etale} (X(U_1(N))\times_{\QQ}
\overline{\QQ}, \ZZ_p)$ under ${\rm pr}_g$. For a triple $\mathbf{n}=(n_1,n_2,n_3)$ of natural numbers, we define the class
$$\Delta_{\mathbf{n}}^{\ddagger}:=C_q^{-1}((\mathrm{U}_{p}')^{-n_1},{\rm id},(\mathrm{U}_{p}')^{-n_3})\Delta_{\mathbf{n}}^{\etale}\,.$$ 
We set
$$H^3_{\etale}(\mathbf{X}_{n_1,0,n_3}\times_{\QQ}
\overline{\QQ},\ZZ_p)^{\circ \,g\, \circ} := e_{\mathrm{U}_{p}'} H^1_{\etale} (X(U_{n_1})\times_{\QQ}
\overline{\QQ}, \ZZ_p)\otimes T_g^* \otimes e_{\mathrm{U}_{p}'} H^1_{\etale} (X(U_{n_3})\times_{\QQ}
\overline{\QQ}, \ZZ_p)$$
 and define the class 
$$\Delta_{n_1,0,n_3}^{\etale}(g)\in  H^1(\QQ,  H^3_{\etale}(\mathbf{X}_{n_1,0,n_3}\times_{\QQ}
\overline{\QQ},\ZZ_p)^{\circ \,g\, \circ}(2))$$ as the image of the class 
$\Delta_{n_1,0,n_3}^{\ddagger}$
under the morphism induced from 
\begin{align*}
    (e_{\mathrm{U}_{p}'}, {\rm id}, e_{\mathrm{U}_{p}'})\cdot  H^3_{\etale} (\mathbf{X}_{n_1,0,n_3}\times_{\QQ}
\overline{\QQ}, \ZZ_p(2)) \,\xrightarrow{({\rm id},\,{\rm pr}_g\,,\,{\rm id})}  H^3_{\etale}(\mathbf{X}_{n_1,0,n_3}\times_{\QQ}
\overline{\QQ},\ZZ_p)^{\circ \,g\, \circ}(2)\,.\end{align*}
composed with $\texttt{K\"u}$. Note then that $\Delta_{n_1,0,n_3}^{\etale}(g)$ does not depend on the choice of the auxiliary prime $q$.

\subsubsection{}
\label{subsubsec_2025_10_20_1513}
In our argument below to prove that diagonal cycles form a $Q$-system (in the sense of Perrin-Riou), we will also need the following modification: For $g$ as above and a pair $(n_1,n_3)$ of positive integers, let us define 
$$\widetilde{\Delta}_{n_1,0,n_3}^{\etale}(g)\in H^1(\QQ, H^3_{\etale}({\mathbf{X}}_{n_1,0,n_3}\times_\QQ \overline{\QQ},\ZZ_p)^{\circ\,g\,\circ}(2))$$ 
as the image of the class $\Delta_{n_1,1,n_3}^{\ddagger}$ under the morphism induced from
\begin{align*}
          (e_{\mathrm{U}_{p}'}, {\rm id},e_{\mathrm{U}_{p}'})\cdot H^3_{\etale}({\mathbf{X}}_{n_1,1,n_3}\times_\QQ\overline{\QQ},\ZZ_p)
        &\,\xrightarrow{({\rm id},\,{\,\rm pr}_g\,\circ\,\pi_{2,*},\,{\rm id})} H^3_{\etale}({\mathbf{X}}_{n_1,0,n_3}\times_\QQ\overline{\QQ},\ZZ_p)^{\circ\,g\,\circ}\,.
\end{align*}

\subsubsection{}

The following proposition will be used to establish that the collection of diagonal cycles forms a $Q$-system. In what follows, we shall write $\Delta_{n}^{\etale}(g)$ in place of $\Delta_{n,0,n}^{\etale}(g)$. We similarly put $\Delta_n^\ddagger:=\Delta_{n,0,n}^\ddagger$, $\Delta_{n}^{\etale}:=\Delta_{n,0,n}^{\etale}$, and $\widetilde{\Delta}_{n}^{\etale}(g):=\widetilde{\Delta}_{n,0,n}^{\etale}(g)$.

\begin{proposition}
\label{prop_2025_03_08_1203}
   The following relation holds for all positive integers $n$:
    \[
   (\pi_1, {\rm id},\pi_1)_*\,\Delta_{n+1}^{\etale}(g)=a_p(g) \Delta_{n} ^{\etale}(g)-\chi_g(p)\widetilde{\Delta}_{n}^{\etale}(g).
    \]
\end{proposition}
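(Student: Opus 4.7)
The plan is to reduce the identity to the triple-norm relation Corollary~\ref{cor_2025_07_02_1244}, which governs the simultaneous pushforward $(\pi_1,\pi_1,\pi_1)_*$ of the cycles $\Delta_{\mathbf{n}}$, together with the two-term KLZ-type relation Proposition~\ref{Prop_of_KLZ17} on the middle factor, which is precisely what produces the two terms on the right-hand side once one tracks the degeneracy $X(U_1)\to X(U_0)$.

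Unfolding $\Delta_{n+1}^{\etale}(g) = C_q^{-1}\bigl((\mathrm{U}_p')^{-(n+1)},\,\mathrm{pr}_g,\,(\mathrm{U}_p')^{-(n+1)}\bigr)\Delta_{n+1,0,n+1}^{\etale}$ and applying $(\pi_1,\mathrm{id},\pi_1)_*$, I would first slide each outer $(\mathrm{U}_p')^{-(n+1)}$ past the pushforward: between levels $U_k$ and $U_{k-1}$ with $k\ge 2$, the degeneracy $\pi_{1,*}$ commutes with $\mathrm{U}_p'$, and this extends to negative powers after restricting to the $e_{\mathrm{U}_p'}$-isotypic part. This reduces the problem to computing $(\pi_{1,*},\mathrm{id},\pi_{1,*})\Delta_{n+1,0,n+1}^{\etale}$. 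Writing $\Delta_{n+1,0,n+1} = (\mathrm{id},\pi_{1,*}^{n+1},\mathrm{id})\Delta_{(n+1)}$ (iterated natural degeneracy on the middle factor) and factoring $(\pi_{1,*},\pi_{1,*}^{n+1},\pi_{1,*}) = (\mathrm{id},\pi_{1,*}^n,\mathrm{id})\circ(\pi_{1,*},\pi_{1,*},\pi_{1,*})$, Corollary~\ref{cor_2025_07_02_1244} replaces the inner triple pushforward by $\mathbb{U}_p'\Delta_{(n)}$; via Proposition~\ref{prop_2025_05_15_0828} and K\"unneth, $\mathbb{U}_p'$ decomposes as $\mathrm{U}_p'\otimes\mathrm{U}_p'\otimes\mathrm{U}_p'$ on the three Shimura-curve factors at level $U_n$. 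All the nontrivial composition is then on the middle factor: iterating the commutation above converts $\pi_{1,*}^n\circ\mathrm{U}_p'$ into $\pi_{1,*}\circ\mathrm{U}_p'\circ\pi_{1,*}^{n-1}$ (with the surviving $\mathrm{U}_p'$ now at level $U_1$), and Proposition~\ref{Prop_of_KLZ17} at the final step yields $T_p'\circ\pi_{1,*}^n - S_p^{-1}\circ\pi_{2,*}\circ\pi_{1,*}^{n-1}$.

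Substituting this back produces the two terms on the right-hand side. For the $T_p'$ piece, $(\mathrm{id},\pi_{1,*}^n,\mathrm{id})\Delta_{(n)} = \Delta_{n,0,n}$, and since $T_p'$ acts as $a_p(g)$ on $V_g^*$, collecting the surviving $(\mathrm{U}_p')^{-n}$ on the outer factors recovers $a_p(g)\Delta_n^{\etale}(g)$. For the $S_p^{-1}\pi_{2,*}\pi_{1,*}^{n-1}$ piece, one rewrites $(\mathrm{id},\pi_{2,*}\pi_{1,*}^{n-1},\mathrm{id})\Delta_{(n)} = (\mathrm{id},\pi_{2,*},\mathrm{id})\Delta_{n,1,n}$, which is precisely the cycle used to define $\widetilde{\Delta}_n^{\etale}(g)$ in \S\ref{subsubsec_2025_10_20_1513}; since $S_p^{-1}$ acts as $\chi_g(p)$ on $V_g^*$, this piece contributes $-\chi_g(p)\widetilde{\Delta}_n^{\etale}(g)$, completing the identity. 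The main obstacle will be the careful bookkeeping of operator orders across these commutations, and in particular verifying that after the $\mathrm{pr}_g$-projection the twisted term $S_p^{-1}\pi_{2,*}\pi_{1,*}^{n-1}$ matches $\widetilde{\Delta}_n^{\etale}(g)$ exactly, with the correct sign and eigenvalue convention.
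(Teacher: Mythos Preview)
Your proposal is correct and follows essentially the same route as the paper's proof: both arguments commute the outer $(\mathrm{U}_p')^{-(n+1)}$ past the degeneracy maps (valid on the ordinary part since the source level is $\ge 2$), invoke Corollary~\ref{cor_2025_07_02_1244} to produce $\mathbb{U}_p'\Delta_{(n)}$, push the resulting middle-factor $\mathrm{U}_p'$ down to level $1$, and then apply Proposition~\ref{Prop_of_KLZ17} to split into the $T_p'$-term (yielding $a_p(g)\Delta_n^{\etale}(g)$) and the $S_p^{-1}\pi_{2,*}$-term (yielding $\chi_g(p)\widetilde{\Delta}_n^{\etale}(g)$). The only difference is that the paper records the computation as a single chain of equalities tracking $\Delta^{\ddagger}$ throughout, whereas you separate out the raw pushforward computation first and reassemble the normalizing factors at the end; the bookkeeping you flag as the main obstacle is exactly what the paper's chain of equalities carries out.
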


\begin{proof}
    This follows from a direct calculation:
    \begin{align}
     \notag    (\pi_1, {\rm id},\pi_1)_*\,\Delta_{n+1}^{\etale}(g)&= ({\rm id},\,{\rm pr}_g\,,\,{\rm id})\circ {\rm pr}^{n+1,0,n+1}_{n,0,n} \Delta_{n+1}^{\ddagger}\\
 \notag  &= C_q^{-1}({\rm id},\,{\rm pr}_g\,,\,{\rm id})\circ {\rm pr}^{n+1,0,n+1}_{n,0,n}\circ  (\mathrm{U}_{p}',{\rm id},\mathrm{U}_{p}')^{-n-1} \Delta_{n+1,0,n+1}^{\etale}\\
         \notag  &= C_q^{-1}({\rm id},\,{\rm pr}_g\,,\,{\rm id})\circ {\rm pr}^{n+1,0,n+1}_{n,0,n}\circ  (\mathrm{U}_{p}',{\rm id},\mathrm{U}_{p}')^{-n-1} {\rm pr}^{(n+1)}_{n+1,0,n+1}\Delta_{(n+1)}^{\etale}\\
         \label{eqn_2025_07_02_1501}  &= C_q^{-1}({\rm id},\,{\rm pr}_g\,,\,{\rm id})\circ {\rm pr}^{n+1,0,n+1}_{n,0,n}\circ {\rm pr}^{(n+1)}_{n+1,0,n+1} (\mathrm{U}_{p}',{\rm id},\mathrm{U}_{p}')^{-n-1}\Delta_{(n+1)}^{\etale}\\
            \notag &= C_q^{-1}({\rm id},\,{\rm pr}_g\,,\,{\rm id})\circ {\rm pr}^{(n+1)}_{n,0,n} (\mathrm{U}_{p}',{\rm id},\mathrm{U}_{p}')^{-n-1}\Delta_{(n+1)}^{\etale}\\
          \notag &= C_q^{-1}({\rm id},\,{\rm pr}_g\,,\,{\rm id})\circ {\rm pr}^{(n)}_{n,0,n}\circ {\rm pr}^{(n+1)}_{(n)} (\mathrm{U}_{p}',{\rm id},\mathrm{U}_{p}')^{-n-1}\Delta_{(n+1)}^{\etale}\\
           \label{eqn_2025_07_02_1502}   &= C_q^{-1}({\rm id},\,{\rm pr}_g\,,\,{\rm id})\circ {\rm pr}^{(n)}_{n,0,n} \circ (\mathrm{U}_{p}',{\rm id},\mathrm{U}_{p}')^{-n-1}\circ {\rm pr}^{(n+1)}_{(n)}\Delta_{(n+1)}^{\etale}\\
       \notag \hbox{(Corollary~\ref{cor_2025_07_02_1244})}\quad\quad\qquad   \notag    &= C_q^{-1}({\rm id},\,{\rm pr}_g\,,\,{\rm id})\circ {\rm pr}^{(n)}_{n,0,n} \circ (\mathrm{U}_{p}',{\rm id},\mathrm{U}_{p}')^{-n-1}(\mathrm{U}_{p}',\mathrm{U}_{p}',\mathrm{U}_{p}') \,\Delta_{(n)}^{\etale}\\
       \notag      &= C_q^{-1}({\rm id},\,{\rm pr}_g\,,\,{\rm id})\circ {\rm pr}^{(n)}_{n,0,n} \circ (\mathrm{U}_{p}',{\rm id},\mathrm{U}_{p}')^{-n}({\rm id},\mathrm{U}_{p}',{\rm id}) \,\Delta_{(n)}^{\etale}\\
        \notag     &= ({\rm id},\,{\rm pr}_g\,,\,{\rm id})\circ {\rm pr}^{(n)}_{n,0,n} \circ ({\rm id},\mathrm{U}_{p}',{\rm id}) \,\Delta_{(n)}^{\ddagger}\\
       \notag      &= ({\rm id},\,{\rm pr}_g\,,\,{\rm id})\circ {\rm pr}^{n,1,n}_{n,0,n} \circ {\rm pr}^{(n)}_{n,1,n} \circ ({\rm id},\mathrm{U}_{p}',{\rm id}) \,\Delta_{(n)}^{\ddagger}\\
       \label{eqn_2025_03_08_1450} &=({\rm id},\,{\rm pr}_g\,,\,{\rm id})\circ {\rm pr}^{n,1,n}_{n,0,n} \circ({\rm id},\mathrm{U}_{p}',{\rm id}) \circ {\rm pr}^{(n)}_{n,1,n}  \,\Delta_{(n)}^{\ddagger}\\
       \label{eqn_2025_03_08_1451} &=({\rm id},\,{\rm pr}_g\,,\,{\rm id})\circ ({\rm id},\mathrm{T}_{p}',{\rm id})\circ  {\rm pr}^{n,1,n}_{n,0,n} \circ {\rm pr}^{(n)}_{n,1,n}  \,\Delta_{(n)}^{\ddagger}\\
       \notag& \hspace{2cm} -({\rm id},\,{\rm pr}_g\,,\,{\rm id})\circ ({\rm id},\mathrm{S}_p^{-1},{\rm id})\circ ({\rm id},\pi_2,{\rm id})_*\circ {\rm pr}^{(n)}_{n,1,n}  \,\Delta_{(n)}^{\ddagger}\\
       \notag & =  ({\rm id},\,{\rm pr}_g\,,\,{\rm id})\circ ({\rm id},\mathrm{T}_{p}',{\rm id})\circ {\rm pr}^{(n)}_{n,0,n}  \,\Delta_{(n)}^{\ddagger} \\
       \notag &\hspace{4cm}- ({\rm id},\,{\rm pr}_g\,,\,{\rm id})\circ ({\rm id},\mathrm{S}_p^{-1},{\rm id})\circ ({\rm id},\pi_2,{\rm id})_*\circ {\rm pr}^{(n)}_{n,1,n}  \,\Delta_{(n)}^{\ddagger}\\
      \notag & =  a_p(g)\cdot \Delta_{n}^{\etale}(g)- \chi_g(p)\cdot  \,\widetilde{\Delta}_{n}^{\etale}(g)\,.
    \end{align}
    Here, \eqref{eqn_2025_07_02_1501}, \eqref{eqn_2025_07_02_1502}, and \eqref{eqn_2025_03_08_1450} are valid since %{\color{red} [check these for the ``non-split'' Hecke operators] 
    ${\rm pr}^j_{j-1}\,\mathrm{U}_{p}'=\mathrm{U}_{p}'\,{\rm pr}^j_{j-1}$ whenever $j>1$, whereas \eqref{eqn_2025_03_08_1451} holds because ${\rm pr}^1_{0}\,\mathrm{U}_{p}'=\mathrm{T}_{p}'\,{\rm pr}^1_{0}-\mathrm{S}_p^{-1}\,(\pi_2)_*$ (cf. Proposition~\ref{Prop_of_KLZ17}).
\end{proof}

\begin{remark}
    A more general form of the asserted identity in Proposition~\ref{prop_2025_03_08_1203} holds: for any pair $(n_1,n_3)$ of positive integers, we have  
    \[
   (\pi_1, {\rm id},\pi_1)_*\,\Delta_{n_1+1,0,n_3+1}^{\etale}(g)=a_p(g) \Delta_{n_1,0,n_3} ^{\etale}(g)-\chi_g(p)\widetilde{\Delta}_{n_1,0,n_3}^{\etale}(g)\,.
    \]
    The proof of this generalization is virtually identical to that of Proposition~\ref{prop_2025_03_08_1203}. Since we do not need this extension in what follows, we leave the details to the interested reader.
\end{remark}

\begin{lemma}
    \label{lemma_2025_03_08_1548}
    Let $n\geq 2$ be an integer. Then,
    $$(\pi_1,{\rm id},\pi_1)_*\,\widetilde{\Delta}_{n}^{\etale}(g)=p\cdot {\Delta}_{n-1}^{\etale}(g)\,.$$
\end{lemma}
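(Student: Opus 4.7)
The plan is to imitate the computation in the proof of Proposition~\ref{prop_2025_03_08_1203}, but now exploiting the first identity of Proposition~\ref{Prop_of_KLZ17}, namely $\pi_{2,*}\circ\mathrm{U}_p'=p\cdot\pi_{1,*}$, in place of the second one. Unpacking definitions, I would first observe that since $\Delta_{n,1,n}^{\ddagger}={\rm pr}_{n,1,n}^{(n)}\Delta_{(n)}^{\ddagger}$ (because ${\rm pr}_{n,1,n}^{(n)}$ acts only on the middle factor and therefore commutes with the Hecke operators $(\mathrm{U}_p')^{-n}$ appearing in the definition of $\Delta_{(n)}^{\ddagger}$), one has
\[
\widetilde{\Delta}_{n}^{\etale}(g)=({\rm id},{\rm pr}_g\circ\pi_{2,*}\circ\mu_{*},{\rm id})\circ\texttt{K\"u}[\Delta_{(n)}^{\ddagger}],
\]
where $\mu=\pi_1^{n-1}:X(U_n)\to X(U_1)$ is the iterated natural degeneracy and $\texttt{K\"u}$ denotes the K\"unneth projection.

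Next, applying $(\pi_1,{\rm id},\pi_1)_{*}$ and factoring
\[
(\pi_{1,*},\pi_{2,*}\circ\mu_{*},\pi_{1,*})=({\rm id},\pi_{2,*}\circ\pi_{1,*}^{n-2},{\rm id})\circ(\pi_{1,*},\pi_{1,*},\pi_{1,*}),
\]
I extract a triple pushforward from $\mathbf{X}_{(n)}$ to $\mathbf{X}_{(n-1)}$. Combining Corollary~\ref{cor_2025_07_02_1244} with the fact that $\mathrm{U}_p'$ commutes with the natural degeneracy $\pi_{1,*}$ at all levels $\geq 2$, I would rewrite
\[
(\pi_1,\pi_1,\pi_1)_{*}\Delta_{(n)}^{\ddagger}=({\rm id},\mathrm{U}_p',{\rm id})\Delta_{(n-1)}^{\ddagger},
\]
so that the $(\mathrm{U}_p')^{-n}$-factors on the first and third factors become $(\mathrm{U}_p')^{-(n-1)}$, leaving behind a residual $\mathrm{U}_p'$ on the middle factor.

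The key step is then to invoke the first identity of Proposition~\ref{Prop_of_KLZ17}. Moving the residual $\mathrm{U}_p'$ past $\pi_{1,*}^{n-2}$ (again using commutation at levels $\geq 2$) to land at level $1$, I can replace $\pi_{2,*}\circ\mathrm{U}_p'$ by $p\cdot\pi_{1,*}$. This converts $\pi_{2,*}\circ\pi_{1,*}^{n-2}\circ\mathrm{U}_p'$ into $p\cdot\pi_{1,*}^{n-1}=p\cdot\nu_{*}$, where $\nu:X(U_{n-1})\to X(U_0)$ is the $(n-1)$-fold natural degeneracy. Packaging the result back via the identity $\Delta_{n-1,0,n-1}^{\ddagger}=({\rm id},\nu_{*},{\rm id})\Delta_{(n-1)}^{\ddagger}$ gives
\[
p\cdot({\rm id},{\rm pr}_g\circ\nu_{*},{\rm id})\circ\texttt{K\"u}[\Delta_{(n-1)}^{\ddagger}]=p\cdot({\rm id},{\rm pr}_g,{\rm id})\circ\texttt{K\"u}[\Delta_{n-1,0,n-1}^{\ddagger}]=p\cdot\Delta_{n-1}^{\etale}(g),
\]
which is the desired identity.

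The main obstacle I anticipate is the bookkeeping around these commutation relations: one must carefully track the level at which each copy of $\pi_1$, $\pi_2$, and $\mathrm{U}_p'$ acts, since the commutator $[\pi_{1,*},\mathrm{U}_p']$ fails precisely at level $1\to 0$, which is exactly the level at which one wishes to apply the multiplicative identity $\pi_{2,*}\circ\mathrm{U}_p'=p\cdot\pi_{1,*}$. The hypothesis $n\geq 2$ is tailored to ensure that every $\mathrm{U}_p'$ which must be shuffled past a natural degeneracy map lives at a level $\geq 2$, so that the correction term $-\mathrm{S}_p^{-1}\pi_{2,*}$ appearing in the second identity of Proposition~\ref{Prop_of_KLZ17} never enters the computation.
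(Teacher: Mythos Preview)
Your proposal is correct and takes essentially the same approach as the paper's proof: both unwind the definition, isolate the full degeneracy $(\pi_1,\pi_1,\pi_1)_*$ from $\mathbf{X}_{(n)}$ to $\mathbf{X}_{(n-1)}$, apply Corollary~\ref{cor_2025_07_02_1244} to produce an extra $\mathrm{U}_p'$ on the middle factor, commute it through the $\pi_{1,*}$'s (valid at levels $\geq 2$), and then invoke $\pi_{2,*}\circ\mathrm{U}_p'=p\cdot\pi_{1,*}$ from Proposition~\ref{Prop_of_KLZ17}. The only difference is bookkeeping: you factor the composite degeneracy maps at the outset, whereas the paper rewrites the compositions step by step; the key inputs and the order in which they are applied are identical.
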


\begin{proof}
    The proof of this lemma is similar to that of Proposition~\ref{prop_2025_03_08_1203}, and it follows from a direct calculation:
    \begin{align}
     \notag    (\pi_1, {\rm id},\pi_1)_*\,\widetilde{\Delta}_{n}^{\etale}(g)&= ({\rm id},\,{\rm pr}_g\,,\,{\rm id})\circ {\rm pr}^{n,0,n}_{n-1,0,n-1}\circ ({\rm id},\pi_2,{\rm id})_*\circ {\rm pr}^{(n)}_{n,1,n} \,\Delta_{(n)}^{\ddagger}\\
         \notag  &= C_q^{-1} ({\rm id},\,{\rm pr}_g\,,\,{\rm id})\circ {\rm pr}^{n,0,n}_{n-1,0,n-1}\circ ({\rm id},\pi_2,{\rm id})_*\circ {\rm pr}^{(n)}_{n,1,n} \circ (\mathrm{U}_{p}',{\rm id},\mathrm{U}_{p}')^{-n}\Delta_{(n)}^{\etale} \\
          \notag  &= C_q^{-1} ({\rm id},\,{\rm pr}_g\,,\,{\rm id})\circ ({\rm id},\pi_2,{\rm id})_*\circ {\rm pr}^{n,1,n}_{n-1,1,n-1} \circ  {\rm pr}^{(n)}_{n,1,n} \circ (\mathrm{U}_{p}',{\rm id},\mathrm{U}_{p}')^{-n}\Delta_{(n)}^{\etale} \\
         \notag  &= C_q^{-1} ({\rm id},\,{\rm pr}_g\,,\,{\rm id})\circ  ({\rm id},\pi_2,{\rm id})_*\circ {\rm pr}^{(n-1)}_{n-1,1,n-1} \circ {\rm pr}^{(n)}_{(n-1)}\circ \, (\mathrm{U}_{p}',{\rm id},\mathrm{U}_{p}')^{-n}\Delta_{(n)}^{\etale} \\
         \notag  &= C_q^{-1} ({\rm id},\,{\rm pr}_g\,,\,{\rm id})\circ  ({\rm id},\pi_2,{\rm id})_*\circ {\rm pr}^{(n-1)}_{n-1,1,n-1} \circ \, (\mathrm{U}_{p}',{\rm id},\mathrm{U}_{p}')^{-n}\,{\rm pr}^{(n)}_{(n-1)}\,\Delta_{(n)}^{\etale} \\
     \notag \hbox{(Corollary~\ref{cor_2025_07_02_1244})}\qquad  &= C_q^{-1} ({\rm id},\,{\rm pr}_g\,,\,{\rm id})\circ  ({\rm id},\pi_2,{\rm id})_*\circ {\rm pr}^{(n-1)}_{n-1,1,n-1} \circ \, (\mathrm{U}_{p}',{\rm id},\mathrm{U}_{p}')^{-n}(\mathrm{U}_{p}',\mathrm{U}_{p}',\mathrm{U}_{p}')\,\Delta_{(n-1)}^{\etale} \\
     \notag   &= C_q^{-1} ({\rm id},\,{\rm pr}_g\,,\,{\rm id})\circ  ({\rm id},\pi_2,{\rm id})_*\circ {\rm pr}^{(n-1)}_{n-1,1,n-1} \circ \, (\mathrm{U}_{p}',{\rm id},\mathrm{U}_{p}')^{-n+1}({\rm id},\mathrm{U}_{p}',{\rm id})\,\Delta_{(n-1)}^{\etale} \\
     \notag   &= ({\rm id},\,{\rm pr}_g\,,\,{\rm id})\circ  ({\rm id},\pi_2,{\rm id})_*\circ {\rm pr}^{(n-1)}_{n-1,1,n-1} \circ \, ({\rm id},\mathrm{U}_{p}',{\rm id})\,\Delta_{(n-1)}^{\ddagger} \\
      \notag   &= ({\rm id},\,{\rm pr}_g\,,\,{\rm id})\circ  ({\rm id},\pi_2,{\rm id})_*\circ ({\rm id},\mathrm{U}_{p}',{\rm id})\circ {\rm pr}^{(n-1)}_{n-1,1,n-1} \,\Delta_{(n-1)}^{\ddagger}\\
      \notag   &=p\cdot  ({\rm id},\,{\rm pr}_g\,,\,{\rm id})\circ   ({\rm id},\pi_1,{\rm id})_*\circ  {\rm pr}^{(n-1)}_{n-1,1,n-1} \,\Delta_{(n-1)}^{\ddagger}\\
      \notag   &=p\cdot  ({\rm id},\,{\rm pr}_g\,,\,{\rm id})\circ    {\rm pr}^{(n-1)}_{n-1,0,n-1} \,\Delta_{(n-1)}^{\ddagger}= p\cdot \Delta_{n-1}^{\etale}(g)\,.
    \end{align}
    Here, the fifth and ninth equalities are valid since    ${\rm pr}^j_{j-1}\,\mathrm{U}_{p}'=\mathrm{U}_{p}'\,{\rm pr}^j_{j-1}$ whenever $j>1$, while the 10th holds because $\pi_{2,*}  \,\mathrm{U}_{p}'=p\, \pi_{1,*}$.
\end{proof}

\subsubsection{}
Let $f$ be an eigenform on $B$ of level $U_{1}$ given as in \eqref{eqn_new_level}. We assume that $f$ is $\mathrm{U}_p'$-ordinary and we define the Carayol--Deligne Galois representation $V_f^*$ (and the lattice $T_f^*\subset V_f^*$) as well as a projection ${\rm pr}_f$ in a manner analogous to \S\ref{subsubsec_2025_07_02_1527}. %where $p\nmid N_g\mid N$. 
We set 
$$H^3_{\etale}(\mathbf{X}_{1,0,n}\times_{\QQ}
\overline{\QQ},\ZZ_p)^{f \,g\, \circ} := T_f^*\otimes T_g^* \otimes e_{\mathrm{U}_{p}'} H^1_{\etale} (X(U_{n})\times_{\QQ}
\overline{\QQ}, \ZZ_p)\,.$$
\begin{defn}
    \label{defn_2025_03_21_2027}
    We set 
    $$\Delta_{n}^{\etale}(f,g):=({\rm pr}_{f},\id,\id)\circ ({\rm pr}^{n,0,n}_{1,0,n})\,\,\Delta_{n}^{\etale}(g)\in  H^1(\QQ, H^3_{\etale}(\mathbf{X}_{1,0,n}\times_{\QQ}
\overline{\QQ},\ZZ_p)^{f \,g\, \circ} (2))$$
    $$\widetilde{\Delta}_{n}^{\etale}(f,g):=({\rm pr}_{f},\id,\id)\circ ({\rm pr}^{n,0,n}_{1,0,n})\,\,\widetilde{\Delta}_{n}^{\etale}(g)\in  H^1(\QQ, H^3_{\etale}(\mathbf{X}_{1,0,n}\times_{\QQ}
\overline{\QQ},\ZZ_p)^{f \,g\, \circ} (2))\,.$$
\end{defn}

\begin{corollary}
        \label{cor_2025_03_22_0955}
    For any positive integer $n$, we have
    \begin{equation}
    \label{eqn_2025_03_22_1111}
        ({\rm id}, {\rm id},\pi_1)_*\,\Delta_{n+1}^{\etale}(f, g)=a_p(g) \Delta_{n}^{\etale}(f,g)-\chi_g(p)\widetilde{\Delta}_{n}^{\etale}(f,g)\,,
    \end{equation}
        \begin{equation}
    \label{eqn_2025_03_22_1112}
        ({\rm id}, {\rm id},\pi_1)_*\,\widetilde\Delta_{n+1}^{\etale}(f, g)=p\cdot \Delta_{n} ^{\etale}(f,g)\,.
    \end{equation}
\end{corollary}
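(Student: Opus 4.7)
The plan is to deduce Corollary~\ref{cor_2025_03_22_0955} as a direct formal consequence of Proposition~\ref{prop_2025_03_08_1203} and Lemma~\ref{lemma_2025_03_08_1548}, by pushing those identities through the projection/pushforward $({\rm pr}_f, \id, \id) \circ {\rm pr}^{n,0,n}_{1,0,n}$ that cuts out $\Delta_n^\etale(f,g)$ and $\widetilde{\Delta}_n^\etale(f,g)$ from $\Delta_n^\etale(g)$ and $\widetilde{\Delta}_n^\etale(g)$ in Definition~\ref{defn_2025_03_21_2027}.

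The key point to verify is that the first-coordinate operations $({\rm pr}_f, \id, \id)$ and the degeneracy pushforward ${\rm pr}^{n,0,n}_{1,0,n}$ commute with the third-coordinate pushforward $(\id,\id,\pi_1)_*$, and that composing first-coordinate and third-coordinate degeneracies is order-independent. Both facts are immediate from the triple-product structure of $\mathbf{X}_{n_1,n_2,n_3}$ and the K\"unneth decomposition used to identify the targets of the classes $\Delta^\etale$, since first- and third-coordinate maps act on disjoint factors. In particular, one has the identity
\[
{\rm pr}^{n,0,n}_{1,0,n} \circ (\pi_1, \id, \pi_1)_* \;=\; (\id, \id, \pi_1)_* \circ {\rm pr}^{n+1,0,n+1}_{1,0,n+1},
\]
both compositions being the simultaneous pushforward from level $(n+1,0,n+1)$ to $(1,0,n)$ (first coordinate from $n+1$ down to $1$, third from $n+1$ down to $n$, via the same natural degeneracies).

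Granting this, for \eqref{eqn_2025_03_22_1111} I apply $({\rm pr}_f, \id, \id) \circ {\rm pr}^{n,0,n}_{1,0,n}$ to both sides of Proposition~\ref{prop_2025_03_08_1203}. On the right, Definition~\ref{defn_2025_03_21_2027} directly yields $a_p(g)\Delta_n^\etale(f,g) - \chi_g(p)\widetilde{\Delta}_n^\etale(f,g)$. On the left, the commutativities above rearrange $({\rm pr}_f, \id, \id) \circ {\rm pr}^{n,0,n}_{1,0,n} \circ (\pi_1, \id, \pi_1)_*$ into $(\id,\id,\pi_1)_* \circ ({\rm pr}_f, \id, \id) \circ {\rm pr}^{n+1,0,n+1}_{1,0,n+1}$, so the left-hand side becomes $(\id,\id,\pi_1)_*\,\Delta_{n+1}^\etale(f,g)$, again by Definition~\ref{defn_2025_03_21_2027}. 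Identity \eqref{eqn_2025_03_22_1112} is obtained by the same manipulation applied to Lemma~\ref{lemma_2025_03_08_1548} at index $n+1$ (which is in its permitted range since $n+1 \geq 2$ whenever $n \geq 1$), using in addition that the auxiliary $(\id, \pi_{2,*}, \id)$ step entering the definition of $\widetilde{\Delta}^\etale$ in \S\ref{subsubsec_2025_10_20_1513} also commutes with ${\rm pr}_f$ and the first-coordinate degeneracies.

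Since Proposition~\ref{prop_2025_03_08_1203} and Lemma~\ref{lemma_2025_03_08_1548} have already absorbed all the substantive combinatorics (Proposition~\ref{Prop_of_KLZ17} together with the Hecke commutation identities of \S\ref{subsubsec_2027_07_02_1557}), I do not anticipate any genuine obstacle in this last step; the only mild care needed is to match source and target levels of the various pushforwards carefully, so that the commutativities hold on the nose rather than only up to a Hecke-correspondence correction.
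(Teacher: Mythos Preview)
Your proposal is correct and essentially identical to the paper's proof: both apply $({\rm pr}_f,\id,\id)\circ {\rm pr}^{n,0,n}_{1,0,n}$ to the identities of Proposition~\ref{prop_2025_03_08_1203} and Lemma~\ref{lemma_2025_03_08_1548} (the latter at index $n+1$) and use the same commutation ${\rm pr}^{n,0,n}_{1,0,n}\circ(\pi_1,\id,\pi_1)_*=(\id,\id,\pi_1)_*\circ {\rm pr}^{n+1,0,n+1}_{1,0,n+1}$ together with the fact that ${\rm pr}_f$ on the first factor commutes with third-coordinate pushforwards. Your observation that Lemma~\ref{lemma_2025_03_08_1548} is invoked at $n+1\geq 2$ is exactly the point the paper leaves implicit.
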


\begin{proof}
    We apply $({\rm pr}_{f},\id,\id)\circ ({\rm pr}^{n,0,n}_{1,0,n})$ to both sides of the asserted equality in Proposition~\ref{prop_2025_03_08_1203}, to see that
    \begin{align*}
        ({\rm pr}_{f},\id,\id)\circ ({\rm pr}^{n,0,n}_{1,0,n})\circ(\pi_1, {\rm id},\pi_1)_*\,\Delta_{n+1}^{\etale}(g)&\,=a_p(g)({\rm pr}_{f},\id,\id)\circ ({\rm pr}^{n,0,n}_{1,0,n})\, \Delta_{n}^{\etale}(g)\\
        &\qquad\qquad\qquad \qquad  -\chi_g(p)({\rm pr}_{f},\id,\id)\circ ({\rm pr}^{n,0,n}_{1,0,n})\,\widetilde{\Delta}_{n}^{\etale}(g)\\
        &\,=a_p(g) \Delta_{n}^{\etale}(f,g)-\chi_g(p)\widetilde{\Delta}_{n}^{\etale}(f,g).
    \end{align*}
    The proof of \eqref{eqn_2025_03_22_1111} follows from noting that 
    \begin{align*}
    ({\rm pr}_{f},\id,\id)\circ ({\rm pr}^{n,0,n}_{1,0,n})\circ(\pi_1, {\rm id},\pi_1)_*\,\Delta_{n+1}^{\etale}(g)&=\, ({\rm pr}_{f},\id,\id)\circ({\rm id}, {\rm id},\pi_1)_*\,\circ \, ({\rm pr}^{n+1,0,n+1}_{1,0,n+1})\,\Delta_{n+1}^{\etale}(g)\\
    &\,=({\rm id}, {\rm id},\pi_1)_*\,\circ\,({\rm pr}_{f},\id,\id)\circ \, ({\rm pr}^{n+1,0,n+1}_{1,0,n+1})\,\Delta_{n+1}^{\etale}(g)\\
    &\,=:({\rm id}, {\rm id},\pi_1)_*\,\circ\,\Delta_{n+1}^{\etale}(f,g)\,.
        \end{align*}
        The proof of \eqref{eqn_2025_03_22_1112} can be deduced from Lemma~\ref{lemma_2025_03_08_1548} in a similar fashion.
\end{proof}

\subsubsection{}\label{sec:specialization} We recall from \S\ref{subsubsec_2027_07_02_1557} the diamond operator $\langle x, y, z \rangle$ attached to the element $\left(\left( \begin{smallmatrix}
    1 & \\ & x
\end{smallmatrix}\right),\left( \begin{smallmatrix}
    1 & \\ & y
\end{smallmatrix}\right),\left( \begin{smallmatrix}
    1 & \\ & z
\end{smallmatrix}\right)\right)$ for $x,y,z\in \ZZ_p^\times$. 
\begin{proposition}
        \label{prop_2025_03_08_1713}
Let $n\geq 2$ be an integer. For any integer $d\equiv 1 \mod p^{n-1}$, we have
    $$\langle 1,1,d\rangle\,\widetilde{\Delta}_{n}^{\etale}(f, g)=\widetilde{\Delta}_{n}^{\etale}(f,g)\,.$$
\end{proposition}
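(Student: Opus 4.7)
The plan is to reduce the invariance to a statement about the underlying cycle at level $(U_1,U_0,U_n)$ in $\mathbf{X}_{1,0,n}$, and then verify it by an explicit reparametrization of $X_{Z,n}$. First, I would observe that $\widetilde{\Delta}_n^\etale(f,g)$ is constructed from the twisted diagonal image of $X_{Z,n}$ in $\mathbf{X}_{\mathcal{U}_{(n)}}$ by applying the Hecke operators $(e_{\mathrm{U}_p'},\mathrm{T}_q'-(q+1),e_{\mathrm{U}_p'})$ and $(\mathrm{U}_p',\mathrm{id},\mathrm{U}_p')^{-n}$, the degeneracy ${\rm pr}^{(n)}_{1,1,n}$, the middle-factor pushforward $(\mathrm{id},\pi_{2,*},\mathrm{id})$, and the $f$- and $g$-isotypic projections $({\rm pr}_f,{\rm pr}_g,\mathrm{id})$. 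All of these operations act on factors other than the third, or are Hecke operators, and hence commute with $\langle 1,1,d\rangle$. It therefore suffices to show that the projected cycle in $\mathbf{X}_{1,0,n}$ is fixed by $\langle 1,1,d\rangle$.

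Using Remark~\ref{remark_2025_03_2025_1427} together with the identity $\pi_{2,*}\bigl(\bigl[g\bigl(\begin{smallmatrix}p^n&1\\&1\end{smallmatrix}\bigr)\bigr]_{U_1}\bigr)=\bigl[g\bigl(\begin{smallmatrix}p^{n-1}&1\\&1\end{smallmatrix}\bigr)\bigr]_{U_0}$ (which follows from $\pi_2={\rm pr}'\circ\mathcal{T}_{\eta_p^{-1}}$), this projected cycle is the image of $X_{Z,n}$ under the map
\[
g\;\longmapsto\;\Bigl([g\eta_p^n]_{U_1},\ \bigl[g\bigl(\begin{smallmatrix}p^{n-1}&1\\&1\end{smallmatrix}\bigr)\bigr]_{U_0},\ [g w_n]_{U_n}\Bigr),
\]
where $w_n=\bigl(\begin{smallmatrix}&1\\-p^n&\end{smallmatrix}\bigr)$. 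Setting $\delta:=\bigl(\begin{smallmatrix}d&\\&1\end{smallmatrix}\bigr)$, the matrix identity $w_n\bigl(\begin{smallmatrix}1&\\&d\end{smallmatrix}\bigr)=\delta w_n$ shows that $\langle 1,1,d\rangle$ replaces the third-factor representative $[g w_n]_{U_n}$ by $[g\delta w_n]_{U_n}$. Since $\delta$ normalizes $U_{Z,n}$, right multiplication by $\delta$ defines a bijection of $X_{Z,n}$, so reparametrizing by $g\mapsto g\delta$ presents the transformed cycle as the image of $g\mapsto\bigl([g\delta^{-1}\eta_p^n]_{U_1},\,\bigl[g\delta^{-1}\bigl(\begin{smallmatrix}p^{n-1}&1\\&1\end{smallmatrix}\bigr)\bigr]_{U_0},\,[g w_n]_{U_n}\bigr)$.

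It then remains to verify two matrix computations. On the first factor, $\delta^{-1}\eta_p^n=\eta_p^n\delta^{-1}$ since diagonals commute, and $\delta^{-1}\in U_1$ because $d\equiv 1\pmod{p^{n-1}}$ together with $n\geq 2$ forces $d\equiv 1\pmod{p}$. On the middle factor, a direct computation yields
\[
\bigl(\begin{smallmatrix}p^{n-1}&1\\&1\end{smallmatrix}\bigr)^{-1}\delta^{-1}\bigl(\begin{smallmatrix}p^{n-1}&1\\&1\end{smallmatrix}\bigr)=\bigl(\begin{smallmatrix}d^{-1}&p^{-(n-1)}(d^{-1}-1)\\&1\end{smallmatrix}\bigr),
\]
which lies in $\mathrm{GL}_2(\Zp)=U_0|_p$ precisely when $d\equiv 1\pmod{p^{n-1}}$. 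Together, these give equality of the reparametrized and original cycles at each factor, so the underlying cycle classes agree.

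The key subtlety, and the arithmetic reason why the hypothesis is $d\equiv 1\pmod{p^{n-1}}$ rather than $d\equiv 1\pmod{p^n}$, is the appearance of $\bigl(\begin{smallmatrix}p^{n-1}&1\\&1\end{smallmatrix}\bigr)$ (in place of $\bigl(\begin{smallmatrix}p^n&1\\&1\end{smallmatrix}\bigr)$) in the middle factor after $\pi_{2,*}$: the single power of $p$ absorbed by the $\eta_p^{-1}$-twist is exactly what permits the middle-factor conjugation to land in $\mathrm{GL}_2(\Zp)$ under the weaker congruence on $d$.
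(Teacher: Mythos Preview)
Your proof is correct and follows essentially the same route as the paper's: both reduce, via commutation of $\langle 1,1,d\rangle$ with the Hecke operators and projections, to a cycle-level invariance, and then absorb the diamond action into right multiplication by $\delta=\bigl(\begin{smallmatrix}d&\\&1\end{smallmatrix}\bigr)$ on $X_{Z,n}$ using matrix identities --- your check that $\bigl(\begin{smallmatrix}p^{n-1}&1\\&1\end{smallmatrix}\bigr)^{-1}\delta^{-1}\bigl(\begin{smallmatrix}p^{n-1}&1\\&1\end{smallmatrix}\bigr)\in\GL_2(\Zp)$ is exactly the paper's observation that $\bigl(\begin{smallmatrix}d&(d-1)p^{1-n}\\&1\end{smallmatrix}\bigr)\in U_{n-1}$, just conjugated. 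The only organizational difference is that you compute downstream at level $(U_1,U_0,U_n)$ after all pushforwards, whereas the paper lifts back to level $\mathcal{U}_{(n)}$; your final step (the bijection $g\mapsto g\delta$ preserving $\mathbf{1}_n$) is in fact slightly cleaner than the paper's, which takes an unnecessary detour through ``$d$ is a square mod $p^n$'' to fix each component individually rather than just permuting them.
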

\begin{proof}
    %\item[i)] 
    We recall that 
$$\widetilde{\Delta}_{n}^{\etale}(f,g)=({\rm pr}_f,\,{\rm pr}_g\,,\,{\rm id})\circ ({\rm id},\pi_2,{\rm id})_*\circ {\rm pr}^{(n)}_{1,1,n}  \,\Delta_{(n)}^{\ddagger}\,.$$
Hence, by obvious commutation relations (including $\pi_{2,*}\circ \pi_{1,*}=\pi_{1,*}\circ\pi_{2,*}$), we have
\begin{align}
\label{eqn_2025_03_09_1004}
\begin{aligned}
    \langle 1,1,d^{-1}\rangle\,\widetilde{\Delta}_{n}^{\etale}(f,g) =({\rm pr}_f,\,{\rm pr}_g\,,\,{\rm id})\circ ({\rm id},\pi_1,{\rm id})_*\circ \,{\rm pr}^{n-1,n-1,n}_{1,1,n}\circ \langle 1,1,d^{-1}\rangle \circ (\pi_1,\pi_2,{\rm id})_*\,\Delta_{(n)}^{\ddagger}\,.
\end{aligned}
\end{align}
Recall also that 
$$\Delta_{(n)}^{\ddagger}:=C_q^{-1}(\mathrm{U}_{p}',{\rm id},\mathrm{U}_{p}')^{-n}\Delta_{(n)}^{\etale}\,,$$
so that, since $({\rm U}_p', {\rm id},{\rm U}_p')$ commutes with diamond action and $(\pi_1,\pi_2,{\rm id})_*$ (note that $n\geq 2$),
\begin{equation}
\label{eqn_2025_03_09_1005}
\langle 1,1,d^{-1}\rangle \circ (\pi_1,\pi_2,{\rm id})_* \,\Delta_{(n)}^{\ddagger}=C_q^{-1}(\mathrm{U}_{p}',{\rm id},\mathrm{U}_{p}')^{-n}\langle 1,1,d^{-1}\rangle  \circ (\pi_1,\pi_2,{\rm id})_*\,\Delta_{(n)}^{\etale}\,.
\end{equation}
Combining \eqref{eqn_2025_03_09_1004} and \eqref{eqn_2025_03_09_1005}, it therefore suffices (in order to check the claimed equality in the first part of our proposition) to prove that
\begin{equation}
\label{eqn_2025_03_22_1357}
     \langle 1,1,d^{-1}\rangle \circ (\pi_1,\pi_2,{\rm id})_*\,\Delta_{(n)}^{\etale}=(\pi_1,\pi_2,{\rm id})_*\,\Delta_{(n)}^{\etale}\,.
\end{equation}
Furthermore, as we have (by definition)
\begin{equation}
\label{eqn_2025_03_22_1414}
    \Delta_{(n)}^{\etale}:= 
%\texttt{K\"u}\circ 
{\rm cl}_1 \circ (e_{\mathrm{U}_{p}'}, \mathrm{T}_{q}'-(q+1), e_{\mathrm{U}_{p}'}) \cdot\,\Delta_{(n)} %\circ \iota(\eta_p^n)_\star \circ \iota_{n,\star}^u(\mathbf{1}_{n})\,, 
\end{equation}
and $\langle 1,1,d^{-1}\rangle \circ (\pi_1,\pi_2,{\rm id})_*$ commutes with the
%K\"unneth isomorphism $\texttt{K\"u}$, 
cycle class map ${\rm cl}_1$ as well as the ordinary projector (we use the fact that $n\geq 2$ once again), it suffices to show that
\begin{equation}
\label{eqn_2025_07_02_1642}
\langle 1,1,d^{-1}\rangle \circ (\pi_1,\pi_2,{\rm id})_*\,\Delta_{(n)}=(\pi_1,\pi_2,{\rm id})_*\,\Delta_{(n)}\,. 
\end{equation}
%Since $\Delta_{(n)} = {\rm b}_{(n),*}\Delta_{\mathcal{U}_{(n)}}$, the sought-after equality \eqref{eqn_2025_07_02_1642} follows (thanks to Proposition \ref{prop_2025_05_15_0828}) once we show 
%\begin{equation}
%\label{eqn_2025_07_02_1647} 
%\langle 1,1,d^{-1}\rangle \circ (\pi_1,\pi_2,{\rm id})_*\,\Delta_{\mathcal{U}_{(n)}}= (\pi_1,\pi_2,{\rm id})_* \circ\,\langle 1,1,d^{-1}\rangle\,\Delta_{\mathcal{U}_{(n)}}=(\pi_1,\pi_2,{\rm id})_*\,\Delta_{\mathcal{U}_{(n)}}\,, 
%\end{equation}
%where the first equality is clear. In \eqref{eqn_2025_07_02_1647}, we have abusively denoted by $(\pi_1,\pi_2,{\rm id})$ the morphism $\mathbf{X}_{\mathcal{U}_{(n)}} \to \mathbf{X}_{\mathcal{U}_{n-1,n-1,n}}$ given as the compositum 
%\[\mathbf{X}_{\mathcal{U}_{(n)}} \xrightarrow[]{\mathcal{T}_{{}^2\eta_p^{-1}}} \mathbf{X}_{{}^2\eta_p \,\mathcal{U}_{(n)}\,{}^2\eta_p^{-1} } \xrightarrow{\,\,{\rm pr}\,\,} \mathbf{X}_{\mathcal{U}_{n-1,n-1,n}}, \]
%where we recall that ${}^2\eta_p = \left({\rm id},\eta_p ,{\rm id}\right)$ and ${\rm pr}$ denotes the natural degeneracy map given by the inclusion ${}^2\eta_p\, \mathcal{U}_{(n)}\,{}^2\eta_p^{-1} \subseteq \mathcal{U}_{n-1,n-1,n}$.

%{(2025/07/02) We should take one more step back and move to non-split tower and the class $\Delta_{\mathcal{U}_{(n)}}$, and recast this sought after identity in terms of that.}

%{(2025/07/03)}

Since $d\equiv 1 \pmod{p^{n-1}}$, the elements $\left ( \begin{smallmatrix}
    d & \\ & 1
\end{smallmatrix}\right), \left ( \begin{smallmatrix}
    d & (d-1)p^{1-n}\\ & 1
\end{smallmatrix}\right) \in \mathrm{GL}_2(\ZZ_p)$ belong to $U_{n-1}$, and hence
\begin{align*}
    \langle 1,1,d^{-1}\rangle \circ (\pi_1,\pi_2,{\rm id})_*\,\Delta_{(n)} &= \left (\left ( \begin{smallmatrix}
    d & \\ & 1
\end{smallmatrix}\right), \left ( \begin{smallmatrix}
    d & (d-1)p^{1-n}\\ & 1
\end{smallmatrix}\right), \left ( \begin{smallmatrix}
    1 & \\ & d
\end{smallmatrix}\right)\right)_* (\pi_1,\pi_2,{\rm id})_*\,\Delta_{(n)} \\ &= (\pi_1,\pi_2,{\rm id})_* \left (\left ( \begin{smallmatrix}
    d & \\ & 1
\end{smallmatrix}\right), \left ( \begin{smallmatrix}
    d & (d-1)p^{-n}\\ & 1
\end{smallmatrix}\right), \left ( \begin{smallmatrix}
    1 & \\ & d
\end{smallmatrix}\right)\right)_* \,\Delta_{(n)}.
\end{align*}
Moreover, as $\Delta_{(n)} = {\rm b}_{(n),*}\Delta_{\mathcal{U}_{(n)}}$, the sought-after equality \eqref{eqn_2025_07_02_1642} follows (thanks to Proposition \ref{prop_2025_05_15_0828}) once we show that
\begin{equation}
\label{eqn_2025_07_03_1204} 
\left (\left ( \begin{smallmatrix}
    d & \\ & 1
\end{smallmatrix}\right), \left ( \begin{smallmatrix}
    d & (d-1)p^{-n}\\ & 1
\end{smallmatrix}\right), \left ( \begin{smallmatrix}
    1 & \\ & d
\end{smallmatrix}\right)\right)_* \,\Delta_{\mathcal{U}_{(n)}}=\,\Delta_{\mathcal{U}_{(n)}}\,. 
\end{equation}
By Definition \ref{def_2025_07_03_1206}, we have
\begin{align*}
    \left (\left ( \begin{smallmatrix}
    d & \\ & 1
\end{smallmatrix}\right), \left ( \begin{smallmatrix}
    d & (d-1)p^{-n}\\ & 1
\end{smallmatrix}\right), \left ( \begin{smallmatrix}
    1 & \\ & d
\end{smallmatrix}\right)\right)_* \,\Delta_{\mathcal{U}_{(n)}} & = \left (\left ( \begin{smallmatrix}
    d & \\ & 1
\end{smallmatrix}\right), \left ( \begin{smallmatrix}
    d & (d-1)p^{-n}\\ & 1
\end{smallmatrix}\right), \left ( \begin{smallmatrix}
    1 & \\ & d
\end{smallmatrix}\right)\right)_* \circ  \iota(\eta_p^n)_\ast \circ \iota_{n,\ast}^u(\mathbf{1}_{n}) \\ 
& =\iota(\eta_p^n)_\ast \circ \left (\left ( \begin{smallmatrix}
    d & \\ & 1
\end{smallmatrix}\right), \left ( \begin{smallmatrix}
    d & d-1\\ & 1
\end{smallmatrix}\right), \left ( \begin{smallmatrix}
    1 & \\ & d
\end{smallmatrix}\right)\right)_* \circ  \iota_{n,\ast}^u(\mathbf{1}_{n}) \\ 
& =\iota(\eta_p^n)_\ast \circ  u_*  \circ \iota \left (\left ( \begin{smallmatrix}
    d & \\ & 1
\end{smallmatrix}\right)\right)_* \circ \iota_{U_{Z,n}\,,\, u\, \mathcal{U}^{(n)} \,u^{-1},\ast}(\mathbf{1}_{n}) 
\\ 
& =\iota(\eta_p^n)_\ast \circ  u_*   \circ \iota_{U_{Z,n}\,,\, u\, \mathcal{U}^{(n)}\, u^{-1},\ast}(\left ( \begin{smallmatrix}
    d & \\ & 1
\end{smallmatrix}\right)_*\mathbf{1}_{n})\,. 
\end{align*}
Recall that $$\mathbf{1}_{n}= \sum_{\mathcal{C} \in \pi_0(X_{Z,n})} [\mathcal{C}] \in {\rm CH}^0(X_{Z,n})$$  
is the fundamental cycle of $X_{Z,n}$.
Since $d\equiv 1 \mod p^{n-1}$ and $n>1$, it follows that $d$ is a square mod $p^n$. As a result, the action of $\left ( \begin{smallmatrix}
    d & \\ & 1
\end{smallmatrix}\right)_*$ preserves the set of connected components of $X_{Z,n}$. Then the equality \eqref{eqn_2025_07_03_1204} follows from the fact that 
$$\left ( \begin{smallmatrix}
    d & \\ & 1
\end{smallmatrix}\right)_*[\mathcal{C}]=\left ( \begin{smallmatrix}
    d^{-1} & \\ & 1
\end{smallmatrix}\right)^*[\mathcal{C}]=[\mathcal{C}]\,,$$
where the last equality is because the fundamental cycle is compatible with (flat) base change and $\left ( \begin{smallmatrix}
    d^{-1} & \\ & 1
\end{smallmatrix}\right)^*$ is an isomorphism (in particular, its degree equals $1$). This concludes the proof.
\end{proof}
 
\vspace{-0.3cm}

\section{Diagonal cycles for anticyclotomic twists of Rankin--Selberg products}\label{S:anticyclotomic}

We keep the definitions in \S\ref{sec:families_of_indefinite_cycles}. Let $K$ be an imaginary quadratic field of discriminant $-D<0$ and let $\varepsilon_{K}$ be the corresponding quadratic character. Assume that $p$ splits in $K$ as $(p)=\frk{p}\overline{\frk{p}}$. Let $\frk{p}$ be the prime of $K$ above $p$ induced by our fixed embedding $\iota_p:\overline{\bb{Q}}\hookrightarrow\overline{\bb{Q}}_p$. Assume also that $p$ does not divide the class number of $K$. Building upon the construction of the classes ${\Delta}_{n}^{\etale}(f,g)$ and $\widetilde{\Delta}_{n}^{\etale}(f,g)$ in \S\ref{sec:families_of_indefinite_cycles}, we define a $p$-adic family of cohomology classes $z_{n,m}$ over ring class fields of $K$ with coefficients in a conjugate self-dual twist of $T_f^\ast\otimes T_g^\ast$ and prove that they satisfy Euler-system tame norm relations and $Q$-system relations. 

\vspace{-0.2cm}
\subsection{CM Hida families}
\label{subsec_CM_Hida_fam}

Let $\psi$ be a Hecke character of $K$ of conductor $\frk{c}$ coprime to $p$ and infinity type $(1-k,0)$ for some even integer $k\geq 2$. Let $\chi_\psi$ be the unique Dirichlet character modulo $N_{K/\bb{Q}}(\frk{c})$ such that 
\[
\psi((n))=n^{k-1} \chi_\psi(n)
\]
for integers $n$ coprime to $N_{K/\bb{Q}}(\frk{c})$. Put $N_\psi=N_{K/\bb{Q}}(\frk{c})D$, and let $\theta_{\psi} \in S_k(N_{\psi}, \chi_\psi\varepsilon_{K})$ be the theta series attached to $\psi$, i.e. $\theta_{\psi} := \sum_{(\frk{a}, \frk{c})=1} \psi(\frk{a}) q^{N_{K/\bb{Q}}(\frk{a})}$.
\subsubsection{}
\label{subsubsec_2025_11_12_1049}
Let $L$ be a finite extension of $\bb{Q}_p$ containing (the image under $\iota_p$ of) $K$, the Fourier coefficients of $f$ and $g$, the roots of the Hecke polynomials of $f$ and $g$ at $p$, and the image of the Hecke character $\psi$. We denote by $\mathcal{O}$ the ring of integers of $L$ and by $\frk{P}$ its maximal ideal. Let $\psi_{\mathfrak{P}}$ be the $p$-adic avatar of $\psi$, i.e. the continuous $L$-valued character of $K^\times\backslash\mathbb{A}_{K,f}^\times$ defined by
\[
\psi_{\mathfrak{P}}(x)=x_\mathfrak{p}^{1-k} \psi(x).
\]
We also denote by $\psi_\mathfrak{P}$ the corresponding character of $G_K$ obtained via the (geometrically normalised) Artin map. The $p$-adic representation attached to the eigenform $\theta_\psi$ is given by $\Ind_{K}^\bb{Q} L(\psi_\frk{P})$. Since $(\frk{c},p)=1$ and $k$ is even, it easily follows that the corresponding residual representation is absolutely irreducible and $p$-distinguished (see \cite[Prop.~5.1.2]{LLZ2} and \cite[Remark~5.1.3]{LLZ2}).

\subsubsection{} Let $\Gamma_{\frk{p}}$ be the Galois group of the unique $\bb{Z}_p$-extension of $K$ unramified outside $\frk{p}$. We can identify $\Gamma_{\frk{p}}$ with $\mathcal{O}_{K,\frk{p}}^{(1)}\cong 1+p\bb{Z}_p$ via the Artin map. Let $\lambda$ denote the unique Hecke character of infinity type $(-1,0)$ and conductor $\frk{p}$ whose $p$-adic avatar $\lambda_\mathfrak{P}:K^\times\backslash \bb{A}_{K,f}^{\times}\rightarrow L^\times$, defined by
\[
\lambda_{\frk{P}}(x)=x_{\frk{p}}^{-1} \lambda(x),
\]
factors through $\Gamma_\frk{p}$. Under the identification $\Gamma_{\frk{p}}\cong 1+p\bb{Z}_p$, the map $\lambda_{\frk{P}}$ is given by $u\mapsto u^{-1}$. We write $\psi=\psi_0 \lambda^{k-2}$. Then $\psi_0$ is a Hecke character of $K$ of conductor dividing $\frk{cp}$ and infinity-type $(-1,0)$. We denote by $\psi_{0,\frk{P}}$ its $p$-adic avatar. 
\vspace{-0.2cm}
\subsubsection{} Let $\Lambda=\mathcal{O}\llbracket1+p\bb{Z}_p\rrbracket$. Let $\bm{\kappa}:1+p\bb{Z}_p\rightarrow \Lambda^\times$ be the tautological character given by $u\mapsto [u]$. Let $\bm{\kappa}^{1/2}:1+p\bb{Z}_p\rightarrow \Lambda^\times$ denote the unique square root of $\bm{\kappa}$. Let $\bm{\lambda}:\Gamma_{\frk{p}}\rightarrow \Lambda^\times$ be the character defined by $\sigma\mapsto [\lambda_{\frk{P}}(\sigma)]$ (i.e. $\bm{\lambda}=\bm{\kappa}\circ\lambda_{\frk{P}}$). We use the same notation for the corresponding character of $G_K$, as well as for the character of $K^\times\backslash \bb{A}_{K,f}$ obtained by composition with the Artin map. Consider the $q$-expansion
\[
\hh = \sum_{(\frk{a},\frk{cp})=1}\psi_0(\frk{a})\bm{\lambda}(\frk{a})q^{N_{K/\bb{Q}}(\frk{a})}\in \Lambda\llbracket q\rrbracket.
\]
Then $\hh$ is a primitive Hida family of tame level $N_\psi$ and character $\chi_\psi\varepsilon_{K}\omega^{k-2}$ passing through the ordinary $p$-stabilization of $\theta_\psi$. The big Galois representation attached to $\hh$ is given by $\Ind_{K}^\bb{Q} \Lambda(\psi_{0,\frk{P}}\bm{\lambda})$.
\vspace{-0.2cm}

\subsection{Jacquet--Langlands correspondence for CM forms}

Recall that $B$ denotes an indefinite quaternion algebra over $\bb{Q}$ of discriminant $N^-$. The main goal of this subsection is to give conditions under which the eigenform $\theta_\psi$ introduced above can be seen as the Jacquet--Langlands lift of an eigenform $\varphi$ on $B^\times$.

Let $\pi=\otimes'_{v} \pi_v$ be the automorphic representation of $\GL_2$ defined by $\theta_\psi$. Since $\theta_\psi$ is the theta series attached to the Hecke character $\psi$, the automorphic representation $\pi$ is the automorphic induction of $\psi$. In particular, we have the following description of its local constituents:
\begin{enumerate}[(i)]
    \item At finite places $v$ of $\bb{Q}$ which split in $K$, say $v=w\overline{w}$, the local representation $\pi_v$ is the principal series $\pi(\psi_w,\psi_{\overline{w}})$.
    \item At finite places $v$ of $\bb{Q}$ which do not split in $K$, the local representation $\pi_v$ is the one corresponding to the Langlands parameter $\Ind_{W_{K_v}}^{W_{\bb{Q}_v}}\psi_v$. Let $c$ denote the non-trivial element of $\Gal(K_v/\bb{Q}_v)$ and let $\psi_v^c=\psi_v\circ c$. We consider the following two subcases:
    \begin{enumerate}[(a)]
        \item Suppose that $\psi_v\neq \psi_v^c$. Then the representation $\Ind_{W_{K_v}}^{W_{\bb{Q}_v}}\psi_v$ is irreducible and therefore $\pi_v$ is supercuspidal.
        \item Suppose that $\psi_v= \psi_v^c$. This is equivalent to assuming that there exists a character $\mu_v$ of $\bb{Q}_v^\times$ such that $\psi_v=\mu_v\circ N_{K_v/\bb{Q}_v}$. Let $\omega_{K_v/\bb{Q}_v}$ denote the quadratic character of $\bb{Q}_v^\times$ attached to the extension $K_v/\bb{Q}_v$. Then $\Ind_{W_{K_v}}^{W_{\bb{Q}_v}}\psi_v=\mu_v\oplus\omega_{K_v/\bb{Q}_v}\mu_v$ and $\pi_v$ is the principal series $\pi(\mu_v,\omega_{K_v/\bb{Q}_v}\mu_v)$.
    \end{enumerate}
    \item At the infinite place $\infty$ of $\bb{Q}$, the local representation $\pi_\infty$ is the holomorphic discrete series $\sigma_k$. 
\end{enumerate}
In case (ii), note that if $\psi_v$ is unramified, then $\psi_v=\psi_v^c$, so we are in subcase (ii)(b).

Let $N_{K/\bb{Q}}(\frk{c})=N_{\frk{c}}^+N_{\frk{c}}^-$, where all primes dividing $N_{\frk{c}}^+$ split in $K$ and all primes dividing $N_{\frk{c}}^-$ do not split in $K$. It is known that an eigenform $f$ on $\GL_2$ is the Jacquet--Langlands lift of an eigenform $\varphi$ on $B^\times$ if and only if for all places $v\mid N^-$ the corresponding local representation is a discrete series. Therefore, based on the previous discussion, we have the following result.

\begin{proposition}
    The eigenform $\theta_\psi$ is the Jacquet--Langlands lift of an eigenform $\varphi$ on $B^\times$ if and only if the following condition holds:
    \begin{itemize}
        \item[\mylabel{item_JLCM}{$\mathbf{JL}_{\rm CM}$})] For all places $v\mid N^-$, we have $v\mid N_{\frk{c}}^-$ and $\psi_v\neq \psi_v^c$.
    \end{itemize}
\end{proposition}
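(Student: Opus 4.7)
The plan is to appeal to the local characterization of the Jacquet--Langlands correspondence: a cuspidal automorphic representation $\pi$ of $\GL_2(\mathbb{A}_\QQ)$ admits a non-zero Jacquet--Langlands transfer to $B^{\times}$ if and only if, at every place $v$ where $B$ is ramified, the local component $\pi_v$ is essentially square-integrable (i.e.\ a discrete series). Since $B$ is indefinite of discriminant $N^-$, the ramified places are precisely the finite primes $v \mid N^-$; the archimedean condition is automatic because $\pi_\infty = \sigma_k$ is already a discrete series and $B\otimes\R \simeq M_2(\R)$ is split. Thus my task reduces to converting the discrete-series condition at each $v\mid N^-$ into the arithmetic condition in the statement.

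Next I would run through the three cases of the local description recalled just above the statement. At a split prime $v = w\overline{w}$, the local component $\pi_v = \pi(\psi_w,\psi_{\overline{w}})$ is an irreducible principal series (irreducibility holds because $\psi_w/\psi_{\overline{w}}$ is never of the form $|\cdot|^{\pm 1}$, given that $\psi$ has algebraic infinity type $(1-k,0)$), and hence is not a discrete series. At a non-split $v$ with $\psi_v = \psi_v^c$, case (ii)(b) shows $\pi_v$ is again a (fully induced) principal series, so not discrete series. At a non-split $v$ with $\psi_v \neq \psi_v^c$, case (ii)(a) gives $\pi_v$ supercuspidal, which \emph{is} a discrete series. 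Combining these, $\pi_v$ is discrete series if and only if $v$ is non-split in $K$ and $\psi_v \neq \psi_v^c$.

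The final step is to rewrite ``non-split in $K$ and $\psi_v \neq \psi_v^c$'' as ``$v \mid N_{\frk{c}}^-$ and $\psi_v\neq \psi_v^c$''. The parenthetical observation already made in the excerpt -- that for non-split $v$, unramified $\psi_v$ satisfies $\psi_v = \psi_v^c$ -- tells us that $\psi_v\neq \psi_v^c$ forces $\psi_v$ to be ramified, i.e.\ the primes of $K$ above $v$ must divide the conductor $\frk{c}$. Since $v$ is assumed non-split, this is exactly the condition $v \mid N_{\frk{c}}^-$. Conversely $v\mid N_{\frk{c}}^-$ encodes the non-splitness of $v$ by definition of $N_{\frk{c}}^-$. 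Imposing the resulting condition at every $v \mid N^-$ yields precisely \eqref{item_JLCM}, completing the proof.

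There is no real obstacle here: the argument is a bookkeeping exercise combining the JL criterion with the automorphic-induction description of $\pi_v$. The only point requiring minor care is the verification that the split-place principal series are irreducible (so that they are genuinely not discrete series), which is immediate from the infinity type of $\psi$.
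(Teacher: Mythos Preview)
Your proposal is correct and follows exactly the paper's approach: the paper simply cites the Jacquet--Langlands criterion (discrete series at every $v\mid N^-$) and reads off the conclusion from the local case analysis (i), (ii)(a), (ii)(b) recorded just before the statement. You have supplied a bit more detail than the paper (e.g.\ the irreducibility check at split places and the translation ``non-split with $\psi_v\neq\psi_v^c$'' $\Leftrightarrow$ ``$v\mid N_{\frk c}^-$ and $\psi_v\neq\psi_v^c$''), but the argument is the same.
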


Assume that \eqref{item_JLCM} holds. Let $N=N^+N^-$ be a positive integer as in \S\ref{subsec:ShimuraCurves} and assume that $N_\psi\mid N$. For each natural number $n$ and for each positive integer $m$ coprime to $N p$, we put
\[
U_n := \{ g \in U_1(N) \,:\, \Psi_p(g_p) \equiv \left( \begin{smallmatrix} a & \star  \\ 0 & a \end{smallmatrix}\right)\, (\text{mod } p^n)\, ,\,a \in\Zp^\times\}\,,
\]
\[
U^m:=\{ g \in U_1(N)\,:\, \Psi_q(g_q) \equiv \left( \begin{smallmatrix}
    \star & \star \\ 0 & \star
\end{smallmatrix} \right)\, (\text{mod } m^2 \ZZ_q ) \text{ for } q| m \}\,,
\]
\[
U_n^m:=U_n\cap U^m\,.
\]
We denote by $X(U_n^m)$ the Shimura curve of level $U_n^m$, and we denote by $T_B(U_n^m)$ the algebra of Hecke correspondences of level $U_n^m$ generated by the operators $\lbrace \mathrm{T}_\ell',\mathrm{S}_\ell, \mathrm{S}_\ell^{-1}\,\vert\, \ell\nmid Np\rbrace$ and $\lbrace \mathrm{U}_p',\langle x\rangle\,\vert\, x\in \bb{Z}_p^\times\rbrace$. We define
\[
H^1_{\Iw}(X(U_\infty^m)\times_{\bb{Q}}\overline{\bb{Q}},\mathcal{O}):=\varprojlim_n H^1_{\etale}(X(U_n^m)\times_{\bb{Q}}\overline{\bb{Q}},\mathcal{O}).
\]
Let $\mathbb{T}^{\mathrm{a.o.}}_{B,\mathcal{O}}(U_{\infty}^m)$ denote the subalgebra of $\End_{\mathcal{O}}(e_{\mathrm{U}_p'}H^1_{\Iw}(X(U_\infty^m)\times_{\bb{Q}}\overline{\bb{Q}},\mathcal{O}))$ generated by the Hecke operators $\lbrace \mathrm{T}_\ell',\mathrm{S}_\ell, \mathrm{S}_\ell^{-1}\,\vert\, \ell\neq p\rbrace$ and $\lbrace \mathrm{U}_p',\langle x\rangle\,\vert\, x\in \bb{Z}_p^\times\rbrace$.

In the next subsection, following \cite{LLZ2}, we carry out a patching argument to obtain a family of cohomology classes satisfying suitable norm relations. However, the results of \cite{LLZ2} rely on the following crucial input (stated here for the levels that will be relevant in our case).

\begin{theorem}
    Assume that $B=\mathrm{M}_2(\bb{Q})$. Let $\frk{m}$ be a non-Eisenstein $p$-distinguished maximal ideal of $\mathbb{T}^{\mathrm{a.o.}}_{B,\mathcal{O}}(U_{\infty}^m)$. Then $e_{\mathrm{U}_p'}H^1_{\Iw}(X(U_\infty^m)\times_{\bb{Q}}\overline{\bb{Q}},\mathcal{O})$ is a free $\mathbb{T}^{\mathrm{a.o.}}_{B,\mathcal{O}}(U_{\infty}^m)$-module of rank $2$.
\end{theorem}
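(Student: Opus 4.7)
The plan is to deduce the theorem from the analogous freeness statement at each finite level and then pass to the inverse limit via Hida's control theorem. At finite level $n$, write $\mathbb{T}_n := \mathbb{T}^{\mathrm{a.o.}}_{B,\mathcal{O}}(U_n^m)$ for the ordinary Hecke algebra acting on $V_n := e_{\mathrm{U}_p'}H^1_{\etale}(X(U_n^m)\times_{\bb{Q}}\overline{\bb{Q}},\mathcal{O})$, and let $\frk{m}_n$ denote the maximal ideal of $\mathbb{T}_n$ induced by $\frk{m}$. The first objective is to show that $(V_n)_{\frk{m}_n}$ is free of rank $2$ over $(\mathbb{T}_n)_{\frk{m}_n}$ for every $n$.

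This finite-level statement is a Mazur--Wiles--Tilouine--Ohta-style freeness. Eichler--Shimura supplies the generic rank: after inverting $p$, the Hecke eigenspace decomposition of $V_n \otimes_{\mathcal{O}} L$ realises it as free of rank $2$ over $\mathbb{T}_n \otimes_{\mathcal{O}} L$. To upgrade this to an integral statement, one combines the self-duality of $V_n$ under Poincar\'e duality with the Gorenstein property of $(\mathbb{T}_n)_{\frk{m}_n}$: for a self-dual finite module over a local Gorenstein ring that is generically free of rank $r$, freeness of rank $r$ follows. The Gorenstein property is where both hypotheses on $\frk{m}$ enter; the non-Eisenstein assumption ensures residual irreducibility and eliminates interference with the Eisenstein component, while the $p$-distinguishedness handles the $p$-adic congruences arising from the two possible $\mathrm{U}_p'$-stabilisations, following the arguments of Ohta and Fukaya--Kato. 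Alternatively, one can invoke a suitable $R=\mathbb{T}$ theorem to deduce that $(\mathbb{T}_n)_{\frk{m}_n}$ is a complete intersection, hence Gorenstein.

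To pass to the inverse limit, I would invoke Hida's control theorem: $\mathbb{T}^{\mathrm{a.o.}}_{B,\mathcal{O}}(U_{\infty}^m)$ is finite free over $\Lambda$, and its quotients by the appropriate augmentation ideals recover the $\mathbb{T}_n$. Combined with the uniform rank-$2$ freeness of $(V_n)_{\frk{m}_n}$ and the topological Nakayama lemma over the local ring $\mathbb{T}^{\mathrm{a.o.}}_{B,\mathcal{O}}(U_{\infty}^m)_{\frk{m}}$, this yields the asserted freeness.

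The main obstacle is verifying the Gorenstein property at the level $U_n^m$ incorporating the auxiliary tame level $m$, as well as the care required to arrange a genuine Poincar\'e-style self-duality on the non-compact modular curve (via compactly supported cohomology or parabolic cohomology, noting that the non-Eisenstein localisation identifies these). For $m=1$ the Gorenstein input is classical; the extension to general $m$ should be essentially formal, since the localisation at the non-Eisenstein $p$-distinguished ideal $\frk{m}$ sees only deformations whose tame conductor is already fixed by $\overline{\rho}_{\frk{m}}$, and the local arguments at primes $\ell \mid m$ introduce no new $p$-adic congruences beyond those handled in the $m=1$ case.
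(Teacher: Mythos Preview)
Your strategy is the classical Mazur--Ohta--Tilouine route that underlies the result you would ultimately cite, but the paper does not reprove any of this: it simply invokes \cite[Prop.~3.3.1]{EPW} and then transports the conclusion from the classical $\Gamma_1(Np^n)$-tower to the tower $\{X(U_n^m)\}$ via the comparison worked out in Appendix~\ref{sec:comparison_of_towers}. In other words, the paper's ``proof'' is a citation plus a change-of-tower argument; the new content is the latter.

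This points to the genuine gap in your proposal. You correctly flag that one must check the Gorenstein/freeness input at the relevant level, but you misidentify where the difficulty lies: you worry about the auxiliary tame level $m$ and assert that ``for $m=1$ the Gorenstein input is classical''. It is not. Even for $m=1$ the group $U_n$ (matrices congruent to $\left(\begin{smallmatrix} a & * \\ 0 & a\end{smallmatrix}\right)$ mod $p^n$) is \emph{not} the standard $\Gamma_1(Np^n)$ level, and the Mazur--Wiles--Ohta freeness and Gorenstein results you invoke are proved for the latter. The entire point of Appendix~\ref{sec:comparison_of_towers} is to bridge this: one passes through the common cover $X_n^{1,1}$, identifies $e_{\mathrm{U}_p'}H^1_{\Iw}(X(U_\infty))_{\mathfrak m}$ with $e_{\mathrm{U}_p'}H^1_{\Iw}(X_\infty^{1,1})_{\widetilde{\mathfrak m}}\otimes_{\Lambda_{\rm full}}\Lambda_Z$ (Fouquet), and only then can one import the classical freeness from EPW. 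Your sketch omits precisely this step, so as written it does not close. The tame level $m$, by contrast, really is harmless for the reasons you give.
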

\begin{proof}
    With a different choice of tower of level structures at $p$, this follows from \cite[Prop.~3.3.1]{EPW}. The comparison of the relevant cohomology groups for the choice of level structures at $p$ in \emph{op.\,cit.} and the choice in the present article is explained in Appendix~\ref{sec:comparison_of_towers}.
\end{proof}

At present, we are not aware of an analogous result for arbitrary Shimura curves over $\bb{Q}$ in the level of generality that we need. Therefore, in the rest of this article we assume that $B=\mathrm{M}_2(\bb{Q})$.

\subsection{Patching}

We keep the definitions and notations in the previous subsections and consider the Hecke character $\varphi=\psi\vert\cdot\vert^{k/2-1}$ of $K$, which has conductor $\frk{c}$ and infinity-type $(-k/2,k/2-1)$. We denote by $\varphi_{\frk{P}}$ the $p$-adic avatar of $\varphi$.

Let $\Gamma_{\ac}$ be the Galois group of the anticyclotomic $\bb{Z}_p$-extension of $K$. We can identify $\Gamma_{\ac}$  with the anti-diagonal in $(1+p\bb{Z}_p)\times (1+p\bb{Z}_p)\cong \mathcal{O}_{K,\frk{p}}^{(1)}\times \mathcal{O}_{K,\overline{\frk{p}}}^{(1)}$ via the Artin map. Let 
\[
\kappa_{\ac}:\Gamma_{\ac}\longrightarrow \bb{Z}_p^\times
\]
be the character defined by mapping the element $((1+p)^{-1},(1+p))$ to $1+p$ and let $\bm{\kappa}_{\ac}:\Gamma_{\ac}\rightarrow \Lambda^\times$ be the character defined by mapping the element $((1+p)^{-1}, (1+p))$ to the group-like element $[1+p]$. We use the same notation for the corresponding characters of $G_K$.

Write $\varphi_{\frk{P}}=\varphi_{0,\frk{P}}\kappa_{\ac}^{2-k}$. Then $\varphi_{0,\frk{P}}$ is the $p$-adic avatar of a Hecke character $\varphi_0$ of $K$ of conductor dividing $\frk{cp}$, infinity type $(-1,0)$ and central character $\chi_{\varphi_0}=\chi_\psi\omega^{k-2}$.

For each positive integer $m$, let $K[m]$ denote the maximal $p$-subextension of the ring class field of $K$ of conductor $m$ and let $R_m=\Gal(K[m]/K)$. For each positive integer $n$, let $\Gamma_n=(1+p\bb{Z}_p)/(1+p\bb{Z}_p)^{p^{n-1}}$ and $\Lambda_n=\mathcal{O}[\Gamma_n]$. Note that, via the isomorphism $\Gamma_{\ac}\cong 1+p\bb{Z}_p$ defined by $\kappa_{\ac}$, we have the identifications $\Gamma_n\cong \Gamma_{\ac}/\Gamma_{\ac}^{p^{n-1}}\cong R_{p^{n}}$.

\begin{proposition}
There exists a homomorphism $\phi^m_n:T_B(U_n^m)\rightarrow \Lambda_n[R_m]$ defined on generators as follows:
\begin{itemize}
    \item For every rational prime $q\nmid Np$,
    \[
    \phi^m_n(\mathrm{T}_q')=\sum_{\frk{q}}\varphi_0(\frk{q})\bm{\kappa}_{\ac}(\frk{q})[\frk{q}],
    \]
    where the sum runs over ideals of $\mathcal{O}_K$ coprime to $m\frk{c}$ of norm $q$ and $[\frk{q}]$ denotes the image of $\frk{q}$ in $R_m$ by the Artin map.
    \item For every rational prime $q\nmid Np$,
    \[
    \phi_n^m(\mathrm{S}_q^{-1})=(\chi_\psi\epsilon_K)(q).
    \]
    \item $\phi_{n}^m(\mathrm{U}_p')=\psi(\overline{\frk{p}})[\overline{\frk{p}}]$, where $[\overline{\frk{p}}]$ denotes the image of $\overline{\frk{p}}$ in $R_m$ by the Artin map.
    \item for all $x\in \mathbb{Z}_p^\times$,
    \[
    \phi_n^m(\langle x\rangle)=\omega(x)^{1-k/2}\bm{\kappa}^{-1/2}(x).
    \]
\end{itemize}
\end{proposition}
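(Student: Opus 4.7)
The plan is to check the claim character by character. Since $\Gamma_n$ and $R_m$ are finite abelian groups, the algebra $\Lambda_n[R_m]$ is finite over $\cO$ and $\Lambda_n[R_m]\otimes_{\Zp}\barQ_p$ is semisimple, embedding diagonally into $\prod_{(\chi,\eta)}\barQ_p$, where $(\chi,\eta)$ runs over pairs of $\barQ_p^\times$-valued characters of $\Gamma_n$ and $R_m$. Consequently, to extend the prescribed assignment on generators to a ring homomorphism $\phi_n^m:T_B(U_n^m)\to\Lambda_n[R_m]$, it suffices to check, for each such pair $(\chi,\eta)$, that the induced specialization of the formulas coincides with the Hecke eigensystem of an eigenform appearing in $e_{\mathrm{U}_p'}H^1_{\etale}(X(U_n^m)\times_\QQ\overline{\QQ},\cO)\otimes\barQ_p$.

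To this end, I would associate with $(\chi,\eta)$ the finite-order Hecke character $\widetilde\chi$ of $K$ whose $p$-adic avatar factors through $\Gamma_{\ac}\twoheadrightarrow\Gamma_n$ as $\chi\circ\kappa_{\ac}$, together with the finite-order Hecke character $\widetilde\eta$ of conductor dividing $m\cO_K$ obtained from $\eta$ via the Artin map $\bb{A}_{K,f}^\times/K^\times\twoheadrightarrow R_m$. The product
\[
\xi_{\chi,\eta}:=\varphi_0\cdot\widetilde\chi\cdot\widetilde\eta
\]
has infinity type $(-1,0)$ and conductor dividing $\frk{c}p^n m$, so its theta series $\theta_{\xi_{\chi,\eta}}$ is a weight-$2$ cuspidal eigenform on $\GL_2/\QQ$. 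A local analysis at each prime of ramification would then show that the ordinary $p$-stabilization of $\theta_{\xi_{\chi,\eta}}$ lives at level $U_n^m$: at $p$, the ordinary principal series $\pi_{\xi_{\chi,\eta},p}$ admits a $U_n$-fixed vector on which $\mathrm{U}_p'$ acts by $\xi_{\chi,\eta}(\overline{\fp})$; at each prime $q\mid m$, the conductor of the local component of $\widetilde\eta$ matches the $U^m$-level structure.

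With this eigenform in hand, one computes its Hecke eigenvalues and compares them to the specializations of the formulas. For $q\nmid Npm$, the formula for $\phi_n^m(\mathrm{T}_q')$ specializes to $\sum_{\frk{q}\mid q}\xi_{\chi,\eta}(\frk{q})$ via $\widetilde\chi(\frk{q})=\chi(\kappa_{\ac}(\frk{q}))$ and $\widetilde\eta(\frk{q})=\eta([\frk{q}])$, while the formula for $\phi_n^m(\mathrm{S}_q^{-1})$ recovers the central-character eigenvalue $(\chi_\psi\varepsilon_K)(q)$, using that $\widetilde\chi$ and $\widetilde\eta$ are trivial on norms from $\bb{A}_{K,f}^\times$. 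The formula $\phi_n^m(\mathrm{U}_p')=\psi(\overline{\fp})[\overline{\fp}]$ specializes to $\psi(\overline{\fp})\eta([\overline{\fp}])$, which one matches with $\xi_{\chi,\eta}(\overline{\fp})$ by tracing through $\psi=\psi_0\lambda^{k-2}$ and the identity $\varphi_{\frk{P}}=\varphi_{0,\frk{P}}\kappa_{\ac}^{2-k}$, the contribution from $\widetilde\chi(\overline{\fp})$ being absorbed into these anticyclotomic twists. Finally, the $\langle x\rangle$-eigenvalue is determined by the $p$-part of the central character of $\xi_{\chi,\eta}$ together with the weight-$2$ cohomological normalization, producing $\omega(x)^{1-k/2}\bm{\kappa}^{-1/2}(x)$ after specialization.

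The main technical hurdle is the local analysis at primes of ramification, most delicately at $p$: one must identify the $U_n$-fixed vector in the ordinary principal series explicitly, and reconcile the half-integral weight shift $\bm{\kappa}^{-1/2}$ with the cohomological normalization forced by the identification with $H^1_{\etale}$. Granting these local computations, the semisimplicity reduction of the first paragraph yields the desired ring homomorphism $\phi_n^m$.
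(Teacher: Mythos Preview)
The paper's own proof is a one-line citation to \cite[Prop.~3.2.1]{LLZ2}, so there is essentially nothing to compare against directly. Your strategy---using the semisimplicity of $\Lambda_n[R_m]\otimes_{\Zp}\barQp$ to reduce to a character-by-character check, and then realizing each specialization as the Hecke eigensystem of the ordinary $p$-stabilization of a CM theta series $\theta_{\xi_{\chi,\eta}}$ at level $U_n^m$---is precisely the idea underlying that reference, and your outline is sound.

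One remark on scope: depending on how literally one reads the paper's definition of $T_B(U_n^m)$ as ``the algebra of Hecke correspondences'', the proposition may be more formal than your argument suggests. If $T_B(U_n^m)$ is taken as the abstract commutative $\ZZ$-algebra on the symbols $\mathrm{T}_q',\mathrm{S}_q^{\pm1},\mathrm{U}_p',\langle x\rangle$ subject only to the obvious relations ($\mathrm{S}_q\mathrm{S}_q^{-1}=1$ and $\langle x\rangle\langle y\rangle=\langle xy\rangle$), then defining $\phi_n^m$ amounts to verifying these two relations are preserved, which is immediate from the formulas. Your eigenform argument becomes genuinely necessary only if $T_B(U_n^m)$ is understood as carrying the relations that hold among correspondences on $X(U_n^m)$; in that case, exhibiting each specialization inside $e_{\mathrm{U}_p'}H^1_{\etale}(X(U_n^m)\times_\QQ\overline{\QQ},\cO)\otimes\barQp$ is exactly the right move, since a Hecke eigensystem occurring there automatically respects every such relation. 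Either way your argument works; it is simply more thorough than the abstract reading requires.

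You are right to flag the local analysis at $p$ as the delicate point. In particular, matching $\psi(\overline{\fp})[\overline{\fp}]$ with $\xi_{\chi,\eta}(\overline{\fp})$ requires noting that $\overline{\fp}$ is ramified in the anticyclotomic tower, so the symbol $[\overline{\fp}]$ lives only in $R_m$ and not in $\Gamma_n$; the apparent $\chi$-dependence of $\widetilde\chi(\overline{\fp})$ must therefore be absorbed into the identification of the $p$-adic avatars via $\psi=\psi_0\lambda^{k-2}$ and $\varphi_{0,\fP}=\varphi_{\fP}\kappa_{\ac}^{k-2}$, exactly as you indicate.
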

\begin{proof}
    This follows as in \cite[Prop.~3.2.1]{LLZ2}.
\end{proof}

Let $m$ be a positive integer coprime to $Np$, let $q$ be a rational prime coprime to $Np$ and let $n\in \bb{Z}_{\geq 1}$. Let $\varpi_{11,\ast}$, $\varpi_{12,\ast}$ and $\varpi_{22,\ast}$ denote the degeneracy maps
\[
\varpi_{ij,\ast}=\varpi_{i,q,\ast}\circ \varpi_{j,q,\ast}: H^1_{\etale}(X(U_n^{mq})\times_\bb{Q}\overline{\bb{Q}},\mathcal{O}) \longrightarrow H^1_{\etale}(X(U_n^{m})\times_\bb{Q}\overline{\bb{Q}},\mathcal{O}).
\]
Following \cite[\S~3.3]{LLZ2}, we define norm maps
\[
\mathcal{N}_{m,n}^{mq,n}:\Lambda_n[R_{mq}]\otimes_{\phi_{n}^{mq}} H^1_{\etale}(X(U_n^{mq})\times_\bb{Q}\overline{\bb{Q}},\mathcal{O})\longrightarrow \Lambda_n[R_{m}]\otimes_{\phi_n^{m}} H^1_{\etale}(X(U_n^{m})\times_\bb{Q}\overline{\bb{Q}},\mathcal{O})
\]
by the following formulae:
\begin{itemize}
    \item if $q\mid m$, $\mathcal{N}_{m,n}^{mq,n}=1\otimes \varpi_{11,\ast}$.
    \item if $q\nmid m$ and $q$ splits in $K$ as $(q)=\frk{q}\overline{\frk{q}}$,
    \begin{align*}
    \mathcal{N}_{m,n}^{mq,n}&=1\otimes \varpi_{11,\ast}-\left(\frac{\varphi_{0}(\frk{q})\bm{\kappa}_{\ac}(\frk{q})}{q}[\frk{q}]+\frac{\varphi_{0}(\overline{\frk{q}})\bm{\kappa}_{\ac}(\overline{\frk{q}})}{q}[\overline{\frk{q}}]\right)\otimes \varpi_{12,\ast}+\frac{\chi_\psi(q)}{q}\otimes \varpi_{22,\ast}\,.
    \end{align*}
    \item if $q\nmid n$ and $q$ is inert in $K$,
    \[
    \mathcal{N}_{m,n}^{mq,n}=1\otimes \varpi_{11,\ast}-\frac{\chi_\psi(q)}{q}\otimes \varpi_{22,\ast}\,.
    \]
\end{itemize}

Also, for any positive integer $m$ coprime to $N p$ and for integers $n'\geq n\geq 1$, we define the norm map
\[
\mathcal{N}_{m,n}^{m,n'}:\Lambda_{n'}[R_{m}]\otimes_{\phi_{n'}^{m}} H^1_{\etale}(X(U_{n'}^{m})\times_\bb{Q}\overline{\bb{Q}},\mathcal{O})\longrightarrow \Lambda_n[R_{m}]\otimes_{\phi_n^{m}} H^1_{\etale}(X(U_n^{m})\times_\bb{Q}\overline{\bb{Q}},\mathcal{O})
\]
by $\mathcal{N}_{m,n}^{m,n'}=1\otimes \pi_{1,\ast}$. As in \cite[Prop.~3.3.2]{LLZ2}, it can be verified that all the norm maps above are well-defined. More generally, given positive integers $m\mid m'$ coprime to $Np$ and integers $n'\geq n\geq 1$, we define
\[
\mathcal{N}_{m,n}^{m',n'}:\Lambda_{n'}[R_{m'}]\otimes_{\phi_{n'}^{m'}} H^1_{\etale}(X(U_{n'}^{m'})\times_\bb{Q}\overline{\bb{Q}},\mathcal{O})\longrightarrow \Lambda_n[R_{m}]\otimes_{\phi_n^{m}} H^1_{\etale}(X(U_n^{m})\times_\bb{Q}\overline{\bb{Q}},\mathcal{O})
\]
by composing the previously defined norm maps in the natural way.

\begin{proposition}\label{lemma:LLZ}
There exists a family of $G_\bb{Q}$-equivariant homomorphisms of $\Lambda_n[R_m]$-modules
\begin{equation*}
\nu_{n}^m:\Lambda_n[R_{m}]\otimes_{\phi_n^m} e_{\mathrm{U}_p'}H^1_{\etale}(X(U_n^m)\times_\bb{Q}\overline{\bb{Q}},\mathcal{O}) \longrightarrow  \Ind_{K[m]}^\bb{Q}\Lambda_n(\varphi_{0,\frk{P}}^c\bm{\kappa}_{\ac}^{-1}),
\end{equation*}
for positive integers $m$ coprime to $N p$ and for positive integers $n$, such that for integers $m\mid m'$ coprime to $N p$ and for integers $n'\geq n\geq 1$, the diagram
\[
\begin{tikzcd}
\Lambda_{n'}[R_{m'}]\otimes_{\phi_{n'}^{m'}} e_{\mathrm{U}_p'}H^1_{\etale}(X(U_{n'}^{m'})\times_\bb{Q}\overline{\bb{Q}},\mathcal{O}) \arrow[r, "\nu_{n'}^{m'}"] \arrow[d, "\mathcal{N}_{m,n}^{m',n'}"] & \Ind_{K[m']}^\bb{Q}\Lambda_{n'}(\varphi_{0,\frk{P}}^c\bm{\kappa}_{\ac}^{-1}) \arrow[d, "\mathrm{Norm}"] \\
\Lambda_n[R_{m}]\otimes_{\phi_n^m} e_{\mathrm{U}_p'} H^1_{\etale}(X(U_n^m)\times_{\bb{Q}}\overline{\bb{Q}},\mathcal{O}) \arrow[r, "\nu_n^m"] & \Ind_{K[m]}^\bb{Q}\Lambda_n(\varphi_{0,\frk{P}}^c\bm{\kappa}_{\ac}^{-1})
\end{tikzcd}
\]
commutes.
\end{proposition}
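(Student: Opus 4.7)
I will follow the patching strategy of Lei--Loeffler--Zerbes \cite[Prop.~3.3.4]{LLZ2}, adapted to the CM Hida family $\hh$ introduced in \S\ref{subsec_CM_Hida_fam}. The essential input is the freeness theorem stated immediately before the proposition, which asserts that after localization at the non-Eisenstein $p$-distinguished maximal ideal $\frk{m}$ of $\mathbb{T}^{\mathrm{a.o.}}_{B,\mathcal{O}}(U_\infty^m)$ attached to $\hh$, the ordinary cohomology $e_{\mathrm{U}_p'}H^1_{\Iw}(X(U_\infty^m)\times_\bb{Q}\overline{\bb{Q}},\mathcal{O})$ is free of rank $2$.

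For fixed $m,n$, the construction of $\nu_n^m$ proceeds as follows. Localizing $e_{\mathrm{U}_p'}H^1_{\etale}(X(U_n^m)\times_\bb{Q}\overline{\bb{Q}},\mathcal{O})$ at $\frk{m}$ and tensoring via $\phi_n^m$ yields a free $\Lambda_n[R_m]$-module of rank $2$ carrying a continuous $G_\bb{Q}$-action. By Eichler--Shimura, together with the explicit values of $\phi_n^m$ on the generators $\mathrm{T}_q'$ and $\mathrm{S}_q^{-1}$, the characteristic polynomial of Frobenius at each prime unramified for both sides agrees with the one computed from $\Ind_{K[m]}^{\bb{Q}}\Lambda_n(\varphi_{0,\frk{P}}^c\bm{\kappa}_{\ac}^{-1})$. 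Since the latter is residually multiplicity-free (using $p$-distinguishedness), a standard argument based on Chebotarev and the Brauer--Nesbitt theorem produces the desired $G_\bb{Q}$-equivariant morphism $\nu_n^m$ to the induced representation, unique up to a choice of period which may be fixed once and for all and propagated coherently through the system.

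The compatibility of the diagram is verified separately in the $n$- and $m$-directions. In the $n$-direction, where $\mathcal{N}_{m,n}^{m,n'}=1\otimes \pi_{1,\ast}$, the assertion reduces to the fact that $\pi_{1,\ast}$ translates under $\nu$ to the natural corestriction $\Ind_{K[mp^{n'}]}^{\bb{Q}}(\cdot)\to \Ind_{K[mp^n]}^{\bb{Q}}(\cdot)$, which follows from the prescription of $\phi_n^m$ on $\mathrm{U}_p'$ and the identification $R_{p^n}\cong \Gamma_n$ via $\kappa_\ac$. In the $m$-direction, the explicit formulae defining $\mathcal{N}_{m,n}^{mq,n}$ in each of the three cases (inert $q$, split $q\mid m$, split $q\nmid m$) are designed precisely to match the corestriction from $\Ind_{K[mq]}^{\bb{Q}}(\cdot)$ to $\Ind_{K[m]}^{\bb{Q}}(\cdot)$. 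To check this, I would combine Proposition~\ref{Prop_of_KLZ17} (relating the degeneracy maps $\varpi_{ij,\ast}$ to the Hecke operators $\mathrm{T}_q'$ and $\mathrm{S}_q^{-1}$) with the prescribed values $\phi_n^m(\mathrm{T}_q')$ and $\phi_n^m(\mathrm{S}_q^{-1})$, and with the fact that the correspondences $\varpi_{ij,\ast}$ commute with the Galois action.

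The main technical obstacle is the horizontal compatibility in the unramified split case $q\nmid m$, where the coefficient $\varphi_0(\frk{q})\bm{\kappa}_{\ac}(\frk{q})/q$ enters; reconciling it with the corestriction requires a careful bookkeeping of the two primes $\frk{q},\overline{\frk{q}}$ above $q$ and the use of the central-character relation imposed by the infinity type $(-1,0)$ of $\varphi_0$, namely $\varphi_0(\frk{q})\varphi_0(\overline{\frk{q}})=\chi_{\varphi_0}(q)\,q^{-1}$. Once this case is handled, the inert case and the split case with $q\mid m$ follow by analogous but simpler manipulations, completing the verification of the commutativity of the diagram.
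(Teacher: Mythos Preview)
Your approach is essentially the paper's: the paper's proof is the one-line citation ``This follows from \cite[Cor.~5.2.6]{LLZ2}'', and what you have written is a faithful sketch of the argument behind that corollary (freeness of rank~$2$ at the non-Eisenstein $p$-distinguished maximal ideal, identification of the Galois representation via Frobenius traces, and then verification that the explicit norm formulae $\mathcal{N}_{m,n}^{m',n'}$ match the corestriction on induced representations). One small correction: Proposition~\ref{Prop_of_KLZ17} concerns the degeneracy maps $\pi_{1,\ast},\pi_{2,\ast}$ at the prime $p$, not the tame degeneracy maps $\varpi_{ij,\ast}$ at $q\nmid Np$; the identities you need in the $m$-direction are the analogous standard relations at $q$ (as in \cite[\S3.3]{LLZ2}), not those of Proposition~\ref{Prop_of_KLZ17}.
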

\begin{proof}
    This follows from \cite[Cor.~5.2.6]{LLZ2}.
\end{proof}

\subsection{Norm relations}

For the remaining of this section, we assume that the self-duality condition \eqref{item_self_duality} holds. Let $r$ be a positive integer coprime to $Np$. For $\bullet \in \{ 1, Z , \emptyset, {\rm spl} \}$ and for $n\in \mathbb{Z}_{\geq 0}$, we put
\[
U_{\bullet,n}(r)=U_{\bullet,n}\cap U_0(r)\subseteq \widehat{B}^{\times}.
\]
Similarly, for $\bullet \in \{ 1, Z , \emptyset, {\rm spl} \}$ and for $\mathbf{n}=(n_1,n_2,n_3) \in \mathbb{Z}_{\geq 0}^3$, we put
\[
\mathcal{U}_{\bullet,\mathbf{n}}(r)=\mathcal{U}_{\bullet,\mathbf{n}}\cap \mathcal{U}_0(r) \subseteq \widehat{B}_E^{\times}
\]
and we define
\[
\mathbf{X}_{\mathbf{n}}(r)=X(U_{n_1}(r))\times X(U_{n_2}(r))\times X(U_{n_3}(r))
\]
for each positive integer $r$ coprime to $Np$.

Recall that in \S\ref{sec:families_of_indefinite_cycles} we constructed cycles $\Delta_{\mathbf{n}}\in \mathrm{CH}^2(\mathbf{X}_{\mathbf{n}})$. Using the same recipe, one can also construct
\[
{\Delta}_{\mathbf{n}}(r)^\circ\in \mathrm{CH}^2(\mathbf{X}_{\mathbf{n}}(r)).
\]

\begin{lemma}\label{lem:deg-maps}
    Let $r$ be a positive integer coprime to $Np$ and let $q$ be a rational prime coprime to $Np$. Then, for every $\mathbf{n}=(n_1,n_2,n_3) \in \mathbb{Z}_{\geq 0}^3$,
    \begin{align*}
    &(\varpi_{2,q},\varpi_{1,q},\varpi_{1,q})_\ast \,{\Delta}_{\mathbf{n}}(rq)^\circ=(\mathrm{T}_q,1,1) {\Delta}_{\mathbf{n}}(r)^\circ;\\
    &(\varpi_{1,q},\varpi_{2,q},\varpi_{1,q})_\ast \,{\Delta}_{\mathbf{n}}(rq)^\circ=(1,\mathrm{T}_q,1) {\Delta}_{\mathbf{n}}(r)^\circ;\\
    &(\varpi_{1,q},\varpi_{1,q},\varpi_{2,q})_\ast \,{\Delta}_{\mathbf{n}}(rq)^\circ=(1,1,\mathrm{T}_q) {\Delta}_{\mathbf{n}}(r)^\circ;\\
    &(\varpi_{1,q},\varpi_{2,q},\varpi_{2,q})_\ast \,{\Delta}_{\mathbf{n}}(rq)^\circ=(\mathrm{T}_q',1,1) {\Delta}_{\mathbf{n}}(r)^\circ;\\
    &(\varpi_{2,q},\varpi_{1,q},\varpi_{2,q})_\ast \,{\Delta}_{\mathbf{n}}(rq)^\circ=(1,\mathrm{T}_q',1) {\Delta}_{\mathbf{n}}(r)^\circ;\\
    &(\varpi_{2,q},\varpi_{2,q},\varpi_{1,q})_\ast \,{\Delta}_{\mathbf{n}}(rq)^\circ=(1,1,\mathrm{T}_q') {\Delta}_{\mathbf{n}}(r)^\circ.
    \end{align*}
    If $q$ is coprime to $r$ we have
    \begin{align*}
    &(\varpi_{1,q},\varpi_{1,q},\varpi_{1,q})_\ast \,{\Delta}_{\mathbf{n}}(rq)^\circ=(q+1) {\Delta}_{\mathbf{n}}(r)^\circ;\\
    &(\varpi_{2,q},\varpi_{2,q},\varpi_{2,q})_\ast \,{\Delta}_{\mathbf{n}}(rq)^\circ=(q+1) {\Delta}_{\mathbf{n}}(r)^\circ;
    \end{align*}
    otherwise, if $q\mid r$, we have
    \begin{align*}
    &(\varpi_{1,q},\varpi_{1,q},\varpi_{1,q})_\ast \,{\Delta}_{\mathbf{n}}(rq)^\circ=q {\Delta}_{\mathbf{n}}(r)^\circ;\\
    &(\varpi_{2,q},\varpi_{2,q},\varpi_{2,q})_\ast \,{\Delta}_{\mathbf{n}}(rq)^\circ=q {\Delta}_{\mathbf{n}}(r)^\circ.
    \end{align*}
\end{lemma}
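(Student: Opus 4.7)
The plan is to reduce the eight identities to a standard pushforward calculation for diagonally embedded cycles at the auxiliary prime $q$, then distinguish cases according to the triple $(i_1,i_2,i_3)\in\{1,2\}^3$. The modification turning $\Delta_{\mathbf{n}}(r)$ into $\Delta_{\mathbf{n}}(r)^\circ$, in the spirit of \eqref{eqn_2023_11_08_1351_appendix}, only involves Hecke operators and idempotents at $p$ and at an auxiliary prime $q_0\nmid Npq$ (chosen once and for all to avoid $q$). Since these act only at primes disjoint from $q$, Proposition~\ref{prop_2025_05_15_0828} and its analogue for the $q$-adic degeneracy maps imply that they commute with $(\varpi_{i_1,q},\varpi_{i_2,q},\varpi_{i_3,q})_\ast$. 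It therefore suffices to check each identity with $\Delta_{\mathbf{n}}(r)^\circ$ replaced by $\Delta_{\mathbf{n}}(r)={\rm b}_{\mathbf{n},\ast}\Delta_{\mathcal{U}_{\mathbf{n}}}(r)$.

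By Definition~\ref{def_2025_07_03_1206} and Remark~\ref{remark_2025_03_2025_1427}, the class $\Delta_{\mathbf{n}}(r)$ is the pushforward of the image of $X_{Z,n}(r)$ under a closed immersion into $\mathbf{X}_{\mathbf{n}}(r)$ whose $q$-component is the ordinary diagonal (the twisting by $u$ and $\iota(\eta_p^n)$ is concentrated at $p$). Writing $Y=X(U_0(rq))$, $X=X(U_0(r))$, and $\pi_{1,q},\pi_{2,q}:Y\to X$ for the two degeneracy maps, the problem reduces to computing $(\pi_{i_1,q},\pi_{i_2,q},\pi_{i_3,q})_\ast\delta_\ast[Y]$, where $\delta:Y\hookrightarrow Y^3$ is the diagonal, and identifying the result as a correspondence acting on $\iota_{X,\ast}[X]\subset X^3$.

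When $(i_1,i_2,i_3)=(1,1,1)$ or $(2,2,2)$, the composition $(\pi_{i_k,q})\circ\delta\colon Y\to X^3$ factors through $\iota_X$ via $\pi_{1,q}$ (respectively $\pi_{2,q}$), so the pushforward equals $\deg(\pi_{1,q})\cdot\iota_{X,\ast}[X]$, where $\deg(\pi_{1,q})=[U_0(r)_q:U_0(rq)_q]$ equals $q+1$ if $q\nmid r$ and $q$ if $q\mid r$, giving the last four identities. When exactly one index differs from the other two, say at position $j$, the composition factors through the partial diagonal $X^2\hookrightarrow X^3$ duplicating the $j$-th factor, and the remaining map $Y\to X^2$ is either $(\pi_{1,q},\pi_{2,q})$ or $(\pi_{2,q},\pi_{1,q})$. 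Matching against $\mathrm{T}_q=\pi_{1,q,\ast}\pi_{2,q}^\ast$ and $\mathrm{T}_q'=\pi_{2,q,\ast}\pi_{1,q}^\ast$ (\S\ref{subsubsec_A21_2025_06_30_1533}), the pushforward identifies with $\mathrm{T}_q$ acting in position $j$ when the odd index is $2$, and with $\mathrm{T}_q'$ when it is $1$.

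The main obstacle I foresee is pinning down the correct side of each correspondence so that the paper's conventions for $\mathrm{T}_q$ versus $\mathrm{T}_q'$, and for $\varpi_{1,q}$ versus $\varpi_{2,q}$, are respected. I would therefore execute one case in full detail (say $(\varpi_{2,q},\varpi_{1,q},\varpi_{1,q})_\ast\Delta_{\mathbf{n}}(rq)=(\mathrm{T}_q,1,1)\Delta_{\mathbf{n}}(r)$) via explicit double-coset representatives at $q$, and deduce the remaining cases from the $S_3$-symmetry of the diagonal and the involution $\varpi_{1,q}\leftrightarrow\varpi_{2,q}$, which interchanges $\mathrm{T}_q$ and $\mathrm{T}_q'$.
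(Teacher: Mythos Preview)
Your overall strategy is correct and matches the paper's approach, but there is one notational misreading and one organizational difference worth noting.

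First, the superscript $\circ$ in $\Delta_{\mathbf{n}}(r)^\circ$ here does \emph{not} denote the Hecke modification of \eqref{eqn_2023_11_08_1351_appendix}; it is simply the paper's notation for the raw cycle $\mathrm{b}_{\mathbf{n},\ast}\circ\iota(\eta_p^n)_\ast\circ\iota^u_{n,\ast}(\mathbf{1}_{n,r})$ at auxiliary level $r$ (the $\circ$ is there to distinguish it from the later $\Delta_{\mathbf{n}}(m)$, which has extra degeneracy maps applied). So your first paragraph about commuting past the $p$- and $q_0$-Hecke operators is unnecessary, though harmless.

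Second, the paper organizes the computation slightly differently from your ``factoring through partial diagonals''. It first works at level $(n)=(n,n,n)$ (where the embedding is a genuine closed immersion), writes down the commutative diagram
\[
\xymatrix{
X_{Z,n}(rq)\ar[d]_{\varpi_{1,q}}\ar[rr] & & X(U_n(rq))\times X(U_n(r))\times X(U_n(r)) \ar[d]^{(\varpi_{1,q},1,1)}\\
X_{Z,n}(r)\ar[rr] & & X(U_n(r))^3\,,
}
\]
and observes that it is Cartesian because the horizontal maps are closed immersions and the vertical maps have the same degree. This gives the single identity $(1,\varpi_{1,q},\varpi_{1,q})_\ast\Delta_{(n)}(rq)^\circ=(\varpi_{1,q},1,1)^\ast\Delta_{(n)}(r)^\circ$, from which both $(\varpi_{2,q},\varpi_{1,q},\varpi_{1,q})_\ast$ and $(\varpi_{1,q},\varpi_{1,q},\varpi_{1,q})_\ast$ follow immediately by composing with $(\varpi_{2,q},1,1)_\ast$ or $(\varpi_{1,q},1,1)_\ast$. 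One then passes to general $\mathbf{n}$ via $\mathrm{pr}^{(n)}_{\mathbf{n}}$. This Cartesian-square viewpoint is exactly what justifies your assertion that the pushforward ``identifies with $\mathrm{T}_q$ in position $j$'', so the two arguments coincide once unwound; the paper's formulation just avoids the need to track double-coset representatives explicitly.
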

\begin{proof}
    Let $n=\max\lbrace n_1,n_2,n_3\rbrace$. We have the commutative diagram
    \[
    \xymatrix{  
     & & X(U_{n}(rq))\times X(U_{n}(rq))\times X(U_{n}(rq)) \ar[d]^-{(1,\varpi_{1,q},\varpi_{1,q})} \\ X_{Z,n}(rq) \ar[d]_-{\varpi_{1,q}} \ar[urr]^-{\mathrm{b}_{(n)}\circ\iota(\eta_p^n)\circ\iota_{n}^u} \ar[rr] & & X(U_{n}(rq))\times X(U_{n}(r))\times X(U_{n}(r)) \ar[d]^-{(\varpi_{1,q},1,1)} \\ 
     X_{Z,n}(r) \ar[rr]_-{\mathrm{b}_{(n)}\circ\iota(\eta_p^n)\circ\iota_{n}^u} & & X(U_{n}(r))\times X(U_{n}(r))\times X(U_{n}(r))\,.}
     \]
     Since the horizontal arrows are closed embeddings and the two bottom vertical arrows are finite morphisms of the same degree, it follows that the bottom square is Cartesian. 
     Let $\bm{1}_{n,rq}\in \CH^0(X_{Z,n}(rq))$ denote the fundamental cycle of $X_{Z,n}(rq)$ and let $\bm{1}_{n,r}\in\CH^0(X_{Z,n}(r))$ denote the fundamental cycle of $X_{Z,n}(r)$. Then $\Delta_{(n)}(rq)^\circ=\mathrm{b}_{(n),\ast}\circ\iota(\eta_p^n)_{\ast}\circ\iota_{n,\ast}^u(\bm{1}_{n,rq})$ and $\Delta_{(n)}(r)^\circ=\mathrm{b}_{(n),\ast}\circ\iota(\eta_p^n)_{\ast}\circ\iota_{n,\ast}^u(\bm{1}_{n,r})$. By the Cartesianness of the bottom square in the previous diagram, we have
     \begin{equation}\label{eq:202510241545}
         (1,\varpi_{1,q},\varpi_{1,q})_\ast \,\Delta_{(n)}(rq)^\circ=(\varpi_{1,q},1,1)^\ast \, \Delta_{(n)}(r)^\circ.
     \end{equation}
     Applying $(\varpi_{2,q},1,1)_{\ast}$ to both sides of \eqref{eq:202510241545}, we obtain
     \[
     (\varpi_{2,q},\varpi_{1,q},\varpi_{1,q})_\ast \,\Delta_{(n)}(rq)^\circ=(\mathrm{T}_q,1,1) \Delta_{(n)}(r)^\circ,
     \]
     whereas applying $(\varpi_{1,q},1,1)_{\ast}$ to both sides of \eqref{eq:202510241545}, we obtain
     \[
     (\varpi_{1,q},\varpi_{1,q},\varpi_{1,q})_\ast \,\Delta_{(n)}(rq)^\circ=\deg(\varpi_{1,q}) \Delta_{(n)}(r)^\circ.
     \]
     Note that $\Delta_{\mathbf{n}}(rq)^\circ=\pr_{\mathbf{n}}^{(n)} \Delta_{(n)}(rq)^\circ$ and $\Delta_{\mathbf{n}}(r)^\circ=\pr_{\mathbf{n}}^{(n)} \Delta_{(n)}(r)^\circ$. Since
     \[
     \pr_{\mathbf{n}}^{(n)}\circ (\varpi_{i,q},\varpi_{1,q},\varpi_{1,q})_\ast=(\varpi_{i,q},\varpi_{1,q},\varpi_{1,q})_\ast\circ \pr_{\mathbf{n}}^{(n)}
     \]
     for $i\in\lbrace 1,2\rbrace$, we deduce that
     \begin{align*}
     (\varpi_{2,q},\varpi_{1,q},\varpi_{1,q})_\ast \,\Delta_{\mathbf{n}}(rq)^\circ &=(\mathrm{T}_q,1,1) \Delta_{\mathbf{n}}(r)^\circ\,, \qquad
     (\varpi_{1,q},\varpi_{1,q},\varpi_{1,q})_\ast \,\Delta_{\mathbf{n}}(rq)^\circ &=\deg(\varpi_{1,q}) \Delta_{\mathbf{n}}(r)^\circ\,.
     \end{align*}
     By the second equality, we have
     \[
     (\varpi_{1,q},\varpi_{1,q},\varpi_{1,q})_\ast \,\Delta_{\mathbf{n}}(rq)^\circ=q \Delta_{\mathbf{n}}(r)^\circ\,, \qquad \hbox{if $q\mid r$ }
     \]
     \[
     (\varpi_{1,q},\varpi_{1,q},\varpi_{1,q})_\ast \,\Delta_{\mathbf{n}}(rq)^\circ=(q+1) \Delta_{\mathbf{n}}(r)^\circ\,,\qquad \hbox{if $q\nmid r$.}
     \]

     The remaining identities in the statement can be proved in a similar way.
\end{proof}

Let $m$ be a square-free product of primes coprime to $Np$. For $i=1,2$ and $n\in \mathbb{Z}_{\geq 0}$, consider the degeneracy maps
\[
\varpi_{i,m^2}=\prod_{q\mid m} \varpi_{i,q}^2 : X(U_n^m)\rightarrow X(U_n), 
\]
where the product is taken over all primes dividing $m$. We then define the cycles
\[
\Delta_{\mathbf{n}}(m)=(\varpi_{1,m^2},\varpi_{2,m^2},1)_\ast\,\Delta_{\mathbf{n}}(m^2)^\circ\in\mathrm{CH}^2(X(U_{n_1})\times X(U_{n_2})\times X(U_{n_3}^m)).
\]

\begin{lemma}\label{lemma:norm-relations}
Let $m$ be a square-free product of primes coprime to $Np$, and let $q$ be a rational prime coprime to $Nmp$. Then, for every $\mathbf{n}=(n_1,n_2,n_3) \in \mathbb{Z}_{\geq 0}^3$,
\begin{align*}
(1,1,\varpi_{11,q})_\ast \Delta_{\mathbf{n}}(mq)&=\left\{(1,\mathrm{S}_q^2 (\mathrm{T}_{q}')^2,1)-(q+1)(1,\mathrm{S}_q,1)\right\}\Delta_{\mathbf{n}}(m),\\
(1,1,\varpi_{12,q})_\ast \Delta_{\mathbf{n}}(mq)&=\left\{(\mathrm{T}_q',\mathrm{S}_q\mathrm{T}_{q}',1)-(\mathrm{S}_q^{-1},1,\mathrm{T}_q')\right\}\Delta_{\mathbf{n}}(m),\\
(1,1,\varpi_{22,q})_\ast \Delta_{\mathbf{n}}(mq)&=\left\{((\mathrm{T}_q')^2,1,1)-(q+1)(\mathrm{S}_q^{-1},1,1)\right\}\Delta_{\mathbf{n}}(m).
\end{align*}
\end{lemma}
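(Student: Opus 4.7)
The three identities are established by the same strategy, which we outline. First, unfolding the definition $\Delta_{\mathbf{n}}(mq) = (\varpi_{1,(mq)^2}, \varpi_{2,(mq)^2}, 1)_* \Delta_{\mathbf{n}}((mq)^2)^\circ$ and using that degeneracy maps at disjoint primes commute and that maps on different factors of the threefold commute, we rewrite each left-hand side as
$$(1,1,\varpi_{ij,q})_* \Delta_{\mathbf{n}}(mq) = (\varpi_{1,m^2}, \varpi_{2,m^2}, 1)_* (\varpi_{11,q}, \varpi_{22,q}, \varpi_{ij,q})_* \Delta_{\mathbf{n}}(m^2 q^2)^\circ,$$
for $(ij) \in \lbrace (11), (12), (22)\rbrace$. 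Since the outer pushforward commutes with all Hecke operators at $q$ (because $q\nmid m$) and sends $\Delta_{\mathbf{n}}(m^2)^\circ$ to $\Delta_{\mathbf{n}}(m)$, it suffices to express each $(\varpi_{11,q}, \varpi_{22,q}, \varpi_{ij,q})_*\Delta_{\mathbf{n}}(m^2q^2)^\circ$ as the prescribed Hecke combination at level $U(m^2)$ acting on $\Delta_{\mathbf{n}}(m^2)^\circ$.

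To compute each such two-step pushforward we factor $\varpi_{ij,q} = \varpi_{i,q}\circ\varpi_{j,q}$, decompose the triple as a composition of two one-step triples, and iterate Lemma~\ref{lem:deg-maps}. The first application, at level $r = m^2 q$ (where $q\mid r$), produces a Hecke operator $\mathrm{T}_q$ or $\mathrm{T}_q'$ at level $U(m^2q)$ acting on $\Delta_{\mathbf{n}}(m^2 q)^\circ$. The second application requires commuting these operators past the residual degeneracy maps $\varpi_{i,q,*}\colon X(U(m^2q))\to X(U(m^2))$; this is accomplished via the analog of Proposition~\ref{Prop_of_KLZ17} at the prime $q$, yielding
\begin{align*}
\varpi_{1,q,*}\circ \mathrm{T}_q' &= \mathrm{T}_q'\circ\varpi_{1,q,*} - \mathrm{S}_q^{-1}\circ \varpi_{2,q,*}, \\
\varpi_{2,q,*}\circ \mathrm{T}_q' &= q\cdot \varpi_{1,q,*},
\end{align*}
where $\mathrm{T}_q'$ on the left-hand sides is understood at level $U(m^2q)$ and on the right-hand sides at level $U(m^2)$, together with the corresponding pair of relations for $\mathrm{T}_q$. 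A second application of Lemma~\ref{lem:deg-maps}, now at the level $r = m^2$ (where $q\nmid r$), evaluates all remaining simple pushforwards as Hecke operators at level $U(m^2)$. Assembling the pieces and collecting terms via the standard identity $\mathrm{T}_q = \mathrm{S}_q \mathrm{T}_q'$ (valid at levels coprime to $q$) yields the three right-hand sides in the statement.

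The main technical obstacle is the Hecke-algebra bookkeeping at the prime $q$ when $q$ lies in the level: at level $U(m^2q)$ the double cosets $[U(m^2q)\eta_q U(m^2q)]$ and $\mathrm{S}_q\cdot[U(m^2q)\eta_q^{-1} U(m^2q)]$ are distinct, because the Weyl element needed to conjugate $\eta_q$ to $\eta_q^{-1}$ lies outside the Iwahori-type level subgroup at $q$. Consequently, the commutation relations for $\mathrm{T}_q$ and $\mathrm{T}_q'$ at level $U(m^2q)$ must be derived separately, and the various $\mathrm{S}_q^{\pm 1}$ factors appearing in the process must be tracked carefully. Once this is done, the three identities follow by direct expansion and collection of terms.
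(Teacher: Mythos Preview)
Your proposal is correct and follows essentially the same approach as the paper, which simply records that ``in view of Lemma~\ref{lem:deg-maps}, the argument is identical to that of \cite[Lem.~4.4]{ACR}.'' What you have written is precisely a spelling-out of that argument: reduce to a two-step pushforward at the prime $q$, iterate Lemma~\ref{lem:deg-maps}, commute the intermediate Hecke operators past the remaining degeneracy maps via the $q$-analogue of Proposition~\ref{Prop_of_KLZ17}, and collect terms using $\mathrm{T}_q=\mathrm{S}_q\mathrm{T}_q'$ at level prime to $q$.
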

\begin{proof}
In view of Lemma~\ref{lem:deg-maps}, the argument is identical to that of \cite[Lem.~4.4]{ACR}.
\end{proof}

For each positive integer $n$ and for each square-free positive integer $m$ coprime to $Np$, we set
$$H^3_{\etale}(\mathbf{X}_{1,0,n}^m\times_{\QQ}
\overline{\QQ},\ZZ_p)^{f \,g\, \circ} := T_f^*\otimes T_g^* \otimes e_{\mathrm{U}_{p}'} H^1_{\etale} (X(U_{n}^m)\times_{\QQ}
\overline{\QQ}, \ZZ_p)\,.$$
Following the construction in \S\ref{sec:families_of_indefinite_cycles}, we use the cycles $\Delta_{\mathbf{n}}(m)$ to define cohomology classes
\[
\Delta_{n,m}^{\etale}(f,g),\,\widetilde{\Delta}_{n,m}^{\etale}(f,g) \in H^1(\bb{Q},H^3_{\etale}(\mathbf{X}_{1,0,n}^m\times_{\QQ}
\overline{\QQ},\ZZ_p)^{f \,g\, \circ}(2))\,.
\]
Applying the homomorphisms $\nu_n^m$ of Lemma~\ref{lemma:LLZ} to these classes, we obtain classes
\[
z_{n,m},\,\tilde{z}_{n,m}\in H^1(\bb{Q},T_f^*\otimes T_g^*\otimes \Ind_{K[m]}^{\bb{Q}}\Lambda_n(\varphi_{0,\frk{P}}^c\bm{\kappa}_{\ac}^{-1})(2))\,.
\]
By Shapiro's isomorphism, these classes can also be regarded as classes in
\[
H^1(K[m],T_f^\ast\otimes T_g^\ast\otimes \Lambda_n(\varphi_{0,\frk{P}}^c\bm{\kappa}_{\ac}^{-1})(2))
\simeq H^1(K[mp^n],T_f^\ast\otimes T_g^\ast(\varphi_{0,\frk{P}}^c)(2))\,.
\]

\begin{proposition}\label{prop:horizontal-norm}
    Let $n$ be a positive integer, let $m$ be a square-free product of primes coprime to $Np$ and split in $K$ and let $q$ be a prime coprime to $Nmp$ and split in $K$. Then
    \begin{align*}
        \mathrm{cor}_{K[mqp^n]/K[mp^n]}(z_{n,mq}) &= \chi_g^{-1}(q)\Bigg\{ q\chi_f(q)\chi_g(q) \left(\frac{\varphi_0(\frk{q})}{q}\Fr_{\frk{q}}\right)^2 -a_q(f)a_q(g) \left(\frac{\varphi_0(\frk{q})}{q}\Fr_{\frk{q}}\right) \\
        & \hspace{2cm} +\frac{\chi_f^{-1}(q)a_q(f)^2}{q}+ \chi_{g}^{-1}(q)a_{q}(g)^2-\frac{q^2+1}{q} \\
        &\hspace{2.8cm} -a_q(f)a_q(g) \left(\frac{\varphi_0(\overline{\frk{q}})}{q}\Fr_{\overline{\frk{q}}}\right) +q\chi_f(q)\chi_g(q) \left(\frac{\varphi_0(\overline{\frk{q}})}{q}\Fr_{\overline{\frk{q}}}\right)^2 \Bigg\} z_{n,m},
    \end{align*}
    where $\Fr_\frk{q}$ denotes a geometric Frobenius at $\frk{q}$.
\end{proposition}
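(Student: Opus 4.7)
The plan is to exploit the commutative diagram in Proposition~\ref{lemma:LLZ}, which, via Shapiro's lemma, identifies the corestriction $\mathrm{cor}_{K[mqp^n]/K[mp^n]}$ with the norm map $\mathcal{N}_{m,n}^{mq,n}$ applied to the preimage of $z_{n,mq}$ under $\nu_n^{mq}$. Since $q$ is split in $K$ and coprime to $m$, the explicit formula for $\mathcal{N}_{m,n}^{mq,n}$ expresses it as
\[
1\otimes\varpi_{11,\ast}\,-\,\left(\tfrac{\varphi_0(\frk{q})\bm{\kappa}_{\ac}(\frk{q})}{q}[\frk{q}]+\tfrac{\varphi_0(\overline{\frk{q}})\bm{\kappa}_{\ac}(\overline{\frk{q}})}{q}[\overline{\frk{q}}]\right)\otimes\varpi_{12,\ast}\,+\,\tfrac{\chi_\psi(q)}{q}\otimes\varpi_{22,\ast}\,.
\]

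The geometric input is Lemma~\ref{lemma:norm-relations}, which computes the three pushforwards $(1,1,\varpi_{ij,q})_\ast\Delta_{\mathbf{n}}(mq)$ for $ij\in\{11,12,22\}$ as explicit combinations of copies of $\Delta_{\mathbf{n}}(m)$ acted on by $\mathrm{T}_q',\mathrm{S}_q,\mathrm{S}_q^{-1}$ in each of the three slots. I would first apply these identities to $\Delta_{\mathbf{n}}(mq)$ with $\mathbf{n}=(1,0,n)$, then pass to the ordinary-at-$p$ étale realisations via the cycle class map, the K\"unneth decomposition, and the projections $\mathrm{pr}_f,\mathrm{pr}_g$. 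The Hecke operators in the first two slots collapse to scalars: $\mathrm{T}_q'$ acts on $T_f^\ast$ (resp. $T_g^\ast$) as $a_q(f)$ (resp. $a_q(g)$), and $\mathrm{S}_q^{\pm 1}$ acts as $\chi_f(q)^{\pm 1}$ (resp. $\chi_g(q)^{\pm 1}$).

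For the third slot, I would invoke the explicit description of $\phi_n^m$ from Proposition~\ref{lemma:LLZ}: the operator $\mathrm{T}_q'$ becomes $\varphi_0(\frk{q})\bm{\kappa}_{\ac}(\frk{q})[\frk{q}]+\varphi_0(\overline{\frk{q}})\bm{\kappa}_{\ac}(\overline{\frk{q}})[\overline{\frk{q}}]$ while $\mathrm{S}_q^{-1}$ becomes $(\chi_\psi\varepsilon_K)(q)=\chi_\psi(q)$, the last equality using that $q$ splits in $K$. Transporting everything through Shapiro's isomorphism to classes over $K[mp^n]$, each group-like element $[\frk{q}]$ becomes the geometric Frobenius $\Fr_\frk{q}$, while the twist by $\bm{\kappa}_{\ac}$ is absorbed into the anticyclotomic character defining the coefficient module. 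Assembling the nine contributions (three terms of $\mathcal{N}_{m,n}^{mq,n}$ multiplied by the three identities of Lemma~\ref{lemma:norm-relations}) and pulling out the overall $\chi_g^{-1}(q)$ matches the seven-term Euler factor in the statement.

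The principal obstacle is the careful bookkeeping of these nine contributions, in particular verifying the cancellation of the $(q+1)$-terms coming from the diagonal degeneracy identities against the cross-terms of $\mathcal{N}_{m,n}^{mq,n}$ so that the combined expression produces precisely $\chi_f^{-1}(q)a_q(f)^2/q$, $\chi_g^{-1}(q)a_q(g)^2$ and $-(q^2+1)/q$ with the correct signs. The self-duality hypothesis \eqref{item_self_duality} provides the necessary coherence between $\chi_f$, $\chi_g$, $\chi_{\varphi_0}$ and $\chi_\psi$; once normalisations are fixed, the remaining verification is a direct algebraic rearrangement.
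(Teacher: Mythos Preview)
Your proposal is correct and follows exactly the paper's approach: the proof there simply observes that corestriction corresponds under Shapiro's isomorphism to the norm map of Proposition~\ref{lemma:LLZ}, and that the result then follows from Lemma~\ref{lemma:norm-relations} ``after straightforward computations.'' Your write-up makes those computations explicit, but the strategy is identical.
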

\begin{proof}
The corestriction map in Galois cohomology corresponds to the norm map in Lemma~\ref{lemma:LLZ} under Shapiro's isomorphism. Hence, the result follows from Lemma~\ref{lemma:LLZ} and Lemma~\ref{lemma:norm-relations} after straightforward computations.
\end{proof}

\begin{proposition}
\label{prop:2025_08_11_1553}
    Let $n$ be a positive integer and let $m$ be a square-free product of primes coprime to $Np$ and split in $K$; then
    \begin{equation*}
        \mathrm{cor}_{K[mp^{n+1}]/K[mp^n]}(z_{n+1,m})=a_p(g) z_{n,m}-\chi_g(p)\tilde{z}_{n,m}\,,
    \end{equation*}
    \begin{equation*}
        \mathrm{cor}_{K[mp^{n+1}]/K[mp^n]}(\tilde{z}_{n+1,m})=p\cdot z_{n,m}\,.
    \end{equation*}
\end{proposition}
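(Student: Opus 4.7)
The plan is to deduce both identities from the corresponding cycle-level relations obtained in Corollary~\ref{cor_2025_03_22_0955}, by transporting them across the patched system of Proposition~\ref{lemma:LLZ}, using the fact that the norm map $\mathcal{N}_{m,n}^{m,n+1}$ in the $p$-direction is precisely $1\otimes \pi_{1,*}$.

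First, I would promote the two identities of Corollary~\ref{cor_2025_03_22_0955} from the level $U_{1}\times U_{0}\times U_{n}$ to the level $U_{1}\times U_{0}\times U_{n}^{m}$. This is formal: the tame level $U^{m}$ only intervenes on the third factor and commutes with every operation appearing on the $p$-part (the projector $e_{\mathrm{U}_{p}'}$, the trace map $\pi_{1,*}$, and the scalars $a_{p}(g),\chi_{g}(p)$), and the construction of $\Delta_{\mathbf{n}}$ in Section~\ref{sec:families_of_indefinite_cycles} carries over verbatim with an additional $U_{0}(m)$-level on the third factor. This yields
\begin{align*}
(\id,\id,\pi_{1})_{*}\,\Delta_{n+1,m}^{\etale}(f,g) &= a_{p}(g)\,\Delta_{n,m}^{\etale}(f,g)-\chi_{g}(p)\,\widetilde{\Delta}_{n,m}^{\etale}(f,g)\,,\\
(\id,\id,\pi_{1})_{*}\,\widetilde{\Delta}_{n+1,m}^{\etale}(f,g) &= p\cdot \Delta_{n,m}^{\etale}(f,g)\,.
\end{align*}

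Next, I would apply the map $\mathrm{id}_{T_{f}^{*}\otimes T_{g}^{*}}\otimes \nu_{n+1}^{m}$ to both sides of the equalities above, and invoke the commutative diagram of Proposition~\ref{lemma:LLZ} with $m'=m$, $n'=n+1$: the left-hand vertical arrow specialises to $\mathcal{N}_{m,n}^{m,n+1}=1\otimes \pi_{1,*}$, while the right-hand vertical map is the Shapiro-style norm on the induced $\Lambda$-modules. This produces
\begin{align*}
\mathrm{Norm}(z_{n+1,m}) &= a_{p}(g)\,z_{n,m}-\chi_{g}(p)\,\widetilde{z}_{n,m}\,,\\
\mathrm{Norm}(\widetilde{z}_{n+1,m}) &= p\cdot z_{n,m}\,.
\end{align*}
Under the Shapiro isomorphism $H^{1}(\QQ,\Ind_{K[mp^{k}]}^{\QQ}(-))\simeq H^{1}(K[mp^{k}],(-))$, the norm on induced modules translates to Galois corestriction from $K[mp^{n+1}]$ down to $K[mp^{n}]$, which yields the two stated identities.

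The only step that genuinely requires care is confirming that $(\id,\id,\pi_{1})_{*}$ on the threefold's étale cohomology matches, under the patched map $\nu_{n+1}^{m}$, the norm $\mathcal{N}_{m,n}^{m,n+1}=1\otimes\pi_{1,*}$ on the tensor factor attached to the third Shimura curve; this is a tautology once one unwinds that $\nu_{n+1}^{m}$ is built from the third Shimura curve factor with the $T_{f}^{*}$ and $T_{g}^{*}$ components passive, and that its formation commutes with the $p$-direction degeneracy by Proposition~\ref{lemma:LLZ}. The scalar coefficients $a_{p}(g)$, $\chi_{g}(p)$, and $p$ are unaffected by any of the maps involved, so no genuine analytic input is needed beyond Corollary~\ref{cor_2025_03_22_0955} and the patching machinery.
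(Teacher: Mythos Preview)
Your proposal is correct and follows essentially the same approach as the paper, which simply says ``This follows from Corollary~\ref{cor_2025_03_22_0955}.'' You have fleshed out the steps that the paper leaves implicit: promoting the cycle-level identities to tame level $m$, applying the patched map $\nu_n^m$ via the commutative diagram of Proposition~\ref{lemma:LLZ} with $\mathcal{N}_{m,n}^{m,n+1}=1\otimes\pi_{1,*}$, and translating the induced-module norm into corestriction via Shapiro's lemma.
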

\begin{proof}
    This follows from Corollary~\ref{cor_2025_03_22_0955}.
\end{proof}

We will henceforth work under the following assumption. We note that this hypothesis will be replaced by its stronger version \eqref{item_BI0} in \S\ref{subsubsec_2025_10_30_0947}.

\begin{assumption}
    \label{ass:irred}
    The residual $G_K$-representation $\overline{T}_f^\ast\otimes \overline{T}_g^\ast(\varphi_{0,\frk{P}}^c)(2)$ satisfies $H^0(K,\overline{T}_f^\ast\otimes \overline{T}_g^\ast(\varphi_{0,\frk{P}}^c)(2))=0$.
\end{assumption}

\begin{remark}\label{rk:irred}
    As a consequence of \eqref{item_H0}, we have for any finite $p$-extension $\mathcal{K}$ of $K$ that
    \[
    H^0(\mathcal{K},\overline{T}_f^\ast\otimes \overline{T}_g^\ast(\varphi_{0,\frk{P}}^c)(2))=0.
    \]
\end{remark}

\begin{proposition}
\label{prop:2025_08_11_1554}
    For all positive integers $n$ and for all square-free products $m$ of primes coprime to $Np$ and split in $K$, we have
    \[
    \tilde{z}_{n+1,m}=\mathrm{res}_{K[mp^n]/K[mp^{n+1}]}(z_{n,m}).
    \]
\end{proposition}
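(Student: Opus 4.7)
The strategy is to combine the diamond invariance of $\widetilde{\Delta}_{n+1}^{\etale}(f,g)$ from Proposition~\ref{prop_2025_03_08_1713} with the second identity of Proposition~\ref{prop:2025_08_11_1553}, exploiting the injectivity and torsion-freeness consequences of Assumption~\ref{ass:irred} to pin down the pre-image.

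First, set $M := T_f^\ast \otimes T_g^\ast(\varphi_{0,\frk{P}}^c)(2)$ and observe that, by Remark~\ref{rk:irred} applied to the finite $p$-extension $K[mp^{n+1}]/K$ and the $p$-torsion-freeness of $M$, Nakayama's lemma forces $M^{G_{K[mp^{n+1}]}} = 0$. Since $\Gal(K[mp^{n+1}]/K[mp^n])$ is cyclic of order $p$, the inflation--restriction sequence both makes $\mathrm{res}\colon H^1(K[mp^n], M) \to H^1(K[mp^{n+1}], M)$ injective and identifies its image with $H^1(K[mp^{n+1}], M)^{\Gal(K[mp^{n+1}]/K[mp^n])}$. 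Moreover, the short exact sequence $0 \to M \xrightarrow{p} M \to \overline{M} \to 0$, combined with the vanishing of $H^0(K[mp^n], \overline{M})$, shows that $H^1(K[mp^n], M)$ is $p$-torsion free.

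Second, I would verify that $\tilde{z}_{n+1,m}$ is $\Gal(K[mp^{n+1}]/K[mp^n])$-invariant, hence lies in the image of $\mathrm{res}$. Applying Proposition~\ref{prop_2025_03_08_1713} at level $n+1$ (valid since $n+1 \geq 2$), and extending it to the level-$m$ setting — justified because the diamond operators at $p$ commute with the level structure at primes dividing $m$ — gives $\langle 1, 1, d \rangle\, \widetilde{\Delta}_{n+1,m}^{\etale}(f,g) = \widetilde{\Delta}_{n+1,m}^{\etale}(f,g)$ for every $d \equiv 1 \pmod{p^n}$. In the tensor product $\Lambda_{n+1}[R_m] \otimes_{\phi_{n+1}^m} e_{\mathrm{U}_p'} H^1_{\etale}(X(U_{n+1}^m) \times_{\QQ} \overline{\QQ}, \mathcal{O})$, the defining relation $\phi_{n+1}^m(\langle d \rangle) \otimes c = 1 \otimes \langle d \rangle c$ together with the computation $\phi_{n+1}^m(\langle d \rangle) = \bm{\kappa}^{-1/2}(d)$ for $d \in 1 + p\ZZ_p$ forces the image $\tilde{z}_{n+1,m}$ to be annihilated by $\bm{\kappa}^{-1/2}(d) - 1$ for every $d \equiv 1 \pmod{p^n}$. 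Under the Shapiro-type identification with Galois cohomology over $K[mp^{n+1}]$, these annihilators generate exactly the augmentation ideal of $\Gal(K[mp^{n+1}]/K[mp^n])$, yielding the required invariance.

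Finally, let $w \in H^1(K[mp^n], M)$ be the unique class, guaranteed by injectivity of $\mathrm{res}$, such that $\mathrm{res}(w) = \tilde{z}_{n+1,m}$. Using $\mathrm{cor} \circ \mathrm{res} = [K[mp^{n+1}]:K[mp^n]] = p$ and the second formula of Proposition~\ref{prop:2025_08_11_1553}, we obtain
\[
p\, w \;=\; \mathrm{cor}_{K[mp^{n+1}]/K[mp^n]}(\tilde{z}_{n+1,m}) \;=\; p\cdot z_{n,m}.
\]
The $p$-torsion-freeness of $H^1(K[mp^n], M)$ then yields $w = z_{n,m}$, as desired. The delicate step is the second one, since it requires careful bookkeeping through the Shapiro isomorphism to match the diamond action on the Shimura-variety side with the Galois action on the induced coefficients via the specialization character $\phi_{n+1}^m$ and the twist by $\bm{\kappa}^{-1/2}$; all other manipulations are routine.
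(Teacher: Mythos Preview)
Your proposal is correct and follows essentially the same approach as the paper's proof: both establish the $\Gal(K[mp^{n+1}]/K[mp^n])$-invariance of $\tilde z_{n+1,m}$ via the diamond invariance of Proposition~\ref{prop_2025_03_08_1713}, then apply $\mathrm{cor}\circ\mathrm{res}=p$ together with the second identity of Proposition~\ref{prop:2025_08_11_1553} and conclude by the $p$-torsion-freeness coming from \eqref{item_H0}. You are somewhat more explicit than the paper about the Shapiro bookkeeping translating diamond invariance into Galois invariance (the paper simply cites Proposition~\ref{prop_2025_03_08_1713}), but the argument is otherwise identical.
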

\begin{proof}
First we prove that
\[
\tilde{z}_{n+1,m}\in \mathrm{im}\left(H^1(K[mp^n],T_f^\ast\otimes T_g^\ast(\varphi_{0,\frk{P}}^c)(2))\rightarrow H^1(K[mp^{n+1}],T_f^\ast\otimes T_g^\ast(\varphi_{0,\frk{P}}^c)(2))\right).
\]
By \eqref{item_H0}  and Remark~\ref{rk:irred}, this is equivalent to showing that
\[
\tilde{z}_{n+1,m}\in H^1(K[mp^{n+1}],T_f^\ast\otimes T_g^\ast(\varphi_{0,\frk{P}}^c)(2))^{G_{K[mp^n]}},
\]
which holds by Proposition~\ref{prop_2025_03_08_1713}. We therefore know that $\tilde{z}_{n+1,m}=\mathrm{res}_{K[mp^n]/K[mp^{n+1}]}(c)$ for some
\[
c\in H^1(K[mp^n],T_f^\ast\otimes T_g^\ast(\varphi_{0,\frk{P}}^c)(2)).
\]
Using the second equation in Proposition~\ref{prop:2025_08_11_1553} and noting that the composition
\[
\mathrm{cor}_{K[mp^{n+1}]/K[mp^{n}]}\circ\mathrm{res}_{K[mp^n]/K[mp^{n+1}]},
\]
equals multiplication by $p$, we deduce that
\[
p \cdot z_{n,m} =\mathrm{cor}_{K[mp^{n+1}]/K[mp^n]}(\tilde{z}_{n+1,m})=p\cdot c.
\]
Finally, again by \eqref{item_H0}  and Remark~\ref{rk:irred}, we infer that $H^1(K[mp^{n+1}],T_f^\ast\otimes T_g^\ast(\varphi_{0,\frk{P}}^c)(2))$ is torsion-free, so we obtain $z_{n,m}=c$ and therefore $\tilde{z}_{n+1,m}=\mathrm{res}_{K[mp^n]/K[mp^{n+1}]}(z_{n,m})$, as required.
\end{proof}

\begin{proposition}
\label{prop_thm_2025_07_03_1627}
    Let $n\geq 2$ be an integer and let $m$ be a square-free product of primes coprime to $Np$ and split in $K$. Then
    \[
    \mathrm{cor}_{K[mp^{n+1}]/K[mp^n]}(z_{n+1,m})=a_p(g)z_{n,m}-\chi_g(p)\mathrm{res}_{K[mp^{n-1}]/K[mp^n]}(z_{n-1,m}).
    \]
\end{proposition}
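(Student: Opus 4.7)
The proof will be a direct combination of Propositions~\ref{prop:2025_08_11_1553} and \ref{prop:2025_08_11_1554}, so the plan is essentially to chain these two identities together, with only the hypothesis $n\geq 2$ needing justification.

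First, I would apply the first equation of Proposition~\ref{prop:2025_08_11_1553} (with the index $n$ in place of the index used there is $n$, so applied directly) to obtain
\[
\mathrm{cor}_{K[mp^{n+1}]/K[mp^n]}(z_{n+1,m}) \;=\; a_p(g)\, z_{n,m} \;-\; \chi_g(p)\, \tilde{z}_{n,m}.
\]
Next, I would rewrite the auxiliary class $\tilde z_{n,m}$ using Proposition~\ref{prop:2025_08_11_1554}. The hypothesis $n\geq 2$ is exactly what is needed to apply that proposition at level $n$ (rewritten as $\tilde z_{(n-1)+1,m}$), since it requires $n-1\geq 1$ so that $z_{n-1,m}$ is defined via the patched system. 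This gives
\[
\tilde{z}_{n,m} \;=\; \mathrm{res}_{K[mp^{n-1}]/K[mp^n]}(z_{n-1,m}).
\]

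Substituting this into the previous display yields the desired identity
\[
\mathrm{cor}_{K[mp^{n+1}]/K[mp^n]}(z_{n+1,m}) \;=\; a_p(g)\, z_{n,m} \;-\; \chi_g(p)\,\mathrm{res}_{K[mp^{n-1}]/K[mp^n]}(z_{n-1,m}).
\]

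There is no substantive obstacle here: all the real work has already been done in Corollary~\ref{cor_2025_03_22_0955} (which propagates the three-term cohomological norm relation of Theorem~\ref{thmA} from the Shimura-threefold level to the level of the classes $\Delta_n^{\etale}(f,g)$ and $\widetilde\Delta_n^{\etale}(f,g)$), in Proposition~\ref{prop_2025_03_08_1713} (which supplies the diamond-invariance needed to descend $\tilde z_{n+1,m}$ through one layer of the $\mathbb{Z}_p$-extension), and in Assumption~\ref{ass:irred}/Remark~\ref{rk:irred} (which ensures the relevant cohomology groups are $p$-torsion-free so that the descent is unique). The present proposition simply packages these inputs; the only thing one must keep in mind is the compatibility of the norm relations of Corollary~\ref{cor_2025_03_22_0955} with the patching maps $\nu_n^m$ of Proposition~\ref{lemma:LLZ}, which was already established in the course of proving Proposition~\ref{prop:2025_08_11_1553}.
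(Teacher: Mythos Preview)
Your proposal is correct and matches the paper's own proof essentially verbatim: the paper simply says the result follows by combining the first equation in Proposition~\ref{prop:2025_08_11_1553} with Proposition~\ref{prop:2025_08_11_1554}, and your write-up spells out precisely this substitution, together with the observation that $n\geq 2$ is what allows the latter proposition to be applied at level $n-1$.
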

\begin{proof}
    This follows by combining the first equation in Proposition~\ref{prop:2025_08_11_1553} and Proposition~\ref{prop:2025_08_11_1554}.
\end{proof}

\begin{remark}
    Note that the previous proposition may also be proved using the more conceptual strategy in the proof of Proposition~\ref{prop_2025_10_15_1640}.
\end{remark}

\section{Iwasawa cohomology classes}
As in \S\ref{S:anticyclotomic}, $K$ denotes an imaginary quadratic field in which $p$ splits.  Let $\psi$ be a Hecke character of $K$ of conductor $\frk{c}$ coprime to $p$ and infinity type $(1-k,0)$ for some even integer $k\geq 2$. Throughout this section, $m$ denotes a square-free product of primes that are coprime to $Np$ and split in $K$. Furthermore, we assume that \eqref{item_H0}  holds. We explain how the collection $\left\{z_{n,m}\right\}_{n}$ gives rise to classes that take values in tempered distributions and bounded signed classes.

\subsection{Classes with values in tempered distributions}
We first recall definitions of distributions and the associated norms. 
\begin{defn}
\label{defn_PR_rings}
Let $0\le \lambda<1$ be a real number.
\item[i)]  We define the Banach space $\cC_\lambda$ of $L$-valued order-$\lambda$ functions on $1+p\Zp$ as in \cite[\S I.5]{Colmez2010Ast}, after identifying the multiplicative group $1+p\Zp$ with the additive group $\Zp$.
   
\item[ii)] We let $\cD_\lambda$ denote the continuous $L$-dual of $\cC_\lambda$.
   \item[iii)] Given $\mu\in\cD_\lambda$, we define the norm
   \[
   ||\mu||_\lambda=\sup_{n\ge 0}\,\,\sup_{a\in 1+p\Zp} p^{-\lfloor \lambda n\rfloor}\left|\left|\int_{a(1+p^{n+1}\Zp)} \mu\right|\right|.
   \]
   \item[iv)] Let $\sL$ denote the $L$-dual of the space of locally constant $L$-valued functions on $1+p\Zp$.
\end{defn}

 Via the Amice transform, we may consider elements of $\sL$ as power series in $L\llbracket X\rrbracket$ that converge on the open unit disk that are $O(\log)$. Those in $\cD_\lambda$ correspond to the power series that are $O(\log^\lambda)$. In particular, there is a natural injection $\cD_{\lambda}\hookrightarrow\sL$ for any $0\le \lambda<1$.

 Let $n\ge1$ be an integer. Recall that $\Gamma_n=(1+p\Zp)/(1+p\Zp)^{p^{n-1}}=(1+p\Zp)/(1+p^n\Zp)$. We write $\sL_n$ for the dual of the set of functions on $1+p\Zp$ that are spanned by the indicator functions of the cosets in $\Gamma_n$. By an abuse of notation, we again write $||\cdot||$ for the sup-norm on $\sL_n$. There is a natural projection $\sL_{n'}\to \sL_n$ for all $n'\ge n$. When viewed as power series, this is the map given by the projection modulo $(1+X)^{p^{n-1}}-1$.

\begin{lemma}\label{lem:characterizing-distributions}
    Let $0\le \lambda<1$ be a real number. Let $(\mu_n)_{n\ge1}$ be a sequence of elements in $\sL_n$ such that for all $n\ge1$, we have
    \begin{itemize}
        \item[a)] $||p^{\lambda (n-1)}\mu_n||\le 1$;
        \item[b)] the image of $\mu_{n+1}$ in $\sL_n$ is $\mu_n$.
    \end{itemize}
    Then there exists a unique $\mu\in\cD_{\lambda}$ such that the image of $\mu$ in $\sL_n$ is $\mu_n$ for all $n$, and $||\mu||_\lambda\le 1$.
\end{lemma}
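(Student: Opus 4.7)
The plan is to first glue the compatible sequence $(\mu_n)$ into a single functional $\mu$ on locally constant functions (hence an element of $\sL$), and then to show that the growth bound on $||\mu_n||$ forces $\mu$ to lie in $\cD_\lambda$ with norm at most $1$. Concretely, given a locally constant function $f$ on $1+p\Zp$, I would choose $n$ large enough that $f$ is constant on cosets of $(1+p\Zp)^{p^{n-1}} = 1+p^n\Zp$ and set $\mu(f) := \mu_n(f)$; hypothesis (b) guarantees that the definition is independent of such an $n$, so $\mu$ is a well-defined element of $\sL$.

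To bound $||\mu||_\lambda$, I would observe that for each $n \ge 0$ and $a \in 1+p\Zp$, the indicator of $a(1+p^{n+1}\Zp)$ is constant on cosets of $1+p^{n+1}\Zp$, so the construction of the previous step gives
\[
\int_{a(1+p^{n+1}\Zp)} \mu \,=\, \int_{a(1+p^{n+1}\Zp)} \mu_{n+1}.
\]
Hypothesis (a) applied at level $n+1$ then yields $\bigl|\int_{a(1+p^{n+1}\Zp)} \mu\bigr| \le ||\mu_{n+1}|| \le p^{-\lambda n}$. Multiplying by $p^{-\lfloor \lambda n\rfloor}$ and taking a supremum gives $||\mu||_\lambda \le \sup_{n\ge 0} p^{-\lfloor\lambda n\rfloor - \lambda n} \le 1$, since both exponents are non-negative.

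To conclude that $\mu \in \cD_\lambda$, I would invoke the identification described after Definition~\ref{defn_PR_rings}: via the Amice transform, $\sL$ is realised as the space of $O(\log)$ power series on the open unit disk, and $\cD_\lambda$ as the subspace of $O(\log^\lambda)$ power series. The compatible sequence $(\mu_n)$ corresponds to a sequence of polynomial truncations of a single $F \in L\llbracket X\rrbracket$ modulo the cyclotomic polynomials $(1+X)^{p^{n-1}} - 1$, and the norm bound of the previous step translates into the $O(\log^\lambda)$ condition characterising $\cD_\lambda$. Uniqueness is then automatic: any $\mu' \in \cD_\lambda$ with the same images in every $\sL_n$ would agree with $\mu$ on all locally constant functions, and by continuity on $\cC_\lambda$ (in which such functions are dense) it must coincide with $\mu$.

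The hard part will be the final identification, namely the translation from the coset-integral sup-norm on $\sL_n$ to the $O(\log^\lambda)$ growth of the Amice transform. This is contained in Colmez's framework, and a direct verification amounts to matching the explicit formula for the Amice transform of indicator functions of cosets with the coefficients of $F$; any slackness in the matching of floors versus real exponents would only affect the proportionality constant and not the conclusion $\mu \in \cD_\lambda$.
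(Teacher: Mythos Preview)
Your approach is sound and more explicit than the paper's own treatment, which simply cites \cite[Lemma~2.2]{BFSuper}. The strategy---glue the $\mu_n$ to a functional $\mu\in\sL$, verify that the coset-integral norm $\|\mu\|_\lambda$ is finite, then use the density of locally constant functions in $\cC_\lambda$ (valid precisely because $\lambda<1$) to extend $\mu$ uniquely to an element of $\cD_\lambda$---is exactly the Amice--V\'elu/Vishik argument underlying the cited lemma, and your uniqueness paragraph captures it correctly.

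There is one computational slip to fix: your reading of hypothesis~(a) has the sign reversed. In the $p$-adic convention one has $\|p^{\lambda(n-1)}\mu_n\|=p^{-\lambda(n-1)}\|\mu_n\|$, so (a) says $\|\mu_n\|\le p^{\lambda(n-1)}$ (the coset integrals are allowed to \emph{grow} in absolute value), not $\le p^{-\lambda(n-1)}$; compare the explicit form of this bound in Propositions~\ref{prop:inject-H1} and~\ref{prop:exist-H1}. With the correct sign your estimate becomes $p^{-\lfloor\lambda n\rfloor}\|\mu_{n+1}\|\le p^{\lambda n-\lfloor\lambda n\rfloor}$, which is still uniformly bounded and hence gives $\mu\in\cD_\lambda$; the sharp bound $\|\mu\|_\lambda\le 1$ then follows on matching the floor in the definition of $\|\cdot\|_\lambda$ with the discreteness of the norm on $\sL_n$.
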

\begin{proof}
    This is a special case of \cite[Lemma~2.2]{BFSuper}.
\end{proof}

 Recall that $z_{n,m}$ belongs to 
\[
H^1(K[m],T_f^\ast\otimes T_g^\ast\otimes \Lambda_n(\varphi_{0,\frk{P}}^c\bm{\kappa}_{\ac}^{-1})(2))
\simeq H^1(K[mp^{n}],T_f^\ast\otimes T_g^\ast(\varphi_{0,\frk{P}}^c)(2))\,.
\]
For notational simplicity, we write $M$ for the $G_{K[m]}$-representation $T_f^\ast\otimes T_g^\ast(\varphi_{0,\frk{P}}^c)(2)$. We shall identify $z_{n,m}$ with its image under the natural map
\[
H^1(K[mp],M\otimes \Lambda_n\bk)\to H^1(K[m],M\otimes\sL_n).
\]

 We write $||\cdot||_M$ for the natural norm on $M$ induced by the $\cO$-module structure of $M$ and the $p$-adic norm on $\cO$. For $G\subseteq G_{K[m]}$, the continuous cohomology group $H^1(G,M\otimes \sL)$ is endowed with a supremum semi-norm, which we still denote by $||\cdot||_M$. We use the same notation for the semi-norm on $H^1(G,M\otimes \sL_n)$.

%For $\lambda\in\R_{\ge0}$, we define the Banach space $\cC_\lambda$ of $E$-valued order-$\lambda$ functions on $1+p\Zp$ as in \cite[\S I.5]{Colmez2010Ast}, after identifying the multiplicative group $1+p\Zp$ with the additive group $\Zp$. Its continuous dual is denoted by $\cD_\lambda$. The natural norm on $\cD_\lambda$ is denoted by $||\cdot||_\lambda$ (see \cite[\S2.2]{LZ1}).

%Let $\sL$ denote the $E$-dual of the space of locally constant $E$-valued functions on $1+p\Zp$. There is a natural injection $\cD_{\lambda}\hookrightarrow\sL$ if $0\le \lambda<1$.
%Given an integer $n\ge0$, let $\sL_n$ denote the quotient $\left(\sL\right)_{\Gamma_n}$, which can be identified with $\cH/(\omega_n)$. Furthermore, it can be realized as the dual of the set of functions on $1+p\Zp$ that are spanned by the indicator functions of the cosets of $1+p^{n+1}\Zp$.% There is a natural map $H^1(K[m],M\otimes\sL)\rightarrow H^1(K[m],M\otimes\sL_n)$. We again denote the norm on $H^1(K[m],M\otimes\sL_n)$ induced by $||\cdot||_M$ with the same notation.

The following proposition is a slight alteration of a special case of \cite[Proposition~2.3.2]{LZ1}.

\begin{proposition}\label{prop:inject-H1}
    For all real numbers $\lambda$ such that $0\le \lambda<1$, there is an injection $$\Phi_\lambda:H^1(K[mp^\infty],M\otimes \cD_{\lambda})\longrightarrow H^1(K[mp^\infty],M\otimes \sL).$$
    
    Let $\mu\in H^1(K[mp^\infty],M\otimes \sL)$, and write $\mu_n$ for its image in $H^1(K[mp^\infty],M\otimes \sL_n)$ induced by the natural map $\sL\rightarrow\sL_n$. Then $\mu$ is in the image of $\Phi_\lambda$ if and only if the sequence
    \[
    p^{-\lfloor \lambda (n-1)\rfloor}\left|\left|\mu_n\right|\right|_M
    \]
    is bounded as $n\rightarrow\infty$. Moreover, if $\left|\left|\mu_n\right|\right|_M\le p^{ \lambda (n-1)}$ for all $n\ge1$, then $||\mu||_{M,\lambda}\le 1$,
    where $||\cdot||_{M,\lambda}$ is the natural semi-norm on $H^1(K[mp^\infty],M\otimes\cD_{\lambda})$ induced by the norms on $M$ and $\cD_{\lambda}$.
\end{proposition}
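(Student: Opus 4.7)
The plan is to construct $\Phi_\lambda$ from the canonical module inclusion $\cD_\lambda\hookrightarrow\sL$ and to read off both injectivity and the image characterization by lifting Lemma~\ref{lem:characterizing-distributions} from coefficient modules to continuous cohomology.

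The continuous $L$-linear inclusion $\cD_\lambda\hookrightarrow\sL$ (with trivial $G_{K[mp^\infty]}$-action) tensored with $M$ gives a morphism of continuous $G_{K[mp^\infty]}$-representations, and taking continuous cohomology defines $\Phi_\lambda$. For every $n$, the natural projection $\cD_\lambda\to\sL_n$ has operator norm at most $p^{\lfloor\lambda(n-1)\rfloor}$ by the very definition of $||\cdot||_\lambda$, so the induced map on $H^1$ satisfies $||\mu_n||_M\le p^{\lfloor\lambda(n-1)\rfloor}\,||\tilde\mu||_{M,\lambda}$ whenever $\mu=\Phi_\lambda(\tilde\mu)$. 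This simultaneously establishes the ``only if'' direction of the image characterization and the ``moreover'' clause. Injectivity is then a consequence: if $\Phi_\lambda(\tilde\mu)=0$ then $\mu_n=0$ for every $n$, and by the uniqueness part of Lemma~\ref{lem:characterizing-distributions} the pointwise values of any cocycle representative of $\tilde\mu$ are forced to vanish in $M\otimes\cD_\lambda$ (applied, after a choice of $\cO$-basis of $M$, coordinate-wise to each cocycle value).

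For the ``if'' direction, suppose $p^{-\lfloor\lambda(n-1)\rfloor}||\mu_n||_M$ is bounded by some constant $C$. Choose a continuous cocycle representative $c:G_{K[mp^\infty]}\to M\otimes\sL$ of $\mu$, and project through $\sL=\varprojlim_n\sL_n$ to obtain a compatible family of cocycles $c_n$ representing $\mu_n$. The growth hypothesis permits us, for each $n$, to correct $c_n$ by a coboundary so that the modified cocycle has norm at most $Cp^{\lfloor\lambda(n-1)\rfloor}$; the central issue is to choose these corrections coherently through the surjective transition maps $\sL_{n+1}\to\sL_n$ so that the modified cocycles reassemble to a cocycle valued in $M\otimes\sL$. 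Granted such a coherent modification $c'$, applying Lemma~\ref{lem:characterizing-distributions} pointwise to $c'(g)\in M\otimes\sL$ for each $g\in G_{K[mp^\infty]}$ shows that $c'(g)$ in fact lies in $M\otimes\cD_\lambda$, yielding the required class in $H^1(K[mp^\infty],M\otimes\cD_\lambda)$ with image $\mu$, and the pointwise norm bound gives the moreover clause.

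The principal obstacle lies in this coherent lifting of coboundaries: although each $\mu_n$ individually admits a cocycle representative of the controlled norm, it is not automatic that these representatives fit into an inverse system compatible with $\sL_{n+1}\to\sL_n$. This is precisely the point addressed in the proof of \cite[Proposition~2.3.2]{LZ1}, where a Mittag-Leffler argument for the inverse system $\{H^1(K[mp^\infty],M\otimes\sL_n)\}_n$, combined with the completeness of $M\otimes\sL$ for the projective-limit topology, resolves the compatibility problem. Our setting differs only in the specific growth class and in the coefficient representation $M$; the argument of \emph{loc.\,cit.}\ adapts to the present statement with only cosmetic changes.
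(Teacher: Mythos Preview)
Your injectivity argument has a genuine gap. From $\Phi_\lambda(\tilde\mu)=0$ you correctly deduce $\mu_n=0$ as \emph{cohomology classes}, but you then assert that ``by the uniqueness part of Lemma~\ref{lem:characterizing-distributions} the pointwise values of any cocycle representative of $\tilde\mu$ are forced to vanish''. This does not follow: $\mu_n=0$ only means that the projected cocycle $c_n$ is a coboundary, $c_n(g)=(g-1)x_n$ for some $x_n\in M\otimes\sL_n$, not that $c_n(g)=0$. Lemma~\ref{lem:characterizing-distributions} is a statement about elements of $\sL_n$, not about cohomology classes, so it cannot turn ``$[c_n]=0$'' into ``$c_n(g)=0$''. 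What you actually need is that the $x_n$ assemble to some $x\in M\otimes\sL$ which in fact lies in $M\otimes\cD_\lambda$; equivalently, that a $\cD_\lambda$-valued cocycle which is an $\sL$-coboundary is already a $\cD_\lambda$-coboundary. The paper handles this by invoking \cite[Proposition~2.1.2]{LZ1} (closedness of $B^1$ in $Z^1$, so that the seminorm on $H^1$ is a genuine norm) together with \cite[proof of Proposition~II.2.1]{colmez98}. This topological input is not bypassed by your pointwise application of Lemma~\ref{lem:characterizing-distributions}.

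A second, smaller point: your operator-norm inequality $\|\mu_n\|_M\le p^{\lfloor\lambda(n-1)\rfloor}\|\tilde\mu\|_{M,\lambda}$ establishes the ``only if'' direction, but it does \emph{not} establish the ``moreover'' clause. The ``moreover'' goes in the opposite direction (from bounds on $\|\mu_n\|_M$ to a bound on $\|\tilde\mu\|_{M,\lambda}$) and is part of the constructive ``if'' argument; it comes out of the adaptation of \cite[Proposition~2.3.2]{LZ1} together with Lemma~\ref{lem:characterizing-distributions}, exactly as in the paper. Your treatment of the ``if'' direction itself is essentially the paper's.
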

\begin{proof}
The existence of the morphism $\Phi_\lambda$ follows from the natural injection $\cD_{\lambda}\hookrightarrow\sL$.
By \cite[Proposition~2.1.2]{LZ1}, the set of continuous coboundaries $B^1(G_{K[mp^\infty]},M)$ is closed in the set of continuous cocycles $Z^1(G_{K[mp^\infty]},M)$. Consequently, the injectivity of $\Phi_\lambda$ follows from \cite[proof of Proposition~II.2.1]{colmez98}. The remainder of the proposition follows by adapting the proof of \cite[Proposition~2.3.2]{LZ1}, replacing the use of Lemma~2.2.6 in op. cit. by Lemma~\ref{lem:characterizing-distributions} (which tells us that the constant $C$ appearing there can be taken to be $1$ in our setting). 
\end{proof}

For $n\ge2$, we write 
\begin{align*}
    \pr^n_{n-1}&:H^1({K[m]},M\otimes\sL_n)\rightarrow H^1({K[m]},M\otimes\sL_{n-1}),\\
    \Tr^n_{n-1}&:H^1({K[m]},M\otimes\sL_{n-1})\rightarrow H^1({K[m]},M\otimes\sL_{n})
\end{align*}
for the natural maps induced by the projection $\sL_n\rightarrow \sL_{n-1}$ and the trace map $\sL_{n-1}\rightarrow\sL_{n}$ induced by $\Gamma_{n-1}\ni\sigma\mapsto \displaystyle\sum_{\pr^n_{n-1}(\tau)=\sigma}\tau$. 
\begin{proposition}\label{prop:exist-H1}
    Let $\lambda$ be a real number such that $0\le \lambda<1$. Suppose we are given a sequence of elements $\mu_n\in H^1(K[m],M\otimes \sL_n)$, $n\ge0$ such that
    \begin{itemize}
        \item[i)] For all $n\ge2$, $\pr^{n}_{n-1}\mu_{n}=\mu_{n-1}$,
        \item[ii)] $||\mu_n||_M\le p^{ \lambda (n-1)}$ for all $n\ge1$.
    \end{itemize}
    Then there is a unique element $\mu\in H^1(K[m],M\otimes \cD_{\lambda})$ whose image in $H^1(K[m],M\otimes \sL_{n})$ is equal to $\mu_n$ for all $n$. Furthermore, $||\mu||_{M,\lambda}\le1$.
\end{proposition}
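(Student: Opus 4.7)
The plan is to adapt Lemma~\ref{lem:characterizing-distributions} to Galois cohomology by first producing a class $\tilde\mu$ in the larger group $H^1(K[m], M\otimes\sL)$ as an inverse limit of the given sequence, and then invoking Proposition~\ref{prop:inject-H1} to descend $\tilde\mu$ to a class in $H^1(K[m], M\otimes\cD_\lambda)$.

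The first step is to build $\tilde\mu\in H^1(K[m], M\otimes\sL)$ whose image in $H^1(K[m], M\otimes\sL_n)$ is $\mu_n$ for every $n\ge1$. I would represent each $\mu_n$ by a continuous cocycle $c_n:G_{K[m]}\to M\otimes\sL_n$. The compatibility condition (i) says that for $n\ge 2$ the cochain $\pr^n_{n-1}(c_n)-c_{n-1}$ is a coboundary, say $d\,b_{n-1}$ for some $b_{n-1}\in M\otimes\sL_{n-1}$. Since the projection $M\otimes\sL_n\twoheadrightarrow M\otimes\sL_{n-1}$ is surjective on $0$-cochains, we may lift $b_{n-1}$ to $\tilde b_{n-1}\in M\otimes\sL_n$ and replace $c_n$ by $c_n-d\,\tilde b_{n-1}$, which still represents $\mu_n$ and now satisfies $\pr^n_{n-1}(c_n)=c_{n-1}$ exactly. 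Proceeding inductively, we obtain a strictly compatible family whose inverse limit $c:=\varprojlim_n c_n$ is a continuous cocycle valued in $M\otimes\sL=\varprojlim_n M\otimes\sL_n$, defining the desired class $\tilde\mu$.

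The second step is to apply Proposition~\ref{prop:inject-H1}. Hypothesis (ii) yields $p^{-\lfloor\lambda(n-1)\rfloor}\|\mu_n\|_M\le 1$ for every $n\ge1$, which is precisely the boundedness criterion for $\tilde\mu$ to lie in the image of $\Phi_\lambda$. The unique preimage $\mu\in H^1(K[m], M\otimes\cD_\lambda)$ automatically satisfies $\|\mu\|_{M,\lambda}\le 1$ by the final sentence of that proposition. Uniqueness of $\mu$ subject to the stated compatibility with $(\mu_n)_n$ then follows from the injectivity of $\Phi_\lambda$, combined with the vanishing $H^0(K[m], M\otimes\sL_n)=0$, which is a consequence of \eqref{item_H0} and Remark~\ref{rk:irred} (via Shapiro's isomorphism) and which rules out any $\varprojlim^1$-type ambiguity in the first step.

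The main technical point, and the only place where care is needed, is the inductive cocycle-level construction in the first step: one must ensure that the transition maps at the level of continuous cocycles admit genuinely compatible lifts in the correct topology on $M\otimes\sL$. This is formally routine once one uses that $\sL=\varprojlim_n\sL_n$ as profinite objects and that the coboundary modifications stabilise in the relevant topologies, but it is the step that requires the most notational bookkeeping.
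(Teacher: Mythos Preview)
Your approach is correct and is essentially the same as the paper's, which likewise constructs a class in the inverse limit and then invokes Proposition~\ref{prop:inject-H1}. One bookkeeping point: Proposition~\ref{prop:inject-H1} is stated for $H^1(K[mp^\infty],-)$ rather than $H^1(K[m],-)$, so before invoking it the paper first uses \eqref{item_H0} together with inflation--restriction and Shapiro's lemma to identify $H^1(K[m],M\otimes\cD_\lambda)\cong H^1(K[mp^\infty],M\otimes\cD_\lambda)^{\Gamma}$ and $H^1(K[m],M\otimes\sL_n)\cong H^1(K[mp^\infty],M\otimes L)^{\Gamma^{p^{n-1}}}$; you should make this passage explicit rather than only alluding to Shapiro at the end.
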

\begin{proof}
Under \eqref{item_H0}, we have $H^0(K[mp^\infty],M)=0$. Thus, by the inflation-restriction exact sequence and Shapiro's lemma, we have 
\[
H^1(K[mp^\infty],M\otimes \cD_{\lambda})^\Gamma\cong H^1(K[m],M\otimes \cD_{\lambda}),\quad H^1(K[mp^\infty],M\otimes L)^{\Gamma^{p^{n-1}}}\cong H^1(K[m],M\otimes \sL_{n}).
\]
In particular, we can identify $\mu_n$ as elements of
   $H^1(K[mp^\infty],M\otimes L)^{\Gamma^{p^{n-1}}}$. The proposition follows from \cite[proof of Proposition~2.3.3]{LZ1} after replacing Proposition 2.3.2 therein by Proposition~\ref{prop:inject-H1}.
\end{proof}

\begin{defn}
\label{def_2025_10_15_1649}
Let $\alpha$ and $\beta$ denote the roots of the Hecke polynomial of $g$ at $p$.
 For $\xi\in\{\alpha,\beta\}$ and $n\ge2$, define 
 \[z_{m,n,\xi}=\frac{1}{\xi^{n}}\left(z_{m,n}-\frac{\chi_g(p)}{\xi}\Tr_{n-1}^nz_{m,n-1}\right) \,\in\, H^1(K[m], M\otimes\sL_n).\]
\end{defn}

\begin{theorem}\label{thm:non-integral-classes}
    For $\xi\in\{\alpha,\beta\}$ with $\ord_p(\xi)<1$, there exists a unique element $$z_{m,\infty,\xi}\in H^1(K[m],M\otimes\cD_{\ord_p(\xi)})$$ such that its image under the natural morphism $
    H^1(K[m],M\otimes\sL)\longrightarrow H^1(K[m],M\otimes\sL_n)$ is equal to $z_{m,n,\xi}$. Furthermore, $||z_{m,\infty,\xi}||_{M,\ord_p(\xi)}\le |\xi|^{-2}$. 
\end{theorem}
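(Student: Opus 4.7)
The plan is to apply Proposition~\ref{prop:exist-H1} with $\lambda := \ord_p(\xi)$ to the rescaled sequence $\nu_n := |\xi|^2\, z_{m,n,\xi}$, after first extending the definition of $z_{m,n,\xi}$ from $n\ge 2$ to all $n\ge 0$. Once the norm-compatibility $\pr^n_{n-1}\nu_n = \nu_{n-1}$ and the bound $||\nu_n||_M\le p^{\lambda(n-1)}$ are established, Proposition~\ref{prop:exist-H1} will produce a unique $\nu_\infty \in H^1(K[m], M\otimes \cD_\lambda)$ of norm $\le 1$ projecting to every $\nu_n$, and we set $z_{m,\infty,\xi} := |\xi|^{-2}\, \nu_\infty$; uniqueness of $z_{m,\infty,\xi}$ then follows from the injectivity of $\Phi_\lambda$ in Proposition~\ref{prop:inject-H1}.

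The heart of the argument is the compatibility $\pr^n_{n-1}\, z_{m,n,\xi} = z_{m,n-1,\xi}$ for $n\ge 3$. Expanding the definition and using the elementary identity $\pr^n_{n-1}\circ\Tr^n_{n-1} = p\cdot\id$ on $\sL_n$ (immediate by summing over the fibers of $\Gamma_n \to \Gamma_{n-1}$), together with the three-term relation $\pr^n_{n-1} z_{m,n} = a_p(g)\, z_{m,n-1} - \chi_g(p)\,\Tr^{n-1}_{n-2}\, z_{m,n-2}$ supplied by Proposition~\ref{prop_thm_2025_07_03_1627}, one computes
\[
\pr^n_{n-1}\,z_{m,n,\xi} \,=\, \xi^{-n}\bigl((a_p(g) - p\chi_g(p)\xi^{-1})\,z_{m,n-1} \,-\, \chi_g(p)\,\Tr^{n-1}_{n-2}\, z_{m,n-2}\bigr)\,.
\]
The coefficient $a_p(g) - p\chi_g(p)\xi^{-1}$ equals $\xi$ precisely because $\xi\in\{\alpha,\beta\}$ is a root of the weight-two Hecke polynomial $X^2 - a_p(g) X + p\chi_g(p)$; factoring out $\xi^{-(n-1)}$ then recovers $z_{m,n-1,\xi}$.

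For the base case $n = 2$, the relation above is unavailable (since $z_{m,0}$ has not been defined), and must be replaced by Proposition~\ref{prop:2025_08_11_1553}, which reads $\pr^2_1 z_{m,2} = a_p(g)\, z_{m,1} - \chi_g(p)\,\widetilde{z}_{m,1}$. This prompts the definition $z_{m,1,\xi} := \xi^{-1}(z_{m,1} - \chi_g(p)\xi^{-1}\widetilde{z}_{m,1})$, together with $z_{m,0,\xi} := \pr^1_0\, z_{m,1,\xi}$; the same Hecke-polynomial identity then gives $\pr^2_1 z_{m,2,\xi} = z_{m,1,\xi}$. For the size bound, the integrality of $z_{m,n}$ and $\widetilde{z}_{m,n}$, the unit modulus $|\chi_g(p)|=1$, and the fact that $\Tr^n_{n-1}$ has sup-norm operator norm $\le 1$ on $\sL_n$ yield
\[
||z_{m,n,\xi}||_M \,\le\, |\xi|^{-n}\max(1,|\xi|^{-1}) \,=\, |\xi|^{-(n+1)} \,=\, |\xi|^{-2}\cdot p^{\lambda(n-1)}
\]
for every $n\ge 1$, so $||\nu_n||_M \le p^{\lambda(n-1)}$ as required. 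The main technical subtlety lies in the careful book-keeping at $n = 2$ needed to splice the two distinct forms of the three-term norm relation into a single $\pr$-compatible tower; the remaining estimates and the appeal to Proposition~\ref{prop:exist-H1} are essentially mechanical once this is in place.
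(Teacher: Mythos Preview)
Your proof is correct and follows the same route as the paper's (apply Proposition~\ref{prop:exist-H1} after checking norm-compatibility via the three-term relation), but you supply considerably more detail than the paper's two-line argument: you handle the base case $n=2$ explicitly via Proposition~\ref{prop:2025_08_11_1553}, and you rescale to match the precise hypothesis (ii) of Proposition~\ref{prop:exist-H1} so as to extract the bound $|\xi|^{-2}$ rather than merely boundedness. One cosmetic point: the scaling factor $|\xi|^2$ is a real number, not an element of $L$, so strictly speaking $\nu_n := |\xi|^2 z_{m,n,\xi}$ is not a cohomology class; replace it by $\xi^2 z_{m,n,\xi}$ (or any $c\in L$ with $|c|=|\xi|^2$) and the argument goes through verbatim.
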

\begin{proof}
    It follows from Proposition~\ref{prop_thm_2025_07_03_1627} that 
\[
\pr_{n-1}^nz_{m,n,\xi}=z_{m,n-1,\xi}.
\]
Therefore, the theorem is a direct consequence of Proposition~\ref{prop:exist-H1}. 
\end{proof}

\subsection{Integral signed classes}\label{subsec:integralsignedclasses}

We now explain how the classes given by Theorem~\ref{thm:non-integral-classes} decompose into integral ones under the assumption that $g$ is $p$-nonordinary. We first define the logarithmic matrix attached to $g$ (which finds its origin in \cite{sprung09}). The notation we use here is based on the discussion in \cite[\S2.2]{BFSuper}. We identify $\Lambda$ with $\cO\llbracket X\rrbracket$. For each $n\ge1$, let $\omega_n=(1+X)^{p^n}-1$.
\begin{defn}\label{defn_01_11_25}
    Let 
    \[A_g=\begin{bmatrix}
    0&\frac{-1}{\chi_g(p)p}\\ \\ 1&\frac{a_p(g)}{\chi_g(p)p}
    \end{bmatrix},\quad  Q_g=\begin{bmatrix}
        \alpha&-\beta\\ -\chi_g(p)p& \chi_g(p)p
    \end{bmatrix},\quad C_{g,n}=\begin{bmatrix}
    a_p(g)&1\\ -\chi_g(p)\Phi_n&0
    \end{bmatrix}\] for $n\ge1$, where $\Phi_n=\sum_{j=0}^{p-1}(1+X)^{jp^{n-1}}$ denotes the $p^n$-th cyclotomic polynomial in $1+X$. 

    The limit of the sequence $((\alpha-\beta)(A_g)^{n+1}C_{g,n}\cdots C_{g,1})_{n\ge1}$ is denoted by $M_{\log,g}$.% (the convergence of this sequence follows from \cite[proof of Theorem~1.5]{lei14}).
\end{defn}

It follows from \cite[Lemma~2.8]{BFSuper} that the entries in the first column of $Q_g^{-1}M_{\log,g}$ are $O(\log^{\ord_p(\alpha)})$, while those in the second column are $O(\log^{\ord_p(\beta)})$.

\begin{lemma}\label{lem:inverse-limit}
    For $n\ge1$, let $\Theta_{g,n}$ denote the map
    \[
   \Lambda^{\oplus 2}\rightarrow \cO[\Gamma_n]^{\oplus2},\quad \begin{bmatrix}
        F\\ G
    \end{bmatrix}\mapsto C_{g,n-1}\cdots C_{g,1}\begin{bmatrix}
        F\\ G
    \end{bmatrix}\mod \omega_{n-1}.
    \]
    Then $\displaystyle\varprojlim \Lambda^{\oplus 2}/\ker \Theta_{g,n}=\Lambda^{\oplus2}$.
\end{lemma}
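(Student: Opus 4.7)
The plan is to verify that the natural map $\Lambda^{\oplus 2}\to\varprojlim_n\Lambda^{\oplus 2}/\ker\Theta_{g,n}$ is an isomorphism. The identity $D_n:=C_{g,n}\cdots C_{g,1}=C_{g,n}D_{n-1}$ combined with $\omega_{n-1}\mid\omega_n$ immediately gives $\ker\Theta_{g,n+1}\subseteq\ker\Theta_{g,n}$, so the inverse system has surjective transition maps and the natural map is defined; a short computation also identifies $\ker\Theta_{g,n}=X\,\mathrm{adj}(D_{n-1})\Lambda^{\oplus 2}$, which I will use below. For injectivity I would prove $\bigcap_n\ker\Theta_{g,n}=0$ by a specialisation-at-$X=0$ argument. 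Since $\Phi_i(0)=p$, one has $D_{n-1}(0)=A_g^{1-n}$ with $A_g$ as in Definition~\ref{defn_01_11_25}; this matrix is invertible over $L$ because $\det A_g=1/(\chi_g(p)p)\neq 0$. Since $\omega_{n-1}(0)=0$, the defining condition $D_{n-1}\vec v\equiv 0\pmod{\omega_{n-1}}$ forces $\vec v(0)=0$, hence $X\mid\vec v$; writing $\vec v=X\vec v_1$ and noting that $\gcd(X,\omega_{n-1}/X)=1$ in $\Lambda$ (because each $\Phi_i$ has constant term $p$), the constraint reduces to $D_{n-1}\vec v_1\in(\omega_{n-1}/X)\Lambda^{\oplus 2}$. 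Evaluating at $X=0$ and using $(\omega_{n-1}/X)(0)=p^{n-1}$ yields $\vec v_1(0)\in(pA_g)^{n-1}\cO^{\oplus 2}$, and since $g$ is $p$-non-ordinary the eigenvalues $\alpha/\chi_g(p),\beta/\chi_g(p)$ of $pA_g$ have strictly positive $p$-adic valuation; Cayley--Hamilton therefore forces the entries of $(pA_g)^{n-1}$ to have $p$-adic valuation tending to infinity, so $\vec v_1(0)=0$, and iterating gives $\vec v\in\bigcap_{k\ge 0}X^k\Lambda^{\oplus 2}=0$.

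For surjectivity I would exploit the $\mathfrak m$-adic completeness of $\Lambda^{\oplus 2}$, where $\mathfrak m$ denotes the maximal ideal of $\Lambda$. Reducing each $C_{g,i}$ modulo $\mathfrak m$ yields the nilpotent matrix $\bigl(\begin{smallmatrix}0&1\\0&0\end{smallmatrix}\bigr)$ --- here the hypothesis $a_p(g)\in\mathfrak m$, coming from non-ordinarity of $g$, is used decisively --- and combining its nilpotency with $\Phi_i\equiv p\pmod{\mathfrak m^2}$ via a Cayley--Hamilton bookkeeping analogous to the one above yields $D_n\in\mathfrak m^{c_n}M_2(\Lambda)$ with $c_n\to\infty$. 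Consequently $\ker\Theta_{g,n}=X\,\mathrm{adj}(D_{n-1})\Lambda^{\oplus 2}\subseteq\mathfrak m^{c_{n-1}+1}\Lambda^{\oplus 2}$. Given a compatible family $(\bar v_n)$ in the inverse limit with chosen lifts $v_n\in\Lambda^{\oplus 2}$, the differences $v_{n+1}-v_n\in\ker\Theta_{g,n}$ are then $\mathfrak m$-adically small, so $(v_n)$ is Cauchy and converges by completeness to some $v\in\Lambda^{\oplus 2}$. Because each $\ker\Theta_{g,n}$ is a finitely generated submodule of $\Lambda^{\oplus 2}$ over the Noetherian complete local ring $\Lambda$, it is $\mathfrak m$-adically closed, so $v-v_n=\lim_m(v_m-v_n)\in\ker\Theta_{g,n}$ for every $n$, and $v$ lifts $(\bar v_n)$.

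The main technical obstacle is the quantitative decay $\ker\Theta_{g,n}\subseteq\mathfrak m^{c_n}\Lambda^{\oplus 2}$ with $c_n\to\infty$: the qualitative statement is immediate from the nilpotency of $C_{g,i}\bmod\mathfrak m$, but pinning down that the bound actually grows with $n$ requires careful combinatorial bookkeeping of matrix products together with the observation that $\Phi_i\equiv p\pmod{\mathfrak m^2}$. This is also the point at which the non-ordinary hypothesis on $g$ is decisive; without $a_p(g)\in\mathfrak m$ the initial nilpotency modulo $\mathfrak m$ breaks down and the whole strategy fails.
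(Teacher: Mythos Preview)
Your argument is correct. The paper's own proof is simply a citation to \cite[Lemma~2.9]{BFSuper}, so you have supplied a self-contained argument where the paper defers to the literature. Your two-part strategy---injectivity via iterated evaluation at $X=0$ using invertibility of $D_{n-1}(0)=A_g^{1-n}$ over $L$ together with the decay of $(pA_g)^{n-1}$, and surjectivity via $\mathfrak m$-adic completeness using $\ker\Theta_{g,n}\subseteq\mathfrak m^{c_n}\Lambda^{\oplus 2}$ with $c_n\to\infty$---is sound. Two minor remarks: first, the coprimality of $X$ and $\omega_{n-1}/X$ in the UFD $\Lambda$ is actually needed at the \emph{iteration} step (to cancel an extra factor of $X$ when passing from $D_{n-1}\vec v_k\in(\omega_{n-1}/X)\Lambda^{\oplus 2}$ to the same containment for $\vec v_{k+1}$), rather than at the first reduction where you invoke it; second, the ``Cayley--Hamilton bookkeeping'' for the decay of $D_n$ becomes explicit upon noting that in the expansion of $\prod_i(N+E_i)$ with $N=\bigl(\begin{smallmatrix}0&1\\0&0\end{smallmatrix}\bigr)$ and $E_i\in\mathfrak m\, M_2(\Lambda)$, the relation $N^2=0$ forces each surviving term to contain at least $\lceil(n-1)/2\rceil$ factors $E_i$, giving $c_n\ge\lceil(n-1)/2\rceil$. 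Note also that non-ordinarity of $g$ is already used in your injectivity argument (through $\ord_p(\alpha),\ord_p(\beta)>0$), not only in the surjectivity step as your closing paragraph suggests.
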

\begin{proof}
See \cite[Lemma~2.9]{BFSuper}.
\end{proof}

For integers $n'\ge n\ge1$, let
\[
\pr^{n'}_{n}:\cO[\Gamma_{n'}]\to\cO[\Gamma_{n}]
\]
denote the natural projection map and write
\[
\Tr^{n'}_{n}:\cO[\Gamma_{n}]\to\cO[\Gamma_{n'}]
\]
for the trace map defined by $\Gamma_{n}\ni\sigma\mapsto \displaystyle\sum_{\pr^{n'}_n(\tau)=\sigma}\tau$.  By an abuse of notation, we use the same notation to denote the maps on the cohomological groups
\begin{align*}
\pr^{n'}_n&:H^1(K[m],M\otimes\Lambda_{n'}\bk)\to H^1(K[m],M\otimes\Lambda_{n}\bk),\\
\Tr_{n}^{n'}&:H^1(K[m],M\otimes\Lambda_{n}\bk)\to H^1(K[m],M\otimes\Lambda_{n'}\bk).
\end{align*}

\begin{lemma}\label{lem:image-trace}
    Let $n'\ge n\ge1$. The images of $\Tr_n^{n'}:H^1\left(K[m],M\otimes\Lambda_n\bk\right)\to H^1\left(K[m],M\otimes\Lambda_{n'}\bk\right)$ and the natural map $$H^1\left(K[m],M\otimes\left(\omega_{n'-1}/\omega_{n-1}\right)\Lambda_{n'}\bk\right)\to H^1\left(\QQ,M\otimes\Lambda_{n'}\bk\right)$$ coincide. Furthermore, if $z\in H^1(\QQ,M\otimes\Lambda_{n'}\bk)$, then $\omega_{n'-1}/\omega_{n-1}\cdot z=\Tr_{n}^{n'}\circ\pr^{n'}_n(z)$.
\end{lemma}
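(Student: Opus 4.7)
The plan is to reduce the cohomological assertion to the corresponding module-theoretic identity over $\Lambda$ and then invoke functoriality of $H^1(K[m],-)$.

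Fix the standard identification $\Lambda\simeq\cO\llbracket X\rrbracket$ in which a topological generator $\gamma$ of $1+p\ZZ_p$ corresponds to $1+X$, so that $\Lambda_n=\Lambda/\omega_{n-1}\Lambda$ and $\Lambda_{n'}=\Lambda/\omega_{n'-1}\Lambda$. Denoting the images of $\gamma$ in $\Gamma_n$ and $\Gamma_{n'}$ by $\overline{\gamma}_n$ and $\overline{\gamma}_{n'}$ respectively, a direct computation for $0\leq k<p^{n-1}$ yields
$$
\Tr^{n'}_{n}(\overline\gamma_n^{\,k})\;=\;\sum_{j=0}^{p^{n'-n}-1}\overline\gamma_{n'}^{\,k+jp^{n-1}}\;=\;\overline\gamma_{n'}^{\,k}\cdot\frac{\omega_{n'-1}}{\omega_{n-1}},
$$
which shows that $\Tr^{n'}_n\colon\Lambda_n\to\Lambda_{n'}$ sends the class of $a\in\Lambda$ to the class of $(\omega_{n'-1}/\omega_{n-1})a$; this is well-defined since $(\omega_{n'-1}/\omega_{n-1})\omega_{n-1}=\omega_{n'-1}$.

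Multiplication by $\omega_{n'-1}/\omega_{n-1}$ on $\Lambda_{n'}$ has kernel $\omega_{n-1}\Lambda_{n'}$ and image $(\omega_{n'-1}/\omega_{n-1})\Lambda_{n'}$; it therefore induces a canonical $\Lambda$-linear isomorphism
$$\Lambda_n\;=\;\Lambda_{n'}/\omega_{n-1}\Lambda_{n'}\ \xrightarrow{\,\sim\,}\ \bigl(\omega_{n'-1}/\omega_{n-1}\bigr)\Lambda_{n'},$$
and the previous paragraph identifies $\Tr^{n'}_n$ with the composition of this isomorphism with the inclusion $(\omega_{n'-1}/\omega_{n-1})\Lambda_{n'}\hookrightarrow\Lambda_{n'}$ of $\Lambda$-submodules of $\Lambda_{n'}$.

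Tensoring this factorisation with $M$ over $\cO$ (viewed throughout as carrying the twist $\bm{\kappa}_{\ac}^{-1}$ on the $\Lambda$-factor, as in the notation $\bk$) and applying $H^1(K[m],-)$ realises $\Tr^{n'}_n$ on cohomology as an isomorphism composed with the natural map $H^1(K[m],M\otimes(\omega_{n'-1}/\omega_{n-1})\Lambda_{n'}\bk)\to H^1(K[m],M\otimes\Lambda_{n'}\bk)$. The image of $\Tr^{n'}_n$ on $H^1$ therefore coincides with the image of the latter map, which is the first assertion. For the second assertion, taking $a\in\Lambda_{n'}$ as its own lift in the formula above gives the module-level identity $\Tr^{n'}_n\circ\pr^{n'}_n=(\omega_{n'-1}/\omega_{n-1})\cdot\mathrm{id}_{\Lambda_{n'}}$; by functoriality, this promotes to the identity $\Tr^{n'}_n\circ\pr^{n'}_n(z)=(\omega_{n'-1}/\omega_{n-1})\cdot z$ for every $z\in H^1(K[m],M\otimes\Lambda_{n'}\bk)$. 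The argument is essentially formal once the explicit identification of $\Tr^{n'}_n$ with multiplication by $\omega_{n'-1}/\omega_{n-1}$ is in hand, so no substantive obstacle is anticipated.
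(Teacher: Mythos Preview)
Your proof is correct and rests on the same key identity as the paper's argument—that the trace $\Tr^{n'}_n$ on the coefficient module is multiplication by $\omega_{n'-1}/\omega_{n-1}=\sum_{j=0}^{p^{n'-n}-1}\gamma^{jp^{n-1}}$. The paper reaches this via Shapiro's lemma, identifying $H^1(K[m],M\otimes\Lambda_r(\bm{\kappa}_{\ac}^{-1}))\cong H^1(K[mp^r],M)$ and interpreting $\Tr$ and $\pr$ as restriction/corestriction; you instead stay entirely on the coefficient side and apply functoriality of $H^1$, which is a slightly more direct packaging of the same computation.
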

\begin{proof}
Let $\gamma$ be a topological generator of $\Gal(K[mp^\infty]/K[m])$ that corresponds to $1+X$ under the identification $\cO\llbracket\Gamma_\ac\rrbracket=\cO\llbracket X\rrbracket$. 
By Shapiro's lemma
\[
H^1\left(K[m],M\otimes\Lambda_r\bk\right)\cong H^1\left(K[mp^r],M\right), \quad r\in\{n,n'\}.
\]
The morphisms $\Tr_n^{n'}$ and $\pr_n^{n'}$ correspond to corestriction and restriction maps, respectively. The corestriction map can be realized by the action of $$\sum_{i=0}^{p^{n'-n}-1}\gamma^{ip^{n-1}}=\frac{\gamma^{p^{n'-1}}-1}{\gamma^{p^{n-1}}-1}=\omega_{n'-1}/\omega_{n-1},$$ 
 from which the lemma follows.    \end{proof}

In what follows, $\pr^\infty_n:H^1(K[m],M\otimes\Lambda\bk)\to H^1(K[m],M\otimes\Lambda_n\bk)$ denotes the natural projection map.

\begin{theorem}\label{thm:generic-decomposition}
  Let $(\kappa_n)_{n\ge 0}$ be a collection of classes such that $\kappa_n\in H^1(K[m],M\otimes\Lambda_n\bk)$ for all $n$ and
    \begin{equation}
    \pr^n_{n-1}\kappa_n=a_p(g)\kappa_{n-1}-\chi_g(p)\Tr_{n-2}^{n-1}\kappa_{n-2},\quad n\ge3.    \label{eq:trace-relation}
    \end{equation}
       Then there exist unique $\kappa_\sharp,\kappa_\flat\in H^1(K[m],M\otimes\Lambda\bk)$ such that
    \[
    \begin{bmatrix}
        \kappa_n\\-\chi_g(p)\Tr_{n-1}^n\kappa_{n-1}
    \end{bmatrix}
    =C_{g,n-1}\cdots C_{g,1}\begin{bmatrix}
        \pr^\infty_n \kappa_\sharp\\\pr^\infty_n \kappa_\flat
    \end{bmatrix}.
    \]\end{theorem}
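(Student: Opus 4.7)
The plan is to extract $\kappa_\sharp, \kappa_\flat$ from a compatible system of pairs $\mathbf{w}_n = (\kappa_{\sharp,n}, \kappa_{\flat,n})^T$ at each finite level, using the cohomological analog of Lemma~\ref{lem:inverse-limit}. Let $\mathbf{z}_n := \bigl(\kappa_n,\, -\chi_g(p)\Tr^n_{n-1}\kappa_{n-1}\bigr)^T$. Once a compatible system with $C_{g,n-1}\cdots C_{g,1}\mathbf{w}_n = \mathbf{z}_n$ and $\pr^n_{n-1}\mathbf{w}_n = \mathbf{w}_{n-1}$ is in hand, the identification $H^1(K[m], M\otimes\Lambda\bk) \cong \varprojlim_n H^1(K[m], M\otimes\Lambda_n\bk)$, provided by Shapiro's lemma together with hypothesis \eqref{item_H0}, delivers the desired $\kappa_\sharp, \kappa_\flat \in H^1(K[m], M\otimes\Lambda\bk)$.

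The first step is to verify the key compatibility $\pr^n_{n-1}\mathbf{z}_n = C_{g,n-1}\mathbf{z}_{n-1}$ for $n \ge 2$. The top coordinate is precisely \eqref{eq:trace-relation}; the bottom coordinate reduces to the identity $\pr^n_{n-1}\circ\Tr^n_{n-1} = p\cdot\mathrm{id}$ (restriction followed by corestriction across a degree-$p$ extension) combined with the congruence $\Phi_{n-1} \equiv p \pmod{\omega_{n-2}}$, which in turn follows from $\omega_{n-1} = \omega_{n-2}\Phi_{n-1}$ upon reducing each summand $(1+X)^{kp^{n-2}}$ of $\Phi_{n-1}$ modulo $\omega_{n-2}$.

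The $\mathbf{w}_n$'s are then built inductively, starting from the tautology $\mathbf{w}_1 := \mathbf{z}_1$ (the empty product of $C_{g,\bullet}$'s being the identity). Given $\mathbf{w}_{n-1}$, I would pick any lift $\tilde{\mathbf{w}}_n$ of $\mathbf{w}_{n-1}$ to level $n$; the compatibility above forces the defect $\mathbf{z}_n - C_{g,n-1}\cdots C_{g,1}\tilde{\mathbf{w}}_n$ to lie in $\ker\pr^n_{n-1}$. Using hypothesis \eqref{item_H0}  together with the short exact sequence $0 \to \Lambda/\Phi_{n-1}\Lambda \xrightarrow{\omega_{n-2}\cdot} \Lambda_n \to \Lambda_{n-1} \to 0$, the relevant portion of the long exact sequence in cohomology identifies this kernel with the image of multiplication by $\omega_{n-2}$ from $H^1(K[m], M\otimes\Lambda/\Phi_{n-1}\Lambda\bk)^{\oplus 2}$, so the defect can be uniquely written as $\omega_{n-2}\mathbf{e}$ for some $\mathbf{e}$ in this last group.

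The main technical obstacle is to correct $\tilde{\mathbf{w}}_n$ by an element of $\ker\pr^n_{n-1}$ so as to cancel $\omega_{n-2}\mathbf{e}$; because $C_{g,n-1}$ becomes singular modulo $\Phi_{n-1}$, this requires a careful matching step. The crucial input is Lemma~\ref{lem:image-trace}, which, applied to the second coordinate of $\mathbf{z}_n$, shows that $-\chi_g(p)\Tr^n_{n-1}\kappa_{n-1}$ is $\Phi_{n-1}$-divisible; this matches the built-in $\Phi_{n-1}$-divisibility in the second coordinate of any product $C_{g,n-1}(\cdot)$ and is exactly what one needs to mimic, at the level of the $\Lambda$-module $H^1(K[m], M\otimes\Lambda\bk)$, the algebraic identity $\varprojlim_n \Lambda^{\oplus 2}/\ker\Theta_{g,n} = \Lambda^{\oplus 2}$ underlying Lemma~\ref{lem:inverse-limit}. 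Having produced the required correction $\boldsymbol{\delta}$, setting $\mathbf{w}_n := \tilde{\mathbf{w}}_n + \omega_{n-2}\boldsymbol{\delta}$ completes the inductive step, and the uniqueness of the correction at each stage propagates into the uniqueness of $(\kappa_\sharp, \kappa_\flat)$.
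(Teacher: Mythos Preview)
Your approach shares the essential ingredients with the paper's proof (Lemma~\ref{lem:inverse-limit} for the inverse limit, Lemma~\ref{lem:image-trace} for divisibility), but there is a concrete gap at the base of your induction. You claim the compatibility $\pr^n_{n-1}\mathbf{z}_n = C_{g,n-1}\mathbf{z}_{n-1}$ for all $n\ge 2$, yet the top coordinate is exactly \eqref{eq:trace-relation}, which the theorem only assumes for $n\ge 3$. Starting from $\mathbf{w}_1=\mathbf{z}_1$ and attempting to build $\mathbf{w}_2$ via your lifting procedure therefore fails: nothing in the hypotheses forces $\pr^2_1\kappa_2 = a_p(g)\kappa_1 - \chi_g(p)\Tr_0^1\kappa_0$, so the defect $\mathbf{z}_2-C_{g,1}\tilde{\mathbf{w}}_2$ need not lie in $\ker\pr^2_1$. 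The paper avoids this by working with the adjugate matrices $C_{g,i}'$: it shows inductively that $C_{g,1}'\cdots C_{g,n-1}'\mathbf{z}_n\in\image\,\Tr_1^n$, where the base case $n=2$ follows directly from Lemma~\ref{lem:image-trace} (via the identity $\Phi_1\kappa_2=\Tr_1^2\pr_1^2\kappa_2$) without any trace relation, and the inductive step invokes \eqref{eq:trace-relation} only at level $n+1\ge 3$.

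There is also a problem with your uniqueness claim. The correction $\boldsymbol{\delta}$ at each stage is \emph{not} unique, since $\mathbf{w}_n$ is only determined modulo $\ker\Theta_{g,n}$ (modulo $\Phi_{n-1}$ the matrix $C_{g,n-1}$ has a one-dimensional kernel). Uniqueness of $(\kappa_\sharp,\kappa_\flat)$ comes instead from the injectivity implicit in Lemma~\ref{lem:inverse-limit}. As a final contrast, the paper carries out the compatibility step at the cocycle level---lifting each $\kappa_n$ to a representative $c_n$ so that the trace relation holds on the nose---which allows Lemma~\ref{lem:inverse-limit} to be applied pointwise in $M\otimes\Lambda^{\oplus 2}\bk$, rather than through a cohomological analogue of that inverse-limit identity.
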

\begin{proof}
Our proof is based on a slight modification of the proof presented in \cite[Theorem~3.4]{BBL1}.
Let $C_{g,i}'=\begin{bmatrix}
    0&-1\\ \chi_g(p)\Phi_i&a_p(g)
\end{bmatrix}$ be the adjugate matrix of $C_{g,i}$ so that $C_{g,i}C_{g,i}'=\det C_{g,i}$. We show inductively that for all $n\ge2$
\begin{equation}
C_{g,1}'\cdots C_{g,n-1}'\begin{bmatrix}
    \kappa_n\\-\chi_g(p)\Tr_{n-1}^n\kappa_{n-1}
\end{bmatrix}\in\image\Tr_{1}^n.
\label{eq:induction}    
\end{equation}

When $n=2$,
\[C_{g,1}'\begin{bmatrix}
    \kappa_2\\-\chi_g(p)\Tr_{1}^2\kappa_{1}
\end{bmatrix}=
\chi_g(p)\begin{bmatrix}
    \Tr_{1}^2\kappa_{1}\\ 
    \Phi_1\kappa_2-a_p(g)\Tr_{1}^2\kappa_{1}
\end{bmatrix}.\]
As $\Phi_1=\omega_1/\omega_0$, Lemma~\ref{lem:image-trace} says that $\Phi_1\kappa_2$ is in the image of $\Tr^2_1$. Thus, \eqref{eq:induction} holds for $n=2$.

Suppose that \eqref{eq:induction} holds for some $n\ge2$. Then
\begin{align*}
C_{g,1}'\cdots C_{g,n}'\begin{bmatrix}
    \kappa_{n+1}\\-\chi_g(p)\Tr_n^{n+1}\kappa_{n}
\end{bmatrix}&=\chi_g(p)C_{g,1}'\cdots C_{g,n-1}'\begin{bmatrix}
    \Tr_{n}^{n+1}\kappa_{n}\\
    \Phi_{n}\kappa_{n+1}-a_p(g)\Tr_{n}^{n+1}\kappa_{n}
\end{bmatrix}\\
    &=\chi_g(p)C_{g,1}'\cdots C_{g,n-1}'\begin{bmatrix}
    \Tr_{n}^{n+1}\kappa_{n}\\
    \Tr_{n}^{n+1}\circ\pr_n^{n+1}\kappa_{n+1}-a_p(g)\Tr_{n}^{n+1}\kappa_{n}
\end{bmatrix}\\
&=\chi_g(p)C_{g,1}'\cdots C_{g,n-1}'\begin{bmatrix}
    \Tr_{n}^{n+1}\kappa_n\\
    -\chi_g(p)\Tr_{n}^{n+1}\circ\Tr_{n-1}^{n}\kappa_{n-1}
\end{bmatrix}\\
&=\chi_g(p)\Tr_{n}^{n+1}\left(C_{g,1}'\cdots C_{g,n-1}'\begin{bmatrix}
    \kappa_{n}\\
    -\chi_g(p)\Tr_{n-1}^{n}\kappa_{n-1}
\end{bmatrix}\right),
\end{align*}
where the second equality follows from Lemma~\ref{lem:image-trace} and the third equality is a consequence of the trace relation given by \eqref{eq:trace-relation}.
Therefore, by the inductive hypothesis, \eqref{eq:induction} holds for $n+1$.

For each $n\ge0$, we can choose a $M\otimes\Lambda_n\bk$-valued cocycle on $G_{K[m]}$ recursively so that
\[
\pr^n_{n-1}c_n=a_p(g)c_{n-1}-\chi_g(p)\Tr_{n-2}^{n-1}c_{n-2},\quad n\ge2.
\]
Indeed, after $c_{n-1}$ and $c_{n-2}$ are chosen, any representative $c_n$ of $\kappa_n$ satisfies
\[
\pr^n_{n-1}c_n-a_p(g)c_{n-1}+\chi_g(p)\Tr_{n-2}^{n-1}c_{n-2}=b_x,
\]
where $b_x:\sigma\mapsto\sigma\cdot x-x$ is a coboundary defined by some element $x\in M\otimes\Lambda_{n-1}\bk$. We can replace $c_n$ by $c_n-b_{x'}$, where $b_{x'}$ is the coboundary defined by any pre-image $x'\in M\otimes\Lambda_n$ of $x$. Then
\begin{equation}
\label{eq:cocycle}\pr^n_{n-1}c_n-a_p(g)c_{n-1}+\chi_g(p)\Tr_{n-2}^{n-1}c_{n-2}=0,    
\end{equation}

For any $\sigma\in G_{K[m]}$,  \eqref{eq:induction} implies that there exist $c_{\sharp,n}(\sigma),c_{\flat,n}(\sigma)\in M\otimes\Lambda\bk$ such that
\[\begin{bmatrix}
    c_n(\sigma)\\-\chi_g(p)\Tr_{n-1}^nc_{n-1}(\sigma)
\end{bmatrix}=C_{g,n-1}\cdots C_{g,1}\begin{bmatrix}
    c_{\sharp,n}(\sigma)\\c_{\flat,n}(\sigma)
\end{bmatrix}\mod\omega_{n-1}
\]
since $\det(C_{g,n-1}\cdots C_{g,1})=\chi_g(p)^{n-1}\prod_{i=1}^{n-1}\Phi_i$. Let $\Theta_{g,n}$ denote the map on $\Lambda^2_n$ given as multiplication by $C_{g,n-1}\cdots C_{g,1}$ as in Lemma~\ref{lem:inverse-limit}. 
As $c_n$ and $c_{n-1}$ are cocycles on $G_{K[m]}$, the map $\sigma\mapsto(c_{\sharp,n}(\sigma),c_{\flat,n}(\sigma))$ defines a $M\otimes\Lambda_n^2\bk/\ker\Theta_{g,n}$-valued cocycle on $G_{K[m]}$.
Furthermore,
\begin{align*}
    \begin{bmatrix}
    c_{n+1}\\-\chi_g(p)\Tr_{n}^{n+1}c_{n}
\end{bmatrix}&\equiv C_{g,n}C_{g,n-1}\cdots C_{g,1}\begin{bmatrix}
    c_{\sharp,n+1}\\c_{\flat,n+1}
\end{bmatrix}&\mod\omega_{n}\\
\begin{bmatrix}
    a_p(g)c_{n}-\chi_g(p)\Tr_{n-1}^nc_{n-1}\\-\chi_g(p)pc_{n}
\end{bmatrix}&\equiv \begin{bmatrix}
    a_p(g)&1\\-\chi_g(p)p&0
\end{bmatrix}C_{g,n-1}\cdots C_{g,1}\begin{bmatrix}
    \pr_n^{n+1}c_{\sharp,n+1}\\\pr_n^{n+1}c_{\flat,n+1}
\end{bmatrix}&\mod\omega_{n-1}\\
\begin{bmatrix}
    c_{n}\\-\chi_g(p)\Tr_{n-1}^nc_{n-1}
\end{bmatrix}&\equiv C_{g,n-1}\cdots C_{g,1}\begin{bmatrix}
    \pr_n^{n+1}c_{\sharp,n+1}\\\pr_n^{n+1}c_{\flat,n+1}
\end{bmatrix}&\mod\omega_{n-1},
\end{align*}
where the second congruence is a consequence of \eqref{eq:cocycle}.
Thus, $(c_{\sharp,n},c_{\flat,n})$ form an inverse system of cocycles taking values in 
$\displaystyle\varprojlim_n M\otimes\Lambda_n^2\bk/\ker\Theta_{g,n}=M\otimes\Lambda^2\bk$ (where the validity of the equality follows from Lemma~\ref{lem:inverse-limit}). This concludes the proof.
\end{proof}

\begin{theorem}\label{thm:decompo}
    There exist unique $z_m^\sharp,z_m^\flat\in H^1(K[m],M\otimes\LL\bk)$ such that
    \[
    \begin{bmatrix}
      z_{m,\infty,\alpha}\\  z_{m,\infty,\beta}\end{bmatrix}=Q_g^{-1}M_{\log,g}
\begin{bmatrix}
       z_m^\sharp\\  z_m^\flat
\end{bmatrix}    .
    \]
    Furthermore,  let $q$ be a prime coprime to $Nmp$ and split in $K$. Then
    \begin{align*}
        \mathrm{cor}_{K[mq]/K[m]}z_{m q}^\bullet &= \chi_g^{-1}(q)\Bigg\{ q\chi_f(q)\chi_g(q) \left(\frac{\varphi_0(\frk{q})}{q}\Fr_{\frk{q}}\right)^2 -a_q(f)a_q(g) \left(\frac{\varphi_0(\frk{q})}{q}\Fr_{\frk{q}}\right) \\
        & +\frac{\chi_f^{-1}(q)a_q(f)^2}{q}+ \chi_{g}^{-1}(q)a_{q}(g)^2-\frac{q^2+1}{q} \\
        &-a_q(f)a_q(g) \left(\frac{\varphi_0(\overline{\frk{q}})}{q}\Fr_{\overline{\frk{q}}}\right) +q\chi_f(q)\chi_g(q) \left(\frac{\varphi_0(\overline{\frk{q}})}{q}\Fr_{\overline{\frk{q}}}\right)^2 \Bigg\} z_{m}^\bullet
    \end{align*}
    for $\bullet\in\{\sharp,\flat\}$.
\end{theorem}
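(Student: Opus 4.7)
The plan is to deduce the existence and uniqueness of $z_m^\sharp$ and $z_m^\flat$ directly from Theorem~\ref{thm:generic-decomposition} applied to the sequence $\kappa_n := z_{n,m}$. To verify its hypothesis, I will translate Proposition~\ref{prop_thm_2025_07_03_1627} across the Shapiro isomorphism: corestriction along $K[mp^{n+1}]/K[mp^n]$ corresponds to $\pr^{n+1}_n$, while restriction along $K[mp^{n-1}]/K[mp^n]$ corresponds to $\Tr^n_{n-1}$. Proposition~\ref{prop_thm_2025_07_03_1627} then reads
\[
\pr^n_{n-1} z_{n,m} = a_p(g)\, z_{n-1,m} - \chi_g(p)\,\Tr^{n-1}_{n-2} z_{n-2,m}
\]
for all $n\geq 3$, which is precisely the trace relation \eqref{eq:trace-relation}.

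For the matrix identity, I will first verify the diagonalisation $Q_g^{-1} A_g Q_g = \mathrm{diag}(\alpha^{-1}, \beta^{-1})$ using $\alpha+\beta = a_p(g)$ and $\alpha\beta = \chi_g(p)p$. Combined with the short computation that the rows of $(\alpha-\beta) Q_g^{-1}$ are $(1, 1/\alpha)$ and $(1, 1/\beta)$, this yields
\[
Q_g^{-1}(\alpha-\beta) A_g^{n+1} = \begin{bmatrix} \alpha^{-n-1} & \alpha^{-n-2} \\ \beta^{-n-1} & \beta^{-n-2} \end{bmatrix}.
\]
Reducing Definition~\ref{defn_01_11_25} of $M_{\log,g}$ modulo $\omega_n$, along the lines of the proof of \cite[Proposition~2.10]{BFSuper}, and substituting the decomposition supplied by the first step at level $n+1$ for the column $\bigl(z_{n+1,m},\,-\chi_g(p)\Tr^{n+1}_n z_{n,m}\bigr)$, I expect to obtain
\[
Q_g^{-1} M_{\log,g} \begin{bmatrix} z_m^\sharp \\ z_m^\flat \end{bmatrix} \equiv \begin{bmatrix} z_{m,n+1,\alpha} \\ z_{m,n+1,\beta} \end{bmatrix} \pmod{\omega_n}
\]
in $H^1(K[m], M\otimes\sL_{n+1})^{\oplus 2}$ for every $n\geq 1$. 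Since Theorem~\ref{thm:non-integral-classes} identifies the right-hand side with the reduction of $(z_{m,\infty,\alpha}, z_{m,\infty,\beta})$, the uniqueness in Proposition~\ref{prop:inject-H1} forces the claimed equality in $H^1(K[m], M\otimes\sL)^{\oplus 2}$.

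For the tame norm relations, I will apply $\mathrm{cor}_{K[mq]/K[m]}$ to the decomposition defining $(z^\sharp_{mq}, z^\flat_{mq})$, using that corestriction is $\Lambda$-linear and commutes with $\pr^\infty_n$ and $\Tr^n_{n-1}$. By Proposition~\ref{prop:horizontal-norm}, the corestricted sequence equals $\cP_q\cdot\{z_{n,m}\}_n$, which is simultaneously decomposed by $(\cP_q z_m^\sharp, \cP_q z_m^\flat)$ thanks to the $\Lambda$-linearity of $C_{g,n-1}\cdots C_{g,1}$; the uniqueness in Theorem~\ref{thm:generic-decomposition} then forces $\mathrm{cor}_{K[mq]/K[m]}(z_{mq}^\bullet) = \cP_q\cdot z_m^\bullet$ for $\bullet\in\{\sharp,\flat\}$. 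The main obstacle I anticipate is the careful bookkeeping of the index shift between the definition $z_{m,n,\xi} = \xi^{-n}\bigl(z_{n,m} - (\chi_g(p)/\xi)\Tr^n_{n-1} z_{n-1,m}\bigr)$ and the factor $A_g^{n+1}$ appearing in $M_{\log,g}$, together with verifying that the mod-$\omega_n$ congruences genuinely pass to the limit in the appropriate topology on $\cD_{\ord_p(\xi)}$; the remainder is a routine matter of commuting $\Lambda$-linear operations.
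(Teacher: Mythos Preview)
Your proposal is correct and follows essentially the same route as the paper's proof. Both arguments apply Theorem~\ref{thm:generic-decomposition} to $\kappa_n=z_{n,m}$ (with the trace relation supplied by Proposition~\ref{prop_thm_2025_07_03_1627} under Shapiro), unwind the diagonalisation $Q_g^{-1}A_gQ_g=\mathrm{diag}(\alpha^{-1},\beta^{-1})$ to relate $(z_{m,n,\alpha},z_{m,n,\beta})$ to $(z_{n,m},-\chi_g(p)\Tr^n_{n-1}z_{n-1,m})$, and then let $n\to\infty$; the paper writes this as $A_g^{-n}Q_g$ acting from the left at level $n$, whereas you compute $Q_g^{-1}(\alpha-\beta)A_g^{n+1}$ at level $n+1$, but these are the same manipulation up to an index shift. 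For the tame norm relation, both you and the paper invoke Proposition~\ref{prop:horizontal-norm} together with the uniqueness of the signed decomposition; the paper phrases this at the level of the $Q_g^{-1}M_{\log,g}$ identity, while you work with the finite-level decomposition directly, which is an equivalent (and arguably cleaner) formulation.
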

\begin{proof}
Recall that for $\xi\in\{\alpha,\beta\}$, the image of $ z_{m,\infty,\xi}$ in $H^1(K[m],M\otimes\sL_n)$ is given by
\[z_{m,n,\xi}=\frac{1}{\xi^{n}}\left(z_{m,n}-\frac{\chi_g(p)}{\xi}\Tr_{n-1}^nz_{m,n-1}\right).\]
As $Q_g^{-1}A_gQ_g=\begin{bmatrix}
    \alpha^{-1}&0\\0&\beta^{-1}
\end{bmatrix}$, we have
\begin{align*}
A_g^{-n}Q_g\begin{bmatrix}
    z_{m,n,\alpha}\\z_{m,n,\beta}
\end{bmatrix}&=\begin{bmatrix}
    \alpha&-\beta\\-\alpha\beta&\alpha\beta
\end{bmatrix}\begin{bmatrix}
    \alpha^{n}&0\\0&\beta^{n}
\end{bmatrix}\begin{bmatrix}
    \alpha^{-n}&\alpha^{-n-1}\\  \beta^{-n}&\beta^{-n-1}
\end{bmatrix}\begin{bmatrix}
    z_{m,n}\\-\chi_g(p) \Tr_{n-1}^nz_{m,n-1}
\end{bmatrix}\\
    &=(\alpha-\beta)\begin{bmatrix}
    z_{m,n}\\-\chi_g(p) \Tr_{n-1}^nz_{m,n-1}
\end{bmatrix}.
\end{align*}
Therefore, by Proposition~\ref{prop_thm_2025_07_03_1627} and Theorem \ref{thm:generic-decomposition}, there exist unique classes $z_m^\sharp,z_m^\flat\in H^1(K[m],M\otimes\LL\bk)$ such that 
\[
A_g^{-n}Q_g\begin{bmatrix}
    z_{m,n,\alpha}\\z_{m,n,\beta}
\end{bmatrix}=(\alpha-\beta)C_{g,n-1}\cdots C_{g,1}\begin{bmatrix}
\pr_n^\infty z_m^\sharp\\ \pr_n^\infty z_m^\flat
\end{bmatrix}.
\]
Letting $n\rightarrow \infty$, the first assertion of the theorem follows.

    Let
     \begin{align*}
        \cP_q &= \chi_g^{-1}(q)\Bigg\{ q\chi_f(q)\chi_g(q) \left(\frac{\varphi_0(\frk{q})}{q}\Fr_{\frk{q}}\right)^2 -a_q(f)a_q(g) \left(\frac{\varphi_0(\frk{q})}{q}\Fr_{\frk{q}}\right) \\
        & +\frac{\chi_f^{-1}(q)a_q(f)^2}{q}+ \chi_{g}^{-1}(q)a_{q}(g)^2-\frac{q^2+1}{q} \\
        &-a_q(f)a_q(g) \left(\frac{\varphi_0(\overline{\frk{q}})}{q}\Fr_{\overline{\frk{q}}}\right) +q\chi_f(q)\chi_g(q) \left(\frac{\varphi_0(\overline{\frk{q}})}{q}\Fr_{\overline{\frk{q}}}\right)^2 \Bigg\} .
    \end{align*}
    It follows from Proposition~\ref{prop:horizontal-norm} and  the first assertion of the theorem applied to $mq$ that
     \[
    \begin{bmatrix}
      \cP_q\cdot z_{m,\infty,\alpha}\\   \cP_q  \cdot z_{m,\infty,\beta}\end{bmatrix}=Q_g^{-1}M_{\log,g}
\begin{bmatrix}
       \cor_{K[mq]/K[m]}z_{mq}^\sharp\\\cor_{K[mq]/K[m]}  z_{mq}^\flat
\end{bmatrix}    .
    \]
After applying $\cP_q$ to both sides of the equation given by the first assertion of the theorem, we have
      \[
    \begin{bmatrix}
      \cP_q\cdot z_{m,\infty,\alpha}\\   \cP_q  \cdot z_{m,\infty,\beta}\end{bmatrix}=Q_g^{-1}M_{\log,g}
\begin{bmatrix}
       \cP_q\cdot z_{m}^\sharp\\\cP_q \cdot z_{m}^\flat
\end{bmatrix}    .
    \]
    Therefore, by the uniqueness of the signed classes, 
    \[
    \begin{bmatrix}
       \cor_{K[mq]/K[m]}z_{mq}^\sharp\\\cor_{K[mq]/K[m]}  z_{mq}^\flat
\end{bmatrix}   = \begin{bmatrix}
       \cP_q\cdot z_{m}^\sharp\\\cP_q \cdot z_{m}^\flat
\end{bmatrix} ,
    \]
        and the second assertion follows.
\end{proof}

\section{Arithmetic applications}
\label{sec:applications}
In this section, we use the signed classes constructed in \S\ref{subsec:integralsignedclasses} to obtain applications towards signed main conjectures. We keep the assumptions and notations introduced in previous sections, and for a technical reason (cf. Remark~\ref{rem_2025_10_13_0908}), we restrict our discussion to the case $a_p(g)=0$. 

\subsection{Coleman maps and Selmer groups}
\label{subsec_5_1_2025_11_02_1128}
In this subsection, we introduce Coleman maps and use them to define the Selmer groups that we will consider in our applications. 

Since $f$ is $p$-ordinary, we have an exact sequence of $G_{\bb{Q}_p}$-representations
\[
0\rightarrow \mathscr{F}^+V_f^\ast\rightarrow V_f^\ast \rightarrow \mathscr{F}^- V_f^\ast\rightarrow 0,
\]
where $\mathscr{F}^+V_f^\ast$ and $\mathscr{F}^-V_f^\ast$ are one-dimensional with Hodge--Tate weights $0$ and $-1$, respectively, with the convention that the Hodge--Tate weight of the cyclotomic character is $+1$. Also, $\mathscr{F}^+V_f^\ast$ is unramified with the geometric Frobenius $\Fr_p$ acting as multiplication by $\alpha_f$ (the unit root of the Hecke polynomial of $f$ at the prime $p$). We define
\[
\mathscr{F}^+T_f^\ast=T_f^\ast\cap \mathscr{F}^+V_f^\ast \quad \text{and} \quad \mathscr{F}^-T_f^\ast=T_f^\ast/\mathscr{F}^+T_f^\ast.
\]
We also define
\[
\mathscr{F}^+M=\mathscr{F}^+T_f^\ast\otimes T_g^\ast(\varphi_{0,\frk{P}}^c)(2) \quad \text{and} \quad \mathscr{F}^-M=\mathscr{F}^-T_f^\ast\otimes T_g^\ast(\varphi_{0,\frk{P}}^c)(2)=M/\mathscr{F}^+M.
\]
We remark that the $G_{K_{\overline{\frk{p}}}}$-representation $\mathscr{F}^+M\vert_{G_{K_{\overline{\frk{p}}}}}$ and the $G_{K_{\frk{p}}}$-representation $\mathscr{F}^-M\vert_{G_{K_{\frk{p}}}}$ both have Hodge–Tate weights $1$ and $0$.

Let $(\fq,T)\in\{(\overline\fp,\sF^+M),(\fp,\sF^-M)\}$. Let $F$ be a finite unramified extension of $K_\fq$. We write 
\[
\cL_{T,\fq,F}:H^1(F,T\otimes\Lambda_\cyc^{\iota})\to\cH(\Gamma_\cyc)\otimes\Dcris(F,T)
\]
for the Perrin-Riou map, where $\Gamma_\cyc=\Gal(K_\fq(\mu_{p^\infty})/K_\fq)$,  $\Lambda_\cyc$ is the Iwasawa algebra $\Zp\llbracket\Gamma_\cyc\rrbracket$, $\iota$ denotes the Galois action of $G_F$ which is the inverse of the one induced by $G_F\rightarrow \Gamma_\cyc\hookrightarrow\Lambda_\cyc$ and $\cH(\Gamma_\cyc)$ is the algebra of tempered distributions. Following the construction in \cite{LZ0}, after taking inverse limit of $\cL_{T,\fq,F_n}$, where $F_n$ is the unramified extension of degree $p^n$ over $F$, we obtain a ``two-variable'' Perrin-Riou map:
\[
\cL_{T,\fq,F_\infty}:H^1(F,T\otimes\Lambda_\cyc^{\iota}\hat\otimes\Lambda\bk)\to \cH(\Gamma_\cyc)\hat\otimes\Lambda\otimes\Dcris(F,T).
\]
Here, we implicitly fix a basis  $\Omega_F$ of the Yager module, as in \cite[\S2.3]{BFSuper}, and we identify $\Lambda_\cyc\hat\otimes\Lambda$ with the Iwasawa algebra $\Zp\llbracket\Gal(F_\infty(\mu_{p^\infty})/F)\rrbracket\cong\Zp\llbracket\Gamma_\cyc\times\Gamma_\ac\rrbracket$.

Given a finite extension $\mathcal{K}$ of $K$ and a place $\lambda$ of $K$, we write
\[
H^1(\mathcal{K}_\lambda,T):=\bigoplus_{v\mid \lambda} H^1(\mathcal{K}_v,T).
\]
Let $m$ be a square-free product of primes coprime to $Np$ and split in $K$. 
We denote by
\[
\mathcal{L}_{T,\fq,m}:H^1(K[m]_{\fq},T\otimes \Lambda(\bm{\kappa}_{\ac}^{-1})) \longrightarrow \mathcal{O}[\Gal(K[m]/K)]\otimes\mathcal{H}(\Gamma_{\ac})\otimes\Dcris(T\vert_{G_{K_{\frk{q}}}})
\]
the natural map induced by $\bigoplus\cL_{T,\fq,K[m]_{v,\infty}}$ under projection, where $v$ runs over the places lying above $\fq$.

Let $v_0$ be an $\mathcal{O}$-basis of the $\mathcal{O}$-module $\Fil^1\Dcris(T_g\vert_{G_{\bb{Q}_p}})$. Let $\alpha$ and $\beta$ denote the roots of the Hecke polynomial of $g$ at $p$. For $\xi\in\lbrace \alpha,\beta\rbrace$, define the $\varphi$-eigenvector
\[
v_{g,\xi}=\varphi(v_0)-\frac{1}{\xi}v_0 \in \Dcris(V_g\vert_{G_{\bb{Q}_p}})
\]
with $\varphi$-eigenvalue $1/{\xi'}$, where $\xi'\in\lbrace \alpha,\beta\rbrace\setminus\lbrace \xi\rbrace$.
%For $\frk{q}\in \lbrace \frk{p},\overline{\frk{p}}\rbrace$ and $?\in\lbrace +,-\rbrace$, fix an $\mathcal{O}$-basis $w_{\frk{q},?}$ of $\Dcris(\Hom(\mathscr{F}^? T_f^\ast(\varphi_{0,\frk{P}}^c),\mathcal{O}(-1)))$.

Fix an $\mathcal{O}$-basis $w_{\overline{\frk{p}},+}$ of the $\mathcal{O}$-module $\Dcris(\Hom(\mathscr{F}^+ T_f^\ast(\varphi_{0,\frk{P}}^c),\mathcal{O}(-1))\vert_{G_{K_{\overline{\frk{p}}}}})$ and an $\mathcal{O}$-basis $w_{\frk{p},-}$ of the $\mathcal{O}$-module $\Dcris(\Hom(\mathscr{F}^- T_f^\ast(\varphi_{0,\frk{P}}^c),\mathcal{O}(-1))\vert_{G_{K_\frk{p}}})$.
Let $v_{\overline{\frk{p}},+,\xi}=v_{g,\xi}^c\otimes w_{\overline{\frk{p}},+}$ and $v_{\frk{p},-,\xi}=v_{g,\xi}\otimes w_{\frk{p},-}$. 
%Then $\lbrace v_{\frk{q},?,\alpha},v_{\frk{q},?,\beta}\rbrace$ is a $\varphi$-eigenbasis of $\Dcris(\Hom(\mathscr{F}^{?}M,E(1))\vert_{G_{K_{\frk{q}}}})$.
Then $\lbrace v_{\overline{\frk{p}},+,\alpha},v_{\overline{\frk{p}},+,\beta}\rbrace$ is a $\varphi$-eigenbasis of $\Dcris(\Hom(\mathscr{F}^{+}M,L(1))\vert_{G_{K_{\overline{\frk{p}}}}})$ and $\lbrace v_{\frk{p},-,\alpha},v_{\frk{p},-,\beta}\rbrace$ is a $\varphi$-eigenbasis of $\Dcris(\Hom(\mathscr{F}^{-}M,L(1))\vert_{G_{K_{\frk{p}}}})$.
The calculations in \cite[\S2.3]{BFSuper} show that
\begin{equation}
\begin{bmatrix}
    \langle \mathcal{L}_{\mathscr{F}^+ M,\overline{\frk{p}},m}(-),v_{\overline{\frk{p}},+,\alpha}\rangle \\
    \langle \mathcal{L}_{\mathscr{F}^+ M,\overline{\frk{p}},m}(-),v_{\overline{\frk{p}},+,\beta}\rangle
\end{bmatrix}
= Q_g^{-1}M_{\log,g}
\begin{bmatrix}
    \col_{\mathscr{F}^+ M,\overline{\frk{p}},m}^{\sharp} \\
    \col_{\mathscr{F}^+ M,\overline{\frk{p}},m}^{\flat}
\end{bmatrix}
\label{eq:Col-decompo+}    
\end{equation}
for certain homomorphisms of $\Lambda[\Gal(K[m]/K)]$-modules
\[
\col_{\mathscr{F}^+ M,\overline{\frk{p}},m}^{\sharp/\flat}:H^1(K[m]_{\overline{\frk{p}}},\mathscr{F}^+ M\otimes \Lambda(\bm{\kappa}_{\ac}^{-1}))\longrightarrow \Lambda[\Gal(K[m]/K)]
\]
and similarly
\begin{equation}
\begin{bmatrix}
    \langle \mathcal{L}_{\mathscr{F}^- M,\frk{p},m}(-),v_{\frk{p},-,\alpha}\rangle \\
    \langle \mathcal{L}_{\mathscr{F}^- M,\frk{p},m}(-),v_{\frk{p},-,\beta}\rangle
\end{bmatrix}
= Q_g^{-1}M_{\log,g}
\begin{bmatrix}
    \col_{\mathscr{F}^- M,\frk{p},m}^{\sharp} \\
    \col_{\mathscr{F}^- M,\frk{p},m}^{\flat}
\end{bmatrix}
\label{eq:Col-decompo-}    
\end{equation}
for certain homomorphisms of $\Lambda[\Gal(K[m]/K)]$-modules
\[
\col_{\mathscr{F}^- M,\frk{p},m}^{\sharp/\flat}:H^1(K[m]_{\frk{p}},\mathscr{F}^- M\otimes \Lambda(\bm{\kappa}_{\ac}^{-1}))\longrightarrow \Lambda[\Gal(K[m]/K)].
\]

We assume that the following hypothesis holds until the end of \S\ref{sec:applications}.
\begin{itemize}
        \item[(\mylabel{item_H0minus}{\textbf{H}$^0_-$})]     We have  $H^0(K_{\overline{\frk{p}}},\mathscr{F}^-\overline{M})=0$.
    \end{itemize}

\begin{remark}\label{rk:irred2}
    As a consequence of \eqref{item_H0minus}, for any finite $p$-extension $F$ of $K_{\overline{\frk{p}}}$ we have $H^0(F,\mathscr{F}^-\overline{M})=0$.
\end{remark}

\begin{lemma}
    Let $m$ be a square-free product of primes coprime to $Np$ and split in $K$. The map
    \[
    H^1(K[m]_{\overline{\frk{p}}},\mathscr{F}^+M\otimes \Lambda(\bm{\kappa}_{\ac}^{-1}))\rightarrow H^1(K[m]_{\overline{\frk{p}}},M\otimes \Lambda(\bm{\kappa}_{\ac}^{-1}))
    \]
    induced by the inclusion $\mathscr{F}^+M\hookrightarrow M$ is injective.
\end{lemma}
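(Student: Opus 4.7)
My plan is to reduce the injectivity to a vanishing statement and then prove the vanishing by a Nakayama-style bootstrapping argument from the residual hypothesis \eqref{item_H0minus}.

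Since $\Lambda(\bm{\kappa}_{\ac}^{-1})$ is a flat $\cO$-module, tensoring the short exact sequence $0\to\mathscr{F}^+M\to M\to\mathscr{F}^-M\to 0$ by it preserves exactness, so the associated long exact sequence of $G_{K[m]_{\overline{\frk{p}}}}$-cohomology identifies the kernel of the map in the statement with the cokernel of $H^0(M\otimes\Lambda(\bm{\kappa}_{\ac}^{-1}))\to H^0(\mathscr{F}^-M\otimes\Lambda(\bm{\kappa}_{\ac}^{-1}))$. It therefore suffices to establish
\[
H^0\bigl(K[m]_{\overline{\frk{p}}},\,\mathscr{F}^-M\otimes_\cO \Lambda(\bm{\kappa}_{\ac}^{-1})\bigr)=0,
\]
understood as the direct sum over places of $K[m]$ above $\overline{\frk{p}}$. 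Fix such a place $v$ and let $F=K[m]_v$; since $m$ is coprime to $Np$ and the ring class field extension is unramified above $p$, the extension $F/K_{\overline{\frk{p}}}$ is a finite unramified $p$-extension, so Remark~\ref{rk:irred2} applies and gives $H^0(F,\mathscr{F}^-\overline{M})=0$.

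To promote this residual vanishing to the $\Lambda$-adic statement, I would set $N:=\mathscr{F}^-M\otimes_\cO \Lambda(\bm{\kappa}_{\ac}^{-1})$, which is a free $\Lambda$-module of rank $2$ with a semilinear $G_F$-action, and denote by $\mathfrak{m}_\Lambda$ the maximal ideal of $\Lambda$. Because $\bm{\kappa}_{\ac}$ takes values in $1+\mathfrak{m}_\Lambda$, the Galois twist is trivial on each graded piece, so for every $n\geq 0$ the finite $\FF$-vector space $\mathfrak{m}_\Lambda^nN/\mathfrak{m}_\Lambda^{n+1}N$ is isomorphic, as a $G_F$-module, to a finite direct sum of copies of $\mathscr{F}^-\overline{M}$, and hence has no nonzero $G_F$-invariants by the previous paragraph. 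An induction on $n$ then yields $N^{G_F}\subseteq \mathfrak{m}_\Lambda^n N$ for every $n$: given the inclusion at stage $n$, any $x\in N^{G_F}$ projects to a $G_F$-fixed element of the graded piece, which must vanish, so $x\in\mathfrak{m}_\Lambda^{n+1}N$. Finally, since $N$ is finitely generated over the complete Noetherian local ring $\Lambda$, the Krull intersection theorem forces $N^{G_F}=0$.

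The only step requiring care is the identification of the Galois-module structure of the graded pieces $\mathfrak{m}_\Lambda^nN/\mathfrak{m}_\Lambda^{n+1}N$; but the claim is essentially formal, since writing $\bm{\kappa}_{\ac}(\sigma)=1+\varepsilon(\sigma)$ with $\varepsilon(\sigma)\in\mathfrak{m}_\Lambda$ shows that the twist acts trivially modulo $\mathfrak{m}_\Lambda$ on each graded piece. I do not foresee any genuine obstacle — the argument is a routine combination of the long exact sequence, \eqref{item_H0minus}, and Nakayama/Krull intersection.
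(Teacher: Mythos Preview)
Your proposal is correct and follows essentially the same approach as the paper: both reduce via the long exact sequence to the vanishing of $H^0(K[m]_{\overline{\frk{p}}},\mathscr{F}^-M\otimes\Lambda(\bm{\kappa}_{\ac}^{-1}))$ and then bootstrap from the residual hypothesis \eqref{item_H0minus}. The only cosmetic difference is in this last step: the paper writes the $H^0$ as $\varprojlim_n H^0(K[mp^n]_{\overline{\frk{p}}},\mathscr{F}^-M)$ and kills each term, whereas you filter by powers of $\mathfrak{m}_\Lambda$ and invoke Krull intersection --- both are standard and equivalent ways to promote residual vanishing to $\Lambda$-adic vanishing.
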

\begin{proof}
    We have an exact sequence
    \[
    H^0(K[m]_{\overline{\frk{p}}},\mathscr{F}^-M\otimes \Lambda(\bm{\kappa}_{\ac}^{-1}))\rightarrow H^1(K[m]_{\overline{\frk{p}}},\mathscr{F}^+M\otimes \Lambda(\bm{\kappa}_{\ac}^{-1}))\rightarrow H^1(K[m]_{\overline{\frk{p}}},M\otimes \Lambda(\bm{\kappa}_{\ac}^{-1})), 
    \]
    so it suffices to show that $H^0(K[m]_{\overline{\frk{p}}},\mathscr{F}^-M\otimes \Lambda(\bm{\kappa}_{\ac}^{-1}))=0$. Now, by \eqref{item_H0minus} and Remark~\ref{rk:irred2}, we have that $H^0(K[mp^n]_{\overline{\frk{p}}},\mathscr{F}^-M)=0$ for all positive integers $n$. Therefore
    \[
    H^0(K[m]_{\overline{\frk{p}}},\mathscr{F}^-M\otimes \Lambda(\bm{\kappa}_{\ac}^{-1}))=\varprojlim_n H^0(K[mp^n]_{\overline{\frk{p}}},\mathscr{F}^-M)=0,
    \]
    which concludes the proof.
\end{proof}

In light of the previous lemma, we can regard the module $H^1(K[m]_{\overline{\frk{p}}},\mathscr{F}^+M\otimes \Lambda(\bm{\kappa}_{\ac}^{-1}))$ as a submodule of $H^1(K[m]_{\overline{\frk{p}}},M\otimes \Lambda(\bm{\kappa}_{\ac}^{-1}))$.

\begin{defn}
    Let $m$ be a square-free product of primes coprime to $Np$ and split in $K$. Let $\bullet\in\lbrace \sharp,\flat\rbrace$.
        \item[i)] The local condition $H^1_{\ord,\bullet}(K[mp^\infty]_{\frk{p}},M)\subseteq H^1_{\Iw}(K[mp^\infty]_{\frk{p}},M)$ is the kernel of the compositum
        \[
        H^1_{\Iw}(K[mp^\infty]_{\frk{p}},M)\longrightarrow H^1_{\Iw}(K[mp^\infty]_{\frk{p}},\mathscr{F}^-M)\xrightarrow{\col_{\mathscr{F}^-M,\frk{p},m}^\bullet} \Lambda[\Gal(K[m]/K)],
        \]
        where the first arrow is induced by the canonical projection $M\rightarrow \mathscr{F}^-M$.
        \item[ii)] The local condition $H^1_{\ord,\bullet}(K[mp^\infty]_{\overline{\frk{p}}},M)\subseteq H^1_{\Iw}(K[mp^\infty]_{\overline{\frk{p}}},M)$  is the kernel of the map
        \[
        \col_{\mathscr{F}^+M,\overline{\frk{p}},m}^\bullet:H^1_{\Iw}(K[mp^\infty]_{\overline{\frk{p}}},\mathscr{F}^+M)\longrightarrow \Lambda[\Gal(K[m]/K)].
        \]
\end{defn}

At primes $\lambda$ of $K$ not dividing $p$, we will usually work with the following unramifed condition:
\[
H^1_{\ur}(K[mp^\infty]_\lambda,M)=\ker\left(H^1(K[m]_\lambda,M\otimes \Lambda(\bm{\kappa}_{\ac}^{-1}))\longrightarrow H^1(K[m]^{\ur}_\lambda,M\otimes \Lambda(\bm{\kappa}_{\ac}^{-1}))\right).
\]

\begin{defn}\label{def:Selmergroups}
    Let $\Sigma$ be a finite set of finite places of $K$ including all places dividing $Np$. Let $m$ be a square-free product of primes coprime to $Np$ and split in $K$. Let $K[m](\Sigma)$ denote the maximal extension of $K[m]$ unramified outside the places of $K[m]$ lying above $\Sigma$. Let $\bullet,\star\in\lbrace \sharp,\flat\rbrace$. Then we define
    \begin{align*}
        H^{1,\Sigma}_{\emptyset,\emptyset}(K[mp^\infty],M)&=H^1(K[m](\Sigma)/K[m],M\otimes \Lambda(\bm{\kappa}_{\ac}^{-1})), \\
        H^{1}_{\emptyset,\emptyset}(K[mp^\infty],M)&=\ker\left(H^{1,\Sigma}_{\emptyset,\emptyset}(K[mp^\infty],M)\longrightarrow \prod_{\lambda\in \Sigma\setminus \lbrace \frk{p},\overline{\frk{p}}\rbrace}\frac{H^1_{\Iw}(K[mp^\infty]_{\lambda}, M)}{H^1_{\ur}(K[mp^\infty]_{\lambda},M)}\right) \\
        H^{1}_{\bullet,\star}(K[mp^\infty],M)&=\ker\left(H^{1}_{\emptyset,\emptyset}(K[mp^\infty],M)\longrightarrow \frac{H^1_{\Iw}(K[mp^\infty]_{\frk{p}}, M)}{H^1_{\ord,\bullet}(K[mp^\infty]_{\frk{p}},M)} \bigoplus \frac{H^1_{\Iw}(K[mp^\infty]_{\overline{\frk{p}}}, M)}{H^1_{\ord,\star}(K[mp^\infty]_{\overline{\frk{p}}},M)} \right).
    \end{align*}
\end{defn}

\subsection{Verifying the local conditions} 
\label{subsec_5_2_2025_11_02_1128}
The goal of this subsection is to prove that, for $\bullet\in\lbrace \sharp,\flat\rbrace$,
%and for $\Sigma$ any finite set of finite places of $K$ including all places dividing $Np$,
the classes $z_m^\bullet\in H^1_{\Iw}(K[mp^\infty],M)$ introduced in Theorem~\ref{thm:decompo} belong to the Selmer groups $H^{1}_{\bullet,\bullet}(K[mp^\infty],M)$.

\begin{comment}
Given a finite extension $F$ of $\bb{Q}_p$ and a finite-dimensional $p$-adic $G_{F}$-representation $V$, we denote by
\[
H^1_e(F,V)\subseteq H^1_f(F,V)\subseteq H^1_g(F,V)
\]
the usual exponential, Bloch--Kato and geometric subspace of $H^1(F,V)$, respectively. Given a $G_F$-stable lattice $T$ inside $V$, we write
\[
H^1_?(F,T):=\ker\left(H^1(F,T)\longrightarrow H^1(F,V)/H^1_?(F,V)\right)
\]
for $?\in\lbrace e,f,g\rbrace$.

\begin{lemma}\label{lemma:e=f=g}
    Let $n$ be a positive integer and let $m$ be a square-free product of primes coprime to $Np$ and split in $K$. Let $v$ be a place of $K[mp^n]$ above $p$ and let $?\in \lbrace +,-\rbrace$. Then
    \[
    H^1_e(K[mp^n]_v,\mathscr{F}^?M)= H^1_f(K[mp^n]_v,\mathscr{F}^?M)= H^1_g(K[mp^n]_v,\mathscr{F}^?M).
    \]
\end{lemma}

\begin{lemma}
    Let $n$ be a positive integer and let $m$ be a square-free product of primes coprime to $Np$ and split in $K$. Let $v$ be a place of $K[mp^n]$ above $p$ and let $?\in \lbrace +,-\rbrace$. Then the $\mathcal{O}$-module $H^1(K[mp^n]_v,\mathscr{F}^?M)$ is free. 
\end{lemma}
\end{comment}

\begin{proposition}\label{prop:verify-local}
    Let $n$ be a positive integer and let $m$ be a square-free product of primes coprime to $Np$ and split in $K$. Let $v$ be a place of $K[mp^n]$ above $\overline{\frk{p}}$. Then
    \[
    \res_v(z_{n,m})\in H^1(K[mp^n]_v,\mathscr{F}^+M).
    \]
\end{proposition}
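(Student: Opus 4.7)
The plan is to prove the equivalent statement that the image of $\res_v(z_{n,m})$ under the map $H^1(K[mp^n]_v, M) \to H^1(K[mp^n]_v, \mathscr{F}^-M)$ induced by the canonical surjection $M \twoheadrightarrow \mathscr{F}^-M$ vanishes. Indeed, hypothesis \eqref{item_H0minus} together with Remark~\ref{rk:irred2} gives $H^0(K[mp^n]_v, \mathscr{F}^-M) = 0$, hence the injectivity of the natural map $H^1(K[mp^n]_v, \mathscr{F}^+M) \hookrightarrow H^1(K[mp^n]_v, M)$, so such a vanishing produces the required lift.

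First, I would trace $z_{n,m}$ back to its geometric origin. By construction (Definition~\ref{defn_2025_03_21_2027} and its level-$m$ variant used to define $\Delta_{n,m}^{\etale}(f,g)$), the class $z_{n,m}$ is the image of $\Delta_{n,m}^{\etale}(f,g)$ under the Galois-equivariant homomorphism $\nu_n^m$ of Proposition~\ref{lemma:LLZ}, and $\Delta_{n,m}^{\etale}(f,g)$ itself arises from the $p$-adic \'etale Abel--Jacobi image of a null-homologous cycle on the smooth proper Shimura threefold $\mathbf{X}_{1,0,n}^m$, composed with the Hecke-equivariant projections $(\mathrm{pr}_f, \mathrm{id}, e_{\mathrm{U}_p'})$ and the K\"unneth decomposition. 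Since $p \nmid Nm$, this threefold has good reduction above $p$, so Nekov\'ar's crystallinity theorem for \'etale Abel--Jacobi maps (see \cite{nekovarbanff}) ensures that the restriction of such an Abel--Jacobi class at any decomposition group above $p$ lies in the Bloch--Kato crystalline subspace $H^1_f$. Since the Hecke projections, the K\"unneth decomposition, and $\nu_n^m$ are all Galois-equivariant, functoriality of $H^1_f$ yields
\[
\res_v(z_{n,m}) \in H^1_f(K[mp^n]_v, M).
\]

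Second, I would establish the vanishing $H^1_f(K[mp^n]_v, \mathscr{F}^-M) = 0$. The Hodge--Tate weight $-1$ of $\mathscr{F}^-T_f^*$ at $p$, the pair $\{0, -1\}$ for $T_g^*$, the weight $-1$ of $\varphi_{0,\frk{P}}^c$ at $\overline{\frk{p}}$, and the shift $+2$ from the Tate twist combine to give HT weights $\{-1, 0\}$ for $\mathscr{F}^-M\vert_{G_{K_{\overline{\frk{p}}}}}$. In particular, there is no strictly positive HT weight, so the Bloch--Kato dimension formula
\[
\dim_{\Qp} H^1_f(F, V) = \dim_{\Qp} H^0(F, V) + [F:\Qp] \cdot \#\{i : h_i(V) > 0\},
\]
applied with $V = \mathscr{F}^-M$ and $F = K[mp^n]_v$, together with $H^0(K[mp^n]_v, \mathscr{F}^-M) = 0$ (by \eqref{item_H0minus} and Remark~\ref{rk:irred2}), yields the desired vanishing. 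By functoriality of $H^1_f$ under the surjection $M \twoheadrightarrow \mathscr{F}^-M$, the image of $\res_v(z_{n,m})$ in $H^1(K[mp^n]_v, \mathscr{F}^-M)$ then lies inside $H^1_f(K[mp^n]_v, \mathscr{F}^-M) = 0$, completing the argument.

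The main obstacle will be the first step: one must match the \'etale Abel--Jacobi construction used here (including the auxiliary modification by $C_q^{-1}((\mathrm{U}_p')^{-n_1}, \mathrm{id}, (\mathrm{U}_p')^{-n_3})$ appearing in $\Delta_{\mathbf{n}}^{\ddagger}$, the Hecke projections, the K\"unneth decomposition, and the Shapiro-type map $\nu_n^m$) with the formalism of \cite{nekovarbanff}, and verify that each of these Galois-equivariant operations preserves the Bloch--Kato crystalline condition $H^1_f$. Once this is in place, the Hodge-theoretic computation of step two is essentially routine.
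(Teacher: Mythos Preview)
Your overall strategy coincides with the paper's: reduce to showing the image of $\res_v(z_{n,m})$ in $H^1(K[mp^n]_v,\mathscr{F}^-M)$ vanishes, use the geometric origin of the class to place it in a Bloch--Kato subspace, and then show that subspace is zero via the Hodge--Tate weights and hypothesis \eqref{item_H0minus}.

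However, your first step contains an error: the threefold $\mathbf{X}_{1,0,n}^m$ does \emph{not} have good reduction above $p$. The factors $X(U_1)$ and $X(U_n^m)$ carry nontrivial level structure at $p$ (the subgroups $U_1$ and $U_n$ impose congruence conditions modulo $p$ and $p^n$ respectively), so these curves have bad reduction at $p$ regardless of the fact that $p\nmid Nm$. Consequently the crystallinity result of \cite{nekovarbanff} does not apply, and one can only conclude $\res_v(z_{n,m})\in H^1_g(K[mp^n]_v,M)$, which is precisely what the paper extracts from \cite[Thm.~5.9]{NN16}.

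This forces a change in your second step as well: to obtain $H^1_g(K[mp^n]_v,\mathscr{F}^-M)=0$ one needs, beyond $\DdR(V)/\DdR^+(V)=0$ and $H^0=0$, the further verification that $\Dcris(V^\vee(1))^{\varphi=1}=0$ for $V=\mathscr{F}^-M\otimes_{\mathcal{O}}L$. This is where the non-ordinarity of $g$ enters: since $0<\ord_p(\alpha),\ord_p(\beta)<1$, no crystalline Frobenius eigenvalue on $V^\vee(1)$ can equal $1$. Your $H^1_f$ dimension formula omits exactly this term, so the gap is not cosmetic.
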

\begin{proof}
    It suffices to show that the image of $\res_v(z_{n,m})$ in $H^1(K[mp^n]_v,\mathscr{F}^-M)$ is zero. By \cite[Thm.~5.9]{NN16}, we have that $\res_v(z_{n,m})\in H^1_g(K[mp^n]_v,M)$, so the image of $\res_v(z_{n,m})$ in $H^1(K[mp^n]_v,\mathscr{F}^-M)$ lies in $H^1_g(K[mp^n]_v,\mathscr{F}^-M)$. We will prove that $H^1_g(K[mp^n]_v,\mathscr{F}^-M)=0$.
    
    Let $V=\mathscr{F}^-M\otimes_{\mathcal{O}}L$, which we regard as a $G_{K[mp^n]_v}$-representation. Since $V$ has Hodge--Tate weights $0$ and $-1$, we have $\DdR(V)/\DdR^+(V)=0$. By \eqref{item_H0minus} and Remark~\ref{rk:irred2}, we have $H^0(K[mp^n]_v,V)=0$. Also, since $0<\ord_p(\alpha),\ord_p(\beta)<1$, we have that $\Dcris(V^\vee(1))^{\varphi=1}=0$, where $\varphi$ denotes the crystalline Frobenius. Hence,
    \[
    \dim H^1_g(K[mp^n]_v,V)=\dim \DdR(V)/\DdR^+(V)+\dim H^0(K[mp^n]_v,V) +\dim \Dcris(V^\vee(1))^{\varphi=1}=0. 
    \]
    Therefore the $\mathcal{O}$-module
    \[
    H^1_g(K[mp^n]_v,\mathscr{F}^-M)=\ker\left(H^1(K[mp^n]_v,\mathscr{F}^-M)\longrightarrow H^1(K[mp^n]_v,V)/H^1_g(K[mp^n]_v,V)\right)
    \]
    is torsion. Since $H^1(K[mp^n]_v,\mathscr{F}^-M)$ is a free $\mathcal{O}$-module by hypothesis \eqref{item_H0minus} and Remark~\ref{rk:irred2}, it follows that $H^1_g(K[mp^n]_v,\mathscr{F}^-M)=0$.
\end{proof}

\begin{corollary}\label{cor:F+}
    Let $m$ be a square-free product of primes coprime to $Np$ and split in $K$. Then,
    \[
    \res_{\overline{\frk{p}}}(z_{m,\infty,\xi})\in H^1(K[m]_{\overline{\frk{p}}},\mathscr{F}^+M\otimes\cD_{\ord_p(\xi)})\,,\qquad  \xi\in\lbrace \alpha,\beta\rbrace\,,
    \]
    \[
    \res_{\overline{\frk{p}}}(z_{m}^\bullet)\in H^1(K[m]_{\overline{\frk{p}}},\mathscr{F}^+M\otimes\Lambda(\bm{\kappa}_{\ac}^{-1}))\,,\qquad \bullet\in\lbrace \sharp,\flat\rbrace\,.
    \]
\end{corollary}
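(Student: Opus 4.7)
The plan is to deduce both statements from a single idea: under \eqref{item_H0minus} and Remark~\ref{rk:irred2}, the morphism $\mathscr{F}^+M \hookrightarrow M$ of coefficient systems induces an \emph{injection} on local Iwasawa cohomology at $\overline{\frk{p}}$ (for coefficients in either $\Lambda(\bm{\kappa}_{\ac}^{-1})$ or $\cD_\lambda$), exactly as in the lemma preceding Definition~\ref{def:Selmergroups}. Consequently, membership in $H^1(K[m]_{\overline{\frk{p}}}, \mathscr{F}^+M \otimes -)$ is equivalent to the vanishing of the image in $H^1(K[m]_{\overline{\frk{p}}}, \mathscr{F}^-M \otimes -)$ under the projection $\pi^-: M \twoheadrightarrow \mathscr{F}^-M$.

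For the first assertion, I would apply Proposition~\ref{prop:verify-local} at levels $n$ and $n-1$, summed over all places of $K[mp^n]$ (resp.\ $K[mp^{n-1}]$) above $\overline{\frk{p}}$. Via Shapiro's lemma and the fact that $\res_{\overline{\frk{p}}}$ commutes with the trace map $\Tr^n_{n-1}$, this gives
\[
\res_{\overline{\frk{p}}}(z_{n,m}),\quad \res_{\overline{\frk{p}}}(\Tr^n_{n-1} z_{n-1,m}) \;\in\; H^1(K[m]_{\overline{\frk{p}}}, \mathscr{F}^+M \otimes \sL_n).
\]
Hence by Definition~\ref{def_2025_10_15_1649}, the same holds for $\res_{\overline{\frk{p}}}(z_{m,n,\xi})$, so $\pi^-\res_{\overline{\frk{p}}}(z_{m,n,\xi}) = 0$ for every $n \ge 2$. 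It remains to show that $\pi^-\res_{\overline{\frk{p}}}(z_{m,\infty,\xi}) = 0$ in $H^1(K[m]_{\overline{\frk{p}}}, \mathscr{F}^-M\otimes \cD_{\ord_p(\xi)})$. This follows from a local analogue of Proposition~\ref{prop:inject-H1}: the natural map
\[
H^1(K[m]_{\overline{\frk{p}}}, \mathscr{F}^-M \otimes \cD_{\ord_p(\xi)}) \longrightarrow \varprojlim_n H^1(K[m]_{\overline{\frk{p}}}, \mathscr{F}^-M \otimes \sL_n)
\]
is injective, using local $H^0$-vanishing of $\mathscr{F}^-M$ over $K[mp^\infty]_{\overline{\frk{p}}}$ (which is precisely what \eqref{item_H0minus} with Remark~\ref{rk:irred2} delivers, combined with the inflation-restriction sequence). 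This proves (i).

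For the second assertion, apply $\res_{\overline{\frk{p}}}$ to the defining identity of Theorem~\ref{thm:decompo} and then apply $\pi^-$. Part (i) forces the left-hand side to vanish, yielding
\[
Q_g^{-1}M_{\log,g} \begin{bmatrix} \pi^-\res_{\overline{\frk{p}}}(z_m^\sharp) \\ \pi^-\res_{\overline{\frk{p}}}(z_m^\flat) \end{bmatrix} = 0
\]
in the appropriate $\mathscr{F}^-M$-valued distributional cohomology. Using $\det C_{g,i} = \chi_g(p)\Phi_i$, the determinant of $Q_g^{-1}M_{\log,g}$ is (up to constants) essentially $\prod_{n\ge 1}\Phi_n$, which is a non-zero-divisor in $\sL$. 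Inverting $Q_g^{-1}M_{\log,g}$ over a suitable fraction ring then gives $\pi^-\res_{\overline{\frk{p}}}(z_m^\bullet) = 0$ there. The conclusion in $H^1(K[m]_{\overline{\frk{p}}}, \mathscr{F}^-M \otimes \Lambda(\bm{\kappa}_{\ac}^{-1}))$ itself follows from torsion-freeness of this module, which is again a standard consequence of the vanishing of local $H^0$ of $\mathscr{F}^-M$ over the anticyclotomic tower.

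The main obstacle is the last step: propagating the vanishing from a fraction-field identity back to an identity in the integral module $H^1(K[m]_{\overline{\frk{p}}}, \mathscr{F}^-M \otimes \Lambda(\bm{\kappa}_{\ac}^{-1}))$. This rests entirely on torsion-freeness of Iwasawa cohomology, so the critical input is \eqref{item_H0minus}; without it, the matrix-inversion step would only yield the conclusion up to $\Lambda$-torsion. A secondary technical point is verifying the local analogue of Proposition~\ref{prop:inject-H1} used in part (i), but this is a routine adaptation of the argument in \emph{loc.\,cit.}, replacing the global $H^0$-vanishing by its local counterpart at $\overline{\frk{p}}$.
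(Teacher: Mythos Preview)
Your approach for both parts is correct in essence and aligns with the paper's one-line proof (``this follows easily from the previous proposition and the construction of the classes''): Proposition~\ref{prop:verify-local} places every finite-level input $\res_{\overline{\frk p}}(z_{n,m})$ inside $\mathscr F^+M$-cohomology, and both $z_{m,\infty,\xi}$ and $z_m^\bullet$ are manufactured from these inputs by a limiting procedure.

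For part (ii), however, your detour through the $\sL$-valued identity of Theorem~\ref{thm:decompo} and the phrase ``inverting $Q_g^{-1}M_{\log,g}$ over a suitable fraction ring'' is less direct than necessary and, as written, imprecise: the matrix has entries in $\sL\setminus\Lambda$, so $\Lambda$-torsion-freeness of the local Iwasawa $H^1$ does not by itself allow you to cancel it (you would first have to embed the torsion-free finitely generated $\Lambda$-module into a free one and argue inside the integral domain $L\llbracket X\rrbracket$, which you do not spell out). The cleaner path---and presumably what the paper intends---bypasses part (i) altogether: apply $\pi^-\circ\res_{\overline{\frk p}}$ directly to the \emph{finite-level} identity of Theorem~\ref{thm:generic-decomposition}. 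Proposition~\ref{prop:verify-local} makes the left-hand side vanish for every $n$, and then the \emph{uniqueness} clause of that theorem (which rests on Lemma~\ref{lem:inverse-limit} together with torsion-freeness of $H^1(K[m]_{\overline{\frk p}},\mathscr F^-M\otimes\Lambda(\bm\kappa_{\ac}^{-1}))$ coming from \eqref{item_H0minus}---so your instinct about torsion-freeness was right, but it enters here rather than via any fraction ring) forces $\pi^-\res_{\overline{\frk p}}(z_m^\bullet)=0$. Equivalently, one can simply rerun the construction of Theorem~\ref{thm:generic-decomposition} with coefficients $\mathscr F^+M$ locally at $\overline{\frk p}$, since all the inputs already live there.
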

\begin{proof}
    This follows easily from the previous proposition and the construction of the classes $z_{m,\infty,\xi}$, $z_{m}^\bullet$.
\end{proof}

In what follows, we assume that the following hypothesis holds:
 \begin{itemize}
        \item[(\mylabel{item_LOC}{\textbf{LOC}})] $a_p(g)=0$.
    \end{itemize}

\begin{proposition}\label{prop:vanishing-col}
    Let $m$ be a square-free product of primes coprime to $Np$ and split in $K$. Suppose that \eqref{item_LOC} holds. Then, for $\bullet\in\lbrace \sharp,\flat\rbrace$, we have 
    \[
    \col^\bullet_{\mathscr{F}^-M,\frk{p},m}(\res_{\frk{p}}(z_m^\bullet))=0 \quad \text{and} \quad \col^\bullet_{\mathscr{F}^+M,\overline{\frk{p}},m}(\res_{\overline{\frk{p}}}(z_m^\bullet))=0.
    \]
\end{proposition}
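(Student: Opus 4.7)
The plan is to reduce both equalities in the proposition to a single matrix identity that can be verified layer by layer using the interpolation formula of the Perrin-Riou big logarithm and the geometricity of the cycle classes. The two cases at $\mathfrak{p}$ and $\overline{\mathfrak{p}}$ are treated in parallel.

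Write $(T,\mathfrak{q})$ for either $(\mathscr{F}^+M,\overline{\mathfrak{p}})$ or $(\mathscr{F}^-M,\mathfrak{p})$, and let $\pi_T:M\to T$ denote the identity in the first case and the canonical projection in the second. By Corollary~\ref{cor:F+}, the class $\pi_T\circ \res_\mathfrak{q}(z_m^\bullet)$ lies in $H^1(K[m]_\mathfrak{q},T\otimes\Lambda\bk)$ in both cases, so the Coleman map is defined on it.

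Combining Theorem~\ref{thm:decompo}, which expresses
$\left[\begin{smallmatrix} z_{m,\infty,\alpha} \\ z_{m,\infty,\beta}\end{smallmatrix}\right] = B \left[\begin{smallmatrix} z_m^\sharp \\ z_m^\flat \end{smallmatrix}\right]$
with $B := Q_g^{-1}M_{\log,g}$, with the decompositions~\eqref{eq:Col-decompo+}/\eqref{eq:Col-decompo-}, a direct matrix manipulation gives
\begin{align*}
N := \bigl[\langle \mathcal{L}_{T,\mathfrak{q},m}(\pi_T\res_\mathfrak{q}(z_{m,\infty,\xi})), v_\eta \rangle\bigr]_{\xi,\eta\in\{\alpha,\beta\}} = B\cdot S\cdot B^T, \qquad S := \bigl[\col^\bullet(\pi_T\res_\mathfrak{q}(z_m^{\bullet'}))\bigr]_{\bullet,\bullet'\in\{\sharp,\flat\}}.
\end{align*}
Since $\det B \neq 0$ in the fraction ring of $\mathcal{H}_L(\Gamma_\ac)\otimes_\cO \cO[\Gal(K[m]/K)]$, the vanishing of $N$ would force the vanishing of the \emph{entire} matrix $S$, from which the diagonal vanishing asserted by the proposition is immediate. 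Thus the plan is to prove that $N = 0$.

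To establish $N=0$, we evaluate $\mathcal{L}_{T,\mathfrak{q},m}(\pi_T\res_\mathfrak{q}(z_{m,\infty,\xi}))$ at finite-order characters $\epsilon$ of $\Gamma_\ac$ using the interpolation formula for the Perrin-Riou big logarithm. The geometricity of the cycle classes $z_{m,n}$ (cf. Proposition~\ref{prop:verify-local}, whose proof invokes~\cite{NN16}) places $\pi_T\res_\mathfrak{q}(z_{m,n})$ in the Bloch-Kato geometric subspace $H^1_g$, so the interpolation formula expresses each entry of $\epsilon(N)$ as an explicit Euler factor of the form $(1 - \chi_g(p)\varphi/(p\xi))$ applied to $v_\eta$, multiplied by a Bloch-Kato dual-exponential pairing of the level-$n$ specialization of $z_{m,n,\xi}$ with $v_\eta$. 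The specific shape of $z_{m,n,\xi} = \xi^{-n}(z_{m,n} - \chi_g(p)\xi^{-1}\Tr_{n-1}^n z_{m,n-1})$ from Definition~\ref{def_2025_10_15_1649} is precisely designed to cancel against this Euler factor. Under \eqref{item_LOC}, one has $\alpha^2 = \beta^2 = -\chi_g(p)\cdot p$, which streamlines the algebra and makes all four entries of $\epsilon(N)$ vanish simultaneously. Since vanishing at every finite-order character forces the vanishing of the whole distribution, we conclude $N=0$.

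The main obstacle will be the precise bookkeeping of Euler factors in the third paragraph: the interpolation formula of~\cite{LZ0,KLZ2} is formulated in the cyclotomic setting and requires unwinding via the basis $\Omega_F$ of the Yager module (fixed implicitly in \S\ref{subsec_5_1_2025_11_02_1128}) to apply in the present anticyclotomic setting. A secondary subtlety is the need to handle the projection $\pi_T$ at $\mathfrak{q}=\mathfrak{p}$: the analogue of Proposition~\ref{prop:verify-local} at $\mathfrak{p}$ is not immediately available under \eqref{item_H0minus} alone, so one either invokes the self-duality hypothesis~\eqref{item_self_duality} to relate the two primes above $p$, or carries out the $\pi_T$-projected interpolation-formula computation directly, verifying that the Euler-factor cancellation still occurs.
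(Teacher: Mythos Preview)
Your matrix identity $N=B\,S\,B^T$ and the observation that $N(\eta)=0$ at finite-order characters are both correct and match the paper. The gap is in the step
\[
\text{``}N(\epsilon)=0 \text{ for every finite-order }\epsilon\ \Longrightarrow\ N=0\text{''}.
\]
This implication fails here. Under \eqref{item_LOC} the entries of $B=Q_g^{-1}M_{\log,g}$ are all $O(\log^{1/2})$, so each entry of $N=B\,S\,B^T$ is $O(\log^1)$; but a nonzero $O(\log^1)$ distribution can vanish at every finite-order character. Indeed $\log^+\log^-$ (equivalently, a constant multiple of $\log_p(1+X)$) does exactly this. Concretely, once you know the diagonal of $S$ vanishes, each entry of $N$ reduces to $(\hbox{const})\cdot\log^+\log^-\cdot(S_{12}+S_{21}\ \hbox{type combination})$, which already vanishes at all nontrivial finite-order characters regardless of $S_{12},S_{21}$. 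So your argument cannot force $S=0$, and in particular cannot conclude the diagonal vanishing from $N=0$ alone.

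The paper's proof sidesteps this growth obstruction by using \eqref{item_LOC} in a different way: since $a_p(g)=0$, the matrix $M_{\log,g}$ has columns proportional to $\log^-$ and $\log^+$. At a character $\eta$ of order $p^n$ with $n$ even, $\log^-(\eta)=0$, so the identity $M_{\log,g}(\eta)\,S(\eta)\,M_{\log,g}^T(\eta)=0$ collapses to a scalar equation $(\log^+(\eta))^2\cdot\col^\flat(z_m^\flat)(\eta)=0$; since $\log^+(\eta)\neq 0$, this isolates $\col^\flat(z_m^\flat)(\eta)=0$. Because $\col^\flat(z_m^\flat)\in\Lambda$ is \emph{bounded}, vanishing at infinitely many such $\eta$ gives $\col^\flat(z_m^\flat)=0$. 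The case $\bullet=\sharp$ uses odd $n$. The essential point is that the argument descends to a single bounded quantity before invoking density of characters, rather than trying to kill an $O(\log^1)$ object directly.

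A smaller remark: your explanation of why $N(\epsilon)=0$ is not quite right. There is no Euler-factor cancellation coming from the shape of $z_{m,n,\xi}$; at a ramified character the interpolation formula for $\mathcal{L}$ has no Euler factor at all. The vanishing is simply because $\res_\fq(z_{m,n})\in H^1_g$ by \cite{NN16}, and the Perrin-Riou map at such characters computes $\exp^*$, which vanishes on $H^1_g$ (here $H^1_e=H^1_g$ since the crystalline Frobenius eigenvalues have $p$-adic valuation in $(0,1)$). Your ``secondary subtlety'' at $\mathfrak{p}$ is also not an obstacle: functoriality of $H^1_g$ under the quotient $M\twoheadrightarrow\mathscr{F}^-M$ handles it directly.
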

\begin{proof}
We prove the first equality; the second follows by a completely analogous argument.

Since we are assuming that $a_p(g)=0$, the matrix $M_{\log,g}$ is of the form $\begin{bmatrix}
    *\log^-&*\log^+\\
    *\log^-&*\log^+
\end{bmatrix}$, where $*$ represents a non-zero constant and $\log^\pm$ are Pollack's plus and minus logarithms defined in \cite{pollack03} (see for example \cite[Appendix~4]{buyukboduklei0}).

Let $\eta$ be a character of order $p^n$ of $\Gamma_\ac$. Note that %As in the proof of Proposition~\ref{prop:verify-local}, the product
\begin{align*}
&M_{\log,g}\begin{bmatrix}
     \col^\sharp_{\mathscr{F}^-M,\frk{p},m}(\res_{\frk{p}}(z_m^\sharp)) & \col^\sharp_{\mathscr{F}^-M,\frk{p},m}(\res_{\frk{p}}(z_m^\flat)) \\
      \col^\flat_{\mathscr{F}^-M,\frk{p},m}(\res_{\frk{p}}(z_m^\sharp)) & \col^\flat_{\mathscr{F}^-M,\frk{p},m}(\res_{\frk{p}}(z_m^\flat)) 
\end{bmatrix}M_{\log,g}^T \\
&= \begin{bmatrix}
    \langle \mathcal{L}_{\mathscr{F}^-M,\frk{p},m}(\res_{\frk{p}}(z_{m,\infty,\alpha})),v_{\frk{p},-,\alpha} \rangle & \langle \mathcal{L}_{\mathscr{F}^-M,\frk{p},m}(\res_{\frk{p}}(z_{m,\infty,\beta})),v_{\frk{p},-,\alpha} \rangle \\
    \langle \mathcal{L}_{\mathscr{F}^-M,\frk{p},m}(\res_{\frk{p}}(z_{m,\infty,\alpha})),v_{\frk{p},-,\beta} \rangle & \langle \mathcal{L}_{\mathscr{F}^-M,\frk{p},m}(\res_{\frk{p}}(z_{m,\infty,\beta})),v_{\frk{p},-,\beta} \rangle
\end{bmatrix}.
\end{align*}
Since $\res_{\frk{p}}(z_{m,n})\in H^1_g(K[mp^n]_{\frk{p}},M)$ by \cite[Thm.~5.9]{NN16}, the above matrix vanishes when evaluated at $\eta$ by the interpolation property of the Perrin-Riou map $\mathcal{L}_{\mathscr{F}^-M,\frk{p},m}$. If $n$ is even, we have $\log^-(\eta)=0$. In particular, 
\[
\log^+(\eta)^2\col^\flat_{\mathscr{F}^-M,\frk{p},m}(\res_{\frk{p}}(z_m^\flat)) (\eta)=0.
\]
But $\log^+(\eta)\ne0$, which implies that $\col^\flat_{\mathscr{F}^-M,\frk{p},m}(\res_{\frk{p}}(z_m^\flat)) (\eta)=0$. As this holds for infinitely many $\eta$, we deduce that $\col^\flat_{\mathscr{F}^-M,\frk{p},m}(\res_{\frk{p}}(z_m^\flat))=0$. On considering odd $n$, we deduce by the same argument that $\col^\sharp_{\mathscr{F}^-M,\frk{p},m}(\res_{\frk{p}}(z_m^\sharp))=0$.
\end{proof}

\begin{proposition}\label{prop:local-ap=0}
    Let $m$ be a square-free product of primes coprime to $Np$ and split in $K$. If {\rm \eqref{item_LOC}} holds, then for $\bullet\in\lbrace \sharp,\flat\rbrace$, we have that $z_m^\bullet\in H^{1}_{\bullet,\bullet}(K[mp^\infty],M)$.
\end{proposition}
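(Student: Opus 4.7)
The plan is to verify each of the three types of local conditions entering Definition~\ref{def:Selmergroups} separately.

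\emph{Global cohomology class.} First, since the classes $z_m^\bullet$ are built out of the patched images $z_{n,m}$ (via the decomposition in Theorem~\ref{thm:decompo}), and since each $z_{n,m}$ arises from the étale cycle classes $\Delta_{n,m}^{\étale}(f,g)$ on the Shimura threefolds $\mathbf{X}_{1,0,n}^m$ (which have good reduction outside $\Sigma$), Shapiro's lemma together with the usual inflation--restriction exact sequence shows that $z_m^\bullet$ lies in the global Iwasawa cohomology group $H^{1,\Sigma}_{\emptyset,\emptyset}(K[mp^\infty],M)$.

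\emph{Unramified condition at $\lambda \in \Sigma \setminus \{\frk{p},\overline{\frk{p}}\}$.} Fix such a prime $\lambda$ of $K[m]$. Over each finite sub-tower $K[mp^n]$, the image of $z_{n,m}$ lives in the étale realization of a smooth proper variety over a number field with good reduction at the places above $\lambda$ (once we restrict to the maximal sub-extension of $K[mp^n]$ unramified at $\lambda$ inside the anticyclotomic tower; by construction $\Gamma_{\ac}$ is unramified away from $p$). Thus $\res_{\lambda}(z_{n,m})$ lies in $H^1_{\mathrm{ur}}(K[mp^n]_{\lambda},M)$; passing to the inverse limit and then through the decomposition of Theorem~\ref{thm:decompo} yields the analogous statement for $z_m^\bullet$.

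\emph{Local conditions at $\frk{p}$ and $\overline{\frk{p}}$.} At $\overline{\frk{p}}$, Corollary~\ref{cor:F+} tells us that $\res_{\overline{\frk{p}}}(z_m^\bullet)$ factors through the inclusion
\[
H^1_{\Iw}(K[mp^\infty]_{\overline{\frk{p}}},\mathscr{F}^+M) \hookrightarrow H^1_{\Iw}(K[mp^\infty]_{\overline{\frk{p}}},M),
\]
and Proposition~\ref{prop:vanishing-col} shows that the image under $\col^\bullet_{\mathscr{F}^+M,\overline{\frk{p}},m}$ vanishes. Hence $\res_{\overline{\frk{p}}}(z_m^\bullet) \in H^1_{\ord,\bullet}(K[mp^\infty]_{\overline{\frk{p}}},M)$. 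At $\frk{p}$, the local condition is defined through the composition with the projection $M \twoheadrightarrow \mathscr{F}^-M$, and the vanishing of
\[
\col^\bullet_{\mathscr{F}^-M,\frk{p},m}\bigl(\res_{\frk{p}}(z_m^\bullet)\bigr)
\]
is exactly the second statement in Proposition~\ref{prop:vanishing-col}. Combining these three verifications gives $z_m^\bullet \in H^{1}_{\bullet,\bullet}(K[mp^\infty],M)$ for $\bullet \in \{\sharp,\flat\}$, as asserted.

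The main obstacle is the unramified condition at primes $\lambda \in \Sigma \setminus \{\frk{p},\overline{\frk{p}}\}$ of bad reduction (those dividing $N$): at such primes, one cannot simply invoke good reduction, and one must exploit the fact that the inertia action on $\Lambda(\bm{\kappa}_{\ac}^{-1})$ factors through a finite quotient together with the geometric provenance of the classes (or appeal to a standard result on the image of étale Abel--Jacobi maps into the crystalline/Bloch--Kato $H^1_f$, which in our anticyclotomic setting coincides with $H^1_{\mathrm{ur}}$).
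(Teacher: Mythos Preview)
Your handling of the conditions at $\frk{p}$ and $\overline{\frk{p}}$ matches the paper exactly: both invoke Corollary~\ref{cor:F+} and Proposition~\ref{prop:vanishing-col}. The divergence is at primes $\lambda \in \Sigma \setminus \{\frk{p},\overline{\frk{p}}\}$, and here the paper takes a route that is both different from and cleaner than yours.

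You argue via good reduction and the \'etale Abel--Jacobi map, then correctly flag bad primes $\lambda \mid N$ as the remaining obstacle, leaving that case with only a gesture toward a fix (an appeal to a Nekov\'a\v{r}--Nizio{\l} type result on the image of cycle classes). This can be made to work, but you have not carried it out, and the passage from $H^1_g$ with $V$-coefficients to the unramified condition with $\Lambda$-coefficients still needs an argument.

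The paper sidesteps the good/bad-reduction dichotomy entirely by proving something stronger: rather than locating $\res_v(z_{n,m})$ inside a subgroup, it shows that $H^1(K[mp^n]_v,V)=0$ for every finite $v\nmid p$. The mechanism is purity plus weight--monodromy for modular curves (Saito): $V$ is pure of motivic weight $-1$, so the Frobenius eigenvalues on $V^{I_v}$ have complex absolute value $q_v^{1/2}\neq 1$, hence $H^0=0$; the conjugate self-duality $V^c\simeq V^\vee(1)$ together with local Tate duality then gives $H^2(K[mp^n]_v,V)\simeq H^0(K[mp^n]_{\overline{v}},V)^\vee=0$; and the local Euler--Poincar\'e formula forces $\dim H^1=0$. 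This makes $\res_v(z_{n,m})\in H^1_f(K[mp^n]_v,M)$ hold trivially and uniformly over all $v\nmid p$, so the bad primes never need separate treatment.
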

\begin{proof}
    Let $n$ be a positive integer. Let $V$ denote the $p$-adic $G_{K[mp^n]}$-representation $M\otimes_{\mathcal{O}}L\vert_{G_{K[mp^n]}}$, which is pure of motivic weight $-1$. Let $v$ be a finite place of $K[mp^n]$ not dividing $Np$ and let $I_v$ denote the inertia subgroup of $G_{K[mp^n]_v}$. By the weight-monodromy conjecture, established for modular curves by Saito \cite{Saito97}, the eigenvalues of a geometric Frobenius $\Fr_v$ acting on $V^{I_{v}}$ have complex absolute value $q_v^{1/2}$, where $q_v$ is the cardinality of the residue field at $v$. In particular, it follows that $H^0(K[mp^n]_v,V)=0$. By the same argument we have that $H^0(K[mp^n]_{\overline{v}},V)=0$. Therefore
    \begin{align*}
    \dim H^2(K[mp^n]_v,V)&=\dim H^2(K[mp^n]_{\overline{v}},V^c) \\
    &=\dim H^2(K[mp^n]_{\overline{v}},V^\vee(1))=\dim H^0(K[mp^n]_{\overline{v}},V)=0.
    \end{align*}
    By the Euler--Poicar\'e characteristic formula, we deduce that
    \[
    \dim H^1(K[mp^n]_v,V)=\dim H^0(K[mp^n]_v,V)+\dim H^2(K[mp^n]_v,V)=0.
    \]
    %Therefore
    %\[
    %H^1_{\ur}(K[mp^n]_v,M)=H^1(K[mp^n]_v,M)=H^1(K[mp^n]_v,M)_{\mathrm{tors}}.
    %\]
    Thus, it trivially holds that
    \[
    \res_v(z_{m,n})\in H^1_{f}(K[mp^n]_v,M):= \ker\left( H^1(K[mp^n]_v,M)\longrightarrow \frac{H^1(K[mp^n]_v,V)}{H^1_{\ur}(K[mp^n]_v,V)}\right).
    \]
    
    Hence, for each finite place $\lambda\nmid Np$ of $K$, we have that $\res_{\lambda}(z_m^\bullet)\in H^1_{\ur}(K[mp^\infty]_\lambda,M)$. Therefore
    \[
    z_m^\bullet\in H^{1}_{\emptyset,\emptyset}(K[mp^\infty],M).
    \]
    Combined with Corollary~\ref{cor:F+} and  Proposition~\ref{prop:vanishing-col}, this implies that
    \[
    z_m^\bullet\in H^{1}_{\bullet,\bullet}(K[mp^\infty],M),
    \]
    as desired.
\end{proof}

\begin{remark}
    \label{rem_2025_10_13_0908}
 The only reason why our main results require the hypothesis \textup{\eqref{item_LOC}} is the verification of the local properties of signed diagonal cycles (as in Proposition~\ref{prop:local-ap=0}). The more general case can be treated once we compare our diagonal cycles with those of \cite{DarmonRotger, BSV}, relying on the reciprocity laws therein. To establish this comparison, however, one needs to allow variation in $f$ and $g$ as well. This will be treated in the PhD thesis (in progress) of Qingshen Lv. %We refer the reader to Appendix~\ref{appendix:apnotzero} (specifically to Remark~\ref{remark_2025_10_13_0900}) for details regarding this technical point. 
\end{remark}

\subsection{Euler systems}
\label{subsec:ES}

In this subsection, we formulate the signed (anticyclotomic) Iwasawa main conjectures for the trito-non-ordinary product ${\rm BC}_{K/\QQ}(\pi_f)\times {\rm BC}_{K/\QQ}(\pi_g)\times \psi$, and explain that the work \cite{JNS} implies one inclusion of these conjectures.

\subsubsection{} We write $V=M\otimes_{\mathcal{O}}L$ and $A=V/M$. Let us assume\footnote{In our main results towards anticyclotomic main conjectures (cf. Theorem~\ref{thm:mainconjectures} below), we will require the stronger ``big image'' hypothesis \eqref{item_BI0} on $M$.} until the end of \S\ref{sec:applications} that $f$ is residually non-Eisenstein. This, together with the fact that $g$ is readily so, implies that we have a symmetric $\mathcal{O}$-linear perfect pairing
\[
\langle \, , \, \rangle \, : \, M \times M \longrightarrow \mathcal{O}(1)
\]
such that $\langle x^\sigma,y^{c\sigma c}\rangle=\langle x,y\rangle^\sigma$ for all $x,y\in M$ and $\sigma\in G_K$, where $c\in G_{\bb{Q}}$ denotes the choice of complex conjugation determined by our fixed embedding $\iota:\overline{\bb{Q}}\hookrightarrow \bb{C}$. In particular, it follows that $M^c\simeq \Hom_{\mathcal{O}}(M,\mathcal{O}(1))$ and $A\simeq \Hom_{\mathcal{O}}(M^c,L/\mathcal{O}(1))$.

\subsubsection{} Let $\mathcal{K}$ be a finite extension of $K$, $v$ a finite place of $\mathcal{K}$, and $\overline{v}=c(v)$. Then, we have an isomorphism $H^1(\mathcal{K}_{\overline{v}},M)\cong H^1(\mathcal{K}_v,M^c)$. Therefore, cup-product gives a natural perfect pairing
\begin{equation*}
    H^1(\mathcal{K}_{\overline{v}},M)\times H^1(\mathcal{K}_v,A) \longrightarrow L/\mathcal{O}.
\end{equation*}
Similarly, we have a perfect pairing
\begin{equation*}
    H^1(\mathcal{K}_{\overline{v}},M)\times H^1(\mathcal{K}_v,M) \longrightarrow\mathcal{O}.
\end{equation*}

\begin{defn}
    Let $m$ be a square-free product of primes coprime to $Np$ and split in $K$. Let $\bullet\in\lbrace \sharp,\flat\rbrace$.
        \item[i)] The submodule $H^1_{\bullet}(K[mp^\infty]_{\frk{p}},\mathscr{F}^-M)\subseteq H^1_{\Iw}(K[mp^\infty]_{\frk{p}},\mathscr{F}^-M)$ is defined as the kernel of the Coleman map
        \[
        \mathrm{Col}_{\mathscr{F}^-M,\frk{p},m}^\bullet:H^1_{\Iw}(K[mp^\infty]_{\frk{p}},\mathscr{F}^-M)\longrightarrow \Lambda[\Gal(K[m]/K)].
        \]
        \item[ii)]  For each positive integer $n$, the submodule $H^1_{\bullet}(K[mp^n]_{\frk{p}},\mathscr{F}^-M)\subseteq H^1(K[mp^n]_{\frk{p}},\mathscr{F}^-M)$ is defined as the image of $H^1_{\bullet}(K[mp^\infty]_{\frk{p}},\mathscr{F}^-M)$ under the natural projection
        \[
        H^1_{\Iw}(K[mp^\infty]_{\frk{p}},\mathscr{F}^-M)\longrightarrow H^1(K[mp^n]_{\frk{p}},\mathscr{F}^-M).
        \]
        \item[iii)]  For each positive integer $n$, the local condition $H^1_{\ord,\bullet}(K[mp^n]_{\frk{p}},M)\subseteq H^1(K[mp^n]_{\frk{p}},M)$ is defined as the kernel of the homomorphism
        \[
        H^1(K[mp^n]_{\frk{p}},M)\longrightarrow \frac{H^1(K[mp^n]_{\frk{p}},\mathscr{F}^-M)}{H^1_{\bullet}(K[mp^n]_{\frk{p}},\mathscr{F}^-M)}.
        \]
        \item[iv)]  The submodule $H^1_{\bullet}(K[mp^\infty]_{\overline{\frk{p}}},\mathscr{F}^+M)\subseteq H^1_{\Iw}(K[mp^\infty]_{\overline{\frk{p}}},\mathscr{F}^+M)$ is defined as the kernel of the Coleman map
        \[
        \mathrm{Col}_{\mathscr{F}^+M,\overline{\frk{p}},m}^\bullet:H^1_{\Iw}(K[mp^\infty]_{\overline{\frk{p}}},\mathscr{F}^+M)\longrightarrow \Lambda[\Gal(K[m]/K)].
        \]
        \item[v)]  For each positive integer $n$, the submodule $H^1_{\bullet}(K[mp^n]_{\overline{\frk{p}}},\mathscr{F}^+M)\subseteq H^1(K[mp^n]_{\overline{\frk{p}}},\mathscr{F}^+M)$ is defined as the image of $H^1_{\bullet}(K[mp^\infty]_{\overline{\frk{p}}},\mathscr{F}^+M)$ under the natural projection
        \[
        H^1_{\Iw}(K[mp^\infty]_{\overline{\frk{p}}},\mathscr{F}^+M)\longrightarrow H^1(K[mp^n]_{\overline{\frk{p}}},\mathscr{F}^+M).
        \]
        \item[vi)] For each positive integer $n$, the local condition $H^1_{\ord,\bullet}(K[mp^n]_{\overline{\frk{p}}},M)\subseteq H^1(K[mp^n]_{\overline{\frk{p}}},M)$ is defined by
        \[
        H^1_{\ord,\bullet}(K[mp^n]_{\overline{\frk{p}}},M):=H^1_{\bullet}(K[mp^n]_{\overline{\frk{p}}},\mathscr{F}^+M)\subseteq H^1(K[mp^n]_{\overline{\frk{p}}},M).
        \]
\end{defn}

For $\frk{q}\in\lbrace \frk{p},\overline{\frk{p}}\rbrace$ and $\bullet\in\lbrace \sharp,\flat\rbrace$, 
%we denote by $H^1_{\ord,\bullet}(K[p^n]_{\frk{q}},M)$ the image of $H^1_{\ord,\bullet}(K[p^\infty]_{\frk{q}},M)$ under the natural projection $H^1_{\Iw}(K[p^\infty]_{\frk{q}},M)\rightarrow H^1(K[p^n]_{\frk{q}},M)$.
we define $H^1_{\ord,\bullet}(K[p^n]_{\frk{q}},A)\subseteq H^1(K[p^n]_{\frk{q}},A)$ as the orthogonal complement of $H^1_{\ord,\bullet}(K[p^n]_{\overline{\frk{q}}},M)$ under the pairing
\begin{equation*}
    H^1(K[p^n]_{\overline{\frk{q}}},M)\times H^1(K[p^n]_{\frk{q}},A) \longrightarrow L/\mathcal{O}.
\end{equation*}

\subsubsection{} Again, let $\mathcal{K}$ be a finite extension of $K$ and let $v$ be a finite place of $\mathcal{K}$ not dividing $p$. Then we define
\[
H^1_f(\mathcal{K}_v,V)=H^1_{\ur}(\mathcal{K}_v,V)=\ker\left(H^1(\mathcal{K}_v,V)\longrightarrow H^1(\mathcal{K}_v^{\ur},V)\right).
\]
We define $H^1_f(\mathcal{K}_v,M)$ and $H^1_f(\mathcal{K}_v,A)$ by propagation:
\begin{itemize}
    \item $H^1_f(\mathcal{K}_v,M)$ is the preimage of $H^1_f(\mathcal{K}_v,V)$ under the map $H^1(\mathcal{K}_v,M)\rightarrow H^1(\mathcal{K}_v,V)$;
    \item $H^1_f(\mathcal{K}_v,A)$ is the image of $H^1_f(\mathcal{K}_v,V)$ under the map $H^1(\mathcal{K}_v,V)\rightarrow H^1(\mathcal{K}_v,A)$.
\end{itemize}

\subsubsection{} For each positive integer $n$ and for $\bullet\in\lbrace \sharp,\flat\rbrace$, we define
\begin{align*}
    H^1_{\bullet,\bullet}(K[p^n],M)&=\ker\left(H^1(K[p^n],M)\longrightarrow \prod_{\lambda\nmid p} \frac{H^1(K[p^n]_{\lambda},M)}{H^1_f(K[p^n]_{\lambda},M)}\times \prod_{\frk{q}\mid p} \frac{H^1(K[p^n]_{\frk{q}},M)}{H^1_{\ord,\bullet}(K[p^n]_{\frk{q}},M)}\right), \\
    \Sel_{\bullet,\bullet}(K[p^n],A)&=\ker\left(H^1(K[p^n],A)\longrightarrow \prod_{\lambda\nmid p} \frac{H^1(K[p^n]_{\lambda},A)}{H^1_f(K[p^n]_{\lambda},A)}\times \prod_{\frk{q}\mid p} \frac{H^1(K[p^n]_{\frk{q}},A)}{H^1_{\ord,\bullet}(K[p^n]_{\frk{q}},A)}\right).
\end{align*}
We further define
\begin{align*}
    H^1_{\bullet,\bullet}(K[p^\infty],M)&=\varprojlim_n H^1_{\bullet,\bullet}(K[p^n],M), \\
    \Sel_{\bullet,\bullet}(K[p^\infty],A)&=\varinjlim_n \Sel_{\bullet,\bullet}(K[p^n],A) \\
    X_{\bullet,\bullet}(K[p^\infty],A)&=\Hom_{\mathcal{O}}(\Sel_{\bullet,\bullet}(K[p^\infty],A),L/\mathcal{O})
\end{align*}
Note that this definition of $H^1_{\bullet,\bullet}(K[p^\infty],M)$ agrees with the definition given in Definition~\ref{def:Selmergroups}.

\begin{lemma}
    We have the following equalities:
    \begin{align*}
        & H^1_\sharp(K_{\overline{\frk{p}}},\mathscr{F}^+M)=H^1_\flat(K_{\overline{\frk{p}}},\mathscr{F}^+M)=H^1_f(K_{\overline{\frk{p}}},\mathscr{F}^+M), \\
        & H^1_\sharp(K_{\frk{p}},\mathscr{F}^-M)=H^1_\flat(K_{\frk{p}},\mathscr{F}^-M)=H^1_f(K_{\frk{p}},\mathscr{F}^-M).
    \end{align*}
\end{lemma}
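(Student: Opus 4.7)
The two equalities are proved identically; we focus on the first. Since $T:=\mathscr{F}^+M|_{G_{K_{\overline{\frk{p}}}}}$ has Hodge--Tate weights $\{0,1\}$ (both nonnegative) and $K_{\overline{\frk{p}}}\simeq\Qp$, the Bloch--Kato dimension formula yields
\[
\dim_L H^1_f(K_{\overline{\frk{p}}},T\otimes_\cO L)=\dim_L\DdR(T\otimes L)/\Fil^0+\dim_L H^0(K_{\overline{\frk{p}}},T\otimes L)=0+0=0,
\]
using~\eqref{item_H0minus} (which implies $H^0$ vanishes) together with the fact that $\Fil^0\DdR(T\otimes L)=\DdR(T\otimes L)$ (as both Hodge--Tate weights are $\geq 0$). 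Combined with the torsion-freeness of $H^1(K_{\overline{\frk{p}}},\mathscr{F}^+M)$, which follows from~\eqref{item_H0minus} and Remark~\ref{rk:irred2}, this gives $H^1_f(K_{\overline{\frk{p}}},\mathscr{F}^+M)=0$. The lemma therefore reduces to establishing the vanishing $H^1_\bullet(K_{\overline{\frk{p}}},\mathscr{F}^+M)=0$ for each $\bullet\in\{\sharp,\flat\}$.

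To prove this vanishing, take $\tilde z\in \HIw(K[p^\infty]_{\overline{\frk{p}}},T)$ with $\col^\bullet_{\mathscr{F}^+M,\overline{\frk{p}},1}(\tilde z)=0$ and let $z_0\in H^1(K_{\overline{\frk{p}}},T)$ be its image under the bottom-level projection. Since $H^1(K_{\overline{\frk{p}}},T)$ is torsion-free, it suffices to show $z_0\otimes L=0$ in $H^1(K_{\overline{\frk{p}}},T\otimes L)$. The plan is to exploit the factorisation~\eqref{eq:Col-decompo+} in tandem with the interpolation formula of $\cL:=\cL_{\mathscr{F}^+M,\overline{\frk{p}},1}$. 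Under~\eqref{item_LOC}, the matrix $Q_g^{-1}M_{\log,g}$ has entries that are nonzero scalar multiples of Pollack's half-logarithms $\log^\pm$ (as already used in the proof of Proposition~\ref{prop:vanishing-col}); the vanishing $\col^\bullet(\tilde z)=0$ then propagates to a linear relation in $\langle\cL(\tilde z),v_{\overline{\frk{p}},+,\xi}\rangle$ for $\xi\in\{\alpha,\beta\}$. Specialising at the trivial character of $\Gamma_{\ac}$ recovers, up to nonvanishing Euler factors on the $\varphi$-eigenspaces spanned by $v_{\overline{\frk{p}},+,\alpha},v_{\overline{\frk{p}},+,\beta}$, the pairings of these vectors with the Bloch--Kato dual exponential $\exp^*(z_0\otimes L)$; hence the relation forces $\exp^*(z_0\otimes L)=0$, and so $z_0\otimes L\in H^1_f(K_{\overline{\frk{p}}},T\otimes L)=0$.

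The main technical obstacle lies in the interpolation formula for the anticyclotomic Perrin--Riou map $\cL$ at the trivial character of $\Gamma_{\ac}$: one must verify that the Euler factors of the form $(1-\varphi)(1-p^{-1}\varphi^{-1})^{-1}$ intervening in the interpolation do not vanish on the relevant $\varphi$-eigenspaces (which is guaranteed by $\ord_p(\alpha),\ord_p(\beta)\in(0,1)$), and correctly track the normalisations linking $Q_g^{-1}M_{\log,g}$ (with its $\log^\pm$ entries) to the dual exponential under~\eqref{eq:Col-decompo+}; these normalisations are compatible with those fixed in \cite[\S2.3]{BFSuper}, which we follow. The second equality follows by the same argument, replacing $\mathscr{F}^+M$ by $\mathscr{F}^-M$ and $\overline{\frk{p}}$ by $\frk{p}$ throughout.
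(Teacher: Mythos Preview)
Your argument has a fatal error in the first paragraph: under the paper's convention that the cyclotomic character has Hodge--Tate weight $+1$, a Hodge--Tate weight $n$ contributes to $\gr^{-n}\DdR$, so $\Fil^0\DdR(T\otimes L)=\DdR(T\otimes L)$ would require all Hodge--Tate weights to be $\leq 0$, not $\geq 0$. For $T=\mathscr{F}^+M|_{G_{K_{\overline{\frk{p}}}}}$ with Hodge--Tate weights $\{0,1\}$, one has $\dim_L\DdR(T\otimes L)/\Fil^0=1$, and hence $\dim_L H^1_f(K_{\overline{\frk{p}}},T\otimes L)\geq 1$. Your claim $H^1_f(K_{\overline{\frk{p}}},\mathscr{F}^+M)=0$ is therefore false, and the entire strategy of reducing to $H^1_\bullet=0$ collapses. (Compare with the proof of Proposition~\ref{prop:verify-local}, where the vanishing $\DdR/\DdR^+=0$ is obtained for $\mathscr{F}^-M$ at $\overline{\frk{p}}$, which has Hodge--Tate weights $\{0,-1\}$.) You also invoke~\eqref{item_H0minus} for $\mathscr{F}^+M$ at $\overline{\frk{p}}$, but that hypothesis concerns $\mathscr{F}^-M$ at $\overline{\frk{p}}$.

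The paper instead proves the equality $H^1_\bullet=H^1_f$ directly, without any vanishing claim. The key computation (essentially what your second paragraph gestures at, but carried to its conclusion) is that after inverting the matrix $(\alpha-\beta)Q_g^{-1}A_g$ and using the interpolation of $\cL$ at the trivial character together with the filtration condition $\langle\exp^*(z),v_0\otimes w_\fq\rangle=0$, one finds
\[
\col^\bullet_{T,\fq}(z_\infty)(\mathbb{1})=C_\bullet\,\langle\exp^*(z),\varphi(v_0)\otimes w_\fq\rangle
\]
for a nonzero constant $C_\bullet$. This gives the \emph{equivalence} $\col^\bullet(z_\infty)(\mathbb{1})=0\iff\exp^*(z)=0\iff z\in H^1_f$, which is what is needed. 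Your second paragraph, even if made precise, only argues the forward implication $H^1_\bullet\subseteq H^1_f$; the reverse inclusion is never addressed.
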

\begin{proof}
Let $z_\infty\in H^1(K_\fq,T\otimes\Lambda\bk)$, where $(\fq,T)$ is either $(\overline{\fp},\sF^+M)$ or $(\fp,\sF^-M)$. There is an isomorphism
\[
H^1(K_\fq,T\otimes\Lambda\bk)_\Gamma\cong H^1(K_\fq,T).
\]
Let $z$ be the image of $z_\infty$ in $H^1(K_\fq,T)$ under this isomorphism.
Let $\bullet\in\{\sharp,\flat\}$. Then $z\in H^1_\bullet(K_\fq,T)$ if and only if $\col^\bullet_{T,\fq}(z_\infty)(\mathbb{1})=0$, where $\mathbb{1}$ denotes the trivial character of $\Gamma$.

By the interpolation formula of the Perrin-Riou map and \eqref{eq:Col-decompo+} (or \eqref{eq:Col-decompo-}), we have
\[
\begin{bmatrix}
    \langle \mathcal{L}_{T,\fq}(z_\infty)(\mathbb{1}),v_{\alpha}\rangle \\
    \langle \mathcal{L}_{T,\fq}(z_\infty)(\mathbb{1}),v_{\beta}\rangle
\end{bmatrix}
=\begin{bmatrix}
 \langle(1-\vp)(1-p^{-1}\vp^{-1})^{-1}\exp^*(z),v_{\alpha}\rangle \\
  \langle(1-\vp)(1-p^{-1}\vp^{-1})^{-1}\exp^*(z),v_{\beta}\rangle
\end{bmatrix}
= (\alpha-\beta)Q_g^{-1}A_g\begin{bmatrix}
    \col_{T,\fq}^{\sharp}(z_\infty)(\mathbb{1}) \\
    \col_{T,\fq}^{\flat}(z_\infty)(\mathbb{1})
\end{bmatrix},
\]
where we have written $v_\xi=v_{g,\xi}\otimes w_{\fq,?}$ for $\xi\in\{\alpha,\beta\}$, $?\in\{+,-\}$ for simplicity.
Since the adjoint of $\vp$ is $p^{-1}\vp^{-1}$, we deduce that
\[
\chi_g(p)p\cdot\begin{bmatrix}
 \langle\exp^*(z),(1-\vp)^{-1}(1-p^{-1}\vp^{-1})(\vp^2(v_0)\otimes w_\fq)\rangle \\
  \langle\exp^*(z),-(1-\vp)^{-1}(1-p^{-1}\vp^{-1})(\vp(v_0)\otimes w_\fq)\rangle
\end{bmatrix}
= \begin{bmatrix}
    \col_{T,\fq}^{\sharp}(z_\infty)(\mathbb{1}) \\
    \col_{T,\fq}^{\flat}(z_\infty)(\mathbb{1})
\end{bmatrix}.
\]
On $\Dcris(T)$, we have $\vp^2+a\vp +b =0$ for some constants $a$ and $b$. Then, \cite[proof of Lemma~5.6]{LLZ0.5} tells us that
\[
(1-\vp)^{-1}(1-p^{-1}\vp^{-1})=\frac{(1+a+bp)\vp+a(1+a+pb)+b(p-1)}{pb(1+a+b)}.
\]
As $\langle u,v_0\otimes w_\fq\rangle=0$ if $u$ is in the image of $\exp^*$, we have
\[
\begin{bmatrix}
C_\sharp\langle\exp^*(z),\vp(v_0)\otimes w_\fq\rangle \\
 C_\flat\langle\exp^*(z),\vp(v_0)\otimes w_\fq\rangle
\end{bmatrix}
= \begin{bmatrix}
    \col_{T,\fq}^{\sharp}(z_\infty)(\mathbb{1}) \\
    \col_{T,\fq}^{\flat}(z_\infty)(\mathbb{1})
\end{bmatrix}
\]
for some non-zero constants $C_\sharp$ and $C_\flat$. Hence, we deduce that $\col_{T,\fq}^{\bullet}(z_\infty)(\mathbb{1})=0$ if and only if $\exp^*(z)=0$, which is equivalent to $z\in H^1_f(K_\fq,T)$.
\end{proof}

\begin{proposition}
    Let $\bullet\in \lbrace \sharp,\flat\rbrace$. The local conditions $H^1_{\ord,\bullet}(K_{\overline{\frk{p}}},M)$ and $H^1_{\ord,\bullet}(K_\frk{p},M)$ are exact orthogonal complements under the pairing
    \begin{equation*}
    H^1(K_{\overline{\frk{p}}},M)\times H^1(K_\frk{p},M) \longrightarrow\mathcal{O}.
    \end{equation*}
\end{proposition}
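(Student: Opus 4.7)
The plan is to combine the preceding lemma with local Tate duality. Specializing the preceding lemma to the bottom layer (using $m=1$ and the trivial character of $\Gamma_\ac$, with $K[1]=K$ since $p \nmid h_K$) yields the identifications
\[
H^1_{\ord,\bullet}(K_{\overline{\fp}}, M) \;=\; H^1_f(K_{\overline{\fp}}, \sF^+M) \;\subseteq\; H^1(K_{\overline{\fp}}, M)
\]
for either $\bullet \in \{\sharp,\flat\}$, and $H^1_{\ord,\bullet}(K_\fp, M)$ as the preimage of $H^1_f(K_\fp, \sF^-M)$ along the natural projection $H^1(K_\fp, M) \to H^1(K_\fp, \sF^-M)$. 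Thus the proposition reduces to showing that these two Greenberg-type local conditions are exact annihilators under the Tate pairing $H^1(K_{\overline{\fp}}, M) \times H^1(K_\fp, M) \to \mathcal{O}$.

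For this, I would first verify that the $G_{\bb{Q}_p}$-stable filtration $\sF^+M \subseteq M$ is Lagrangian with respect to the symmetric pairing $\langle\,,\,\rangle : M \times M \to \mathcal{O}(1)$, that is, $(\sF^+M)^\perp = \sF^+M$. Granting the self-duality condition \eqref{item_self_duality}, this can be read off the explicit description $\sF^+M = \sF^+T_f^\ast \otimes T_g^\ast(\varphi_{0,\fP}^c)(2)$: the pairing on $M$ is induced from the standard (Weil) symplectic pairings on $T_f^\ast$ and $T_g^\ast$, and $\sF^+T_f^\ast$ is the unique Lagrangian sub-line of $T_f^\ast$ under the Weil pairing (being the unramified line of the $p$-ordinary filtration). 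A rank count then upgrades isotropy to the Lagrangian property.

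Once the Lagrangian property is in place, the short exact sequence $0 \to \sF^+M \to M \to \sF^-M \to 0$ combined with $(\sF^+M)^\perp = \sF^+M$ induces a perfect pairing $\sF^+M \times \sF^-M \to \mathcal{O}(1)$ of $G_{\bb{Q}_p}$-modules. Applying local Tate duality at the conjugate pair $(\fp, \overline{\fp})$, this induced pairing produces a perfect pairing
\[
H^1(K_{\overline{\fp}}, \sF^+M) \times H^1(K_\fp, \sF^-M) \longrightarrow \mathcal{O}
\]
under which the image of $H^1(K_{\overline{\fp}}, \sF^+M)$ in $H^1(K_{\overline{\fp}}, M)$ is exactly annihilated by the image of $H^1(K_\fp, \sF^+M)$ in $H^1(K_\fp, M)$, i.e.\ by the kernel of $H^1(K_\fp, M) \to H^1(K_\fp, \sF^-M)$. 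A standard further input---the self-duality of the Bloch--Kato finite subspaces under local Tate duality, a consequence of the adjointness of the Bloch--Kato exponential and dual exponential for de Rham representations---shows that within this induced pairing, $H^1_f(K_{\overline{\fp}}, \sF^+M)$ and $H^1_f(K_\fp, \sF^-M)$ are exact orthogonal complements. Combining both orthogonality statements yields the proposition.

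The main obstacle is the verification of the Lagrangian property $(\sF^+M)^\perp = \sF^+M$, which requires an explicit unwinding of the pairing $\langle\,,\,\rangle$ through the tensor-factor decomposition of $M$ against the self-duality hypothesis \eqref{item_self_duality}; everything downstream is a formal consequence of local Tate duality and the compatibility of the Bloch--Kato finite subspaces with it.
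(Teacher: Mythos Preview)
Your proof is correct and follows essentially the same route as the paper: reduce to showing that $H^1_\bullet(K_{\overline{\fp}},\sF^+M)$ and $H^1_\bullet(K_\fp,\sF^-M)$ are exact annihilators under the induced pairing on $H^1(K_{\overline{\fp}},\sF^+M)\times H^1(K_\fp,\sF^-M)$, then invoke the preceding lemma together with Bloch--Kato duality for $H^1_f$ (Proposition~3.8 of \cite{blochkato}). The paper states the reduction step as a bare ``it suffices to show'', whereas you spell out its content (the Lagrangian property $(\sF^+M)^\perp=\sF^+M$ for the conjugate-self-dual pairing); this is a matter of detail rather than a different argument.
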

\begin{proof}
    It suffices to show that $H^1_{\bullet}(K_{\overline{\frk{p}}},\mathscr{F}^+M)$ and $H^1_{\bullet}(K_{\frk{p}},\mathscr{F}^-M)$ are exact orthogonal complements under the pairing
    \begin{equation*}
    H^1(K_{\overline{\frk{p}}},\mathscr{F}^+M)\times H^1(K_\frk{p},\mathscr{F}^-M) \longrightarrow\mathcal{O}.
    \end{equation*}
    But this follows from the previous lemma, since $H^1_f(K_{\overline{\frk{p}}},\mathscr{F}^+M)$ is the exact orthogonal complement of $H^1_f(K_{\frk{p}},\mathscr{F}^-M)$ under the above pairing by \cite[Proposition~3.8]{blochkato}.
\end{proof}

We can now formulate signed main conjectures for the Selmer groups introduced above.

\begin{conjecture}
    Let $\bullet\in \lbrace \sharp,\flat\rbrace$. The class $z_1^\bullet\in H^1(K[p^\infty],M)$ lies in $H^1_{\bullet,\bullet}(K[p^\infty],M)$. Moreover, $z_1^\bullet$ is not $\Lambda$-torsion, both $H^1_{\bullet,\bullet}(K[p^\infty],M)$ and $X_{\bullet,\bullet}(K[p^\infty],A)$ are rank-one $\Lambda$-modules and
    \[
    \Char_{\Lambda}\left(X_{\bullet,\bullet}(K[p^\infty],A)_{\mathrm{tors}}\right)=\Char_\Lambda\left(\frac{H^1_{\bullet,\bullet}(K[p^\infty],M)}{\Lambda \cdot z_1^\bullet}\right)^2.
    \]
\end{conjecture}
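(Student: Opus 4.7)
The plan is to attack the Main Conjecture via the anticyclotomic Euler system machinery of Jetchev--Nekov\'a\v{r}--Skinner (JNS). By Theorem~\ref{thmC}, the collection $\{z_m^\bullet\}_m$---indexed by square-free products $m$ of primes coprime to $Np$ and split in $K$---satisfies the tame norm relation with Euler factor $\mathcal{P}_q$, and Proposition~\ref{prop:local-ap=0} (relying on Corollary~\ref{cor:F+} and Proposition~\ref{prop:vanishing-col}) confirms that $z_1^\bullet$ lies in the signed Selmer group $H^1_{\bullet,\bullet}(K[p^\infty],M)$. Thus $\{z_m^\bullet\}$ is a genuine signed anticyclotomic Euler system for $M$, and the JNS apparatus becomes directly applicable.

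First I would verify the remaining inputs required by JNS: the unramified local conditions away from $p$ hold automatically via the weight-monodromy computation used in the proof of Proposition~\ref{prop:local-ap=0}, the local conditions at $\mathfrak{p}$ and $\overline{\mathfrak{p}}$ are exactly the kernels of the signed Coleman maps by construction, and the big-image axiom is imposed as hypothesis~\eqref{item_BI}. Feeding the Euler system into the JNS machinery then yields simultaneously the rank-one statement for $H^1_{\bullet,\bullet}(K[p^\infty],M)$ and $X_{\bullet,\bullet}(K[p^\infty],A)$ and the Euler system divisibility
\[
\Char_{\Lambda}\!\left(X_{\bullet,\bullet}(K[p^\infty],A)_{\mathrm{tors}}\right)\supseteq\Char_{\Lambda}\!\left(\frac{H^1_{\bullet,\bullet}(K[p^\infty],M)}{\Lambda\cdot z_1^\bullet}\right)^{\!2},
\]
which is precisely the content of Theorem~\ref{thmD}. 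In particular, this half of the conjecture reduces to the non-$\Lambda$-torsion of $z_1^\bullet$.

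For the reverse inclusion---which upgrades the divisibility of Theorem~\ref{thmD} to the equality asserted in the Conjecture---the standard template is to produce a signed $p$-adic $L$-function whose characteristic ideal divides that of $X_{\bullet,\bullet}(K[p^\infty],A)_{\mathrm{tors}}$, and to match it with $\res_{\mathfrak{p}}(z_1^\bullet)$ via an explicit reciprocity law. The natural candidates here are the bounded signed balanced $p$-adic $L$-functions $\Theta_\infty^\bullet(f,g,\mathbf{h})$ constructed in Theorem~\ref{thmE}. A Perrin-Riou-style reciprocity law relating $\Theta_\infty^\bullet(f,g,\mathbf{h})$ to the image of $z_1^\bullet$ under a signed Coleman map would pin down one side, and an opposite divisibility for the signed main conjecture (conjecturally obtained via an Eisenstein-congruence argument on a suitable unitary Shimura variety, in the style of Skinner--Urban and Wan) would close the loop. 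The non-triviality of $z_1^\bullet$ would then follow from the non-vanishing of a suitable specialisation of $\Theta_\infty^\bullet(f,g,\mathbf{h})$.

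The hard part will be this reverse divisibility. Even in the classical ordinary Heegner-point setting, the equality half of Perrin-Riou's main conjecture requires deep input such as Wan's work on Eisenstein ideals for $\mathrm{GU}(2,2)$ or $\mathrm{GU}(3,1)$, and in the trito-non-ordinary, signed setting the requisite Eisenstein-congruence machinery is presently undeveloped. Moreover, constructing a signed reciprocity law linking $z_1^\bullet$ to $\Theta_\infty^\bullet(f,g,\mathbf{h})$ would itself require extending the reciprocity laws of \cite{asterisque,BSV} to allow non-ordinary variation in the $g$-direction. Accordingly, I expect only the divisibility of Theorem~\ref{thmD} to be within reach by the present methods, with the full equality remaining conjectural pending these developments.
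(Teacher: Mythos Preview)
The statement you are ``proving'' is a \emph{conjecture}: the paper does not prove it, and explicitly does not claim to. What the paper proves is Theorem~\ref{thm:mainconjectures} (Theorem~\ref{thmD} in the introduction), namely the one-sided divisibility
\[
\Char_{\Lambda}\!\left(X_{\bullet,\bullet}(K[p^\infty],A)_{\mathrm{tors}}\right)\supseteq\Char_{\Lambda}\!\left(\frac{H^1_{\bullet,\bullet}(K[p^\infty],M)}{\Lambda\cdot z_1^\bullet}\right)^{\!2}
\]
under hypotheses \eqref{item_LOC}, \eqref{item_H0minus}, \eqref{item_BI}, and the assumption that $z_1^\bullet$ is non-$\Lambda$-torsion. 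Your first two paragraphs reproduce exactly this argument and its inputs (Proposition~\ref{prop:local-ap=0}, the tame norm relation, the JNS machinery), so that part is correct and matches the paper.

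Your third and fourth paragraphs are an honest assessment that the reverse inclusion is out of reach by the methods of the paper, and your diagnosis of what would be needed---a signed explicit reciprocity law linking $z_1^\bullet$ to $\Theta_\infty^\bullet(f,g,\h)$, and an Eisenstein-congruence divisibility in the other direction---is accurate. The paper itself offers no argument for this direction; indeed, Remark~\ref{rem_2025_10_13_0908} notes that even the comparison with the diagonal cycles of \cite{DarmonRotger,BSV} required for a reciprocity law is deferred to future work. So your conclusion that the full equality remains conjectural is exactly the paper's position. There is no gap in your reasoning; you have simply (and correctly) recognised that you were asked to prove a statement the paper does not prove.
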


\subsubsection{} 
\label{subsubsec_2025_10_30_0947}
Let $K(p^\infty)$ denote the ray class field of $K$ of conductor $p^\infty$. In order to apply the Euler system machinery of \cite{JNS}, we assume that the following set of ``big image'' hypotheses hold.
%\begin{hyp}[``big image'']\hfill
 \begin{itemize}
        \item[(\mylabel{item_BI}{$\mathbf{BI}$})] The following three conditions hold.
         \begin{itemize}
\item[(\mylabel{item_BI0}{$\mathbf{BI}_0$})] The residual $G_K$-representation $\overline{M}$ is absolutely irreducible.
 \item[(\mylabel{item_BI1}{$\mathbf{BI}_1$}$)$]There exists $\sigma_0 \in \Gal(\overline{K}/K(p^\infty))$ such that $M/(\sigma_0-1)M \simeq \mathcal{O}$ is a free $\mathcal{O}$-module of rank one.
 \item[(\mylabel{item_BI2}{$\mathbf{BI}_2$}$)$] There exists $\tau_0 \in G_K$ such that $\tau_0-1$ acts on $M$ as multiplication by a unit $a_{\tau_0}\in\mathcal{O}^\times$ with $a_{\tau_0}-1\in\mathcal O^{\times}$.
  \end{itemize}
    \end{itemize}

Let $\mathcal{L}$ denote the set of rational primes not dividing $pN$ which split in $K$ and let $\mathcal{S}$ denote the set of square-free products of primes in $\mathcal{L}$. For each $q\in \mathcal{L}$, we fix a choice of prime $\frk{q}$ of $K$ above $q$ and we define
\[
P_\frk{q}(X)=\det(1-\Fr_\frk{q}X\vert V^c).
\]

\begin{theorem}\label{thm:tamenormrelations}
    Let $q\in \mathcal{L}$. Suppose that $m, mq\in \mathcal{S}$. Then, for $\bullet\in \lbrace \sharp,\flat\rbrace$,
    \[
    \cor_{K[mq]/K[m]} (z_m^\bullet)=\chi_f(q)(\varphi_0(\overline{\frk{q}}) \Fr_{\overline{\frk{q}}})^2 P_{\frk{q}}(\Fr_{\frk{q}}) z_{m}^\bullet \quad \mathrm{mod}\, (q-1)H^1(K[mp^\infty],M). 
    \]
\end{theorem}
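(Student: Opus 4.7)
The plan is to start from the integral tame norm relation already established as Theorem~\ref{thm:decompo}:
\[
\cor_{K[mq]/K[m]}(z_{mq}^\bullet) = \cP_q\cdot z_m^\bullet \quad \text{in } H^1(K[m],M\otimes\Lambda\bk) \simeq H^1(K[mp^\infty], M),
\]
where $\cP_q$ is the explicit seven-term Euler factor recorded in Proposition~\ref{prop:horizontal-norm}. With this in hand, the theorem reduces to the purely algebraic operator congruence
\[
\cP_q \,\equiv\, \chi_f(q)\bigl(\varphi_0(\overline{\frk{q}})\Fr_{\overline{\frk{q}}}\bigr)^2\, P_\frk{q}(\Fr_\frk{q}) \pmod{q-1}
\]
as endomorphisms of $H^1(K[mp^\infty], M)$.

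To verify this, I would first unwind $P_\frk{q}(X) = \det(1-\Fr_\frk{q} X \mid V^c)$. Using the conjugation relation $c\Fr_\frk{q}c^{-1} = \Fr_{\overline{\frk{q}}}$ together with the pairing $M^c\simeq \Hom_{\cO}(M,\cO(1))$, the action of $\Fr_\frk{q}$ on $V^c$ is identified with that of $\Fr_{\overline{\frk{q}}}$ on $V$. Because $T_f^*$ and $T_g^*$ arise by restriction from $G_\bb{Q}$-representations, $\Fr_{\overline{\frk{q}}}$ acts on these tensor factors through $\Fr_q$, with Frobenius eigenvalues $\{\alpha_f,\beta_f\}$ and $\{\alpha_g,\beta_g\}$ whose elementary symmetric functions are $a_q(f), q\chi_f(q)$ and $a_q(g), q\chi_g(q)$. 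Only the twist $\varphi_{0,\frk{P}}^c(2)$ distinguishes the two primes above $q$, contributing the scalar $\varphi_{0,\frk{P}}^c(\Fr_{\overline{\frk{q}}})\,q^{-2} = \varphi_0(\frk{q})\,q^{-2}$. Substituting $X = \Fr_\frk{q}$ and multiplying by $\chi_f(q)(\varphi_0(\overline{\frk{q}})\Fr_{\overline{\frk{q}}})^2$ produces a mixed polynomial in $\Fr_\frk{q}$ and $\Fr_{\overline{\frk{q}}}$ (of bidegree $(4,2)$) that I would then match term-by-term with $\cP_q$.

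The matching step uses the following two structural inputs. First, on $M\otimes\Lambda\bk$ the action of $G_K$ factors through $\Gal(K[mp^\infty]/K)$, which is abelian; in particular $\Fr_\frk{q}$ and $\Fr_{\overline{\frk{q}}}$ commute, and their product restricts to $\Fr_q^2$ on the $\bb{Q}$-rational factor $T_f^*\otimes T_g^*$. Second, the infinity-type identity for $\varphi_0$ gives $\varphi_0(\frk{q})\varphi_0(\overline{\frk{q}}) = q\chi_{\varphi_0}(q)$, and the self-duality hypothesis \eqref{item_self_duality} controls $\chi_{\varphi_0}$. Combining these, the cross-terms $\Fr_\frk{q}^i\Fr_{\overline{\frk{q}}}^j$ collapse (modulo $q-1$) to pure powers of $\Fr_\frk{q}$, while the numerical ``fudge'' constants $\tfrac{a_q(f)^2}{q\chi_f(q)}+\tfrac{a_q(g)^2}{\chi_g(q)}-\tfrac{q^2+1}{q}$ in $\cP_q$ are exactly those needed to absorb the discrepancy between $\det(\Fr_{\overline{\frk{q}}}|V)$ and the product $\Fr_\frk{q}^2\Fr_{\overline{\frk{q}}}^2$ coefficients.

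The main obstacle is thus not conceptual but organizational: the explicit symmetric-function bookkeeping matching each of the seven monomials of $\cP_q$ against the corresponding coefficient in the expansion of $\chi_f(q)(\varphi_0(\overline{\frk{q}})\Fr_{\overline{\frk{q}}})^2 P_\frk{q}(\Fr_\frk{q})$, while correctly tracking the twists by $\chi_f, \chi_g, \chi_{\varphi_0}$ and the Tate twist. The shape of this computation is a direct analogue of the Rankin--Selberg tame norm identities carried out in the ordinary case in \cite{LLZ2} and \cite{ACR}, which one can use as a structural template.
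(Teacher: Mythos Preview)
Your proposal is correct and follows the same approach as the paper, which simply says ``This follows from Theorem~\ref{thm:decompo}.'' You supply considerably more detail than the paper does: the paper leaves the congruence $\cP_q \equiv \chi_f(q)(\varphi_0(\overline{\frk{q}})\Fr_{\overline{\frk{q}}})^2 P_\frk{q}(\Fr_\frk{q}) \pmod{q-1}$ entirely implicit, whereas you correctly outline how to verify it via the identification of $\Fr_\frk{q}$ on $V^c$ with $\Fr_{\overline{\frk{q}}}$ on $V$, the self-duality condition \eqref{item_self_duality}, and symmetric-function bookkeeping in the Frobenius eigenvalues.
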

\begin{proof}
    This follows from Theorem~\ref{thm:decompo}. 
\end{proof}

\begin{theorem}\label{thm:mainconjectures}
Let $\bullet\in\lbrace \sharp,\flat\rbrace$. Assume that \eqref{item_LOC}, \eqref{item_H0minus} and \eqref{item_BI} hold. Further, assume that $z_1^\bullet\in H^1_{\bullet,\bullet}(K[p^\infty],M)$ is not $\Lambda$-torsion. Then both $H^1_{\bullet,\bullet}(K[p^\infty],M)$ and $X_{\bullet,\bullet}(K[p^\infty],A)$ are rank-one $\Lambda$-modules and
\[
\Char_{\Lambda}\left(X_{\bullet,\bullet}(K[p^\infty],A)_{\mathrm{tors}}\right)\supseteq\Char_{\Lambda} \left(\frac{H^1_{\bullet,\bullet}(K[p^\infty],M)}{\Lambda \cdot z_1^\bullet} \right)^2.
\]
\end{theorem}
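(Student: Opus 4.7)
The plan is to assemble the collection $\{z_m^\bullet\}_{m\in\mathcal{S}}$ into an anticyclotomic Euler system for the self-dual $G_K$-representation $M$ with respect to the signed Selmer structure of Definition~\ref{def:Selmergroups}, and then invoke the general Euler-system-to-main-conjecture machinery of \cite{JNS}. The tame norm relations at split primes $q\in \mathcal{L}$ required by \cite{JNS} are precisely supplied by Theorem~\ref{thm:tamenormrelations}. The verification that each $z_m^\bullet$ lies in the $(\bullet,\bullet)$-signed Selmer group (in particular, that $z_1^\bullet\in H^1_{\bullet,\bullet}(K[p^\infty],M)$) is supplied by Proposition~\ref{prop:local-ap=0}; this is the only place where the restriction \eqref{item_LOC} is used. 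The hypotheses \eqref{item_H0} and \eqref{item_H0minus} guarantee the residual vanishing statements used throughout (freeness of various cohomology groups, compatibility of Shapiro's lemma with our inverse limits, and the local analysis at $\overline{\frk{p}}$).

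Next, I would match the hypotheses: \eqref{item_self_duality} together with the non-Eisenstein hypothesis on $\overline{f},\overline{g}$ yields the pairing $M\times M\to\mathcal{O}(1)$ satisfying the conjugate-symmetry used to define $\Sel_{\bullet,\bullet}(K[p^\infty],A)$, while the orthogonality statement for $H^1_{\ord,\bullet}(K_{\overline{\frk{p}}},M)$ and $H^1_{\ord,\bullet}(K_\frk{p},M)$ proved above promotes $(H^1_{\ord,\bullet},H^1_{\ord,\bullet})$ to an honestly self-dual Selmer structure. The big image hypothesis \eqref{item_BI} matches the axioms of \cite{JNS}: \eqref{item_BI0} is residual absolute irreducibility, \eqref{item_BI1} provides the Kolyvagin element $\sigma_0$ needed to produce the supply of Kolyvagin primes via Chebotarev, and \eqref{item_BI2} provides $\tau_0$ controlling the cohomology of the residual representation and ensuring that $M/(\tau_0-1)M=0$.

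With these inputs in place, applying the main theorem of \cite{JNS} to the Euler system $\{z_m^\bullet\}_{m\in\mathcal{S}}$, under the non-triviality assumption on $z_1^\bullet$, outputs both the assertion that $H^1_{\bullet,\bullet}(K[p^\infty],M)$ and $X_{\bullet,\bullet}(K[p^\infty],A)$ are rank-one $\Lambda$-modules and the desired divisibility of characteristic ideals, with the square on the right reflecting the self-duality of the Selmer structure in the anticyclotomic setting. The main obstacle I anticipate is the bookkeeping required to translate our signed Selmer conditions, which are cut out by the Coleman maps $\col^\bullet_{\mathscr{F}^{\pm}M,\frk{q},m}$ arising from the Kobayashi--Sprung-style decompositions \eqref{eq:Col-decompo+} and \eqref{eq:Col-decompo-}, into the axiomatic Greenberg-type local conditions used in \cite{JNS}. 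The key compatibilities (behaviour under corestriction along the anticyclotomic tower, compatibility with the Euler factor $P_{\frk{q}}$ at primes $q\in \mathcal{L}$, and exact self-duality at $p$) should follow from the functoriality of $\mathcal{L}_{T,\frk{q},m}$ together with the explicit form of $M_{\log,g}$ given in Definition~\ref{defn_01_11_25}; modulo this translation, the divisibility is a direct consequence of the Kolyvagin derivative argument of \cite{JNS}.
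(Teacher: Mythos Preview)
Your proposal is correct and follows the same approach as the paper: assemble $\{z_m^\bullet\}_{m\in\mathcal{S}}$ into a split anticyclotomic Euler system via Theorem~\ref{thm:tamenormrelations} (with Proposition~\ref{prop:local-ap=0} ensuring the classes land in the signed Selmer groups), check that the hypotheses \eqref{item_BI} match the axioms of \cite{JNS}, and invoke the Euler-system machinery there. The paper's own proof is considerably terser than yours, but the strategy is identical.
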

\begin{proof}
    By Theorem~\ref{thm:tamenormrelations}, the collection
    \[
    \lbrace z_m^{\bullet}\in H^1_{\bullet,\bullet}(K[mp^\infty],M)\,\colon\, m\in \mathcal{S} \rbrace
    \]
    is a split anticyclotomic Euler system for $M$ in the sense of \cite{JNS}.
    This implies the desired inclusion by the Euler system machinery.
\end{proof}

\begin{remark}
    Although the work \cite{JNS} is not publicly available at the time of writing, the reader is invited to consult \cite[\S8]{ACR} for an extensive review of the results therein.
\end{remark}

%%%%%%%%%%%%%%%%%%%%%%%%%%%%%%%%%%%%%%%%%%%%%%%%%%%%%%%
%%%%%%%%%%%%%%%%%%%%%%%%%%%%%%%%%%%%%%%%%%%%%%%%%%%%%%%
%%%%%%%%%%%%%%%%%%%%%%%%%%%%%%%%%%%%%%%%%%%%%%%%%%%%%%%
%%%%%%%%%%%%%%%%%%%%%%%%%%%%%%%%%%%%%%%%%%%%%%%%%%%%%%%
%%%%%%%%%%%%%%%%%%%%%%%%%%%%%%%%%%%%%%%%%%%%%%%%%%%%%%%
%%%%%%%%%%%%%%%%%%%%%%%%%%%%%%%%%%%%%%%%%%%%%%%%%%%%%%%
%%%%%%%%%%%%%%%%%%%%%%%%%%%%%%%%%%%%%%%%%%%%%%%%%%%%%%%
%%%%%%%%%%%%%%%%%%%%%%%%%%%%%%%%%%%%%%%%%%%%%%%%%%%%%%%
%%%%%%%%%%%%%%%%%%%%%%%%%%%%%%%%%%%%%%%%%%%%%%%%%%%%%%%
%%%%%%%%%%%%%%%%%%%%%%%%%%%%%%%%%%%%%%%%%%%%%%%%%%%%%%%
%%%%%%%%%%%%%%%%%%%%%%%%%%%%%%%%%%%%%%%%%%%%%%%%%%%%%%%
%%%%%%%%%%%%%%%%%%%%%%%%%%%%%%%%%%%%%%%%%%%%%%%%%%%%%%%
%%%%%%%%%%%%%%%%%%%%%%%%%%%%%%%%%%%%%%%%%%%%%%%%%%%%%%%
%%%%%%%%%%%%%%%%%%%%%%%%%%%%%%%%%%%%%%%%%%%%%%%%%%%%%%%
%%%%%%%%%%%%%%%%%%%%%%%%%%%%%%%%%%%%%%%%%%%%%%%%%%%%%%%
%%%%%%%%%%%%%%%%%%%%%%%%%%%%%%%%%%%%%%%%%%%%%%%%%%%%%%%
%%%%%%%%%%%%%%%%%%%%%%%%%%%%%%%%%%%%%%%%%%%%%%%%%%%%%%%
%%%%%%%%%%%%%%%%%%%%%%%%%%%%%%%%%%%%%%%%%%%%%%%%%%%%%%%
%%%%%%%%%%%%%%%%%%%%%%%%%%%%%%%%%%%%%%%%%%%%%%%%%%%%%%%
%%%%%%%%%%%%%%%%%%%%%%%%%%%%%%%%%%%%%%%%%%%%%%%%%%%%%%%
%%%%%%%%%%%%%%%%%%%%%%%%%%%%%%%%%%%%%%%%%%%%%%%%%%%%%%%
%%%%%%%%%%%%%%%%%%%%%%%%%%%%%%%%%%%%%%%%%%%%%%%%%%%%%%%
%%%%%%%%%%%%%%%%%%%%%%%%%%%%%%%%%%%%%%%%%%%%%%%%%%%%%%%
%%%%%%%%%%%%%%%%%%%%%%%%%%%%%%%%%%%%%%%%%%%%%%%%%%%%%%%

\appendix

\section{A comparison of towers of Shimura curves}
\label{sec:comparison_of_towers}
The tower of Shimura curves of level $U_n$ naturally carries the Galois representations attached to self-dual twists of (ordinary families) modular forms. In this appendix, we compare its cohomology with the more classical (but for our purposes, rather ad hoc) definition using the tower of classical modular curves (that are associated to level subgroups $U_{1,n}$).

%\begin{comment}
    \subsection{A comparison at finite layers}\label{subsection_2025_07_03_1620} Our comparison will go through the modular curve $X^{1,1}_n$ (that covers both $X_{1,n}$ and $X(U_n)$) of level subgroup 
$$U^{1,1}_{n}:=\{ g \in U_0(N) \,:\, \Psi_p(g_p) \equiv \left( \begin{smallmatrix} 1& \star  \\  & 1 \end{smallmatrix}\right)\, (\text{mod } p^n)\}\,.$$
We recall also the level subgroup
$$U_{0,n}:=\{ g \in U_0(N) \,:\, \Psi_p(g_p) \equiv \left( \begin{smallmatrix} \star& \star  \\  & \star \end{smallmatrix}\right)\, (\text{mod } p^n)\}$$
and the corresponding Shimura curve $X_{0,n}$, as well the subgroup
$$U_{n}^{1}:=\{ g \in U_0(N) \,:\, \Psi_p(g_p) \equiv \left( \begin{smallmatrix} 1& \star  \\  & \star \end{smallmatrix}\right)\, (\text{mod } p^n)\}\,.$$

We remark that both $X^{1,1}_n\xrightarrow{\,q_1\,} X_{1,n}$ and $X^{1,1}_n\xrightarrow{\,q_2\,} X(U_n)$ are Galois coverings with cyclic Galois groups (recall that $p>2$) isomorphic to 
\begin{align}
    \label{eqn_2025_05_15_0823}
    \begin{aligned}
    T_n^{(2)}:=U_{1,n}/U^{1,1}_n\xrightarrow[\sim]{\,\det\,} (\ZZ/p^n\ZZ)^\times\xleftarrow[\sim]{\,\det^{\frac{1}{2}}\,} U_{n}/U^{1,1}_n=Z_n\,.
%These isomorphisms are explicitly given by 
%       (\ZZ/p^n\ZZ)^\times &\xrightarrow{\,\sim\,} U_{n}/U^{1,1}_n\,,\qquad a\mapsto U^{1,1}_n\begin{bmatrix}
 %       a& \\
  %      & a
   % \end{bmatrix}\,,
%\\
%    (\ZZ/p^n\ZZ)^\times &\xrightarrow{\,\sim\,} U_{1,n}/U^{1,1}_n\,,\qquad a\mapsto U^{1,1}_n\,\begin{bmatrix}
%        a& \\
%        & 1
%    \end{bmatrix}\,.
    \end{aligned}
\end{align}
where the square-root of the determinant can be defined since $\det$ maps $Z_n$ isomorphically onto squares in the multiplicative group $(\Z/p^n\Z)^\times$. However, it depends on the choice of an element of order $2$ in $\FF_p^\times$, equivalently a square-root of $g^2$ where $g$ is any topological generator of $\ZZ_p^\times$. We declare\footnote{This choice will not affect the main constructions of this paper (that concern triple products).} henceforth that $(g^2)^{\frac{1}{2}}=g$ (rather than $-g$). Let us also fix a generator $\pmb{\zeta}:=\{\zeta_{p^n}\}\in \Zp(1)$.

\subsubsection{} 
\label{subsubsec_B111_2025_07_01}
The cover $X^{1,1}_n\xrightarrow{\,q_0\,} X_{0,n}$ is also Galois with Galois group
$$T_n:=U_{0,n}/U^{1,1}_n \xrightarrow{\sim} T_n^{(1)}\times T_n^{(2)} \xrightarrow[\sim]{(\det,\det)} (\ZZ/p^n\ZZ)^\times \times (\ZZ/p^n\ZZ)^\times\,,$$
where $T_n^{(1)}:=U_{n}^1/U_{n}^{1,1}\xrightarrow[\sim]{\det}(\ZZ/p^n\ZZ)^\times$.

\subsubsection{} Let us put $G_n:=\Gal(\QQ(\mu_{p^n})/\QQ)$ and denote by $\langle \,\cdot\,\rangle$ the tautological Galois character 
$$\langle \,\cdot\,\rangle\,:\,G_\QQ \twoheadrightarrow G_n \lra \Zp[G_n]\,.$$
We let $\Zp[G_n]^\iota$ denote the free $\Zp[G_n]$-module of rank one on which $G_\QQ$ acts via $\langle \,\cdot\,\rangle^{-1}$. More explicitly, $\Zp[G_n]^\iota=\Zp[G_n]$ as a set, and $g\in G_\QQ$ acts multiplication by $g^{-1}$ on the group like elements of $\Zp[G_n]$. Let us put
$$\chi_n\,:\, G_n\lra (\Zp/p^n\ZZ)^\times\,,\qquad g \mapsto \chi_\cyc(g) \mod p^n\,.$$
We denote by 
$$\chi_n^{-1}:(\Zp/p^n\ZZ)^\times \xrightarrow{\,\sim\,} G_n\,,\qquad g\mapsto \delta_g $$ its inverse (not the reciprocal of the character $\chi_n$). Via \eqref{eqn_2025_05_15_0823}, \S\ref{subsubsec_B111_2025_07_01} and $\chi_n^{-1}$, we will identify the tautological character $T_n^{(i)} \lra \Zp[T_n^{(i)}]^\times$ with $\langle \,\cdot\,\rangle$ as well (and denote it by $\langle \,\cdot\,\rangle_{(i)}$ when we wish to distinguish the choices $i=1,2$), so that we have the following commutative diagram:
$$\xymatrix{
T_n^{(i)} \ar[rr]^-{\langle \,\cdot\,\rangle_{(i)}}\ar[d]^{\sim}_{\chi_n^{-1}\circ\, \det}&& \Zp[T_n^{(i)}]^\times\ar[d]_{\sim}^{\chi_n^{-1}\circ \,\det}\\
G_n  \ar[rr]_-{\langle \,\cdot\,\rangle}&& \Zp[G_n]^\times
}$$

\subsubsection{} Similarly, we denote by $Z_n \xrightarrow{\,\langle \,\cdot\,\rangle_Z\,} \Zp[Z_n]^\times$ the tautological character, and let  $\Zp[Z_n]^\iota$ denote the free $\Zp[Z_n]$-module on which $Z_n$ acts via $\langle \,\cdot\,\rangle_Z^{-1}$. Via the morphisms \eqref{eqn_2025_05_15_0823}, we may endow $\Zp[T_n^{(1)}]$ and $\Zp[G_n]$ with the structure of a $\Zp[Z_n]$-module:
$$ (\Z/p^n\Z)^{\times} \xrightarrow{\,\sim\,} Z_n \xrightarrow{\det} (\Z/p^n\Z)^{\times,2} \xrightarrow{\,\chi_n^{-1}\,} G_n^2\hookrightarrow G_n \lra \ZZ_p[G_n]^\times\,,\qquad  [g]=\begin{bmatrix}
    g&0\\0&g
\end{bmatrix}\mapsto \delta_{g^2}=\delta_g^2\,,$$
where for a multiplicative abelian group $G$, we write $G^2$ to denote the squares in $G$. This tells us that we have 
$$\langle\,\cdot\,\rangle_{(i)}\simeq \ZZ_p[G_n]\simeq \langle\,\cdot\,\rangle^2_Z$$ as $\Zp[(\Z/p^n\Z)^{\times}]$-modules.
\subsubsection{} We then have the Hecke equivariant natural isomorphisms
\begin{align}
    \begin{aligned}
        \label{eqn_2025_05_27_1258}
        H^1_\etale(X^{1,1}_n\times\overline{\Q},\QQ_p)\simeq H^1_\etale(X_{1,n} \times_{\QQ} \overline{\Q},\Qp) \otimes_{\Zp} \Zp[G_n]^\iota &\simeq H^1_\etale(X_{1,n} \times_{\QQ} \overline{\Q},\Qp) \otimes_{\Zp}  \langle\,\cdot\,\rangle_{(1)}^{-1}\\
        &\simeq H^1_\etale(X_{1,n} \times_{\QQ} \overline{\Q},\Qp) \otimes_{\Zp} \langle\,\cdot\,\rangle_Z^{-2}\,,
    \end{aligned}
\end{align}  
of Galois modules, where the torus $T_n$ acts diagonally on the tensor products on the first row (through $T_n\twoheadrightarrow T_n^{(2)}$ on the first factor, and $T_n\twoheadrightarrow T_n^{(1)}$ on the second). In \eqref{eqn_2025_05_27_1258}, the first is induced from the isomorphism 
$$X_{n}^{1,1}\simeq X_{1,n}\times_{\QQ}\QQ(\mu_{p^n}) \simeq X_{1,n}\times_{\QQ}\QQ[G_n]$$ 
 which is determined by $\pmb{\zeta}$ and Shapiro's lemma (the second can be described similarly). 
 
\subsubsection{}
Let $\m$ be a maximal ideal of the Hecke algebra $\TT_n^B$ acting on $H^1_\etale(X(U_{n}) \times_{\QQ} \overline{\Q},\Qp)$, and let us denote its pullback to the Hecke algebra acting on $H^1_\etale(X_{n}^{1,1} \times_{\QQ} \overline{\Q},\Qp)$ also by $\m$. Let us assume that $\m$ is non-Eisenstein in the sense that $\mathrm{T}_q'-(q+1)\not\in \m$ for some prime $q\nmid Np^n$. The following lemma is borrowed from \cite{Fouquet2013}.  
\begin{lemma}
     \label{lemma_2025_06_16_1216}
    For any non-Eisenstein maximal ideal $\m$ as above, we have the following Hecke equivariant isomorphism of Galois modules:
    \begin{align}
    \begin{aligned}
        \label{eqn_2025_06_01_0817}
        H^1_\etale(X(U_n)\times_\QQ\overline{\QQ},\QQ_p)_{\m} 
        \simeq H^1_\etale(X_{n}^{1,1} \times_{\QQ} \overline{\Q},\Qp)_{\m} \otimes_{\ZZ_p[T_n]} \ZZ_p[T_n/Z_n]\,.
         \end{aligned}
\end{align} 
\end{lemma}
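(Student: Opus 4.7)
The plan is to interpret the right-hand side as the $Z_n$-coinvariants of $H^1_{\etale}(X^{1,1}_n\times_\QQ\overline{\QQ},\QQ_p)_{\m}$ (since $\ZZ_p[T_n/Z_n]=\ZZ_p[T_n]\otimes_{\ZZ_p[Z_n]}\ZZ_p$) and to identify this with the cohomology of the quotient $X(U_n)$ via the Hochschild--Serre spectral sequence for the Galois étale cover $q_2\colon X^{1,1}_n\to X(U_n)$, whose deck transformation group is precisely $Z_n$. Concretely, I would write
$$E_2^{i,j}=H^i\!\left(Z_n,\,H^j_{\etale}(X^{1,1}_n\times_\QQ\overline{\QQ},\QQ_p)\right)\Longrightarrow H^{i+j}_{\etale}(X(U_n)\times_\QQ\overline{\QQ},\QQ_p).$$
Since the coefficient ring $\QQ_p$ has characteristic zero, $|Z_n|$ is invertible and therefore $E_2^{i,j}=0$ for all $i>0$. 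The spectral sequence degenerates at $E_2$ and produces a Hecke-equivariant isomorphism of $G_\QQ$-modules
$$H^1_{\etale}(X(U_n)\times_\QQ\overline{\QQ},\QQ_p)\,\simeq\, H^1_{\etale}(X^{1,1}_n\times_\QQ\overline{\QQ},\QQ_p)^{Z_n}.$$

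Next, I would exploit the fact that $|Z_n|$ is invertible in $\QQ_p$ to pass from invariants to coinvariants: the normalized norm element $|Z_n|^{-1}\sum_{z\in Z_n}z$ defines a functorial isomorphism $M^{Z_n}\simeq M_{Z_n}$ for every $\QQ_p[Z_n]$-module $M$. Applying this and using that $\ZZ_p[T_n/Z_n]=\ZZ_p[T_n]\otimes_{\ZZ_p[Z_n]}\ZZ_p$, I obtain
$$H^1_{\etale}(X^{1,1}_n\times_\QQ\overline{\QQ},\QQ_p)^{Z_n}\,\simeq\, H^1_{\etale}(X^{1,1}_n\times_\QQ\overline{\QQ},\QQ_p)\otimes_{\ZZ_p[T_n]}\ZZ_p[T_n/Z_n].$$
Finally, localization at $\m$ is an exact functor that commutes with all the tensor products and fixed-point constructions appearing above, so localizing both sides yields the asserted identification.

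The main obstacle, though more bookkeeping than substance, is verifying Hecke-equivariance throughout: one must track the $\TT_n^B$-action through the edge map of Hochschild--Serre, through the norm-map identification of invariants with coinvariants, and through the rewriting of coinvariants as a tensor product. This reduces to the commutativity of the Hecke correspondences with the deck transformation action of $T_n\supseteq Z_n$, together with functoriality of the spectral sequence under correspondences. I note that the non-Eisenstein hypothesis on $\m$ is not required for the $\QQ_p$-statement as formulated, since the degeneration of Hochschild--Serre is automatic over $\QQ_p$; it would become indispensable for an integral refinement, where the $H^0$ and $H^2$ boundary contributions in the spectral sequence would be killed by using a Hecke operator $\mathrm{T}_q'-(q+1)\notin\m$.
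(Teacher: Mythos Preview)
Your proof is correct and in fact cleaner than the paper's for the $\QQ_p$-statement as written: the Hochschild--Serre spectral sequence for the Galois \'etale cover $q_2\colon X^{1,1}_n\to X(U_n)$ degenerates because $|Z_n|$ is invertible in $\QQ_p$, and the passage invariants $\to$ coinvariants $\to$ tensor product is routine. You are also right that the non-Eisenstein hypothesis is not needed in your argument.

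The paper takes a different route, following Fouquet: rather than invoking the vanishing of higher group cohomology over $\QQ_p$, it climbs through intermediate curves $X^{1,1}_n\to Y\to X(U_n)$ and uses that $H^2_{\etale}(Y\times_\QQ\overline{\QQ},\QQ_p)_{\m}$ is annihilated by $\mathrm{T}_q'-(q+1)$, which is a unit in the localization at a non-Eisenstein $\m$. The payoff of that approach is that it adapts verbatim to integral coefficients $\mathcal{O}$ (where your $|Z_n|^{-1}$ trick fails), which is precisely what the paper needs immediately afterwards in Proposition~\ref{prop_2025_07_04_1618}(i) for the $\Lambda_Z$-module $e_{\mathrm{U}_p'}H^1_{\rm Iw}(X(U_\infty)\times_\QQ\overline{\QQ},\mathcal{O})_{\m}$. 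Your closing remark anticipates exactly this: the non-Eisenstein condition is the price for the integral refinement, and that is the reason the paper phrases its proof in those terms even for the $\QQ_p$-lemma.
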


\begin{proof}
         This is proved as part of the proof of \cite[Corollary 3.2]{Fouquet2013}, where the key point is that, for any intermediate curve $X_n^{1,1}\to Y\to X(U_n)$, the Hecke module $H^2_\etale(Y\times_{\QQ} \overline{\Q},\Qp)_\m$ is annihilated by the degree-0 Hecke operator $\mathrm{T}_q'-(q+1)$, which also acts invertibly on the said module since $\mathrm{T}_q'-(q+1)\not\in\m$. 
\end{proof}

\begin{remark}
\item[i)] In \cite{Fouquet2013}, the author does not require the non-Eisenstein condition, but instead applies the ordinary projector. Our lemma slightly extends Fouquet's corollary as we do not pass to the $p$-ordinary direct summand of the cohomology, at the expense of excluding the Eisenstein locus.
\item[ii)] Let us assume for all the Shimura curves in this paragraph that the level away from $p$ is $U_0(N)$ $($rather than $U_1(N)$$)$. We have a decomposition\footnote{In general, the constituents in the analogous decomposition of $H^1_\etale(X_{n}^{1,1} \times_{\QQ} \overline{\Q},\Qp) \otimes_{\ZZ_p[T_n]} \ZZ_p[T_n/Z_n]$ will involve twists of Deligne's representations that are self-dual up to a Dirichlet character of conductor dividing $N$.} 
$$H^1_\etale(X_{n}^{1,1} \times_{\QQ} \overline{\Q},\Qp) \otimes_{\ZZ_p[T_n]} \ZZ_p[T_n/Z_n]=:H^1_\etale(X_{n}^{1,1} \times_{\QQ} \overline{\Q},\Qp)^\dagger\simeq \bigoplus_g V_g^\dagger$$ 
into Hecke eigenspaces, where the direct sum is over all weight-two eigenforms $g$ on $U_{1,n}$ and $V_g^\dagger$, as in \cite{howard2007}, is the self-dual twist of Deligne's representation.  Therefore, the Galois module $H^1_\etale(X_{n}^{1,1} \times_{\QQ} \overline{\Q},\Qp)^\dagger$ coincides with the Hecke module $H^1_\etale(X_{1,n} \times_{\QQ} \overline{\Q},\Qp)\left(-\doublefrac{1}{2}\right)$ in the notation of \cite{DarmonRotger}. %We remark that the central action on $H^1_\etale(X_{1,n} \times_{\QQ} \overline{\Q},\Qp)$ factors through $(\ZZ/p^n\ZZ)^{\times,2}$, since the determinant is an odd character.} 
The promised relationship between the cohomology of towers $X(U_n)$ and $X_{1,n}$ can be extracted from \eqref{eqn_2025_05_27_1258} and \eqref{eqn_2025_06_01_0817}.
\end{remark}

\subsection{Hida theory: A variant of Ohta's control theorem and a result of Fouquet}

Recall that $p$ is a prime which does not divide $N^-$ and that we have identified $\Psi_p: \mathcal{O}_B \otimes \ZZ_p \simeq M_2(\ZZ_p)$. 

\subsubsection{} Let $K/\QQ_p$ be an unramified finite extension, with ring of integers $\mathcal{O}$. Let $T_{\GL_2}$, resp.  $Z_{\GL_2}$, denote the maximal diagonal torus, resp. center, of $\GL_2$. Then we identify $T_{\GL_2}(\ZZ_p) / Z_{\GL_2}(\ZZ_p)$ with $\ZZ_p^\times$ via the map
$$ T_{\GL_2}(\ZZ_p) / Z_{\GL_2}(\ZZ_p) \lra \ZZ_p^\times,\, \qquad \left( \begin{smallmatrix}a & \\ & d \end{smallmatrix}\right) \mapsto a d^{-1},$$ and let $\Lambda_Z : = \mathcal{O}\llbracket T_{\GL_2}(\ZZ_p) / Z_{\GL_2}(\ZZ_p)\rrbracket \simeq \mathcal{O} \llbracket\ZZ_p^\times \rrbracket$ be the corresponding Iwasawa algebra; its $n$-th layer will be denoted by $\Lambda_{Z,n}$. Note that $$\Lambda_{Z,n} \simeq \mathcal{O}[T_n/Z_n],$$
where $T_n$ and $Z_n$ have been introduced in \S \ref{subsection_2025_07_03_1620}. Note in addition that, if $\Lambda_{\rm full} : = \mathcal{O}\llbracket T_{\GL_2}(\ZZ_p)\rrbracket$, the natural projection $T_{\GL_2}(\ZZ_p) \twoheadrightarrow T_{\GL_2}(\ZZ_p) / Z_{\GL_2}(\ZZ_p)$ induces a morphism $\Lambda_{\rm full} \to \Lambda_Z$.

\subsubsection{} We define 
$$ H^1_{\rm Iw}( X(U_\infty)\times_{\QQ} \overline{\Q}, \mathcal{O}) : = \varprojlim_n H^1_{\etale}( X(U_n)\times_{\QQ} \overline{\Q}, \mathcal{O}),$$ 
where the inverse limit is taken with respect to the pushforward of the natural degeneracy maps $X(U_{n+1})  \xrightarrow{p_n} X(U_n)$.  We note that $H^1_{\rm Iw}( X(U_\infty)\times_{\QQ} \overline{\Q}, \mathcal{O})$ is naturally a $\Lambda_Z$-module. Moreover, as the Hecke operator ${\rm U}_p'$ commutes with the pushforward maps $p_{n,*}$, it acts on the inverse limit. Hence, the anti-ordinary projector $e_{{\rm U}_{p}'}=\lim ({\rm U}_p')^{n!}$ acts on $H^1_{\rm Iw}( X(U_\infty)\times_{\QQ} \overline{\Q}, \mathcal{O})$. 

\subsubsection{} As a special case of the work \cite{LRZ}, we shall prove (as Proposition~\ref{prop_2025_07_04_1618}(iii) below) a control theorem for $e_{{\rm U}_{p}'} H^1_{\rm Iw}( X(U_\infty)\times_{\QQ}  \overline{\Q}, \mathcal{O})$. Moreover, we shall record (cf. Proposition~\ref{prop_2025_07_04_1618}(i) below) the fact (borrowed from \cite{Fouquet2013}, Corollary 3.2) that the module $e_{{\rm U}_{p}'} H^1_{\rm Iw}( X(U_\infty)\times_{\QQ} \overline{\Q}, \mathcal{O})$ is the essentially self-dual twist (in the sense of \cite[Definition 2.1.3]{howard2007}; where ``essentially self-dual'' means self-dual up to Dirichlet characters of conductor coprime to $p$) of the big Galois representations associated to Hida families.  

\subsubsection{} Before we state these, we introduce some notation. For a weight $\lambda = (\lambda_1 \geq \lambda_2)$  of $\GL_2$, we let $V_\lambda$ denote the irreducible representation of $\GL_2$ (over $K$) of highest weight $\lambda$. If $V_{\rm std} \simeq K^2$ denotes the standard representation of $\GL_2$ over $K$, then $$V_\lambda  = {\rm Sym}^{\lambda_1 - \lambda_2} (V) \otimes {\rm det}^{\lambda_2}.$$
Let $V_{\lambda, \mathcal{O}}$ be the minimal admissible lattice of $V_\lambda$, which is explicitly given by 
$$ V_{\lambda, \mathcal{O}} =  {\rm TSym}^{\lambda_1 - \lambda_2} (\mathcal{O}^2) \otimes  {\rm det}^{\lambda_2},$$
and denote by $\mathscr{V}_{\lambda, \mathcal{O}}$ the corresponding local system on $X(U_n)$.

\begin{remark}
Note that if $\lambda = (\lambda_1,\lambda_2)$ is trivial on $Z_{\GL_2}$, then $\lambda_2 = - \lambda_1$ and $ V_{\lambda, \mathcal{O}} =  {\rm TSym}^{2 \lambda_1} (\mathcal{O}^2) \otimes  {\rm det}^{-\lambda_1}.$
\end{remark}

Let us denote by
$$ \mathrm{T}^{\rm a.o.}_{B,\mathcal{O}}(Np^\infty):= \varprojlim_n e_{{\rm U}_{p}'} \TT_n^B \otimes_{\ZZ_p} \mathcal{O}$$
the anti-ordinary Hecke algebra.

\begin{proposition} 
\label{prop_2025_07_04_1618} Let $\mathfrak{m}$ be a maximal ideal of $\mathrm{T}^{\rm a.o.}_{B,\mathcal{O}}(Np^\infty)$  such that the associated mod-$p$ Galois representation $\overline{\rho}_\mathfrak{m}$ is 
irreducible and has residual multiplicity $1$ (in the sense of \cite{Fouquet2013}, Proposition 3.7(iii)).
\item[i)] We have an isomorphism of $\Lambda_Z$-modules $$e_{{\rm U}_{p}'} H^1_{\rm Iw}( X(U_\infty)\times_{\QQ} \overline{\Q}, \mathcal{O})_\mathfrak{m} \simeq  e_{{\rm U}_{p}'} H^1_{\rm Iw}( X_\infty^{1,1}\times_{\QQ} \overline{\Q}, \mathcal{O})_{\widetilde{\mathfrak{m}}} \otimes_{\Lambda_{\rm full}} \Lambda_Z\,,$$
where
$$H^1_{\rm Iw}( X_\infty^{1,1}\times_{\QQ} \overline{\Q}, \mathcal{O}) : = \varprojlim_n H^1_{\etale}( X_n^{1,1}\times_{\QQ} \overline{\Q}, \mathcal{O})$$
 and $\widetilde{\mathfrak{m}}$ denotes the pullback of $\mathfrak{m}$ to the Hecke algebra $\mathrm{T}^{\rm a.o.}_{B,\mathcal{O}}(U_\infty^{1,1})$ acting on $e_{{\rm U}_{p}'} H^1_{\rm Iw}( X_\infty^{1,1}\times_{\QQ} \overline{\Q}, \mathcal{O})$.
 \item[ii)] The $\Lambda_{Z}$-module $e_{{\rm U}_{p}'} H^1_{\rm Iw}( X(U_\infty)_{\overline{\QQ}}, \mathcal{O})_\mathfrak{m}$ is finitely generated and free. Moreover,  $$ e_{{\rm U}_{p}'} H^1_{\rm Iw}( X(U_\infty)_{\overline{\QQ}}, \mathcal{O})_\mathfrak{m} \otimes_{\mathrm{T}^{\rm a.o.}_{B,\mathcal{O}}(Np^\infty)_\mathfrak{m}} R(\mathfrak{a})$$ is of rank 2 over $R(\mathfrak{a}):= \mathrm{T}^{\rm a.o.}_{B,\mathcal{O}}(Np^\infty)_\mathfrak{m} / \mathfrak{a}$, where $ \mathfrak{a}$ is the unique minimal prime ideal contained in $\m$. 
\item[iii)] Let $\lambda$ be a weight which is trivial on $Z_{\GL_2}$ and let $\mathcal{O}[\lambda^{-1}]$ be $\mathcal{O}$ regarded with $\Lambda_{Z}$-action given by $\lambda^{-1}$. For every $n \geq 1$, we have an isomorphism of $\Lambda_{Z,n}$-modules 
    $$e_{{\rm U}_{p}'} H^1_{\rm Iw}( X(U_\infty) \times_{\QQ} \overline{\Q}, \mathcal{O})_\mathfrak{m} \otimes_{\Lambda_{Z,n}} \mathcal{O}[\lambda^{-1}] \simeq e_{{\rm U}_{p}'} H^1_{\etale}( X(U_n) \times_{\QQ} \overline{\Q}, \mathscr{V}_{\lambda,\mathcal{O}})_\mathfrak{m},$$
    which is compatible with Galois and Hecke actions.

\end{proposition}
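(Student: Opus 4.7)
The plan is to establish parts (i)-(iii) in sequence, using Lemma~\ref{lemma_2025_06_16_1216} as a finite-level input, the rank-two and freeness results of Fouquet as the main structural inputs, and an Ohta-type control theorem to identify specializations with cohomology of local systems.

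For part (i), I would apply Lemma~\ref{lemma_2025_06_16_1216} at each finite level $n$ to obtain a Hecke- and Galois-equivariant identification
\[
H^1_\etale(X(U_n)\times_\QQ\overline{\QQ},\mathcal{O})_\m\simeq H^1_\etale(X_n^{1,1}\times_\QQ\overline{\QQ},\mathcal{O})_{\widetilde{\m}}\otimes_{\mathcal{O}[T_n]}\mathcal{O}[T_n/Z_n],
\]
and then pass to the inverse limit along the pushforwards of the degeneracy maps (which commute with the anti-ordinary projector $e_{{\rm U}_p'}$). Under the residual multiplicity-one hypothesis, each $\m$-localized layer is a finitely generated free $\mathcal{O}$-module (by Fouquet's work \cite{Fouquet2013}), so the inverse limit commutes with the tensor product on the right, producing the claimed $\Lambda_Z$-equivariant isomorphism.

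For part (ii), the rank-two statement over $R(\fa)$ is essentially \cite[Corollary 3.2]{Fouquet2013}. To upgrade to finite freeness over $\Lambda_Z$, the plan is to combine this with the classical freeness of the big ordinary Hecke algebra attached to the tower $X_n^{1,1}$: by part (i), the $\Lambda_Z$-structure on $e_{{\rm U}_p'}H^1_{\mathrm{Iw}}(X(U_\infty)\times_\QQ\overline{\QQ},\mathcal{O})_\m$ is pulled back from the $\Lambda_{\rm full}$-structure on the modular-curve side, and the modular-curve cohomology is finite free over $\Lambda_{\rm full}$ by Hida/Ohta. This gives both freeness of $e_{{\rm U}_p'}H^1_{\mathrm{Iw}}(X(U_\infty)\times_\QQ\overline{\QQ},\mathcal{O})_\m$ over $\mathrm{T}^{\rm a.o.}_{B,\mathcal{O}}(Np^\infty)_\m$ and over $\Lambda_Z$.

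For part (iii), with parts (i)-(ii) in hand, the plan is to apply the general control theorem of \cite{LRZ} to the tower $X_n^{1,1}$ of classical modular curves (where it is most directly formulated) and to transfer the resulting statement to the quaternionic tower $X(U_n)$ using the isomorphism of part (i). Concretely, one expects a comparison map---interpolating the Eichler-Shimura isomorphism across the tower---from the right-hand side to the specialization of the left-hand side; after applying $e_{{\rm U}_p'}$ and localizing at $\m$, both sides are finite free $\mathcal{O}$-modules of the same rank by part (ii) and a rank count after inverting $p$, so Nakayama's lemma (together with the residual multiplicity-one hypothesis) promotes the generic-fiber comparison to an integral isomorphism. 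The hard part will be verifying compatibility of the LRZ construction with the passage to the quotient $\Lambda_{\rm full}\twoheadrightarrow\Lambda_Z$ and with the anti-ordinary projector on the quaternionic side, as well as matching the algebraic weight $\lambda$---which does not factor through any finite quotient $\Lambda_{Z,n}$ on the nose---with the profinite weight space $\Lambda_Z$; the integrality of the comparison relies in an essential way on the freeness established in part (ii), which is why the three parts are carried out in the given order.
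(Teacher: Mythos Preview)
Your plan for (i) and (ii) is essentially what the paper does: both parts are handled by direct citation of Fouquet (\cite[Corollary 3.2 and Proposition 3.7]{Fouquet2013}), with your finite-level-then-limit description of (i) being a reasonable unpacking of Fouquet's argument.

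For (iii), however, you take a detour that the paper avoids. You propose to apply the control theorem of \cite{LRZ} on the \emph{classical} tower $X_n^{1,1}$ and then transfer to $X(U_n)$ via the isomorphism of (i), followed by a rank-count and Nakayama argument that you say relies essentially on the freeness from (ii). The paper instead applies \cite[Theorem 2.6.2]{LRZ} \emph{directly} to the quaternionic tower $X(U_n)$: this yields a second-quadrant spectral sequence
\[
E^{i,j}_{2,\m}={\rm Tor}^{\Lambda_{Z,n}}_{-i}\bigl(e_{{\rm U}_p'}H^j_{\rm Iw}(X(U_\infty)\times_\QQ\overline{\QQ},\mathcal{O})_\m,\,\mathcal{O}[\lambda^{-1}]\bigr)\ \Rightarrow\ e_{{\rm U}_p'}H^{i+j}_{\etale}(X(U_n)\times_\QQ\overline{\QQ},\mathscr{V}_{\lambda,\mathcal{O}})_\m,
\]
and since $e_{{\rm U}_p'}H^j_{\rm Iw}(\cdots)_\m$ vanishes for $j\neq 1$ (the anti-ordinary projector and localization at the non-Eisenstein $\m$ kill $H^0$ and $H^2$ of curves), the spectral sequence degenerates immediately to give $E^{0,1}_{2,\m}\cong H^1$. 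So no transfer from the classical side, no Nakayama, and---contrary to your claim---no use of the freeness in (ii) is needed for (iii). Your route may be workable, but the compatibility checks you flag (matching $\lambda$ across $\Lambda_{\rm full}\twoheadrightarrow\Lambda_Z$, compatibility with $e_{{\rm U}_p'}$) are genuine extra work that the direct spectral-sequence argument sidesteps entirely.
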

\begin{proof}
The first part is a special case of \cite[Corollary 3.2]{Fouquet2013}, whereas the second follows from (i) combined with \cite[Proposition 3.7]{Fouquet2013}. We now prove (iii).
By \cite[Theorem 2.6.2]{LRZ}, together with the fact that localization is exact and commutes with the Tor-functor, we have a convergent spectral sequence 
$$E^{i,j}_{2,\mathfrak{m}} = {\rm Tor}^{\Lambda_{Z,n}}_{-i} (e_{{\rm U}_{p}'} H^j_{\rm Iw}( X(U_\infty)\times_{\QQ} \overline{\Q}, \mathcal{O})_\mathfrak{m}, \mathcal{O}[\lambda^{-1}]) \Rightarrow  e_{{\rm U}_{p}'} H^{i+j}_{\etale}( X(U_n) \times_{\QQ} \overline{\Q}, \mathscr{V}_{\lambda,\mathcal{O}})_\mathfrak{m}\,,$$
which is supported on the second quadrant $i \leq 0$, $j \geq 0$. Since $E^{i,j}_{2,\mathfrak{m}} = 0$ unless $j=1$, we have the desired isomorphism $E^{0,1}_{2,\mathfrak{m}} \simeq e_{{\rm U}_{p}'} H^1_{\etale}( X(U_n) \times_{\QQ} \overline{\Q}, \mathscr{V}_{\lambda,\mathcal{O}})_\mathfrak{m}$. The proof of our proposition is now complete.
\end{proof}

\section{The definite case: signed balanced \texorpdfstring{$p$-adic\,}\ \texorpdfstring{$L$-functions\,}\ }
\label{sec:Regularized_definite_diagonal_cycles}
The main purpose of this appendix is to present an alternative construction of Hsieh's diagonal theta elements for Hida families using a representation-theoretic approach, similar to the construction of diagonal cycles presented in \S\ref{sec:families_of_indefinite_cycles} (which is essentially due to Loeffler). Instead of working with indefinite quaternion algebras, we work with definite ones. 

Even though this portion of the appendix has no utility in the main body of the article, it is included here to highlight the parallelism between Hsieh's construction of theta elements in terms of the special locus on (zero-dimensional) Shimura sets in the definite scenario and the construction of diagonal cycles in the indefinite setting. This resemblance suggests the possibility of the existence of a bipartite Euler system (in the sense of Bertolini--Darmon and Howard~\cite{BertoliniDarmon2005, howard06}), but we do not explore this point further in the present article. Furthermore, some of the ideas presented here seem pertinent to the upcoming work of Yi-Li Wu on signed triple-product $p$-adic $L$-functions, which, combined with our construction of signed diagonal classes, may yield a signed version of the explicit reciprocity law.

\subsection{A family of definite theta elements}\label{subsec:familyofthetaelements}
Let $B$ be the definite quaternion algebra over $\QQ$ of discriminant $N^-$. Let $N^+$ be a positive integer prime to $N^-$ and let $N =N^+N^-$. We suppose that $p$ is an odd prime such that $p \nmid N$. In what follows, we retain the notation in \S \ref{subsec:ShimuraCurves}. In particular, recall that $E = \QQ \oplus \QQ \oplus \QQ$ denotes the totally split \'etale cubic algebra over $\QQ$, set $B_E  = B \otimes_\QQ E = B \oplus B \oplus B$, and denote by $\iota\,:\,B \hookrightarrow B_E$ the diagonal embedding.  We also recall that 
\begin{align*}
\mathcal{U}_{Z,(n)} &=\left \{ g \in \mathcal{U}_1(N)\,:\, g \equiv \left(\left( \begin{smallmatrix}  a_1 & b \\ 0 & a_0 \end{smallmatrix}\right),\left( \begin{smallmatrix}  a_2 & c \\ 0 & a_0 \end{smallmatrix}\right),\left( \begin{smallmatrix} a_3 & d \\ 0 & a_0 \end{smallmatrix}\right)\right)\, (\text{mod }p^n),\,a_i \in\Zp^\times, \, b,c,d \in\Zp\right\} 
\end{align*}
for any natural number $n$ (as well as our convention that $(n)$ is a shorthand for the triple $(n,n,n)$).
\begin{defn}\label{def:shimura-set}
    For every $n \geq 0$, let   
        \item[i)] $S_0(Np^n)$ be the Shimura set 
        %{(Kazim: Maybe we should use a less common notation for these, e.g. $S_0(Np^n)$?)}{(AC: I agree with denoting it by $S_0(Np^n)$ and in general change X with S andt $\mathbf{X}$ with $\mathbf{S}$ in Appendix B)} 
        \[S_0(Np^n) := B^\times \backslash \widehat{B}^\times / U_{0,n}, \]
        with  $U_{0,n}$ as in \eqref{eqn_2024_07_31_1649};
        \item[ii)] $\mathbf{S}_{Z,(n)}$ be the ``non-split triple product'' of Shimura sets given by 
        \[ \mathbf{S}_{Z,(n)}= B_E^\times \backslash \widehat{B}_E^\times / \mathcal{U}_{Z,(n)}. \]
\end{defn}

\subsubsection{} For each element $(g_1,g_2,g_3) \in \widehat{B}_E^\times$, we write $[(g_1,g_2,g_3)]$ for the corresponding double coset in $\mathbf{S}_{Z,(n)}$. As in \cite[Definition 4.6]{hsiehpadicbalanced}, we define $\Delta_n  \in \Zp[\mathbf{S}_{Z,(n)}]$ by setting
\begin{align}\label{eqn_Hsieh_element}
    \Delta_n  := \sum_{[g] \in S_0(N p^n)} \sum_{\substack{b \in \ZZ/p^n \ZZ \\ z\in  (\ZZ/p^n \ZZ)^\times}} \left[\left( g \left( \begin{smallmatrix} p^n & b \\ 0 & 1 \end{smallmatrix} \right),  g \left( \begin{smallmatrix}
    p^n & b + z \\ 0 & 1 
\end{smallmatrix} \right), g \tau_{p^n} \left( \begin{smallmatrix}
 1 & 0 \\ 0 & z   
\end{smallmatrix} \right) \right) \right ]\,, 
\end{align} 
where $\tau_{p^n} = \left( \begin{smallmatrix}
 0 & 1 \\ - p^n & 0   
\end{smallmatrix} \right)$. 

\begin{remark}
Note that the Shimura set $ \mathbf{S}_{Z,(n)}$ differs from the one used in \cite[\S 4.6]{hsiehpadicbalanced}, which is given as
$$B_E^\times \backslash \widehat{B}_E^\times / \mathcal{U}_{1,(n)} \iota (\AA_f^\times)\,.$$
Although this definition of $\Delta_n$ allows more freedom in principle, this distinction does not affect applications, as one eventually evaluates the theta elements at triples of modular forms whose central characters multiply to the trivial character.
\end{remark}

\subsubsection{}\label{subsec_2025_10_14_1517} Let $\mathbb{U}_p'$ be the Hecke operator defined as in \S \ref{subsubsec_2027_07_02_1557}. Its action on $\Zp[\mathbf{S}_{Z,(n)}]$ is associated with the pushforward of $\iota(\eta_p) \in \mathrm{GL}_2(\Qp)^3$, where we recall that $\eta_p =\left( \begin{smallmatrix} p &  \\  & 1 \end{smallmatrix} \right) \in \GL_2(\Qp)$ (this operator is denoted as $\mathbf{U}_p$ in \cite[\S 4.4]{hsiehpadicbalanced}).

\begin{proposition}[\cite{hsiehpadicbalanced}, Lemma 4.7]\label{Prop_of_Hsieh}
Let us denote by $\Zp[\mathbf{S}_{Z,(n+1)}] \xrightarrow{{\rm Nm}_{n}^{n+1} } \Zp[\mathbf{S}_{Z,(n)}]$ the natural projection. Then for every $n \geq 1$, we have  
\[ {\rm Nm}_{n}^{n+1}(\Delta_{n+1}) = \mathbb{U}_p' \,\Delta_{n} .\]
\end{proposition}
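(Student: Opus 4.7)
My plan is to mirror the proof of Proposition~\ref{prop:norm_comp_indefinite} for indefinite diagonal cycles, by realizing $\Delta_n$ as the pushforward of a ``fundamental class'' via a twisted diagonal embedding of zero-dimensional Shimura varieties, and then invoking the general formalism of~\cite{LoefflerSpherical}. To this end, I first introduce the Shimura sets $S_{Z,n} := B^\times \backslash \widehat{B}^\times / U_{Z,n}$ and $\mathbf{S}_{\mathcal{U}^{(n)}} := B_E^\times \backslash \widehat{B}_E^\times / \mathcal{U}^{(n)}$. Using the analog of Lemma~\ref{lemma_pullback_opencompacts}, which equally applies in the definite setting since it is a purely group-theoretic statement about open compact subgroups, I obtain a closed embedding $\iota_n^u : S_{Z,n} \hookrightarrow \mathbf{S}_{\mathcal{U}^{(n)}}$ as the composition of the diagonal embedding with right multiplication by the element $u$ introduced in \S\ref{subsec_4_3_2024_07_31_1634}. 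I then set
\[ \Theta_{\mathcal{U}_{(n)}} := \iota(\eta_p^n)_* \circ \iota_{n,*}^u (\mathbf{1}_n) \in \ZZ_p[\mathbf{S}_{\mathcal{U}_{(n)}}], \]
where $\mathbf{1}_n \in \ZZ_p[S_{Z,n}]$ is the sum of all elements of $S_{Z,n}$, and check that the pushforward of $\Theta_{\mathcal{U}_{(n)}}$ along the natural projection $\mathbf{S}_{\mathcal{U}_{(n)}} \to \mathbf{S}_{Z,(n)}$ coincides with Hsieh's element $\Delta_n$ in~\eqref{eqn_Hsieh_element}; this is essentially the reinterpretation that will appear as Proposition~\ref{prop_rewriting_Hsieh}.

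Given this reformulation, the second step is to establish the definite analog of the Cartesian diagram of Lemma~\ref{lemma_cartesian_diagram}. Since $u \in \GL_2^3(\Zp)$ satisfies hypotheses (A), (B) of~\cite[\S 4.3]{LoefflerSpherical} with $Q^0_H = \GL_2$ and $\overline{Q}^0_G = Z \cdot \overline{N}$, the same formalism yields the relevant Cartesian square for the tower of level subgroups $\mathcal{U}^{(n)}$, now interpreted as a diagram of finite sets. Applying~\cite[Proposition 4.5.2]{LoefflerSpherical} (exactly as in the proof of Proposition~\ref{prop:norm_comp_indefinite}) then produces the identity $\mathrm{Nm}_n^{n+1} \Theta_{\mathcal{U}_{(n+1)}} = \mathbb{U}_p'\, \Theta_{\mathcal{U}_{(n)}}$ in $\ZZ_p[\mathbf{S}_{\mathcal{U}_{(n)}}]$.

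To conclude, I push this identity forward along the natural projection $\mathbf{S}_{\mathcal{U}_{(n)}} \to \mathbf{S}_{Z,(n)}$. This projection commutes both with the norm maps $\mathrm{Nm}_n^{n+1}$ and with $\mathbb{U}_p'$; the compatibility with Hecke correspondences is the definite analog of Proposition~\ref{prop_2025_05_15_0828}, proved by the same Cartesianness argument. Combined with the identification $\Delta_n \leftrightarrow \Theta_{\mathcal{U}_{(n)}}$ from the first step, this yields the asserted norm relation.

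The principal obstacle in this plan is the identification of $\Delta_n$ with the image of $\Theta_{\mathcal{U}_{(n)}}$ in $\ZZ_p[\mathbf{S}_{Z,(n)}]$. This will be a careful but essentially mechanical double-coset bookkeeping: one unwinds $\iota_{n,*}^u(\mathbf{1}_n)$ in terms of representatives for $B_E^\times \backslash \widehat{B}_E^\times / \mathcal{U}^{(n)}$, applies $\iota(\eta_p^n)_*$, and compares with the triple appearing in~\eqref{eqn_Hsieh_element}. The parameters $b \in \ZZ/p^n\ZZ$ and $z \in (\ZZ/p^n\ZZ)^\times$ of Hsieh's sum correspond precisely to a choice of coset representatives for $\mathcal{U}_{Z,(n)}/\mathcal{U}_{(n)}$ together with the unipotent freedom accumulated by the twisting element $u$, which pins down the identification uniquely. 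The restriction $n \geq 1$ is natural because $u$ acts nontrivially on the mod $p^n$ level structure only for $n \geq 1$.
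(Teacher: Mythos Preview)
Your proposal is correct and follows exactly the approach the paper develops in \S\ref{subsec:Loefflerfamilyofthetaelements} as its alternative to Hsieh's original direct computation (which the paper merely cites for this proposition). Your three steps---defining $\Theta_{\mathcal{U}_{(n)}}$, invoking \cite[Prop.~4.5.2]{LoefflerSpherical} for its norm relation, and pushing forward along $\mathbf{S}_{\mathcal{U}_{(n)}} \to \mathbf{S}_{Z,(n)}$---are precisely the Proposition following Definition~\ref{def_2025_10_14_1506}, Proposition~\ref{prop_rewriting_Hsieh}, and Corollary~\ref{cor_rewriting_Hsieh}.

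One small inaccuracy in your final paragraph: the parameters $(z,b)$ in Hsieh's sum do \emph{not} index cosets in $\mathcal{U}_{Z,(n)}/\mathcal{U}_{(n)}$ on the $B_E^\times$ side. Rather, they parametrize coset representatives $\gamma_{z,b} = \left(\begin{smallmatrix} z & b \\ & 1 \end{smallmatrix}\right)$ for $U_{0,n}/U_{Z,n}$ on the $B^\times$ side (this is the paper's Lemma~\ref{lemma_cosetdecomposition}). The identification then comes from recognising Hsieh's double sum as $\iota(\eta_p^n)_\star \circ \iota^u_{Z,(n),\star} \circ \nu_{Z,n}$ applied to $\sum_{[g]\in S_0(Np^n)}[g]$, where $\nu_{Z,n}$ is the trace map along $S_n \to S_0(Np^n)$; since $\nu_{Z,n}\bigl(\sum_{[g]}[g]\bigr) = \mathbb{1}_{S_n}$, this gives $\Delta_n = \iota(\eta_p^n)_\star \circ \iota^u_{Z,(n),\star}(\mathbb{1}_{S_n})$, which is the content of Proposition~\ref{prop_rewriting_Hsieh}.
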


As before, let us denote by $e_{\mathbb{U}_p'}$ the anti-ordinary projector associated with $\mathbb{U}_p'$, and define the regularized definite theta element on setting
\[ \Delta^\dagger_n := (\mathbb{U}_p')^{-n} (e_{\mathbb{U}_p'} \, \Delta_{n}),\]
as well as the element 
\[ \Delta^\dagger_\infty := \varprojlim_{n \to \infty} \Delta^\dagger_n \in \varprojlim_{n \to \infty} e_{\mathbb{U}_p'}\Zp[\mathbf{S}_{Z,(n)}], \]
where the inverse limit is taken with respect to the maps ${\rm Nm}_{n}$ given as above.

\subsection{A variant of Hsieh's construction \`a la Loeffler}\label{subsec:Loefflerfamilyofthetaelements}
In \S\ref{sec:families_of_indefinite_cycles}, we constructed families of diagonal cycles arising from the embedding of a Shimura curve into its triple product by essentially mimicking the definition of $\Delta^\dagger_\infty$. However, our construction allows more freedom in terms of $p$-adic variation. This was achieved by exploiting the general machinery of Loeffler developed in \cite{LoefflerSpherical}. 

With this in mind, our goal in the present subsection is to give an alternative definition of the element $\Delta_n$ defined as in \eqref{eqn_Hsieh_element}, which in turn will allow a direct comparison with the cycles of Definition~\ref{def_2025_07_03_1206}. The proof of Proposition \ref{Prop_of_Hsieh} will then be an application of the main result of \cite{LoefflerSpherical}.

\subsubsection{} We introduce basic notation that we shall dwell on in this subsection, and prove a simple lemma.

Let $S_n$ be the Shimura set of level $U_{Z,n}$, defined as in \eqref{eqn_2024_07_31_1640}. As $U_{Z,n} \subseteq U_{0,n}$, we have a natural map $S_n \xrightarrow{\pi_{Z,n}} S_0(Np^n)$ of Shimura sets. Let us denote by $\Zp[S_n] \xrightarrow{\pi_{Z,n}} \Zp[S_0(Np^n)]$ also the induced morphism and by $\Zp[S_0(Np^n)] \xrightarrow{\nu_{Z,n}} \Zp[S_n]$ the trace map induced by 
$$ S_0(Np^n) \lra \bb{Z}_p[S_n],\, \qquad [x] \mapsto \sum_{\pi_{Z,n}([y])=[x]} [y]\,.$$ For every $(z,b) \in (\ZZ/p^n \ZZ)^\times \times \ZZ/p^n \ZZ$, let us choose a lift to $ \Zp^\times \times \Zp$, which we still denote by $(z,b)$, and define $\gamma_{z,b} := \left( \begin{smallmatrix}
    z & b \\ & 1
\end{smallmatrix}\right) \in \GL_2(\Zp)$. 
\begin{lemma}\label{lemma_cosetdecomposition}
 A set of left coset representatives for the quotient $U_{0,n} / U_{Z,n}$ is given by \[\{ \gamma_{z,b}:\, (z,b) \in (\ZZ/p^n \ZZ)^\times \times \ZZ/p^n \ZZ\}.\]   
\end{lemma}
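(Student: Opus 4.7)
The plan is to verify the lemma by two elementary checks: that distinct $\gamma_{z,b}$ lie in distinct left $U_{Z,n}$-cosets, and that every element of $U_{0,n}$ belongs to one such coset. The argument is purely local at $p$, since each $\gamma_{z,b}$ is understood to be the identity at all places away from $p$ (so that the level conditions imposed by $U_1(N)$ there are automatically respected), and it has upper-triangular $p$-component with invertible diagonal entries, placing it in $U_{0,n}$.

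For the distinctness of cosets I would compute
\[
\gamma_{z,b}^{-1}\gamma_{z',b'} \;=\; \begin{pmatrix} z^{-1}z' & z^{-1}(b'-b) \\ 0 & 1 \end{pmatrix},
\]
and observe that this reduces modulo $p^n$ to a scalar matrix $\left(\begin{smallmatrix} a & 0 \\ 0 & a \end{smallmatrix}\right)$ with $a \in \Zp^\times$ if and only if $a \equiv 1$ (forced by the bottom-right entry), which in turn forces $z \equiv z' \pmod{p^n}$ and $b \equiv b' \pmod{p^n}$.

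For surjectivity onto the quotient, given $g \in U_{0,n}$ whose $p$-component reduces modulo $p^n$ to $\left(\begin{smallmatrix} a & b \\ 0 & d \end{smallmatrix}\right)$ with $a,d \in \Zp^\times$ and $b \in \Zp$, I would set $\tilde z \equiv ad^{-1} \pmod{p^n}$ and $\tilde b \equiv bd^{-1} \pmod{p^n}$ and carry out the direct computation
\[
\gamma_{\tilde z,\tilde b}^{-1}\,g \;\equiv\; \begin{pmatrix} \tilde z^{-1}a & \tilde z^{-1}(b-\tilde b d) \\ 0 & d \end{pmatrix} \;\equiv\; \begin{pmatrix} d & 0 \\ 0 & d \end{pmatrix} \pmod{p^n},
\]
so that $\gamma_{\tilde z,\tilde b}^{-1}g \in U_{Z,n}$, as required.

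There is essentially no obstacle here; the only care required is to note that multiplication by $\gamma_{z,b}$ does not affect the components away from $p$, so the $U_1(N)$-conditions at those places are automatically preserved. As a sanity check one may also count cardinalities: the upper Borel of $\GL_2(\ZZ/p^n\ZZ)$ has order $\phi(p^n)^2\cdot p^n$ and its subgroup of scalar matrices has order $\phi(p^n)$, so $|U_{0,n}/U_{Z,n}| = \phi(p^n)\cdot p^n$, which matches $|(\ZZ/p^n\ZZ)^\times\times \ZZ/p^n\ZZ|$ and allows either half of the argument above to suffice on its own.
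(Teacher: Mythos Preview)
Your proof is correct and follows essentially the same approach as the paper: both verify distinctness of the cosets and then show that for $g \equiv \left(\begin{smallmatrix} a & b \\ 0 & d \end{smallmatrix}\right) \pmod{p^n}$ one has $\gamma_{ad^{-1},\,bd^{-1}}^{-1}g \in U_{Z,n}$. You supply more explicit matrix computations and a cardinality check, but the argument is the same.
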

\begin{proof}
This follows from a direct computation. Note first that two different elements $\gamma_{z,b}$ and $\gamma_{z',b'}$ define different left cosets. We now check that any given $g \in U_{0,n}$ can be written as $\gamma_{z,b} \cdot h$, for some $(z,b) \in (\ZZ /p^n \ZZ)^\times \times \ZZ /p^n \ZZ$ and $h \in U_{Z,n}$, or equivalently that there exists $(z,b) \in (\ZZ /p^n \ZZ)^\times \times \ZZ /p^n \ZZ$ such that $\gamma_{z,b}^{-1} \cdot g \in U_{Z,n}$. Indeed, if $g \equiv \left( \begin{smallmatrix} \alpha & \beta \\ & \delta  \end{smallmatrix} \right)\pmod {p^n}$, we may put $(z,b) = (\alpha \delta^{-1},\beta \delta^{-1})$.
\end{proof}

\noindent By Lemma \ref{lemma_cosetdecomposition}, the trace map $\Zp[S_0(Np^n)] \xrightarrow{\nu_{Z,n}} \Zp[S_n]$ is explicitly given by $[x] \mapsto \sum_{(z,b)} [x \cdot \gamma_{z,b}]$.

\subsubsection{}\label{subsec_2025_10_14_1536} For any triple $\mathbf{n}=(n_1,n_2,n_3) \in \mathbb{N}^3$, we also let $\mathbf{S}_{\mathcal{U}_\mathbf{n}}$ and $\mathbf{S}_{\mathcal{U}^{\mathbf{n}}}$ be the ``non-split triple product'' Shimura sets associated with the level subgroups $\mathcal{U}_{\mathbf{n}}$ and $\mathcal{U}^{\mathbf{n}}$ introduced in \S \ref{subsec_2025_10_14_1351}. We recall that 
\[ u = \left(  \left( \begin{smallmatrix} 1 &  \\  & 1 \end{smallmatrix} \right), \left( \begin{smallmatrix} 1 &  1 \\  & 1 \end{smallmatrix} \right),\left( \begin{smallmatrix}  &  1 \\ -1 &  \end{smallmatrix} \right)\right) \in \GL_2^3(\Zp).\]
This element, together with the diagonal embedding $\iota$, gives rise to a map of Shimura sets 
$$ \iota^u_{(n),\star} : \Zp[S_n] \xrightarrow{u_\star \circ \iota_\star} \Zp[\mathbf{S}_{\mathcal{U}^{(n)}}],$$ 
where we recall that $(n)=(n,n,n)$. Composing $\iota^u_{(n),\star}$ with the map $\Zp[\mathbf{S}_{\mathcal{U}^{(n)}}] \xrightarrow{\iota(\eta^n_p)_\star} \Zp[\mathbf{S}_{\mathcal{U}_{(n)}}]$ induced by right-multiplication by $\iota(\eta^n_p)$, we obtain a morphism
\[ \iota(\eta^n_p)_\star \circ  \iota^u_{(n),\star}: \Zp[S_n] \longrightarrow \Zp[\mathbf{S}_{\mathcal{U}_{(n)}}]. \] 
Paralleling our Definition~\ref{def_2025_07_03_1206}, we introduce the following element.

\begin{defn}\label{def_2025_10_14_1506}
    \item[i)] For $n \in \mathbb N$, we define $\Theta_{\mathcal{U}_{(n)}} := \iota(\eta^n_p)_\star \circ  \iota^u_{(n),\star}(\mathbb{1}_{S_n}) \in  \Zp[\mathbf{S}_{\mathcal{U}_{(n)}}]$, where $\mathbb{1}_{S_n} = \sum_{g \in S_n} [g] \in \Zp[S_n]$.        
    \item[ii)]    For every ${\mathbf{n} } = (n_1,n_2,n_3) \in \mathbb N^3$, we define $\Theta_{\mathcal{U}_{\mathbf{n}}} : = {\rm Nm}^{(n)}_{\mathbf{n}} \Theta_{\mathcal{U}_{(n)}} $, where $n = \max \{ n_1,n_2,n_3 \}$ and ${\rm Nm}^{(n)}_{\mathbf{n}} : \Zp[\mathbf{S}_{\mathcal{U}_{(n)}}] \to \Zp[\mathbf{S}_{\mathcal{U}_{\mathbf{n}}}]$ denotes the natural projection.
    \item[iii)] For every ${\mathbf{n} } = (n_1,n_2,n_3) \in \mathbb N^3$, let us consider the natural morphism (cf. \S\ref{subsubsec_2025_10_15_1335}) 
$$\mathbf{S}_{\mathcal{U}_{\mathbf{n}}}  \xrightarrow{\,{\rm b}_{\mathbf{n}}\,} \mathbf{S}_{\mathbf{n}}=\mathbf{S}_{n_1,n_2,n_3} := S_{U_{n_1}} \times S_{U_{n_2}} \times S_{U_{n_3}}\,,$$ 
and define 
\begin{equation}
\label{eqn_2025_10_29_0931}
    \Theta_{\mathbf{n}}:={\rm b}_{\mathbf{n},*}\,\Theta_{\mathcal{U}_{\mathbf{n}}}\in \Zp[S_{U_{n_1}}]\otimes\Zp[S_{U_{n_2}}]\otimes\Zp[S_{U_{n_3}}] \,.
\end{equation}
\end{defn}

Thanks to \cite[Prop. 4.5.2]{LoefflerSpherical} (see also Proposition \ref{prop:norm_comp_indefinite}), the following norm-compatibility property holds.

\begin{proposition}
 For every triple ${\mathbf{n} } = (n_1, n_2, n_3)$ of positive integers %(maybe this can go: let us put  $$\mathbf{n}+\mathbf{1} =  (n_1 +1 , n_2 +1, n_3+1)\,.$$
 we have 
 \[ {\rm Nm}^{\mathbf{n}+\mathbf{1}}_{\mathbf{n}}(\Theta_{\mathcal{U}_{\mathbf{n+1}}}) = \mathbb{U}_p' \, \Theta_{\mathcal{U}_{\mathbf{n}}}\,.\] 
\end{proposition}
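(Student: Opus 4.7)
The plan is to follow almost verbatim the argument used to prove Proposition~\ref{prop:norm_comp_indefinite}, exchanging Shimura curves and threefolds for Shimura sets throughout. The construction of $\Theta_{\mathcal{U}_{(n)}}$ in Definition~\ref{def_2025_10_14_1506} is strictly parallel to that of $\Delta_{\mathcal{U}_{(n)}}$ in Definition~\ref{def_2025_07_03_1206}, and both are instances of the general push-pull formalism of \cite[\S4]{LoefflerSpherical}. In particular, the hypotheses (A), (B) of \cite[\S4.3]{LoefflerSpherical} for the representative $u$ (with $\mathcal{F}=\GL_2^3/\overline{Q}$, $Q^0_H=\GL_2$ and $\overline{Q}^0_G = Z\cdot \overline{N}$) have already been verified in the indefinite setting, and they are insensitive to the geometric nature of the Shimura data: what is needed is the group-theoretic identity of Lemma~\ref{lemma_pullback_opencompacts}, which is purely a statement about the intersection $u\,\mathcal{U}^{(n)} u^{-1}\cap \iota(\widehat{\mathcal{O}}_B^\times) = U_{Z,n}$, and applies equally well here.

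First I would reduce to the case $\mathbf{n}=(n)$. As in the indefinite argument, this follows from the factorisation ${\rm Nm}^{\mathbf{n}+\mathbf{1}}_{\mathbf{n}}\circ {\rm Nm}^{(n+1)}_{\mathbf{n}+\mathbf{1}} = {\rm Nm}^{(n)}_{\mathbf{n}}\circ {\rm Nm}^{(n+1)}_{(n)}$ together with the commutation of $\mathbb{U}_p'$ with the natural projection maps ${\rm Nm}^{(n)}_{\mathbf{n}}$, which is the Shimura-set analogue of \cite[Proposition~4.5.3]{LoefflerSpherical} and holds for the same reason (the operators at $p$ commute with degeneracy maps away from the varying components). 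Next, for $\mathbf{n}=(n)$, the core input is the Shimura-set analogue of Lemma~\ref{lemma_cartesian_diagram}: the diagram
\[
\xymatrix{
& & \mathbf{S}_{\mathcal{U}^{(n+1)}} \ar[d]^-{\mu_n^{E}} \\
S_{n+1} \ar[d]_-{\varpi_{Z,n}} \ar[urr]^-{\iota^u_{n+1}} \ar[rr] & & \mathbf{S}_{\mathcal{U}^{(n)}_\circ} \ar[d]^-{\varpi_n^{E}} \\
S_n \ar[rr]_-{\iota^u_n} & & \mathbf{S}_{\mathcal{U}^{(n)}}
}
\]
commutes, with the bottom square Cartesian. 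The commutativity of the upper triangle is a restatement of Lemma~\ref{lemma_pullback_opencompacts}, while the Cartesianness of the bottom square reduces, in this zero-dimensional setting, to an elementary fiber count: both vertical maps have degree $p^3$.

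Once this diagram is in place, the rest of the argument goes through mechanically. Applying $(\varpi^E_n)^*$ to $\iota^u_{n,*}(\mathbb{1}_{S_n})$ and using Cartesianness together with the compatibility of the fundamental element $\mathbb{1}_{S_n}$ with base change yields
\[
(\varpi_n^{E})^\ast \circ \iota^u_{n,*}(\mathbb{1}_{S_n}) = (\mu_n^{E})_*\circ \iota^u_{n+1,*}(\mathbb{1}_{S_{n+1}}).
\]
Pushing forward along the composition $\varpi^E_{2,n}$, recognising $\mathbb{U}_p' = \varpi^E_{2,n,*}\circ (\varpi^E_n)^*$, and finally applying $\iota(\eta_p^n)_*$ on the left, together with the matrix identity
\[
\iota(\eta_p^n)_*\circ \varpi^E_{2,n,*}\circ (\mu^E_n)_* = {\rm Nm}^{(n+1)}_{(n)}\circ \iota(\eta_p^{n+1})_*
\]
(a direct computation identical to the one carried out in the indefinite case), delivers the desired identity $\mathbb{U}_p'\,\Theta_{\mathcal{U}_{(n)}} = {\rm Nm}^{(n+1)}_{(n)}(\Theta_{\mathcal{U}_{(n+1)}})$. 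No genuinely new ingredient is required beyond translating each categorical statement from the setting of Chow groups and étale cohomology to that of $\mathbb{Z}_p$-valued functions on finite Shimura sets; the only point that warrants care is verifying that Cartesianness in the diagram above still yields the base change formula $(\varpi^E_n)^*\circ \iota^u_{n,*} = (\mu^E_n)_*\circ \iota^u_{n+1,*}\circ \varpi_{Z,n}^*$ at the level of group algebras, but this is immediate from the description of pull-back and push-forward on finite sets in terms of sums over fibers.
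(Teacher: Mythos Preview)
Your proposal is correct and follows the same approach as the paper. The paper does not spell out a proof at all: it simply remarks that the statement follows from \cite[Prop.~4.5.2]{LoefflerSpherical} and refers back to Proposition~\ref{prop:norm_comp_indefinite} for the indefinite analogue, whereas you have written out the details of that transfer explicitly.
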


\subsubsection{} We explain how to compare explicitly the element $\Delta_n$ given as in \eqref{eqn_Hsieh_element} and the element $\Theta_{\mathcal{U}_{(n)}}$ of Definition \ref{def_2025_10_14_1506}.

Mirroring our discussion in \S \ref{subsec_2025_10_14_1536}, the diagonal embedding $\iota$, together with right multiplication by $u$, we define a map 
$$\iota_{Z,(n),\star}^u : \Zp[S_n] \xrightarrow{u_\star \circ \iota_\star} \Zp[\mathbf{S}_{\mathcal{U}^Z_{(n)}}]$$
of Shimura sets, where $\mathbf{S}_{\mathcal{U}^Z_{(n)}} = B_E^\times \backslash \widehat{B}_E^\times / {\mathcal{U}^Z_{(n)}}$ and 
\[\mathcal{U}^Z_{(n)}:=\left \{ g \in \mathcal{U}_1(N)\,:\, g \equiv \left(\left( \begin{smallmatrix}  a_1 & 0 \\ b & a_0 \end{smallmatrix}\right),\left( \begin{smallmatrix}  a_2 & 0 \\ c & a_0 \end{smallmatrix}\right),\left( \begin{smallmatrix} a_3 & 0 \\ d & a_0 \end{smallmatrix}\right)\right)\, (\text{mod }p^n),\,a_i \in\Zp^\times, \, b,c,d \in\Zp\right\}\,,\] 
i.e., ${\mathcal{U}^Z_{(n)}}\subset \mathcal{U}_1(N)$ is the subgroup with conditions modulo $p^n$ transposed to those defining ${\mathcal{U}_{Z,(n)}}$. We therefore have a map
\[ \iota(\eta^n_p)_\star \circ  \iota_{Z,(n),\star}^u: \Zp[S_n] \longrightarrow \Zp[\mathbf{S}_{Z,(n)}]. \]

\begin{proposition}\label{prop_rewriting_Hsieh}
For every natural number $n$, we have 
$$\Delta_n  = \iota(\eta_p^n)_\star \circ \iota_{Z,(n),\star}^u\left(\mathbb{1}_{S_n}\right).$$     
\end{proposition}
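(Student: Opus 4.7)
\emph{Proof plan.} The argument is a direct unwinding of both sides followed by a short matrix bookkeeping exercise. First, I would verify the two well-definedness points that make the statement meaningful: the map $\iota^u_{Z,(n),\star}:\Zp[S_n]\to\Zp[\mathbf{S}_{\mathcal{U}^Z_{(n)}}]$ is well-defined because (by the computation obtained by transposing the matrices in the proof of Lemma~\ref{lemma_pullback_opencompacts}) one has $u\mathcal{U}^Z_{(n)}u^{-1}\cap\iota(\cO_{B,f}^\times)=U_{Z,n}$; and right multiplication by $\iota(\eta_p^n)$ induces a map $\Zp[\mathbf{S}_{\mathcal{U}^Z_{(n)}}]\to\Zp[\mathbf{S}_{Z,(n)}]$ because the direct computation
\[
\eta_p^{-n}\left(\begin{smallmatrix}A&B\\C&D\end{smallmatrix}\right)\eta_p^n=\left(\begin{smallmatrix}A&p^{-n}B\\p^{n}C&D\end{smallmatrix}\right)
\]
shows that $\iota(\eta_p^n)^{-1}\mathcal{U}^Z_{(n)}\iota(\eta_p^n)=\mathcal{U}_{Z,(n)}$ (for $B\in p^n\Zp$, $p^{-n}B$ ranges freely over $\Zp/p^n\Zp$, yielding the star entry).

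Next, using $\eta_p^n=\left(\begin{smallmatrix}p^n&0\\0&1\end{smallmatrix}\right)$, $\left(\begin{smallmatrix}1&1\\0&1\end{smallmatrix}\right)\eta_p^n=\left(\begin{smallmatrix}p^n&1\\0&1\end{smallmatrix}\right)$, and $\left(\begin{smallmatrix}0&1\\-1&0\end{smallmatrix}\right)\eta_p^n=\tau_{p^n}$, the right-hand side of the proposition unwinds to
\[
\iota(\eta_p^n)_\star\circ\iota^u_{Z,(n),\star}(\mathbb{1}_{S_n})=\sum_{[x]\in S_n}\bigl[\bigl(x\,\eta_p^n,\;x\left(\begin{smallmatrix}p^n&1\\0&1\end{smallmatrix}\right),\;x\,\tau_{p^n}\bigr)\bigr]
\]
as an element of $\Zp[\mathbf{S}_{Z,(n)}]$. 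I would then parameterise $S_n$ through the natural projection $\pi_{Z,n}:S_n\to S_0(Np^n)$: by Lemma~\ref{lemma_cosetdecomposition}, every $[x]\in S_n$ is uniquely of the form $[g\,\gamma_{z,b}]$ with $[g]\in S_0(Np^n)$ and $(z,b)\in(\ZZ/p^n\ZZ)^\times\times\ZZ/p^n\ZZ$. Multiplying out yields
\[
\gamma_{z,b}\eta_p^n=\left(\begin{smallmatrix}zp^n&b\\0&1\end{smallmatrix}\right),\quad \gamma_{z,b}\left(\begin{smallmatrix}p^n&1\\0&1\end{smallmatrix}\right)=\left(\begin{smallmatrix}zp^n&z+b\\0&1\end{smallmatrix}\right),\quad \gamma_{z,b}\tau_{p^n}=\left(\begin{smallmatrix}-bp^n&z\\-p^n&0\end{smallmatrix}\right),
\]
which one would like to match with the corresponding Hsieh triple $\bigl(g\left(\begin{smallmatrix}p^n&b\\0&1\end{smallmatrix}\right),\,g\left(\begin{smallmatrix}p^n&b+z\\0&1\end{smallmatrix}\right),\,g\tau_{p^n}\left(\begin{smallmatrix}1&0\\0&z\end{smallmatrix}\right)\bigr)$.

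The only substantive point—and, as the main (if modest) obstacle, the only step that requires some care—is to check that the two triples differ by a right multiplier that genuinely lies in $\mathcal{U}_{Z,(n)}$, the key subtlety being that the three components must share a common $a_0\in\Zp^\times$ (while the $a_i$'s may differ). A direct computation of $h_i^{-1}h_i'$ at the three positions gives the respective right multipliers $\left(\begin{smallmatrix}z^{-1}&0\\0&1\end{smallmatrix}\right)$, $\left(\begin{smallmatrix}z^{-1}&0\\0&1\end{smallmatrix}\right)$, and $\left(\begin{smallmatrix}1&0\\bp^nz^{-1}&1\end{smallmatrix}\right)$; the last is lower-triangular in $\GL_2(\Zp)$ but reduces to the identity modulo $p^n$, so all three are upper-triangular modulo $p^n$ with common bottom-right entry $a_0=1$. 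This is precisely the condition defining $\mathcal{U}_{Z,(n)}$, so the two classes agree in $\mathbf{S}_{Z,(n)}$. Summing over $[g]\in S_0(Np^n)$ and $(z,b)$ then recovers the defining sum \eqref{eqn_Hsieh_element} of $\Delta_n$, completing the proof.
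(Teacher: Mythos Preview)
Your proof is correct and follows essentially the same route as the paper's: both parameterise $S_n$ via the coset decomposition $[x]=[g\gamma_{z,b}]$ from Lemma~\ref{lemma_cosetdecomposition}, compute the three matrix products $\gamma_{z,b}\cdot u_i\cdot\eta_p^n$, and verify the right-$\mathcal{U}_{Z,(n)}$ congruence with Hsieh's triple. The paper records the third component as $\tau_{p^n}\left(\begin{smallmatrix}1&\\-bp^n&z\end{smallmatrix}\right)$ and leaves the right multipliers implicit, whereas you compute them explicitly and verify the common $a_0=1$; you also spell out the well-definedness of $\iota^u_{Z,(n),\star}$ and of $\iota(\eta_p^n)_\star$, which the paper takes for granted by analogy with the indefinite case.
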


\begin{proof}

 Recall that $\Zp[S_0(Np^n)] \xrightarrow{\nu_{Z,n}} \Zp[S_n]$ denotes the trace map induced by $$ S_0(Np^n) \lra \bb{Z}_p[S_n],\, \qquad [x] \mapsto \sum_{\pi_{Z,n}([y])=[x]} [y]\,.$$ Crucially, thanks to  Lemma \ref{lemma_cosetdecomposition}, $\nu_{Z,n}$ is explicitly given by $[x] \mapsto \sum_{(z,b)} [x \cdot \gamma_{z,b}]$. 
 
 Observe that 
 $$\iota(\gamma_{z,b}) \cdot u \cdot \iota(\eta_p^n)=\left(  \left( \begin{smallmatrix} z p^n & b \\  & 1 \end{smallmatrix} \right), \left( \begin{smallmatrix} z p^n &  z+b \\  & 1 \end{smallmatrix} \right),\left( \begin{smallmatrix} -b p^n  &  z \\ -p^n &  \end{smallmatrix} \right)\right)\,,$$ 
 and that the third component can be written as $\tau_{p^n}\left( \begin{smallmatrix} 1 &  \\ -b p^n & z \end{smallmatrix} \right)$. Then, we have the right-$\mathcal{U}_{Z,(n)}$ congruence 
 $$\iota(\gamma_{z,b}) \cdot u \cdot \iota(\eta_p^n)\,\mathcal{U}_{Z,(n)}=\left(  \left( \begin{smallmatrix} p^n & b \\  & 1 \end{smallmatrix} \right), \left( \begin{smallmatrix} p^n &  z+b \\  & 1 \end{smallmatrix} \right),\tau_{p^n}\left( \begin{smallmatrix} 1 &  \\  & z \end{smallmatrix} \right)\right)\,\mathcal{U}_{Z,(n)}\,.$$ 
 Therefore, by Lemma~\ref{lemma_cosetdecomposition}, 
 \begin{align*}
    \Delta_{n}  &= \sum_{[g] \in S_0(Np^n)} \iota(\eta_p^n)_\star \circ \iota_{Z,(n),\star}^u \circ \nu_{Z,n}[g] \\ &= \iota(\eta_p^n)_\star \circ \iota_{Z,(n),\star}^u\left(\sum_{[g] \in S_0(Np^n)}  \nu_{Z,n}[g]\right) \\    
    &=\iota(\eta_p^n)_\star \circ \iota_{Z,(n),\star}^u\left(\sum_{[g] \in S_n}[g]\right) \\ 
    &=\iota(\eta_p^n)_\star \circ \iota_{Z,(n),\star}^u\left(\mathbb{1}_{S_n}\right), 
\end{align*}
as claimed.
\end{proof}

\begin{corollary}\label{cor_rewriting_Hsieh}
  For every natural number $n$, we have
$$ {\rm pr}^{(n)}_{Z} \Theta_{\mathcal{U}_{(n)}} = \Delta_n\,,$$ 
where  ${\rm pr}^{(n)}_{Z} : \Zp[\mathbf{S}_{\mathcal{U}_{(n)}}] \to \Zp[\mathbf{S}_{{Z,(n)}}]$ denotes the natural projection.
\end{corollary}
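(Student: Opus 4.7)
The plan is to reduce the statement to the commutativity of a simple diagram of Shimura sets, and then invoke Proposition~\ref{prop_rewriting_Hsieh}. First, I would observe the following nesting of level subgroups: $\mathcal{U}^{(n)} \subseteq \mathcal{U}^Z_{(n)}$ (inserting $a_1=a_2=a_3=a_0=a$ into the definition of $\mathcal{U}^Z_{(n)}$), and similarly $\mathcal{U}_{(n)} \subseteq \mathcal{U}_{Z,(n)}$. These inclusions induce natural projections of Shimura sets $\pi\,:\,\mathbf{S}_{\mathcal{U}^{(n)}} \to \mathbf{S}_{\mathcal{U}^Z_{(n)}}$ and ${\rm pr}^{(n)}_Z \,:\, \mathbf{S}_{\mathcal{U}_{(n)}} \to \mathbf{S}_{Z,(n)}$, compatible with right multiplication by the central-in-the-first-factor element $\iota(\eta_p^n)$.

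Next, I would establish the commutativity of the diagram
\[
\xymatrix{
\Zp[S_n] \ar[r]^-{\iota^u_{(n),\star}} \ar@{=}[d] & \Zp[\mathbf{S}_{\mathcal{U}^{(n)}}] \ar[r]^-{\iota(\eta_p^n)_\star} \ar[d]^-{\pi_\star} & \Zp[\mathbf{S}_{\mathcal{U}_{(n)}}] \ar[d]^-{{\rm pr}^{(n)}_{Z,\star}} \\
\Zp[S_n] \ar[r]_-{\iota^u_{Z,(n),\star}} & \Zp[\mathbf{S}_{\mathcal{U}^Z_{(n)}}] \ar[r]_-{\iota(\eta_p^n)_\star} & \Zp[\mathbf{S}_{Z,(n)}]
}
\]
The left square commutes because both compositions send $[g] \in S_n$ to the class of $\iota(g)\cdot u$, and the only difference between the two is the level group by which we quotient on the right (we are simply viewing the same double coset modulo a coarser equivalence relation). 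The right square commutes because right multiplication by the fixed element $\iota(\eta_p^n) \in \widehat{B}_E^\times$ clearly commutes with the natural projection between Shimura sets of different levels (both projections correspond to just coarsening the equivalence relation on double cosets, which does not interact with the right action by $\iota(\eta_p^n)$).

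Granted these commutativities, applying the two compositions in the diagram to the element $\mathbb{1}_{S_n}$ yields, respectively,
\[
{\rm pr}^{(n)}_Z\circ \iota(\eta_p^n)_\star\circ \iota^u_{(n),\star}(\mathbb{1}_{S_n}) = {\rm pr}^{(n)}_{Z}\Theta_{\mathcal{U}_{(n)}}
\]
(by Definition~\ref{def_2025_10_14_1506}(i)) and
\[
\iota(\eta_p^n)_\star\circ \iota^u_{Z,(n),\star}(\mathbb{1}_{S_n}) = \Delta_n
\]
(by Proposition~\ref{prop_rewriting_Hsieh}). These are therefore equal, which is the desired identity.

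The main ingredients are already in place, so no serious obstacle is anticipated; the only thing to be careful about is the routine, but subtle, verification that the nesting $\mathcal{U}^{(n)} \subseteq \mathcal{U}^Z_{(n)}$ holds, which is immediate from the explicit congruence descriptions of the two level subgroups. All other steps are formal consequences of the fact that right multiplication and natural projection between Shimura sets commute when the groups are appropriately nested and the multiplier is a fixed element of $\widehat{B}_E^\times$.
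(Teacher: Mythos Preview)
Your proposal is correct and follows essentially the same approach as the paper's own proof: both establish the same commutative diagram relating the maps $\iota^u_{(n),\star}$, $\iota^u_{Z,(n),\star}$, and the projections, then apply it to $\mathbb{1}_{S_n}$ and invoke Proposition~\ref{prop_rewriting_Hsieh}. The only cosmetic difference is that the paper phrases the well-definedness of the left square via the equality $u\,\mathcal{U}^{(n)} u^{-1}\cap\iota(\cO_{B,f}^\times)=u\,\mathcal{U}^Z_{(n)} u^{-1}\cap\iota(\cO_{B,f}^\times)=U_{Z,n}$, whereas you rely directly on the inclusion $\mathcal{U}^{(n)}\subseteq\mathcal{U}^Z_{(n)}$; these amount to the same observation.
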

\begin{proof}
Since $$u\, \mathcal{U}^{(n)} u^{-1} \cap \iota(\cO_{B,f}^\times) = u\, \mathcal{U}^Z_{(n)} u^{-1} \cap \iota(\cO_{B,f}^\times)  =U_{Z,{n}},$$
we have a commutative diagram 
\[  \xymatrix{  
  S_{n} \ar@{=}[d] \ar[rr]^-{\iota_{(n)}^u} & &  \mathbf{S}_{\mathcal{U}^{(n)} } \ar[d] \ar[rr]^-{\iota(\eta_p^n)}  & & \mathbf{S}_{\mathcal{U}_{(n)} } \ar[d]^-{{\rm pr}^{(n)}_{Z}} \\ 
S_{n} \ar[rr]_-{\iota_{Z,(n)}^u} & & \mathbf{S}_{\mathcal{U}^Z_{(n)} } \ar[rr]_-{\iota(\eta_p^n)} & & \mathbf{S}_{Z,(n)} .}\,  \] 
The result then follows from this, together with Proposition \ref{prop_rewriting_Hsieh}.
\end{proof}

\subsection{Balanced triple product \texorpdfstring{$p$-adic\,}\ \texorpdfstring{$L$-functions\,}\ }
\label{subsec_2025_10_20_1818}
We recall the definition of balanced $p$-adic triple-product $L$-functions from \cite[\S4.7]{hsiehpadicbalanced}, which are defined in terms of the theta element $\Delta^\dagger_\infty$. 

We let 
$$(\Bf,\Bg,\Bh)\in e \bS(N_1,\psi_1,\BI_1)\times e \bS(N_2,\psi_2,\BI_2)\times e \bS(N_3,\psi_3,\BI_3)$$ 
denote a triple of primitive Hida families of tame conductors $(N_1,N_2,N_3)$ and branch characters $(\psi_1,\psi_2,\psi_3)$, where $e=e_{\mathrm{U}_p}$ and each $\BI_i$ is a normal domain finite flat over the Iwasawa algebra $\cO\llbracket1+p\Zp\rrbracket$ (here $\cO$ is the ring of integers of a finite extension of $\Qp$). Let us put $\cR=\BI_1\hat{\otimes}_\cO\BI_2\hat{\otimes}_\cO\BI_3$ and assume that
\begin{itemize}
    \item  $\psi_1\psi_2\psi_3=\omega^{2a}$,\quad where $a\in\ZZ$,
    \item  ${\rm gcd}(N_1,N_2,N_3)$ is square-free.
    \end{itemize}

Let us put $N ={\rm lcm}(N_1,N_2,N_3)$. We denote by $\Sigma^-$ the set of primes $\ell \mid N$ where the local root number of the triple-product $L$-function attached to the specialisation of $(\Bf,\Bg,\Bh)$ at some (therefore,  by automorphic rigidity, any) arithmetic point equals $-1$. We further assume that 
\begin{itemize}
\item[\mylabel{item_JL}{$\mathbf{JL}$})] 
\begin{itemize}
    \item[$\bullet$]  $\#\Sigma^- $ is odd,
    \item[$\bullet$] $N^- := \prod_{\ell \in \Sigma^-} \ell$ and $N/N^-$ are relatively prime,
    \item[$\bullet$]  the residual Galois representation attached to each of the Hida families $\Bf,\Bg,$ and $\Bh$ is absolutely irreducible, $p$-distinguished, and it is ramified at every $\ell \in \Sigma^-$ with $\ell \equiv 1 $ ($\text{mod }p$).
      \end{itemize}
    \end{itemize}
Under these hypotheses, we consider the definite quaternion algebra $B$ over $\QQ$ with discriminant $N^-$ and consider the primitive Jacquet-Langlands lift to $B_E$ of $(\Bf,\Bg,\Bh)$  (cf. \cite{hsiehpadicbalanced}, Theorem 4.5):
$$(\Bf^B,\Bg^B,\Bh^B)\in e \bS^B(N_1,\psi_1,\BI_1)\times e \bS^B(N_2,\psi_2,\BI_2)\times e \bS^B(N_3,\psi_3,\BI_3).$$

\begin{defn}
\label{def_2025_11_02_0949}
    \item[i)] We define $\BF^B=\Bf^B\boxtimes\Bg^B\boxtimes\Bh^B:B_E^\times\backslash\widehat{B}_E^\times\rightarrow\cR$ as the triple product given by 
    \[
    \BF^B(x_1,x_2,x_3)=\Bf^B(x_1)\otimes\Bg^B(x_2)\otimes\Bh^B(x_3).
    \]
    \item[ii)] Let $\chi_\cR^*:\QQ^\times\backslash\widehat\QQ^\times\rightarrow\cR^\times$ be the character given by
    \[
    z\mapsto \omega^a(z)\langle\epsilon_\cyc(z)\rangle^{-3}\langle\epsilon_\cyc(z)\rangle_{\BI_1}^{1/2}\langle\epsilon_\cyc(z)\rangle_{\BI_2}^{1/2}\langle\epsilon_\cyc(z)\rangle_{\BI_3}^{1/2}.
    \]
    \item[iii)]We define $\BF^{B, \dagger}: B_E^\times\backslash\widehat{B}_E^\times\rightarrow\cR$ by the formula
    \[\BF^{B, \dagger}(x_1,x_2,x_3)=\BF^B(x_1,x_2,x_3)\chi_\cR^*(\nu(x_3)),\]
    where $\nu$ is the reduced norm on $B$. 
    \item[iv)] The theta element attached to $\BF^{B, \dagger}$ is defined by
    \[
    \Theta_{\BF^{B, \dagger}}:=\BF^{B, \dagger}(\Delta_\infty^\dagger)\in\cR.
    \]
\end{defn}
The theta element $\Theta_{\BF^{B, \dagger}}$ is Hsieh's balanced $p$-adic $L$-function. Its interpolative properties, which justify this nomenclature, are recorded in \cite[Corollary 4.13]{hsiehpadicbalanced}. It is worth recalling that the interpolation formula proven by Hsieh relates the square of the specialization of $\Theta_{\BF^{B, \dagger}}$ at each arithmetic point $(P_1,P_2,P_3)$ in the balanced range with the algebraic part of the critical central $L$-value attached to $(\Bf_{P_1},\Bg_{P_2},\Bh_{P_3})$.

\subsection{Signed balanced \texorpdfstring{$p$-adic\,}\ \texorpdfstring{$L$-function\,}\ }\label{subsec_signedpadic}
We retain the notation of \S \ref{subsec:familyofthetaelements}--\ref{subsec_2025_10_20_1818}. Let $f \in S_2(\Gamma_1(N_1)\cap U_1, \psi_1)$ be a $p$-old ordinary eigenform. We also let $g \in S_2(\Gamma_1(N_2),\psi_2
)$ be a new form with $\ord_p\,a_p(g)>0$. Finally, we let $\h\in e S(N_3,\psi_3,\mathbf{I})$ be a Hida family as in the previous section. We assume that $(\psi_1,\psi_2,\psi_3)$ and $(N_1,N_2,N_3)$ satisfy the conditions of \S\ref{subsec_2025_10_20_1818}. For the definite quaternion algebra $B$ as in loc. cit., we then have a primitive Jacquet--Langlands lift 
$$(f^B,g^B,\h^B)\in e S^B(N_1p,\psi_1)\times S^B(N_2,\psi_2) \times \mathbf{S}^B(N_3,\psi_3,\mathbf{I})$$
of the triple $(f,g,\h)$.

\subsubsection{} For a triple $\mathbf{n}=(n_1,n_2,n_3)$ of natural numbers, we define the element
$$\Theta_{{\mathbf{n}}}^{\ddagger}:=((\mathrm{U}_{p}')^{-n_1}e_{\mathrm{U}_{p}'},{\rm id},(\mathrm{U}_{p}')^{-n_3}e_{\mathrm{U}_{p}'})\,\Theta_{{\mathbf{n}}}\in \Zp[S_{U_{n_1}}]^{\rm ord}\otimes\Zp[S_{U_{n_2}}]\otimes\Zp[S_{U_{n_3}}]^{\rm ord}\,,$$ 
where $\Theta_{\bf n}$ is as in \eqref{eqn_2025_10_29_0931}, and for any $\Zp$-algebra $A$ and positive integer $m$ we have set $A[S_{U_{m}}]^{\rm ord}:=e_{\mathrm{U}_{p}'}\cdot A[S_{U_{m}}]$. We let 
$$\Theta_{n_1,0,n_3}(g)\in  \cO[S_{U_{n_1}}]^{\rm ord}\otimes_\cO\cO[S_{U_{n_3}}]^{\rm ord}$$ 
denote the image of $\Theta_{n_1,0,n_3}^{\ddagger}$
under the map induced from $S_{U_0}:=S_{U_{1}(N)}\xrightarrow{\,g^B\,} \cO$\,. We then define $\Theta_{n}^\ddagger(g)\in  \cO[S_{U_{1}}]^{\rm ord}\otimes_\cO\cO[S_{U_{n}}]^{\rm ord}$ as the image of $\Theta_{n,0,n}(g)$ under the natural morphism $({\rm pr}^{n,0,n}_{1,0,n})$. 

Finally, we let $\Theta_{n}(f,g)\in \cO[S_{U_n}]^{\rm ord}$ denote the image of $\Theta_{n}^\ddagger(g)$ under the map induced from $S_{U_1}\xrightarrow{\,f^B\,} \cO$\,. 

\subsubsection{} Let us consider the subgroup $U_{n-1}\cap U_{0,n} \supseteq U_n$ and let  $S_{U_n} \xrightarrow{\varpi_{n}} S_{U_{n-1}\cap U_{0,n}}$ be the corresponding map of Shimura sets. We denote by 
$$ \ZZ_p[S_{U_{n-1}\cap U_{0,n} }] \xrightarrow{\nu_{n}} \ZZ_p[S_{U_n}], \,\,\,\qquad [x] \mapsto \sum_{[y]\, :\, \varpi_{n}([y]) =[x] }[y]$$ 
the corresponding trace map. Note that 
$$U_{n-1}\cap U_{0,n}  / U_n \simeq  (1 + p^{n-1}\ZZ_p ) /  (1 + p^{n}\ZZ_p )\simeq \ZZ/p\ZZ.$$  
For each  $a \in (1 + p^{n-1}\ZZ_p ) /  (1 + p^{n}\ZZ_p )$, let us choose a lift (which we again denote by $a$) to $\ZZ_p^\times$. Then, a system of left coset representatives for $\left(U_{n-1}\cap U_{0,n} \right) / U_n $ is given by $$\left \{ \gamma_a =\left( \begin{smallmatrix}
    1 & \\ & a
\end{smallmatrix} \right)\,:\, a \in (1+p^{n-1}\ZZ_p)/(1+p^n \ZZ_p) \right\}.$$
\begin{remark}\label{rmk_trace_Hida}
    The trace map 
$$\ZZ_p[S_{U_{n-1}\cap U_{0,n} }] \xrightarrow{\nu_{n}} \ZZ_p[S_{U_n}]$$ can be written as 
$$[x] \mapsto \sum_{a \in (1+p^{n-1}\ZZ_p)/(1+p^n \ZZ_p)} [x \cdot \gamma_a] = \sum_{a \in (1+p^{n-1}\ZZ_p)/(1+p^n \ZZ_p)} \langle a \rangle \cdot [x],$$
where $\langle a \rangle$ is the diamond operator given as in \S
\ref{subsubsec_A21_2025_06_30_1533}. 
\end{remark}

\begin{lemma}\label{lemma:Hida}
    Let $n\geq 2$ be an integer. Then the natural map $\pi_{n,n-1}:\mathbb{Z}_p[S_{U_{n-1}\cap U_{0,n}}]\rightarrow \mathbb{Z}_p[S_{U_{n-1}}]$ induces an isomorphism
    \[
    \mathbb{Z}_p[S_{U_{n-1}\cap U_{0,n}}]^{\ord}\xrightarrow{\,\,\sim\,\,} \mathbb{Z}_p[S_{U_{n-1}}]^\ord.
    \]
\end{lemma}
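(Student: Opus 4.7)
The plan is to show that the $\mathrm{U}_p'$-equivariant surjection of finite free $\mathbb{Z}_p$-modules $\pi_{n,n-1,*}:\mathbb{Z}_p[S_{U_{n-1}\cap U_{0,n}}]\rightarrow \mathbb{Z}_p[S_{U_{n-1}}]$ becomes an isomorphism on ordinary parts, by establishing $\mathrm{U}_p'$-equivariance and surjectivity directly, and then comparing $\mathbb{Z}_p$-ranks.

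Setting $U := U_{n-1}$ and $U' := U_{n-1} \cap U_{0,n}$, the equivariance of $\pi_{n,n-1}$ with respect to the $\mathrm{U}_p'$-action at the two levels follows from a direct matrix computation. Both $\eta_p^{-1} U \eta_p \cap U$ and $\eta_p^{-1} U' \eta_p \cap U'$ impose only the additional condition that the upper-right entry vanishes modulo $p$, so both double cosets $[U \eta_p^{-1} U]$ and $[U' \eta_p^{-1} U']$ admit the common system of single-coset representatives $\gamma_b = \left(\begin{smallmatrix} p^{-1} & b \\ 0 & 1 \end{smallmatrix}\right)$, $b \in \mathbb{Z}/p\mathbb{Z}$ (this uses $n\ge2$ to ensure the relevant matrices are integral at $p$). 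The intertwining $\pi_{n,n-1,*} \circ \mathrm{U}_p' = \mathrm{U}_p' \circ \pi_{n,n-1,*}$ is then immediate from the naturality of right translation with respect to the projection $S_{U'} \twoheadrightarrow S_U$. Surjectivity of $\pi_{n,n-1,*}$ is clear since $U' \subseteq U$, and applying the idempotent $e_{\mathrm{U}_p'}$ yields a surjection of finite free $\mathbb{Z}_p$-modules $\mathbb{Z}_p[S_{U'}]^{\ord} \twoheadrightarrow \mathbb{Z}_p[S_{U}]^{\ord}$.

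The main obstacle is to upgrade this to an isomorphism by establishing rank equality --- equivalently, showing that $\mathrm{U}_p'$ acts topologically nilpotently on $\ker(\pi_{n,n-1,*})$. I would derive this from the definite analog of the control theorem in Proposition~\ref{prop_2025_07_04_1618}(iii) for the degree-$0$ cohomology of Shimura sets, whose proof follows the same spectral-sequence argument of \cite{LRZ} specialised to zero-dimensional targets. Both levels $U$ and $U'$ share the same wild diamond condition $a \equiv d \pmod{p^{n-1}}$ and differ only by the Iwahori refinement $c \equiv 0 \pmod{p^n}$ imposed by $U_{0,n}$; since the extra $c$-congruence lies in the unipotent direction --- invisible to the $\Lambda_Z$-action --- the two ordinary modules appear as specialisations of Hida's universal ordinary module $e_{\mathrm{U}_p'} \varprojlim_{k} \cO[S_{U_k}]$ at the same augmentation-type ideal of $\Lambda_Z$, after localising at a non-Eisenstein, $p$-distinguished maximal ideal $\mathfrak{m}$. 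Freeness of the localised universal module over the appropriate Iwasawa algebra (as in Proposition~\ref{prop_2025_07_04_1618}(ii)) then yields the required rank equality on $\mathfrak{m}$-components, and summing over all non-Eisenstein $\mathfrak{m}$ --- together with the straightforward handling of the Eisenstein contribution --- completes the argument: a surjection of free $\mathbb{Z}_p$-modules of equal rank is necessarily an isomorphism.
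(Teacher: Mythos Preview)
Your approach can be made to work but is more elaborate than the paper's, and as written it has a gap. The paper proceeds entirely elementarily: the kernel of $\pi_{n,n-1,*}$ is $\mathbb{Z}_p$-spanned by differences $[xg]-[x]$ with $g=\left(\begin{smallmatrix} a & b \\ c & d\end{smallmatrix}\right)$ satisfying $p^{n-1}\mid c$ and $p^{n-1}\mid(a-d)$, and a short matrix computation shows that $\mathrm{U}_p'$ (acting via right translation by $\left(\begin{smallmatrix} p & j \\ 0 & 1\end{smallmatrix}\right)$, $j=0,\ldots,p-1$) annihilates each such element --- one checks that for $k\equiv a^{-1}b\pmod p$ one has $\left(\begin{smallmatrix} p & j \\ 0 & 1\end{smallmatrix}\right)^{-1}g\left(\begin{smallmatrix} p & j+k \\ 0 & 1\end{smallmatrix}\right)\in U_{n-1}\cap U_{0,n}$ for every $j$, so the two sums in $\mathrm{U}_p'([xg]-[x])$ coincide after the shift $j\mapsto j+k$. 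Since $\mathrm{U}_p'$ kills the kernel outright, so does $e_{\mathrm{U}_p'}$, and the isomorphism follows. No control theorem, no localisation at maximal ideals, no Eisenstein bookkeeping.

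The gap in your argument is the claim that $\mathbb{Z}_p[S_{U'}]^{\ord}$ (with $U'=U_{n-1}\cap U_{0,n}$) arises as a specialisation of the universal ordinary module $e_{\mathrm{U}_p'}\varprojlim_k\cO[S_{U_k}]$ at the same ideal as $\mathbb{Z}_p[S_{U_{n-1}}]^{\ord}$: the level $U'$ does not occur in the tower $\{U_k\}$, so no control theorem for that tower addresses it directly, and the remark that the unipotent refinement is ``invisible to $\Lambda_Z$'' is not by itself a proof. You can close this by the sandwich $U_n\subset U'\subset U_{n-1}$: if the composite surjection $\mathbb{Z}_p[S_{U_n}]^{\ord}\twoheadrightarrow\mathbb{Z}_p[S_{U'}]^{\ord}\twoheadrightarrow\mathbb{Z}_p[S_{U_{n-1}}]^{\ord}$ is an isomorphism, then so is each factor. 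But note that the definite-case control isomorphism $\mathbb{Z}_p[S_{U_n}]^{\ord}\xrightarrow{\sim}\mathbb{Z}_p[S_{U_{n-1}}]^{\ord}$ you would be invoking is itself most naturally proved by exactly the ``$\mathrm{U}_p'$ kills the kernel'' computation above --- so your route, once made precise, essentially contains the paper's argument as a subroutine and then layers extra machinery on top.
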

\begin{proof}
    The proof is a standard argument in Hida theory, see for instance \cite{EmertonHida}. We give the details for the convenience of the reader.
    The map $\pi_{n,n-1}$ is surjective and its kernel is generated as a $\mathbb{Z}_p$-module by the elements of the form
    \[
    [xg]-[x]\quad \text{with } [x]\in S_{U_{n-1}\cap U_{0,n}} \text{ and } g=\begin{pmatrix} a & b \\ c & d \end{pmatrix}\in \GL_2(\bb{Z}_p) \text{ such that } p^{n-1}\mid \gcd(c, a-d)\,.
    \]
    It therefore suffices to show that the operator $\mathrm{U}_p'$ annihilates these elements.

    To that end, let $[x]$ and $g$ be as above. Then
    \begin{equation}\label{eq:202510221329}
    \mathrm{U}_p'([xg]-[x])=\sum_{j=0}^{p-1} \left(\left[xg\begin{pmatrix} p & j \\ 0 & 1 \end{pmatrix}\right]-\left[x\begin{pmatrix} p & j \\ 0 & 1 \end{pmatrix}\right]\right),
    \end{equation}
    where the matrices $\left(\begin{smallmatrix} p & j \\ 0 & 1 \end{smallmatrix}\right)$ are regarded as elements in $\GL_2(\bb{Q}_p)$. Let $k\in\lbrace 0,1,\ldots, p-1\rbrace$ be the unique element with $k\equiv a^{-1}b\pmod{p}$. Then an easy computation shows that
    \[
    \begin{pmatrix} p & j \\ 0 & 1 \end{pmatrix}^{-1} \begin{pmatrix} a & b \\ c & d \end{pmatrix}\begin{pmatrix} p & j+k \\ 0 & 1 \end{pmatrix}\in U_{n-1}\cap U_{0,n} \qquad \forall\, j\in \lbrace 0,1,\ldots,p-1\rbrace.
    \]
    It follows that
    \[
    \left[xg\begin{pmatrix} p & j+k \\ 0 & 1 \end{pmatrix}\right]-\left[x\begin{pmatrix} p & j \\ 0 & 1 \end{pmatrix}\right]=0\qquad \forall\,  j\in \lbrace 0,1,\ldots,p-1\rbrace
    \]
    as elements in $\mathbb{Z}_p[S_{U_{n-1}\cap U_{0,n}}]$. Combined with equation~(\ref{eq:202510221329}), this yields
    \[
    \mathrm{U}_p'([xg]-[x])=0,
    \]
    as desired.
\end{proof}

For each integer $n\geq 2$, we denote by
\[
\res_{n-1}^{n}:\bb{Z}_p[S_{U_{n-1}}]^\ord\longrightarrow \bb{Z}_p[S_{U_n}]^{\ord}
\]
the composition of the inverse of the isomorphism $\bb{Z}_p[S_{U_{n-1}\cap U_{0,n}}]^\ord\xrightarrow{\,\sim\,} \bb{Z}_p[S_{U_{n-1}}]^\ord$ in Lemma~\ref{lemma:Hida} with the trace map $\nu_n  : \bb{Z}_p[S_{U_{n-1}\cap U_{0,n}}]^\ord\rightarrow \bb{Z}_p[S_{U_{n}}]^\ord$.

The following $Q$-system relation parallels that of the diagonal cycles (cf. Propositions~\ref{prop_2025_03_08_1713} and \ref{prop:2025_08_11_1553}).

\begin{proposition}
    \label{prop_2025_10_15_1640}
    For any integer $n\geq 2$, we have
    $${\rm pr}_{n}^{n+1}\,\Theta_{n+1}(f,g)=a_p(g)\cdot\Theta_{n}(f,g)-\chi_g(p)\cdot \res_{n-1}^{n}\Theta_{n-1}(f,g)\,.$$
\end{proposition}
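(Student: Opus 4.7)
The argument proceeds in three steps: first we reduce the claim, by a computation parallel to the proof of Proposition~\ref{prop_2025_03_08_1203}, to a KLZ-type commutation relation on Shimura sets; second we establish this relation by the same coset-decomposition argument underlying Proposition~\ref{Prop_of_KLZ17}; and third we reinterpret the resulting ``intermediate'' term via Hida theory, using Lemma~\ref{lemma:Hida}.

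\emph{Step 1.} Unwinding $\Theta_{n+1}(f,g)$ via the norm compatibility of the theta elements ${\rm Nm}^{(n+1)}_{(n)}\,\Theta_{\mathcal{U}_{(n+1)}}=\mathbb{U}_p'\,\Theta_{\mathcal{U}_{(n)}}$ (cf. \S\ref{subsec_2025_10_14_1536}), its compatibility with ${\rm b}_{\mathbf{n},*}$ (Proposition~\ref{prop_2025_05_15_0828}), and pushing ${\rm pr}_n^{n+1}$ through the chain of projections, degeneracy maps, and Hecke operators---all of which commute with one another at levels $\ge 1$---the identity to be proved is reduced (exactly as in the proof of Proposition~\ref{prop_2025_03_08_1203}) to an analysis of the interaction of a single residual $\mathrm{U}_p'$ on the middle factor with the projection ${\rm pr}_{0}^{1}:\cO[S_{U_1}]\to\cO[S_{U_0}]$ composed with the $g^B$-evaluation.

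\emph{Step 2.} The Shimura-set analog of Proposition~\ref{Prop_of_KLZ17},
$$({\rm pr}_{0}^{1})_{*}\circ \mathrm{U}_p' \;=\; \mathrm{T}_p'\circ ({\rm pr}_{0}^{1})_{*} \;-\; \mathrm{S}_p^{-1}\circ \pi_{2,*},$$
holds verbatim, since the definite Shimura sets are zero-dimensional and the relation is purely one of correspondences induced by the same coset decompositions that are used on the Shimura-curve side. Inserting this identity and evaluating at $g^B$ (under which $\mathrm{T}_p'$ acts by $a_p(g)$ and $\mathrm{S}_p^{-1}$ acts by $\chi_g(p)$), we obtain
$${\rm pr}_n^{n+1}\,\Theta_{n+1}(f,g) \;=\; a_p(g)\cdot \Theta_n(f,g) \;-\; \chi_g(p)\cdot \tilde{\Theta}_n(f,g),$$
where $\tilde{\Theta}_n(f,g)\in\cO[S_{U_n}]^{\ord}$ is the element defined in direct analogy with $\widetilde{\Delta}_n^{\etale}(f,g)$ of \S\ref{subsubsec_2025_10_20_1513}: start from the theta element at level $(n,1,n)$, apply $\pi_{2,*}$ on the middle factor (pushing from $U_1$ to $U_0$), evaluate at $g^B$ and $f^B$ on the first and middle factors, and take the $\mathrm{U}_p'$-anti-ordinary projector on the first and third factors.

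\emph{Step 3 (the ``conceptual'' input).} It remains to show
$$\tilde{\Theta}_n(f,g) \;=\; \res_{n-1}^{n}\,\Theta_{n-1}(f,g).$$
Using the factorisation $\pi_2={\rm pr}'\circ \mathcal{T}_{\eta_p^{-1}}$ and rearranging, $\tilde{\Theta}_n(f,g)$ is rewritten as the image under the trace $\nu_n:\cO[S_{U_{n-1}\cap U_{0,n}}]\to \cO[S_{U_n}]$ of an element of $\cO[S_{U_{n-1}\cap U_{0,n}}]^{\ord}$ which, under Hida's canonical isomorphism $\pi_{n,n-1}:\cO[S_{U_{n-1}\cap U_{0,n}}]^{\ord}\xrightarrow{\,\sim\,}\cO[S_{U_{n-1}}]^{\ord}$ of Lemma~\ref{lemma:Hida}, corresponds precisely to $\Theta_{n-1}(f,g)$. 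By the very definition of $\res_{n-1}^{n}$ as $\nu_n\circ \pi_{n,n-1}^{-1}$, this identifies $\tilde{\Theta}_n(f,g)$ with $\res_{n-1}^{n}\Theta_{n-1}(f,g)$, and concludes the proof.

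\emph{Main obstacle.} The crux---and the conceptual novelty over the argument of Proposition~\ref{prop_2025_03_08_1203}---lies in Step 3: the $\mathcal{T}_{\eta_p^{-1}}$-twist implicit in the definition of $\pi_2$ translates, on the $\mathrm{U}_p'$-ordinary part, into Hida's canonical isomorphism $\pi_{n,n-1}$, so that a pushforward along the middle factor secretly encodes a canonical Hida-theoretic lift from $\cO[S_{U_{n-1}}]^{\ord}$ to $\cO[S_{U_n}]^{\ord}$ on the third factor. Making this translation rigorous will require a careful comparison of the coset representatives underlying $\pi_{2,*}$, the trace $\nu_n$, and the isomorphism of Lemma~\ref{lemma:Hida}.
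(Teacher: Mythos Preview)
Your Steps 1 and 2 are correct and track the paper exactly. The gap is in Step 3. Your plan is to identify $\widetilde\Theta_n(f,g)$ with $\res_{n-1}^n\Theta_{n-1}(f,g)$ directly, by rewriting it as $\nu_n$ applied to an explicit element of $\cO[S_{U_{n-1}\cap U_{0,n}}]^{\ord}$, but the mechanism for this is not supplied. The map $\pi_{2,*}$ acts on the \emph{middle} factor (at levels $U_1\to U_0$), whereas $\nu_n$ and $\pi_{n,n-1}$ are operations on the \emph{third} factor at levels $U_n$ and $U_{n-1}\cap U_{0,n}$. The third factor never leaves level $U_n$ in the construction of $\widetilde\Theta_n(f,g)$, so there is no evident way a middle-factor pushforward can be ``rearranged'' into a third-factor trace by coset bookkeeping alone. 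You are right to flag this as the crux, but the comparison of coset representatives you envisage does not by itself bridge the two factors.

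The paper's argument for Step 3 is indirect. It first proves the auxiliary relation ${\rm pr}_{n-1}^{n}\widetilde\Theta_n(f,g)=p\cdot\Theta_{n-1}(f,g)$ (the analog of Lemma~\ref{lemma_2025_03_08_1548}), and separately the diamond-invariance $\langle 1,1,d\rangle\widetilde\Theta_n(f,g)=\widetilde\Theta_n(f,g)$ for all $d\equiv 1\pmod{p^{n-1}}$ (the analog of Proposition~\ref{prop_2025_03_08_1713}). By Remark~\ref{rmk_trace_Hida} the latter forces $\widetilde\Theta_n(f,g)$ into the image of $\nu_n$, hence (via Lemma~\ref{lemma:Hida}) into the image of $\res_{n-1}^n$. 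Writing $\widetilde\Theta_n(f,g)=\res_{n-1}^n(c)$ and using that ${\rm pr}_{n-1}^n\circ\res_{n-1}^n$ is multiplication by $p$, the auxiliary relation gives $p\cdot c=p\cdot\Theta_{n-1}(f,g)$, so $c=\Theta_{n-1}(f,g)$ by torsion-freeness of $\cO[S_{U_{n-1}}]^{\ord}$. The diamond-invariance---which exploits the diagonal structure of the theta element to absorb a factor-3 diamond into the action on $\mathbb{1}_{S_n}$, where it is trivial---is precisely the cross-factor input your plan is missing.
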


\begin{proof}
The proof of this proposition is similar to that of Proposition~\ref{prop_thm_2025_07_03_1627}, and we indicate the key steps here for the convenience of the reader. As in Proposition~\ref{prop:2025_08_11_1553}, we have
\begin{equation}
    \label{eqn_2025_10_20_1436}
 {\rm pr}_{n}^{n+1}\,\Theta_{n+1}(f,g)=a_p(g)\cdot\Theta_{n}(f,g)-\chi_g(p)\cdot\widetilde\Theta_{n}(f,g)\,,   
\end{equation}
\begin{equation}
    \label{eqn_2025_10_20_1453}
 {\rm pr}_{n}^{n+1}\,\widetilde\Theta_{n+1}(f,g)=p\cdot\Theta_{n}(f,g)\,.
\end{equation}
Here, $\widetilde\Theta_{n}(f,g) \in  \cO[S_{U_n}]^{\rm ord}$ is defined (paralleling our discussion in \S\ref{subsubsec_2025_10_20_1513}) as the image of $\Theta_{(n,1,n)}^{\ddagger}$ under the compositum of the following maps: 
\begin{align*}
    \Zp[S_{U_n}]^{\rm ord}\otimes \Zp[S_{U_1}] \otimes \Zp[S_{U_n}]^{\rm ord}\xrightarrow{({\rm pr}_{1}^n,\pi_{2,*},{\rm id})} \Zp[S_{U_1}]^{\rm ord}\otimes \Zp[S_{U_0}] \otimes \Zp[S_{U_n}]^{\rm ord}\xrightarrow{(f,g,{\rm id})} \cO[S_{U_n}]^{\rm ord}\,.
\end{align*}
The argument of Proposition~\ref{prop_2025_03_08_1713} applies formally to show that 
\begin{equation}
    \label{eqn_2025_10_20_1437}
    \langle 1,1,d\rangle\,\widetilde\Theta_{n}(f,g)= \widetilde\Theta_{n}(f,g)
\end{equation}
for any integer $d\equiv 1 \pmod{p^{n-1}}$. Note that, thanks to Remark \ref{rmk_trace_Hida}, \eqref{eqn_2025_10_20_1437} implies that $\widetilde\Theta_{n}(f,g)$ belongs to the image of the trace map $\nu_n$. Then, arguing as in the proof of Proposition~\ref{prop:2025_08_11_1554} (and relying on Lemma~\ref{lemma:Hida}), we deduce using \eqref{eqn_2025_10_20_1453} that 
\begin{equation}
    \label{eqn_2025_10_20_1450}
    \widetilde\Theta_{n}(f,g) = \res_{n-1}^{n}\Theta_{n-1}(f,g)\,.
\end{equation}
The proof of our proposition now follows by combining \eqref{eqn_2025_10_20_1436} and \eqref{eqn_2025_10_20_1450}.
\end{proof}

\subsubsection{} As in the main body of our article, let us denote by $\alpha$ and $\beta$ the roots of the Hecke polynomial $X^2-a_p(g)X+\chi_g(p)p$ of $g$ at $p$. As in Definition~\ref{def_2025_10_15_1649}, we define the $p$-stabilized class 
$$\Theta_{n,\xi}(f,g)=\frac{1}{\xi^{n}}\left(\Theta_{n}(f,g)-\frac{\chi_g(p)}{\xi}\res_{n-1}^{n} \Theta_{n-1}(f,g)\right) \,\in\, L[S_{U_n}]^{\ord}\,,\qquad \xi=\alpha,\beta\,,\quad n\in \ZZ_{\geq 2}\,.$$ 

Denote \[ U_\infty :=\{ g \in U_1(N) \,:\, \Psi_p(g_p) = \left( \begin{smallmatrix} a & b  \\  & a \end{smallmatrix}\right),\,\,a \in\Zp^\times, b \in \ZZ_p\}. \] 

\noindent We then have the following ``definite'' version of Theorem~\ref{thm:non-integral-classes}, which is also proved identically:
\begin{theorem}
        \label{thm_2025_10_15_2117}
Let $\xi\in\{\alpha,\beta\}$ be such that $\ord_p(\xi)<1$. 
\item[i)] There exists a unique element 
$$\Theta_{\infty,\xi}(f,g)\in \mathscr{L}\otimes_\LL\varprojlim_n \cO[S_{U_n}]^{\ord}=\mathscr{L}\otimes_\LL \cO[[B^\times\backslash \widehat{B}^\times / U_\infty]]^{\ord}$$ 
such that its image under the natural morphism 
$ \mathscr{L}\otimes_{\LL}\varprojlim_n \cO[S_{U_n}]^{\ord} \lra L[S_{U_n}]^{\ord}$ coincides with $\Theta_{n,\xi}(f,g)$. 
\item[ii)] Let us put $\Theta_{\infty,\xi}(f,g,\h):=\h^{B, \dagger}\left(\Theta_{\infty,\xi}(f,g)\right)\in \mathscr{L}\otimes_\LL \mathbf{I}$. Then, $\Theta_{\infty,\xi}(f,g,\h) \in \mathcal{D}_{\ord_p(\xi)}$\,.
\end{theorem}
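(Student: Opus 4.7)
The plan is to adapt the strategy used for Theorem~\ref{thm:non-integral-classes} to the zero-dimensional setting of Shimura sets, with Proposition~\ref{prop_2025_10_15_1640} playing the role that Proposition~\ref{prop_thm_2025_07_03_1627} played in the cohomological case. The first step is to verify that the $p$-stabilised system $\{\Theta_{n,\xi}(f,g)\}_{n\geq 2}$ is norm-compatible under the natural projections $\mathrm{pr}_n^{n+1}: L[S_{U_{n+1}}]^{\ord} \to L[S_{U_n}]^{\ord}$. Unwinding the definition of $\mathrm{res}_n^{n+1}$ together with Lemma~\ref{lemma:Hida}, and noting that the degeneracy $S_{U_{n+1}} \to S_{U_n \cap U_{0,n+1}}$ has fibres of cardinality $p$, yields the identity $\mathrm{pr}_n^{n+1} \circ \mathrm{res}_n^{n+1} = p \cdot \mathrm{id}$ on the ordinary part. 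Combined with the three-term relation of Proposition~\ref{prop_2025_10_15_1640} and the substitution $a_p(g) - p\chi_g(p)/\xi = \xi$ (a direct consequence of $\xi$ being a root of $X^2 - a_p(g)X + \chi_g(p)p$), a short computation (entirely analogous to the one appearing in the proof of Theorem~\ref{thm:non-integral-classes}) gives $\mathrm{pr}_n^{n+1}\Theta_{n+1,\xi}(f,g) = \Theta_{n,\xi}(f,g)$.

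Next I would estimate the $p$-adic size of $\Theta_{n,\xi}(f,g)$ with respect to the natural $p$-adic norm on the finite free $\mathcal{O}$-module $\mathcal{O}[S_{U_n}]^{\ord}$. Since $\Theta_n(f,g)$ and $\mathrm{res}_{n-1}^n \Theta_{n-1}(f,g)$ are integral and $|\chi_g(p)/\xi| = p^{\ord_p(\xi)}$, this gives the bound $\|\Theta_{n,\xi}(f,g)\| \leq p^{(n+1)\ord_p(\xi)}$. Together with the norm-compatibility established in the previous step, the definite analogue of Proposition~\ref{prop:exist-H1}---which in this cohomology-free setting reduces, after choosing an $\mathcal{O}$-basis of the finitely generated $\Lambda$-module $\varprojlim_n \mathcal{O}[S_{U_n}]^{\ord}$ (finiteness being supplied by Hida theory for ordinary definite quaternionic forms), to a coordinate-wise application of Lemma~\ref{lem:characterizing-distributions}---produces the desired unique element $\Theta_{\infty,\xi}(f,g) \in \mathscr{L} \otimes_\Lambda \varprojlim_n e_{\mathrm{U}_p'}\mathcal{O}[S_{U_n}]$, sitting more precisely inside $\mathcal{D}_{\ord_p(\xi)} \otimes_\Lambda \varprojlim_n e_{\mathrm{U}_p'}\mathcal{O}[S_{U_n}]$ with norm at most $|\xi|^{-2}$. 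Uniqueness inside $\mathscr{L} \otimes_\Lambda \varprojlim_n e_{\mathrm{U}_p'}\mathcal{O}[S_{U_n}]$ follows by the same formal argument as in Proposition~\ref{prop:inject-H1}, settling (i).

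For (ii), I would observe that $\h^{B,\dagger}$ defines a continuous $\Lambda$-linear morphism $\varprojlim_n e_{\mathrm{U}_p'}\mathcal{O}[S_{U_n}] \to \mathbf{I}$ that is bounded with respect to the natural $p$-adic norms (the target $\mathbf{I}$ being finite flat over $\Lambda$). By $\mathscr{L}$-linear extension, $\h^{B,\dagger}$ sends $\mathcal{D}_{\ord_p(\xi)} \otimes_\Lambda \varprojlim_n e_{\mathrm{U}_p'}\mathcal{O}[S_{U_n}]$ into $\mathcal{D}_{\ord_p(\xi)} \otimes_\Lambda \mathbf{I}$ while preserving the growth bound, and applying it to the element constructed in (i) yields $\Theta_{\infty,\xi}(f,g,\h) \in \mathcal{D}_{\ord_p(\xi)}$.

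The main technical point I anticipate requiring care is the precise identification of the $\Lambda$-module structure on $\varprojlim_n \mathcal{O}[S_{U_n}]^{\ord}$ coming from the diamond operators (factoring through $\Gamma_n$ compatibly in $n$) and its compatibility with the Amice-transform realisation of $\mathscr{L}$ and its subspace $\mathcal{D}_{\ord_p(\xi)}$; once this identification is made explicit, the argument proceeds formally and mirrors the cohomological proof of Theorem~\ref{thm:non-integral-classes} step by step.
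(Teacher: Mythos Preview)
Your proposal is correct and follows essentially the same approach as the paper, which simply states that the theorem is ``proved identically'' to Theorem~\ref{thm:non-integral-classes} (with Proposition~\ref{prop_2025_10_15_1640} replacing Proposition~\ref{prop_thm_2025_07_03_1627}). You have in fact supplied considerably more detail than the paper does---including the explicit verification that $\mathrm{pr}_n^{n+1}\circ\mathrm{res}_n^{n+1}=p\cdot\mathrm{id}$, the norm bound, and the treatment of part~(ii) via the boundedness of $\h^{B,\dagger}$---all of which is correct and faithful to the intended argument.
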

The element $\Theta_{\infty,\xi}(f,g,\h)$ is the trito-non-ordinary version of Hsieh's balanced $p$-adic $L$-function. Thanks to Hsieh's calculations of trilinear periods (see \cite[Propositions 4.9 \& 4.10, Corollary 4.13]{hsiehpadicbalanced}), it enjoys identical interpolative properties. Note that we allow variation of only the third factor; as a result, the said interpolation property concerns only specialisations of weight $2$.

\subsubsection{} We are now ready to introduce signed balanced $p$-adic $L$-functions, via the following ``definite'' analogue of Theorem~\ref{thm:decompo} (which is proved in an identical manner):

\begin{theorem}
    \label{thm_2025_10_16_0900}
     There exist unique $\Theta_{\infty}^\sharp(f,g,\h)$, $\Theta_{\infty}^\flat(f,g,\h) \in \mathbf{I}$ such that
    \[
    \begin{bmatrix}
      \Theta_{\infty,\alpha}(f,g,\h)\\  
      \Theta_{\infty,\beta}(f,g,\h)\end{bmatrix}=Q_g^{-1}M_{\log,g}
\begin{bmatrix}
      \Theta_{\infty}^\sharp(f,g,\h)\\  
      \Theta_{\infty}^\flat(f,g,\h)
\end{bmatrix}    .
    \]
\end{theorem}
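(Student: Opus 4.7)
The plan is to mirror the proof of Theorem~\ref{thm:decompo} in this definite setting, with the cohomological argument replaced by its algebraic counterpart. The key observation is that all the essential ingredients are already in place: Proposition~\ref{prop_2025_10_15_1640} provides the three-term recursion for $\Theta_n(f,g)$ in direct analogy with Proposition~\ref{prop_thm_2025_07_03_1627}, while Theorem~\ref{thm_2025_10_15_2117} produces the unbounded $p$-stabilisations $\Theta_{\infty,\xi}(f,g,\h) \in \mathcal{D}_{\ord_p(\xi)}$.

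First, I would set $\Theta_n(f,g,\h) := \h^{B,\dagger}(\Theta_n(f,g)) \in \mathbf{I}/(\omega_{n-1})$, identifying $\Lambda$ with $\cO\llbracket X \rrbracket$ so that $\pr_n^\infty$ denotes the projection modulo $\omega_{n-1} = (1+X)^{p^{n-1}}-1$. Applying the $\Lambda$-linear map $\h^{B,\dagger}$ to Proposition~\ref{prop_2025_10_15_1640}, and noting that the restriction map $\res_{n-1}^n$ transports to the trace map $\Tr_{n-1}^n: \mathbf{I}/(\omega_{n-2}) \to \mathbf{I}/(\omega_{n-1})$ (coming from the inclusion $\Gamma_n \hookrightarrow \Gamma_{n+1}$ realised by the diamond operators in Remark~\ref{rmk_trace_Hida}), yields the three-term relation
\[
\pr_n^{n+1}\,\Theta_{n+1}(f,g,\h) = a_p(g)\,\Theta_n(f,g,\h) - \chi_g(p)\,\Tr_{n-1}^n\,\Theta_{n-1}(f,g,\h)
\]
for $n \geq 2$.

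Next, I would apply the purely algebraic version of Theorem~\ref{thm:generic-decomposition} to the sequence $(\Theta_n(f,g,\h))_{n\geq 1}$. The core content of that theorem is Lemma~\ref{lem:inverse-limit}, which asserts $\varprojlim \Lambda^{\oplus 2}/\ker\Theta_{g,n} = \Lambda^{\oplus 2}$; since $\mathbf{I}$ is finite flat over $\Lambda$ and complete, the same identity holds with $\Lambda$ replaced by $\mathbf{I}$. The inductive argument in the proof of Theorem~\ref{thm:generic-decomposition} then proceeds verbatim (and is in fact easier, since one does not need to lift cocycles and deal with coboundaries), producing unique elements $\Theta^\sharp_\infty(f,g,\h), \Theta^\flat_\infty(f,g,\h) \in \mathbf{I}$ such that
\[
\begin{bmatrix} \Theta_n(f,g,\h) \\ -\chi_g(p)\,\Tr_{n-1}^n\Theta_{n-1}(f,g,\h) \end{bmatrix} = C_{g,n-1}\cdots C_{g,1}\begin{bmatrix} \pr_n^\infty \Theta^\sharp_\infty(f,g,\h) \\ \pr_n^\infty \Theta^\flat_\infty(f,g,\h) \end{bmatrix}.
\]

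Finally, I would invoke the matrix identity
\[
A_g^{-n}Q_g\begin{bmatrix} \Theta_{n,\alpha}(f,g,\h) \\ \Theta_{n,\beta}(f,g,\h) \end{bmatrix} = (\alpha-\beta)\begin{bmatrix} \Theta_n(f,g,\h) \\ -\chi_g(p)\,\Tr_{n-1}^n\Theta_{n-1}(f,g,\h) \end{bmatrix},
\]
proved exactly as in Theorem~\ref{thm:decompo} by direct matrix computation using the definition of $\Theta_{n,\xi}(f,g)$ and the conjugacy $Q_g^{-1}A_g Q_g = \mathrm{diag}(\alpha^{-1},\beta^{-1})$, then applying $\h^{B,\dagger}$. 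Combining with the previous step and the convergence $(\alpha-\beta)A_g^{n+1}C_{g,n-1}\cdots C_{g,1} \to M_{\log,g}$ (Definition~\ref{defn_01_11_25}), passing to the limit as $n\to\infty$ using Theorem~\ref{thm_2025_10_15_2117}(i) for the left-hand side and the completeness of $\mathbf{I}$ for the right-hand side gives the claimed factorisation. Uniqueness is immediate from the non-vanishing of $\det(Q_g^{-1}M_{\log,g})$.

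The main obstacle will be the bookkeeping required to verify that the $\Lambda$-module structures and the trace/projection maps on the sequences of theta elements are compatible with $\h^{B,\dagger}$ in the way needed for the algebraic decomposition to apply in $\mathbf{I}$. This is essentially formal but demands that one correctly identifies the diamond-operator induced action on $\cO[S_{U_n}]^{\ord}$ with the $\Lambda$-module structure coming from $\bm{\kappa}$, so that $\h^{B,\dagger}$ becomes a morphism of $\Lambda$-modules; once this identification is fixed, every step above reduces to the corresponding one in the proof of Theorem~\ref{thm:decompo}.
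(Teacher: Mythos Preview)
Your proposal is correct and follows essentially the same approach as the paper: the paper's proof is the one-line ``This follows from Proposition~\ref{prop_2025_10_15_1640} and \cite[Theorem~3.4]{BBL1}'', and your argument simply unpacks this by applying $\h^{B,\dagger}$ to the three-term recursion and invoking the algebraic decomposition (Theorem~\ref{thm:generic-decomposition} being the cohomological adaptation of \cite[Theorem~3.4]{BBL1}).
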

\begin{proof}
    This follows from Proposition~\ref{prop_2025_10_15_1640} and \cite[Theorem~3.4]{BBL1}.
\end{proof}

\bibliographystyle{amsalpha}
\bibliography{non-ord_diagonal}
\end{document}